\title{Forking and Dividing in Fields with Several Orderings and Valuations}
\author{Will Johnson}
\DeclareMathOperator*{\forkindep}{\raise0.2ex\hbox{\ooalign{\hidewidth$\vert$\hidewidth\cr\raise-0.9ex\hbox{$\smile$}}}}
\newcommand{\concatA}{%
  \mathord{
    \mathchoice
    {\raisebox{1ex}{\scalebox{.7}{$\frown$}}}
    {\raisebox{1ex}{\scalebox{.7}{$\frown$}}}
    {\raisebox{.7ex}{\scalebox{.5}{$\frown$}}}
    {\raisebox{.7ex}{\scalebox{.5}{$\frown$}}}
  }
}
\newcommand{\Abs}{\operatorname{Abs}}
\newcommand{\bdn}{\operatorname{bdn}}
\newcommand{\diag}{\operatorname{diag}}
\newcommand{\qftp}{\operatorname{qftp}}
\newcommand{\rk}{\operatorname{rk}}
\newcommand{\Gal}{\operatorname{Gal}}
\newcommand{\Frac}{\operatorname{Frac}}
\newcommand{\acl}{\operatorname{acl}}
\newcommand{\dcl}{\operatorname{dcl}}
\newcommand{\tp}{\operatorname{tp}}
\newcommand{\stp}{\operatorname{stp}}
\newcommand{\val}{\operatorname{val}}
\newcommand{\trdeg}{\operatorname{tr.deg}}
\newtheorem{theorem}{Theorem}[section] 
\newtheorem{lemma}[theorem]{Lemma}
\newtheorem{corollary}[theorem]{Corollary}
\newtheorem{fact}[theorem]{Fact}
\newtheorem{proposition-eh}[theorem]{Proposition(?)}
\newtheorem*{theorem-star}{Theorem}
\newtheorem*{conjecture-star}{Conjecture}
\newtheorem*{lemma-star}{Lemma}
\theoremstyle{definition}
\newtheorem{definition}[theorem]{Definition}
\theoremstyle{remark}
\newtheorem{remark}[theorem]{Remark}
\newtheorem{claim}[theorem]{Claim}
\newtheorem*{acknowledgment}{Acknowledgments}
\newcommand{\Rr}{\mathbb{R}}
\newcommand{\Zz}{\mathbb{Z}}
\newcommand{\ACVF}{\mathrm{ACVF}}
\newcommand{\RCF}{\mathrm{RCF}}
\newcommand{\pCF}{p\mathrm{CF}}
\newcommand{\ACF}{\mathrm{ACF}}
\newenvironment{claimproof}[1][\proofname]
               {
                 \proof[#1]
                 
               }
               {
                 \endproof
               }
\begin{document}
\maketitle\unmarkedfntext{
  \emph{2010 Mathematical Subject Classification}: 03C60, 03C45

  \emph{Key words and phrases}: valued fields, NTP$_2$, forking
}

\begin{abstract}
  We consider existentially closed fields with several orderings,
  valuations, and $p$-valuations.  We show that these structures are
  NTP$_2$ of finite burden, but usually have the independence
  property.  Moreover, forking agrees with dividing, and forking can
  be characterized in terms of forking in ACVF, RCF, and $p$CF.
\end{abstract}

\section{Introduction}
Consider the theory of fields with $n$ distinct valuations.  By the
thesis of van den Dries \cite{LvdD}, this theory has a model
companion.  More generally, one can add orderings and $p$-valuations
into the mix, and a model companion exists.  We will explore the
classification-theoretic properties of this model companion.

To be more precise, suppose that for each $1 \le i \le n$\ldots
\begin{itemize}
\item The theory $T_i$ is one of ACVF (algebraically closed valued
  fields), RCF (real closed fields), or $p$CF ($p$-adically closed
  fields).
\item $\mathcal{L}_i$ is some language in which $T_i$ has quantifier
  elimination, such as the language of ordered rings for RCF, and the
  Macintyre language for $p$CF.
\item $(T_i)_\forall$ is the universal fragment of $T_i$, plus the
  field axioms.  For example, $\RCF_\forall$ is the theory of ordered
  fields, and $\ACVF_\forall$ is the theory of valued fields.
\end{itemize}
Arrange that $\mathcal{L}_i \cap \mathcal{L}_j = \mathcal{L}_{rings}$
for $i \ne j$, and form the theory $\bigcup_{i = 1}^n (T_i)_\forall$.
In van den Dries's notation, this theory is denoted $((T_1)_\forall,
(T_2)_\forall, \ldots, (T_n)_\forall)$.
For example
\[ (\ACVF_\forall, \ACVF_\forall, \ACVF_\forall)\]
is the theory of fields with three distinct valuations.  The $T_i$ can
be mixed; for example
\[ (\ACVF_\forall, \RCF_\forall, 3\mathrm{CF}_\forall)\]
is the theory of fields with a valuation, an ordering, and a
3-valuation (plus Macintyre predicates).

In all these cases, van den Dries proves the existence of a model companion
\[ \overline{((T_1)_\forall, (T_2)_\forall, \ldots, (T_n)_\forall)}.\]
In fact, van den Dries's result is more general than what we have
stated, allowing the $T_i$'s to be arbitrary theories with quantifier
elimination such that the $(T_i)_\forall$ are ``t-theories''
(Definition III.1.2 in \cite{LvdD}).

However, we will only consider the case where the $T_i$ are ACVF, RCF,
or $p$CF.  In these cases, we will prove the following about the model
companion $\overline{((T_1)_\forall, \ldots, (T_n)_\forall)}$, which
we denote $T$ for simplicity:
\begin{enumerate}
\item \label{z1} $T$ is NTP$_2$, but fails to be NIP (or NSOP) when $n > 1$.  See
  Theorems~\ref{thm-ntp2} and \ref{thm-nip}.  If $n = 1$, then $T$ is
  one of ACVF, RCF, or $p$CF, which are all known to be NIP.
\item Moreover, $T$ is ``strong'' in the sense of Adler~\cite{Adler},
  and, every type has finite burden.  The burden of affine $m$-space
  is exactly $mn$, where $n$ is the number of valuations and
  orderings.  See Theorem~\ref{thm-ntp2}.
\item \label{z3} Forking and dividing agree over sets in the home sort, so every
  set in the home sort is an ``extension base for forking'' in the
  sense of Chernikov and Kaplan \cite{CK}.  See
  Theorem~\ref{forking-is-dividing}.
\item \label{z4} Forking in the home sort has the following
  characterization (Theorem~\ref{thm-char}).  Suppose $K \models T$,
  and $A, B, C \subseteq K$ are subsets of the home sort.  For $1 \le
  i \le n$, let $K_i$ be a model of $T_i$ extending the
  $\mathcal{L}_i$-reduct of $K$.  For example, in the case of $n$
  orderings, $K_i$ could be a real closure of $K$ with respect to the
  $i^{\text{th}}$ ordering.  Then $A \forkindep_C B$ holds in $K$ if
  and only if $A \forkindep_C B$ holds in $K_i$ for every $i$.  The
  choice of the $K_i$ does not matter.
\end{enumerate}
It is likely that (\ref{z3}) also holds of sets of imaginaries, which
would imply that Lascar strong type and compact strong type agree, by
\cite{BYC} Corollary 3.6.


In the case where every $T_i$ is ACVF, Theorem~\ref{miracle} gives a
simple axiomatization of the model companion $T$: a model of $T$ is
simply an algebraically closed field with $n$ independent non-trivial
valuations.  In this case, forking is characterized as follows: $A
\forkindep_C B$ holds if and only if it holds in the reduct $(K,v_i)$,
for every $i$.

Something similar happens when all but one of the $T_i$ is ACVF.  For
example, if $T_1$ is RCF and $T_2, \ldots, T_n$ are all ACVF, then a
model of $T$ is a real closed field with $n-1$ valuations such that
the ordering and the valuations are pairwise independent.  See
Theorem~\ref{miracle} for details.

As a concrete example, let $K$ be one of the following fields:
$\mathbb{F}_p(t)^{alg}$, $\mathbb{Q}^{alg}$, $\mathbb{Q}^{alg} \cap
\mathbb{R}$, or $\mathbb{Q}^{alg} \cap \mathbb{Q}_p$ for some $p$.
Let $R_1, \ldots, R_n$ be valuation rings on $K$.  Then $K$ with the
ring structure and with a unary predicate for each $R_i$ is a strong
NTP$_2$ theory of finite burden, and every set of real elements is an
extension base.  The same holds for $\bigcap_{i = 1}^n R_i$ as a pure
ring.

\subsection{Related and future work}
Existentially closed fields with several orderings were independently
shown to be NTP$_2$ in Montenegro's thesis \cite{samaria-thesis}.
More generally, she shows that bounded pseudo-real-closed fields are
NTP$_2$, proving Conjecture~5.1 of \cite{CKS}.  Similarly, Montenegro
shows that bounded pseudo-$p$-adically-closed fields are NTP$_2$,
which includes the case where every $T_i$ is $p$CF.

The techniques of the present paper have been generalized in
\cite{silly-sequel} to prove that NTP$_2$ holds in algebraically
closed fields with several valuations.  The case of independent
valuations is Corollary~\ref{mir-ntp2} below.

Halevi, Hasson, and Jahnke use an argument related to
\S\ref{sec-miracle} and \S\ref{sec:nip} in order to prove that a field
with two independent valuations cannot be NIP if one of the two
valuations is henselian, which helps connect two
conjectures on the classification of NIP fields \cite{hhj-v-top}.

The classification of dp-minimal fields \cite{arxiv-myself} is not
directly related to the present paper, but suggests a direction for
future research.  Most of the properties shared by ACVF, RCF, and
$p$CF are shared by all dp-minimal theories of valued fields and
ordered fields.  Consequently, the list
\[ \ACVF, \RCF, \pCF\]
appearing throughout the present paper can probably be extended to
include all dp-minimal theories of valued fields and ordered fields.
But there are a large number of details to check.

\subsection{Conventions}
The algebraic closure of a field $K$ will be denoted $K^{alg}$.  A
variety over a field $K$ is a reduced finite-type scheme over $K$.  If
$V$ is a $K$-variety, $\dim V$ denotes the dimension of the definable
set $V(K^{alg})$ in the structure $K^{alg}$, rather than the definable
set $V(K)$ in the structure $K$.  For example, if $V$ is the
$\Rr$-variety cut out by the equations $x^2 + y^2 = 0$, then $\dim V$
is 1, not 0.

The monster model will be denoted $\mathfrak{M}$.  Forking
independence will be denoted $A \forkindep_C B$.  When working in a
field $K$, algebraic independence will be denoted $A \forkindep^\ACF_C
B$.  In other words, $A \forkindep^\ACF_C B$ means that $A
\forkindep_C B$ holds in $K^{alg}$.

When working with fields with several valuations and orderings, we
will generally use the following conventions:
\begin{itemize}
\item $T_i$ will denote the theory ACVF, RCF, or $p$CF.
\item $\mathcal{L}_i$ will denote the language of $T_i$.
\item $(T_i)_\forall$ will denote the universal fragment of $T_i$,
  plus the field axioms.
\item $T^0$ will denote the theory $\bigcup_{i = 1}^n (T_i)_\forall$.
\item $T$ will denote the model companion of $T^0$.
\end{itemize}

\subsection{Outline}
In Section~\ref{sec-misc}, we recall some elementary facts about ACVF,
$p$CF, and RCF which will be needed later.  In Section~\ref{review},
we quickly reprove the main facts needed from Chapters II and III of
van den Dries's thesis, arriving at a slightly different way of
expressing the axioms of the model companion, and handling the case of
positive characteristic, which was not explicitly considered by van
den Dries.  Section~\ref{sec-miracle} is a digression aimed at proving
Theorem~\ref{miracle}, which drastically simplifies the axioms of the
model companions in some cases.  Theorem~\ref{miracle} is probably
known to experts, but we include a proof here for lack of a reference.
In Section~\ref{probabilities}, we construct some Keisler measures
that will be used in the later sections.  In
Section~\ref{classification}, we determine where the model companion
lies in terms of various classification theoretic boundaries, proving
that it is NTP$_2$ and strong, but not NSOP and usually not NIP.  In
Section~\ref{forking}, we show that forking and dividing agree over
sets in the home sort, and we characterize forking in terms of forking
in the $T_i$'s.

\section{Various facts about ACVF, $p$CF, and RCF}\label{sec-misc}
Let $T$ be one of ACVF, RCF, or $p$CF.  Work in the usual one-sorted
languages with quantifier elimination:
\begin{itemize}
\item For ACVF, work in the language of fields with a binary predicate for
  $\val(x) \ge \val(y)$.
\item For RCF, work in the language of ordered rings.
\item For $p$CF, work in the Macintyre language with unary predicates
  for $n$th powers \cite{macintyre}.
\end{itemize}
Quantifier-elimination implies the following:
\begin{fact}\label{the-fact}
Let $\mathfrak{M}$ be a model of $T$, and $K$ be a subfield.  Every $K$-definable set is a positive boolean combination of topologically open sets and affine varieties defined over $K$.  In particular, any $K$-definable subset of $\mathfrak{M}^n$ has non-empty interior or is contained in a $K$-definable proper closed subvariety of $\mathbb{A}^n$.
\end{fact}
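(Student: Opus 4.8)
The plan is to run through quantifier elimination and then examine the atomic formulas of each of the three languages. Let $\mathcal{C}$ denote the class of all subsets of the powers $\mathfrak{M}^n$ that can be written as a positive boolean combination — finitely many intersections and unions, no complementation — of topologically open sets and affine varieties defined over $K$; here ``topologically'' means the valuation topology for ACVF and $p$CF, and the order topology for RCF. Since a positive boolean combination of members of $\mathcal{C}$ is again in $\mathcal{C}$, the class $\mathcal{C}$ is closed under finite unions and intersections. By quantifier elimination every $K$-definable set is a boolean combination of atomic $\mathcal{L}$-formulas over $K$; pushing negations to the leaves, it is a positive boolean combination of sets $\varphi(\mathfrak{M})$ and $\neg\varphi(\mathfrak{M})$ for $\varphi$ atomic over $K$. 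So it suffices to put each of these into $\mathcal{C}$. I will use throughout that a polynomial over $K$ induces a continuous map $\mathfrak{M}^n \to \mathfrak{M}$ (each of the three topologies is a field topology), so preimages of open sets are open, and that $\{p(\bar x) = 0\}$ is an affine variety over $K$ while $\{p(\bar x) \ne 0\}$ is Zariski open, hence topologically open. This disposes of the equational atomic formulas and their negations.

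It remains to handle the order-like atomic formulas. For RCF the only other atomic formulas are $p \le q$; rewriting as $r \le 0$ with $r \in K[\bar x]$, one has $\{r \le 0\} = \{r < 0\} \cup \{r = 0\}$, a union of an open set and an affine variety, and $\{r > 0\}$ is open. For ACVF the extra atomic formulas are $\val(p) \ge \val(q)$, and I would use the decomposition
\[
\{\val(p) \ge \val(q)\} = \{\val(p) > \val(q)\} \;\cup\; \big(\{q \ne 0\} \cap (p/q)^{-1}(\Oo^\times)\big) \;\cup\; \big(\{p = 0\} \cap \{q = 0\}\big),
\]
noting that $\Oo$ and $\mm$ are clopen balls, so $\Oo^\times = \Oo \setminus \mm$ is clopen, whence the middle term is the preimage of an open set under a map continuous on the open set $\{q \ne 0\}$ and is therefore open; a routine ultrametric computation shows $\{(a,b) \in \mathfrak{M}^2 : \val(a) > \val(b)\}$ is open, which makes $\{\val(p) > \val(q)\}$ open as well as its ``$<$'' analogue (the complement of the displayed set). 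For $p$CF the extra atomic formulas are $P_n(p(\bar x))$: by Hensel's Lemma $(\mathfrak{M}^\times)^n$ contains an open neighbourhood of $1$, so it is an open subgroup of $\mathfrak{M}^\times$, hence clopen in $\mathfrak{M}^\times$, and $\mathfrak{M}^\times$ is open in $\mathfrak{M}$. Thus the set of $n$-th powers is the union of the open set $(\mathfrak{M}^\times)^n$ and, if $0$ is counted as an $n$-th power, the variety $\{x = 0\}$, and the set of non-$n$-th-powers is open; pulling both back along $p$ keeps us inside $\mathcal{C}$. This establishes the first assertion.

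For the ``in particular'' clause, let $D \subseteq \mathfrak{M}^n$ be $K$-definable. Putting the positive boolean expression for $D$ into disjunctive normal form and, within each disjunct, grouping the open conjuncts together and the variety conjuncts together, write $D = \bigcup_{i=1}^m (U_i \cap V_i)$, where $U_i$ is topologically open (take $U_i = \mathfrak{M}^n$ if there are no open conjuncts) and $V_i$ is an affine variety over $K$ (take $V_i = \mathbb{A}^n$ if there are none). If some $i$ has $U_i \ne \emptyset$ and $V_i = \mathbb{A}^n$, then $D \supseteq U_i$ has non-empty interior. Otherwise, discard the $i$ with $U_i = \emptyset$; every remaining $V_i$ is a proper $K$-subvariety of $\mathbb{A}^n$ and $D \subseteq \bigcup_i V_i$, which is again a proper closed $K$-subvariety of $\mathbb{A}^n$ since $\mathbb{A}^n$, being irreducible, is not a finite union of proper closed subsets. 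The step I expect to require the most care is the case analysis for the order-like atomic formulas, and in particular the $p$CF case, which rests on the openness of $(\mathfrak{M}^\times)^n$ and hence on Hensel's Lemma.
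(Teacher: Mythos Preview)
The paper does not prove this statement; it is presented as a Fact, justified only by the sentence ``Quantifier-elimination implies the following.'' Your proof supplies the details the paper omits, and the argument is correct. The case analysis for the non-equational atomic formulas is handled properly in each of the three theories: in ACVF your three-piece decomposition of $\{\val(p) \ge \val(q)\}$ works (though it can be streamlined by observing directly that $\{q \ne 0\} \cap (p/q)^{-1}(\mathcal{O})$ is open, since $\mathcal{O}$ itself is clopen, without separating $\mathcal{O}^\times$ from $\mathfrak{m}$); in $p$CF the Hensel-based argument that $(\mathfrak{M}^\times)^n$ is clopen in $\mathfrak{M}^\times$ is the standard one. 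One small point worth making explicit: the open sets produced by your construction are themselves $K$-definable, since they arise from atomic formulas with parameters in $K$; this is how the fact is actually used later in the paper (e.g.\ in the proof of Lemma~\ref{whoops}). Your DNF argument for the ``in particular'' clause is also correct.
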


Let $\mathfrak{M}$ be a monster model of $T$.
\begin{definition}
Let $K$ be a subfield of $\mathfrak{M}$.  Let $D \subseteq \mathfrak{M}^n$ be a definable set, defined over $K$.  Define the \emph{rank} $\rk_K D$ to be
the supremum of $\trdeg(\alpha/K)$ as $\alpha$ ranges over $D$.
\end{definition}

\begin{lemma}\label{various}~
\begin{description}
\item[(a)]
If $D \subseteq \mathfrak{M}^n$, then $\rk_K D = n$ if and only if $D$ has non-empty interior.
\item[(b)]
If $D \subseteq \mathfrak{M}^n$ and $1 \le k \le n$, then $\rk_K D \ge k$ if and only if $\rk_K \pi(D) = k$ for one of the (finitely many) coordinate projections $\pi : \mathfrak{M}^n \twoheadrightarrow \mathfrak{M}^k$.
\item[(c)]
The rank of $D$ does not depend on the choice of $K$, and rank is definable in families.
\item[(d)]
If $D \subseteq V$ where $V$ is geometrically irreducible, then $\rk D = \dim V$ if and only if $D(\mathfrak{M})$ is Zariski dense in $V(\mathfrak{M}^{alg})$.
\end{description}
\end{lemma}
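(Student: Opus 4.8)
The four parts of Lemma~\ref{various} are really a single package: (a) is the base case, (b) bootstraps it to all ranks via coordinate projections, (c) records two easy consequences, and (d) relates rank to Zariski density. The plan is to prove them in the order (a), (b), (c), (d), feeding each into the next. Throughout, the key input is Fact~\ref{the-fact}: every $K$-definable set is a positive boolean combination of topologically open sets and affine varieties over $K$, so in particular a $K$-definable subset of $\mathfrak{M}^n$ either has nonempty interior or sits inside a proper $K$-definable subvariety of $\mathbb{A}^n$. I will also use freely that $\mathfrak{M}$ is a sufficiently saturated model, so that the transcendence degree of a point over $K$ is witnessed inside $\mathfrak{M}$, and that the topology on $\mathfrak{M}^n$ refines the Zariski topology (a nonempty open set is Zariski dense in $\mathbb{A}^n$, since $\mathbb{A}^n$ is geometrically irreducible).

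**Part (a).** Suppose $\rk_K D = n$, so there is $\alpha \in D$ with $\trdeg(\alpha/K) = n$, i.e. $\alpha$ is a generic point of $\mathbb{A}^n$ over $K$. If $D$ had empty interior, then by Fact~\ref{the-fact} $D$ would be contained in a proper $K$-definable closed subvariety $W \subsetneq \mathbb{A}^n$; but then $\alpha \in W(\mathfrak{M})$ forces a nontrivial polynomial relation over $K$, contradicting $\trdeg(\alpha/K) = n$. Conversely, if $D$ has nonempty interior, pick a nonempty basic open box $U \subseteq D$ defined over some finite $K_0 \supseteq K$. Since the topology refines the Zariski topology and $\mathbb{A}^n$ is irreducible, $U$ is Zariski dense, so it is not contained in any proper $K_0$-subvariety; by saturation and compactness there is a point of $U$ omitting every proper $K_0$-subvariety, hence a point of transcendence degree $n$ over $K_0$, and a fortiori over $K$. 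So $\rk_K D \ge n$, and trivially $\rk_K D \le n$.

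**Parts (b) and (c).** For (b), the direction $\Leftarrow$ is immediate: if $\rk_K \pi(D) = k$ then a point of $\pi(D)$ of transcendence degree $k$ lifts (by choosing any preimage in $D$) to a point of $D$ of transcendence degree $\ge k$. For $\Rightarrow$, take $\alpha \in D$ with $\trdeg(\alpha/K) \ge k$; some $k$ of the coordinates of $\alpha$ are algebraically independent over $K$, and the corresponding coordinate projection $\pi$ sends $\alpha$ to a point of $\pi(D)$ of transcendence degree $k$ over $K$; since $\pi(D) \subseteq \mathfrak{M}^k$ this says exactly $\rk_K \pi(D) = k$ by part (a) applied in dimension $k$ — wait, more directly: $\rk_K\pi(D) \ge k$ and $\le k$. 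This proves (b). For (c), independence of $K$: enlarging $K$ can only decrease transcendence degrees, so $\rk_{K'} D \le \rk_K D$ for $K \subseteq K'$; the reverse inequality follows because, having fixed a point $\alpha \in D$ realizing the rank over $K'$, its image under a suitable coordinate projection to $\mathfrak{M}^k$ (with $k = \rk_{K'}D$) lands in a set with nonempty interior by part (a) (the ``nonempty interior'' criterion in (a) is manifestly $K$-independent), hence $\rk_K D \ge k$ by the $\Leftarrow$ direction of (b); combined with (b) this closes the loop. Definability in families then reduces, via (b), to the $n$-dimensional case, which is the statement that ``has nonempty interior'' is a definable condition on parameters — this holds by Fact~\ref{the-fact} together with compactness, or more simply because each of ACVF, RCF, $p$CF eliminates the relevant quantifier and ``nonempty interior'' is expressible.

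**Part (d), the main obstacle.** This is where the real work is. Let $V$ be geometrically irreducible over $K$ with $\dim V = d$, and $D \subseteq V(\mathfrak{M})$ a $K$-definable set. If $D$ is Zariski dense in $V(\mathfrak{M}^{alg})$, then for every proper closed $K$-subvariety $W \subsetneq V$, $D \not\subseteq W(\mathfrak{M})$; by saturation one finds $\alpha \in D$ avoiding all such $W$, i.e. a generic point of $V$, which has $\trdeg(\alpha/K) = d$, so $\rk D \ge d$; and $\rk D \le d$ since $D \subseteq V$. Conversely if $\rk D = d$, a point $\alpha \in D$ with $\trdeg(\alpha/K) = d$ lies in no proper $K$-subvariety of the irreducible $V$, so $D$ is not contained in any proper $K$-subvariety of $V$; the subtle point is upgrading ``not contained in any proper $K$-subvariety'' to ``Zariski dense'', i.e. ``not contained in any proper $\mathfrak{M}^{alg}$-subvariety''. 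Here one uses that the Zariski closure of $D$ is defined over $K^{alg}$ and is $\Aut(\mathfrak{M}^{alg}/K)$-invariant, hence a union of $K$-conjugate subvarieties; since $V$ is geometrically irreducible it is the unique top-dimensional $K$-conjugate of itself, so if the Zariski closure of $D$ were a proper subvariety it would be a proper $K$-subvariety (take the union of the finitely many conjugates), contradicting what we showed. I expect this last density-descent argument — and the routine but fiddly verification that ``nonempty interior'' is preserved and definable — to be the only genuinely nontrivial points; everything else is transcendence-degree bookkeeping on top of Fact~\ref{the-fact}.
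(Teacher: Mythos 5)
Your parts (a) and (b) are correct and essentially match the paper; (b) is more detailed than the paper's one-line appeal to pregeometry facts, which is fine.

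Part (c) contains a genuine logical slip. You establish $\rk_{K'}D \le \rk_K D$ from the monotonicity of transcendence degree, and then announce ``the reverse inequality follows'' — but your argument starts from a point $\alpha$ with $\trdeg(\alpha/K') = k := \rk_{K'}D$ and concludes $\rk_K D \ge k$, which is \emph{the same inequality you already had}, not the reverse. To get $\rk_{K'}D \ge \rk_K D$ you must start from the other end: let $m = \rk_K D$, take $\alpha \in D$ with $\trdeg(\alpha/K) = m$ and a projection $\pi$ onto $\mathfrak{M}^m$ with $\trdeg(\pi(\alpha)/K) = m$; then $\rk_K\pi(D) = m$, so by (a) $\pi(D)$ has nonempty interior, which is a parameter-free condition, so by (a) again $\rk_{K'}\pi(D) = m$, and by (b) $\rk_{K'}D \ge m$. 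You clearly had this idea in mind — you parenthetically observe that ``nonempty interior'' is $K$-independent — but the written argument runs in the wrong direction.

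Part (d) has the right shape but a false intermediate claim, and takes a genuinely different route from the paper. You assert that the Zariski closure $V'$ of $D$ is ``defined over $K^{alg}$ \emph{and} is $\Aut(\mathfrak{M}^{alg}/K)$-invariant.'' The second part is false: $D$ is $\Aut(\mathfrak{M}/K)$-invariant (automorphisms of the \emph{structure} $\mathfrak{M}$), not invariant under arbitrary field automorphisms of $\mathfrak{M}^{alg}$ over $K$, so $V'$ is generally moved by $\Aut(\mathfrak{M}^{alg}/K)$. (If it were both $K^{alg}$-defined and $\Aut(\mathfrak{M}^{alg}/K)$-invariant, it would already be $K$-defined and you would not need the ``union of conjugates'' step at all.) What is true — and this saves your argument — is that $V'$ is $\Aut(\mathfrak{M}/K)$-invariant, hence its code lies in $\dcl_\mathfrak{M}(K) \subseteq \acl_\mathfrak{M}(K) = K^{alg} \cap \mathfrak{M}$, so $V'$ is $K^{alg}$-defined; then the union $W$ of its finitely many $\Gal(K^{alg}/K)$-conjugates is a $K$-defined subvariety with $\dim W = \dim V' < \dim V$ containing $D$, contradicting the first half of your argument. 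You should also flag that the $K^{alg}$-definability of $V'$ relies on the standard fact that $\acl$ agrees with field-theoretic algebraic closure in ACVF/RCF/$p$CF. The paper avoids Galois descent entirely: it observes that $V'$ is defined over $\mathfrak{M}$ (Zariski closure of $\mathfrak{M}$-points), picks a small $L \subseteq \mathfrak{M}$ over which $V'$ and $D$ are defined, and invokes (c) to replace $K$ by $L$, giving $\rk_K D = \rk_L D \le \dim V' < \dim V$ directly. The paper's route is shorter and makes (c) do the work; your route is a valid alternative but needs the cleanup above.
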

\begin{proof}
\begin{description}
\item[(a)] If $\rk_K D < n$, then every tuple $\alpha$ from $D$ lives inside an affine $K$-variety of dimension less than $n$.  By compactness, $D$ is contained in the union of finitely many affine $K$-varieties of dimension less than $n$.  This union contains the Zariski closure of $D$, so $D$ is not Zariski dense.  This forces $D$ to have no topological interior, because non-empty polydisks in affine space are Zariski dense.  Conversely, if $D$ has no interior, then by Fact~\ref{the-fact}, $D \subseteq V$ for some proper subvariety $V \subsetneq \mathbb{A}^n$ with $V$ defined over $K$.  Then $\rk_K D \le \dim V < n$.
\item[(b)] Clear by properties of rank in pregeometries.
\item[(c)] Combine (a) and (b).
\item[(d)] If $\rk D < \dim V$, then every point in $D$ is contained in an affine $K$-variety of dimension less than $\dim V$.  By compactness, $D$ is contained in the union of finitely many such varieties.  This finite union contains the Zariski closure of $D$, and is strictly smaller than $V$ itself.  Conversely, suppose that $D$ is not Zariski dense in $V$.  Let $V' \subsetneq V$ be the Zariski closure of $D$.  As $V$ is geometrically irreducible, $\dim V' < \dim V$.  Also, $V'$ is defined over $\mathfrak{M}$ rather than $\mathfrak{M}^{alg}$, because it is the Zariski closure of a set of $\mathfrak{M}$-points.  Let $L$ be a small subfield of $\mathfrak{M}$ over which $V'$ and $D$ are defined.  Then
\[ \rk_K D = \rk_L D \le \rk_L V' \le \dim V' < \dim V. \qedhere\]
\end{description}
\end{proof}

\begin{corollary}\label{extend-types-independently}
If $K \le L$ is an inclusion of small subfields of $\mathfrak{M}$ and $\alpha$ is a finite tuple, we can find $\alpha' \equiv_K \alpha$ with $\trdeg(\alpha'/L) = \trdeg(\alpha'/K)$.
\end{corollary}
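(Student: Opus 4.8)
Write $p=\tp(\alpha/K)$ and $k=\trdeg(\alpha/K)$. Since the transcendence degree of a tuple over $K$ is determined by its type over $K$, every $\alpha'\models p$ has $\trdeg(\alpha'/K)=k$, and because $K\subseteq L$ we automatically get $\trdeg(\alpha'/L)\le k$; so it is enough to produce $\alpha'\models p$ with $\trdeg(\alpha'/L)\ge k$. The plan is to find a single $k$-coordinate projection $\pi$ that is ``generically onto'' each set in $p$, realize a point $\bar\beta$ of $\mathbb{A}^k$ that is algebraically independent over $L$ and lies in all of the projected sets, and then lift $\bar\beta$ back through $\pi$ to a realization of $p$, reading off the transcendence degree from the fibre.

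First, for each $\varphi\in p$ the set $\varphi(\mathfrak{M})$ is $L$-definable and contains $\alpha$, so $\rk_K\varphi(\mathfrak{M})\ge k$, hence $\rk_L\varphi(\mathfrak{M})\ge k$ by Lemma~\ref{various}(c); by Lemma~\ref{various}(b) some coordinate projection $\pi$ onto $k$ of the coordinates satisfies $\rk_L\pi(\varphi(\mathfrak{M}))=k$. A compactness step promotes this to one projection working for all of $p$: if some $\varphi_j\in p$ witnessed failure for each of the finitely many projections $\pi_j$, then $\varphi:=\bigwedge_j\varphi_j\in p$ would have $\rk_L\varphi(\mathfrak{M})\ge k$ yet $\rk_L\pi_j(\varphi(\mathfrak{M}))<k$ for every $j$, contradicting Lemma~\ref{various}(b). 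Fixing such a $\pi$, Lemma~\ref{various}(a) gives that each $\pi(\varphi(\mathfrak{M}))$ has nonempty interior, hence is Zariski dense in $\mathbb{A}^k$ (nonempty polydisks/boxes being Zariski dense, as in the proof of Lemma~\ref{various}(a)).

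Now consider the partial type $q(y)$ over $L$ asserting $y\in\pi(\varphi(\mathfrak{M}))$ for all $\varphi\in p$ together with $f(y)\ne 0$ for all nonzero $f\in L[y_1,\dots,y_k]$. A finite fragment reduces to a single $\varphi$ and finitely many $f_i$, and the Zariski-dense set $\pi(\varphi(\mathfrak{M}))$ is not contained in $V(\prod_i f_i)$, so $q$ is consistent; realize it by $\bar\beta\in\mathfrak{M}^k$, which then has $\trdeg(\bar\beta/L)=k$. Finally, the partial type $p(x)\cup\{\pi(x)=\bar\beta\}$ over $L\bar\beta$ is consistent, since $p$ is closed under conjunction and $\bar\beta\in\pi(\varphi(\mathfrak{M}))$ for each $\varphi\in p$; let $\alpha'$ realize it. Then $\alpha'\equiv_K\alpha$ and $\trdeg(\alpha'/L)\ge\trdeg(\pi(\alpha')/L)=\trdeg(\bar\beta/L)=k$, giving $\trdeg(\alpha'/L)=k=\trdeg(\alpha'/K)$. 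The only slightly delicate points are the two compactness arguments — extracting a single projection good for the whole type, then lifting the generic fibre point back to a realization of $p$; the rest is routine manipulation of the rank function and of Zariski density of sets with interior.
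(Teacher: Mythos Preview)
Your proof is correct, but it takes a more circuitous route than the paper's. The paper simply forms the partial type $\Sigma(x)$ over $L$ asserting $x \equiv_K \alpha$ together with $x \notin V$ for every $L$-variety $V$ of dimension less than $k$, and checks consistency directly: a failure would give some $\phi \in \tp(\alpha/K)$ with $\phi(\mathfrak{M})$ covered by finitely many such $V_i$, whence $\rk_K \phi(\mathfrak{M}) = \rk_L \phi(\mathfrak{M}) \le \max_i \dim V_i < k$, contradicting $\alpha \in \phi(\mathfrak{M})$. A realization of $\Sigma$ is the desired $\alpha'$. This uses one compactness argument and Lemma~\ref{various}(c) only.

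Your approach instead fixes a single good coordinate projection $\pi$ (one compactness step), pushes the type forward, finds an $L$-generic $\bar\beta$ in all the images (a second compactness step), and then lifts $\bar\beta$ back through the fibre (a third compactness step). Everything works, and the project-and-lift strategy has the mild conceptual payoff of making the witness $\alpha'$ more explicit: its transcendence basis over $L$ is literally the subtuple $\pi(\alpha') = \bar\beta$. But the paper's argument is shorter and avoids having to single out a projection at all, since avoiding all low-dimensional $L$-varieties already forces $\trdeg(\alpha'/L) \ge k$ without reference to coordinates.
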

\begin{proof}
Let $n = \trdeg(\alpha/K)$.  Let $\Sigma(x)$ be the partial type
asserting that $x \equiv_K \alpha$ and that $x$ belongs to no
$L$-variety of dimension less than $n$.  We claim that $\Sigma(x)$ is
consistent.  Otherwise, there is some formula $\phi(x)$ from
$\tp(\alpha/K)$ and some $L$-varieties $V_1, \ldots, V_m$ of dimension
less than $n$, such that $\phi(\mathfrak{M}) \subseteq \bigcup_{i =
  1}^m V_i$.  But then
\[ \rk_K \phi(\mathfrak{M}) = \rk_L \phi(\mathfrak{M}) \le \max_{1 \le i \le m} \dim V_i < n,\]
contradicting the fact that $\alpha \in \phi(\mathfrak{M})$ and $\trdeg(\alpha/K) \ge n$.

Thus $\Sigma(x)$ is consistent.  If $\alpha'$ is a realization, then $\alpha' \equiv_K \alpha$ and
\[ \trdeg(\alpha'/L) \ge n = \trdeg(\alpha/K) = \trdeg(\alpha'/K) \ge \trdeg(\alpha'/L). \qedhere\]
\end{proof}

\begin{corollary}\label{free-amalgamation}
Let $L$ and $L'$ be two fields satisfying $T_\forall$, and suppose they share a common subfield $K$.  Then $L$ and $L'$ can be amalgamated over $K$ in a way which makes $L$ and $L'$ be algebraically independent over $K$.
\end{corollary}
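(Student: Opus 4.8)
The plan is to realize the amalgam inside a monster model $\mathfrak{M} \models T$, enlarged if necessary so that $L$ and $L'$ are small. Fix an $\mathcal{L}_i$-embedding $L \hookrightarrow \mathfrak{M}$ and identify $K$ with its image. Every model of $T_\forall$ — a valued field, an ordered field, or a $p$-valued field, as the case may be — embeds into a model of $T$, so the quantifier-free $\mathcal{L}_i$-diagram of $L'$ over $K$ is consistent with $T$ together with the diagram of $K$; using quantifier elimination and the saturation of $\mathfrak{M}$, the inclusion $K \hookrightarrow \mathfrak{M}$ then extends to an $\mathcal{L}_i$-embedding of $L'$ into $\mathfrak{M}$. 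Let $\bar{a}$ enumerate the image of such an embedding. Thus $\bar{a}$ is a small tuple from $\mathfrak{M}$ whose range is a substructure of $\mathfrak{M}$ that is $\mathcal{L}_i$-isomorphic to $L'$ over $K$, and the same is true of the range of any $\bar{a}' \equiv_K \bar{a}$. So it suffices to find $\bar{a}' \equiv_K \bar{a}$ with $\bar{a}' \forkindep^\ACF_K L$.

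The key step is an infinitary version of Corollary~\ref{extend-types-independently} for the small tuple $\bar{a}$. Let $\Sigma(\bar{x})$ be the partial type over $L$ consisting of $\tp(\bar{a}/K)$ together with, for each finite set $s$ of coordinates and each $L$-subvariety $V \subseteq \mathbb{A}^{|s|}$ with $\dim V < \trdeg(\bar{a}_s/K)$, the formula ``$\bar{x}_s \notin V$''. Any realization $\bar{a}'$ of $\Sigma$ has $\trdeg(\bar{a}'_s/L) \ge \trdeg(\bar{a}_s/K)$ for every finite $s$, since $\bar{a}'_s$ lies in no smaller $L$-variety, while also $\trdeg(\bar{a}'_s/L) \le \trdeg(\bar{a}'_s/K) = \trdeg(\bar{a}_s/K)$; equality for all finite $s$ says exactly that $\bar{a}' \forkindep^\ACF_K L$. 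To see that $\Sigma$ is consistent, observe that a finite fragment mentions, after enlarging $s$, a single finite subtuple $\bar{x}_s$, a single formula $\phi(\bar{x}_s) \in \tp(\bar{a}_s/K)$, and finitely many $L$-varieties of dimension less than $m := \trdeg(\bar{a}_s/K)$. By Corollary~\ref{extend-types-independently} applied to the finite tuple $\bar{a}_s$ and the extension $K \le L$, there is $\bar{b}_s \equiv_K \bar{a}_s$ with $\trdeg(\bar{b}_s/L) = \trdeg(\bar{b}_s/K) = m$. This $\bar{b}_s$ realizes $\phi$ and lies in no $L$-variety of dimension $< m$, and, by monotonicity of algebraic independence, each sub-subtuple $\bar{b}_{s'}$ (for $s' \subseteq s$) likewise lies in no $L$-variety of dimension less than $\trdeg(\bar{a}_{s'}/K)$; hence $\bar{b}_s$ realizes the fragment.

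Finally, picking such an $\bar{a}'$, the substructure $L'' \subseteq \mathfrak{M}$ with universe the range of $\bar{a}'$ is $\mathcal{L}_i$-isomorphic to $L'$ over $K$, and $\bar{a}' \forkindep^\ACF_K L$ says precisely that $L$ and $L''$ are algebraically independent over $K$ in $\mathfrak{M}$. Thus $\mathfrak{M}$ — which is a model of $T$, hence of $T_\forall$ — together with the inclusion $L \hookrightarrow \mathfrak{M}$ and the composite $L' \xrightarrow{\sim} L'' \hookrightarrow \mathfrak{M}$, is the desired amalgam. The only real work lies in the consistency argument for $\Sigma$; the subtle point is arranging the non-containment conditions for all finite subtuples at once, which is exactly where monotonicity of $\forkindep^\ACF$ is used.
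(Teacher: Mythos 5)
Your proof is correct and follows the same route as the paper: embed everything in a monster model of $T$ using quantifier elimination, then apply Corollary~\ref{extend-types-independently} together with compactness to realize a copy of one of the two fields algebraically independently from the other over $K$. The paper simply says ``by Corollary~\ref{extend-types-independently} and compactness'' where you spell out the construction of the partial type $\Sigma$ and its consistency via monotonicity; the only cosmetic deviation is your use of $\mathcal{L}_i$ for the (single) language of $T$, which is notation from the later multi-theory sections.
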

\begin{proof}
By quantifier elimination, we may as well assume that $L$ and $L'$ and $K$ live inside a monster model $\mathfrak{M} \models T$.  By Corollary~\ref{extend-types-independently} and compactness, we can extend $\tp(L/K)$ to $L'$ in such a way that any realization is algebraically independent from $L'$ over $K$.
\end{proof}

\begin{definition}\label{nearly}
Let $K \le L$ be an inclusion of fields.  Say that $K$ is \emph{relatively separably closed} in $L$ if every $x \in L \cap K^{alg}$ is in the perfect closure of $K$.
\end{definition}
This is a generalization of $K$ being relatively algebraically closed in $L$; in characteristic zero these two concepts are the same.
Note that if we embed $L$ into a monster model $\mathfrak{M}$ of ACF, then $K$ is relatively separably closed in $L$ if and only if $\dcl(K) = \acl(K) \cap \dcl(L)$ if and only if $\tp(L/K)$ is stationary.  From this, one gets
\begin{fact}\label{cb-stationarity-fact}
Let $L \ge K \le L'$ be (pure) fields.  Suppose that $K$ is relatively separably closed in $L$ or $L'$.  Then there is only one way to amalgamate $L$ and $L'$ over $K$ in such a way that $L$ and $L'$ are algebraically independent over $K$.
\end{fact}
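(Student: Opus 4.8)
The plan is to reduce to the stable theory $\mathrm{ACF}$ and invoke the remark preceding the statement. By quantifier elimination, after fixing an embedding of the relevant fields into a monster model $\mathfrak{M} \models \mathrm{ACF}$, amalgamations may be studied inside $\mathfrak{M}$. Since both ``amalgamating $L$ and $L'$ over $K$'' and ``$L$ and $L'$ algebraically independent over $K$'' are symmetric in $L$ and $L'$, I may assume without loss of generality that $K$ is relatively separably closed in $L$; by the remark this means exactly that $\tp(L/K)$ is stationary, where $L$ is viewed as the (infinite) tuple enumerating the subfield.

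Next I would set up a correspondence between amalgams and nonforking extensions of types. Fix $L'$ inside $\mathfrak{M}$. Given a field $M$ amalgamating $L$ and $L'$ over $K$, embed $M$ into $\mathfrak{M}$ extending the chosen embedding of $L'$; composing with $L \to M$ gives a $K$-embedding $f \colon L \to \mathfrak{M}$. By quantifier elimination in $\mathrm{ACF}$, the amalgam --- the subfield of $M$ generated by the images of $L$ and $L'$ together with its two structure maps --- is determined up to isomorphism by the $\Aut(\mathfrak{M}/L')$-orbit of $f$, equivalently by the type $q \in S_{x_L}(L')$ of the tuple $f(L)$ over $L'$ (where $x_L$ is the tuple of variables indexed by $L$), and conversely every $q$ with $q|_K = \tp(L/K)$ arises from such an amalgam. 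Under this correspondence, ``$L$ and $L'$ are algebraically independent over $K$ inside $M$'' becomes ``$f(L) \forkindep^{\ACF}_K L'$'', i.e.\ $q$ is a nonforking extension of $p := \tp(L/K)$. So amalgams of the required kind are, up to isomorphism, in bijection with nonforking extensions of $p$ to $L'$.

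Finally, since $p$ is stationary it has a unique nonforking extension to $L'$, so there is a unique amalgam of the required kind (its existence having already been established in Corollary~\ref{free-amalgamation}).

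The argument is routine once this framework is in place; the points that need care are all bookkeeping. One must be precise about what ``the same amalgamation'' means (isomorphism of the diagram $L \to M \leftarrow L'$, which is stronger than mere isomorphism of $M$ as an abstract field) and check that the orbit/type correspondence respects it; one must be comfortable with types in the infinite tuple enumerating $L$, so that stationarity of $\tp(L/K)$ transfers as claimed; and one should note that algebraic independence of $L$ and $L'$ over $K$ is insensitive to the ambient field in which their compositum is formed, which is what licenses passing between $M$ and $\mathfrak{M}$.
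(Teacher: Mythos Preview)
Your proposal is correct and is precisely the argument the paper is gesturing at: the paper states this as a Fact without proof, deriving it immediately from the preceding remark that $K$ being relatively separably closed in $L$ is equivalent to stationarity of $\tp(L/K)$ in $\ACF$. Your write-up simply makes explicit the bookkeeping (amalgams over $K$ with $L'$ fixed correspond to types over $L'$ extending $\tp(L/K)$, and algebraic independence corresponds to nonforking) that the paper leaves to the reader. One minor quibble: your citation of Corollary~\ref{free-amalgamation} for existence is slightly off, since that corollary is stated for models of $T_\forall$ rather than pure fields; but existence of an algebraically independent amalgam of pure fields is standard and not the point here anyway.
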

\begin{fact}\label{cb-stationarity-variety}
If $K$ is relatively separably closed in $L$ and $\alpha$ is a tuple from $L$, and $V$ is the variety over $K$ of which $\alpha$ is the generic point, then $V$ is geometrically irreducible.
\end{fact}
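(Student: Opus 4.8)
The plan is to reduce to the case $L = K(\alpha)$ and then reinterpret geometric irreducibility of $V$ as the uniqueness of the nonforking extension of $\tp(\alpha/K)$ to $K^{alg}$ inside a monster model of ACF, where the hypothesis on $K$ is exactly what is needed.

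Since $K(\alpha) \subseteq L$ we have $K(\alpha) \cap K^{alg} \subseteq L \cap K^{alg}$, so $K$ is relatively separably closed in $K(\alpha)$ as well; hence I would replace $L$ by $K(\alpha) = K(V)$ and assume $L$ is finitely generated over $K$. Note that $V$ is irreducible over $K$ to begin with, being the locus of a single point: the ideal of polynomials over $K$ vanishing at $\alpha$ is the kernel of the evaluation map $K[X] \to K(\alpha)$, hence prime. Now embed $L$ into a monster model of ACF. By the discussion following Definition~\ref{nearly}, relative separable closedness of $K$ in $L$ is equivalent to stationarity of $\tp(L/K) = \tp(\alpha/K)$. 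On the other hand, the extensions of $\tp(\alpha/K)$ to $K^{alg} = \acl(K)$ correspond, via the locus construction over $K^{alg}$, to the irreducible subvarieties of $V_{K^{alg}} := V \times_K K^{alg}$, an extension being nonforking over $K$ precisely when its locus has dimension $\dim V$. So the statement reduces to: a single nonforking extension forces $V_{K^{alg}}$ to be irreducible.

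The one point needing care is that \emph{every} irreducible component of $V_{K^{alg}}$ has dimension $\dim V$, hence gives a nonforking extension; granting this, stationarity (which forbids a second nonforking extension) forbids a second component altogether, and we are done. To see it, pick a transcendence basis $t_1, \ldots, t_d$ of $K(V)/K$ (so $d = \dim V$) and observe that $K(V) \otimes_K K^{alg}$ is finite and free, hence faithfully flat, over the domain $R := K(t_1, \ldots, t_d) \otimes_K K^{alg} \subseteq K^{alg}(t_1, \ldots, t_d)$; its minimal primes therefore all contract to $(0)$, so each residue field is finite over $\Frac(R)$ and thus has transcendence degree $d$ over $K^{alg}$. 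Because $K^{alg}$ is faithfully flat over $K$, these residue fields are exactly the function fields of the irreducible components of $V_{K^{alg}}$, so every such component has dimension $d = \dim V$, as required.

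In characteristic $0$ all this degenerates to the classical fact that a $K$-variety whose function field is relatively algebraically closed over $K$ is geometrically irreducible; the real content lies in characteristic $p$, where one must keep the perfect closure of $K$ (which is $\dcl(K)$ in $\ACF_p$) distinct from the separable algebraic closure $K^{sep}$, and this bookkeeping is the only obstacle. If one wishes to avoid ACF, the same argument runs purely algebraically: the hypothesis is equivalent to $K(V) \cap K^{sep} = K$, i.e.\ $K(V)/K$ is primary, i.e.\ $K(V)$ and $K^{sep}$ are linearly disjoint over $K$; then $K(V) \otimes_K K^{sep}$ is a domain, and since $K^{alg}/K^{sep}$ is purely inseparable — so induces a homeomorphism on prime spectra — $\Spec(K(V) \otimes_K K^{alg})$ is irreducible, which together with faithful flatness of $K^{alg}/K$ yields irreducibility of $V_{K^{alg}}$.
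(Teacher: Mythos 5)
Your argument is correct and follows the route the paper implicitly suggests (it states this as a ``Fact'' right after remarking that relative separable closedness of $K$ in $L$ is equivalent to stationarity of $\tp(L/K)$ in a monster model of ACF): you reinterpret geometric irreducibility as uniqueness of the nonforking extension of $\tp(\alpha/K)$ over $K^{alg}$, and stationarity gives exactly that uniqueness once you verify that every component of $V_{K^{alg}}$ is equidimensional and hence yields a nonforking extension. Both the flatness argument for equidimensionality and the purely algebraic alternative via linear disjointness of $K(V)$ from $K^{sep}$ together with the universal homeomorphism along $K^{sep} \subseteq K^{alg}$ are sound, and amount to careful write-ups of what the paper leaves to the reader.
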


\subsection{Dense formulas}
In this section, $T$ continues to be one of ACVF, RCF, or $p$CF.
\begin{definition}
Let $K$ be a model of $T_\forall$.  Let $V$ be a geometrically irreducible affine variety defined over $K$.  Let $\phi(x)$ be a quantifier-free formula with parameters from $K$, defining a subset of $V$ in any/every model of $T$ extending $K$.  Say that $\phi(x)$ is \emph{$V$-dense} if $\rk \phi(\mathfrak{M}) = \dim V$.
Here $\mathfrak{M}$ is a monster model of $T$ extending $K$.
\end{definition}
The choice of $\mathfrak{M}$ is irrelevant by quantifier-elimination in $T$ and by Lemma~\ref{various}(c).

\begin{lemma}\label{tfae-variant}
Let $K$ be a model of $T_\forall$, $L$ be a model of $T$ extending $K$, and $V$ be a geometrically irreducible variety defined over $K$.  For a quantifier-free $K$-formula $\phi(x)$, the following are equivalent:
\begin{description}
\item[(a)] $\phi(x)$ is $V$-dense.
\item[(b)] $\phi(L)$ is Zariski dense in $V(L^{alg})$.
\item[(c)] We can extend the $T_\forall$-structure on $K$ to the function field $K(V)$ in such a way that the generic point of $V$ in $K(V)$ satisfies $\phi(x)$.
\end{description}
\end{lemma}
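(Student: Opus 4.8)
The three conditions all say, in different guises, that $\phi$ is satisfied by a generic point of $V$ over $K$ in a rich enough model of $T$. The plan is to prove $(a)\Leftrightarrow(c)$ directly, and $(a)\Leftrightarrow(b)$ through Lemma~\ref{various}(d). For $(a)\Rightarrow(c)$, fix a monster model $\mathfrak{M}\models T$ extending $K$. By definition of $\rk$, $V$-density of $\phi$ yields some $\alpha\in\phi(\mathfrak{M})$ with $\trdeg(\alpha/K)=\dim V$; since $\phi$ defines a subset of $V$ we have $\alpha\in V(\mathfrak{M})$, and because $V$ is irreducible of dimension $\dim V$, the locus of $\alpha$ over $K$ is all of $V$, so $\alpha$ is a generic point of $V$ and $K(\alpha)\cong_K K(V)$. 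Transporting the $T_\forall$-structure that $\mathfrak{M}$ induces on $K(\alpha)$ along this isomorphism produces the required $T_\forall$-structure on $K(V)$, under which the generic point of $V$ satisfies $\phi$; here one uses quantifier-elimination in $T$, so that satisfaction of $\phi$ is controlled by the quantifier-free type.

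For $(c)\Rightarrow(a)$, embed the given $T_\forall$-structure on $K(V)$ into a monster model $\mathfrak{M}'\models T$ (possible since $T$ is a model companion); the image of the generic point of $V$ is an element of $\phi(\mathfrak{M}')$ of transcendence degree $\dim V$ over $K$, so $\rk_K\phi(\mathfrak{M}')\ge\dim V$, while the reverse inequality is automatic. Since the rank of $\phi$ does not depend on the choice of monster model (Lemma~\ref{various}(c) together with quantifier-elimination), this is exactly $V$-density. For $(a)\Leftrightarrow(b)$, applying Lemma~\ref{various}(d) to $D=\phi(\mathfrak{M})\subseteq V$ shows that $(a)$ is equivalent to $\phi(\mathfrak{M})$ being Zariski dense in $V(\mathfrak{M}^{alg})$, so it remains to transfer Zariski density between the given model $L$ and the monster model $\mathfrak{M}$.

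For that transfer: if $\phi(L)$ is Zariski dense in $V(L^{alg})$, then the partial type over $K$ asserting that $x\models\phi$ and that $x$ lies on no proper $K$-subvariety of $V$ is consistent — otherwise, by compactness, $\phi$ would be contained, in every model of $T$ extending $K$ and in particular in $L$, in a finite union of proper $K$-subvarieties of $V$, contradicting density — and any realization is a generic point of $V$ over $K$ satisfying $\phi$, which gives $(a)$ as in the first two paragraphs. Conversely, if $\phi(L)$ is not Zariski dense, say $\phi(L)\subseteq Z(f)$ for some nonzero $f\in L[V]$, then the universal $L$-sentence $\forall x\,(\phi(x)\to f(x)=0)$ holds; embedding $L$ into a monster model $\mathfrak{M}^{\ast}\models T$ which also contains our fixed $\mathfrak{M}$ as an elementary substructure — amalgamate the $T_\forall$-reducts of $L$ and $\mathfrak{M}$ over $K$ using Corollary~\ref{free-amalgamation}, embed the result into a monster model, and invoke model-completeness of $T$ — we get $\phi(\mathfrak{M}^{\ast})\subseteq Z(f)$, whence $\rk_K\phi(\mathfrak{M}^{\ast})<\dim V$ and therefore $\rk_K\phi(\mathfrak{M})<\dim V$, so $(a)$ fails.

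The algebraic-geometric ingredients — loci of points, genericity, and the fact that the Zariski closure of a set of $L$-points is already defined over $L$ — are routine. I expect the only step requiring real care to be the last one: transferring Zariski density between the \emph{given} model $L$ and the \emph{given} monster model $\mathfrak{M}$, which need not be comparable, so that one must first manufacture a common elementary-enough extension out of Corollary~\ref{free-amalgamation} and the model-completeness of $T$.
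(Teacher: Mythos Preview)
Your proof is correct, and for $(a)\Leftrightarrow(c)$ it coincides with the paper's argument essentially verbatim.

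For $(a)\Leftrightarrow(b)$ you take a longer route than necessary. You fixed a monster $\mathfrak{M}$ extending only $K$, and then had to manufacture a common extension of $L$ and $\mathfrak{M}$ via Corollary~\ref{free-amalgamation} and model-completeness. The paper avoids this entirely by simply choosing the monster to extend $L$ from the outset; this is permitted precisely because, as you yourself note, $V$-density is independent of the choice of monster (Lemma~\ref{various}(c) plus quantifier elimination). With $\mathfrak{M}\supseteq L$, the direction $(b)\Rightarrow(a)$ is exactly your compactness argument, and $(a)\Rightarrow(b)$ is a short dimension count: if $W$ is the Zariski closure of $\phi(L)$ in $V(L^{alg})$, then $W$ is $L$-definable and $\dim V=\operatorname{rk}\phi\le\operatorname{rk} W\le\dim W\le\dim V$, forcing $W=V$ by geometric irreducibility. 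Your amalgamation step is valid but buys nothing here; the ``real care'' you anticipated in the last paragraph is an artifact of having fixed $\mathfrak{M}$ prematurely.
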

\begin{proof}
\begin{description}
\item[(a) $\implies$ (b)]  Suppose $\phi(x)$ is $V$-dense.  Let $W$ be the Zariski closure of $\phi(L)$ in $V(L^{alg})$.  Then $W$ is defined over $L$ rather than $L^{alg}$, because $W$ is the Zariski closure of some $L$-points.  Therefore it makes sense to think of $W$ as a definable set.  If $\mathfrak{M}$ is a monster model of $T$ extending $L$, then $\dim V = \rk \phi(\mathfrak{M}) \le \rk W \le \dim W \le \dim V$.  Therefore $\dim W = \dim V$.  As $V$ is geometrically irreducible, $W = V$.
\item[(b) $\implies$ (a)] Let $\mathfrak{M}$ be a monster model of $T$ extending $L$, and let $n = \dim V$.  If $\phi(x)$ is not $V$-dense, then every element of $\phi(\mathfrak{M})$ has transcendence degree less than $n$ over $K$.  By compactness, $\phi(\mathfrak{M})$ is contained in a finite union of $K$-definable varieties of dimension less than $n$.  We may assume these varieties are closed subvarieties of $V$.  Of course $\phi(L)$ is also contained in this union, which is clearly a Zariski closed proper subset of $V$.  So $\phi(L)$ is not Zariski dense.
\item[(a) $\implies$ (c)] Embed $K$ into a monster model $\mathfrak{M}$.  Let $\alpha$ be a point in $\phi(\mathfrak{M}) \subseteq V(\mathfrak{M})$ with $\trdeg(\alpha/K) = \rk \phi(\mathfrak{M}) = \dim V$.  Then $\alpha$ is a generic point on $V$, i.e., $K(\alpha) \cong K(V)$.  And $\alpha$ satisfies $\phi(x)$.
\item[(c) $\implies$ (a)] Embed $K(V)$ into a monster model $\mathfrak{M}$.  Let $\alpha$ denote the generic point of $V$, so that $\mathfrak{M} \models \phi(\alpha)$ holds.  Clearly $\trdeg(\alpha/K) = \dim V$.  Thus $\rk_K \phi(\mathfrak{M}) \ge \trdeg(\alpha/K) = \dim V$, implying $V$-density of $\phi(x)$. \qedhere
\end{description}
\end{proof}

\begin{lemma}\label{ershov}
Let $L$ be a model of ACVF, and let $V \subseteq \mathbb{A}^n$ be an irreducible affine variety over $L$.  Suppose $0 \in V$.  Let $\mathcal{O}_L^n$ be the closed unit polydisk in $\mathbb{A}^n$.  Then $\mathcal{O}_L^n \cap V$ is Zariski dense in $V$.
\end{lemma}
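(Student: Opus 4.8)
The plan is to translate the density statement through Lemma~\ref{tfae-variant} and then exhibit the required valued-field structure on the function field by standard valuation theory. Let $\phi(x)$ be the quantifier-free $L$-formula asserting that $x$ lies on $V$ and that $\val(x_i) \ge \val(1)$ for each $i$, so that $\mathcal{O}_L^n \cap V$ is exactly $\phi(L)$. Since $L \models \ACVF$ is algebraically closed, $V$ is geometrically irreducible, so Lemma~\ref{tfae-variant} applies with $K = L$ (and with $L$ itself in the role of the model of $T$ extending $K$). By the equivalence of (b) and (c) there, it suffices to extend the valued-field structure of $L$ to $L(V)$ in such a way that the generic point of $V$ has every coordinate in the valuation ring.

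To build such a structure, observe that $0 \in V$ forces the prime ideal $I(V) \subseteq L[x_1, \ldots, x_n]$ to be contained in $(x_1, \ldots, x_n)$, so the origin is an $L$-rational point of $V$; hence the local ring $A := \mathcal{O}_{V,0}$ is a local domain with $\Frac(A) = L(V)$, residue field $A/\mathfrak{m} \cong L$, and maximal ideal $\mathfrak{m}$ containing the images $\alpha_1, \ldots, \alpha_n$ of the coordinate functions $x_1, \ldots, x_n$. Using the classical fact that every local domain is dominated by a valuation ring of its fraction field, choose a valuation ring $R$ of $L(V)$ dominating $A$; then the residue field $\kappa(R)$ contains $A/\mathfrak{m} = L$ and the composite $L \hookrightarrow A \twoheadrightarrow A/\mathfrak{m} = L \hookrightarrow \kappa(R)$ is the identity on $L$. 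Extend the valuation of $L$ to a valuation $w$ on $\kappa(R)$ with valuation ring $\mathcal{O}_w$ (Chevalley's extension theorem), and let $R'$ be the composite valuation ring, namely the preimage of $\mathcal{O}_w$ under $R \twoheadrightarrow \kappa(R)$; this is again a valuation ring of $L(V)$. For $a \in L$ one has $a \in R'$ iff $a \in \mathcal{O}_w$ iff $a \in \mathcal{O}_L$, so $R' \cap L = \mathcal{O}_L$ and $(L(V), R')$ is a valued-field extension of $L$; moreover each $\alpha_i \in \mathfrak{m} \subseteq \mathfrak{m}_R$ maps to $0 \in \mathcal{O}_w$, so $\alpha_i \in R'$. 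Thus the generic point $(\alpha_1, \ldots, \alpha_n)$ of $V$ satisfies $\phi$ in this structure, which is condition (c) of Lemma~\ref{tfae-variant}; condition (b) then yields that $\mathcal{O}_L^n \cap V$ is Zariski dense in $V$.

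I do not expect a genuine obstacle: the only inputs beyond Lemma~\ref{tfae-variant} are the existence of a dominating valuation ring, Chevalley's extension theorem, and the fact that a composite of valuation rings is a valuation ring, all classical. The one point needing care is the identity $R' \cap L = \mathcal{O}_L$, which relies on $R$ genuinely dominating $A$ and on the origin being an $L$-point with residue field exactly $L$; this is exactly where the hypothesis $0 \in V$ is used.
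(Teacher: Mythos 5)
Your proof is correct. Both you and the paper reduce via the implication (c)~$\Rightarrow$~(b) of Lemma~\ref{tfae-variant} to producing an extension of the valuation on $L$ to $L(V)$ under which every coordinate of the generic point is integral, and both ultimately invoke the existence of a valuation ring dominating a given local ring. The constructions differ, however, in how the extension is assembled. The paper works inside the single domain $\mathcal{O}_L[\alpha]$: the evaluation map at $0$ gives an $\mathcal{O}_L$-algebra map $f:\mathcal{O}_L[\alpha]\to\mathcal{O}_L$, and a valuation ring dominating the localization at $\mathfrak{p}=f^{-1}(\mathfrak{m})$ simultaneously forces $\mathfrak{m}$ and the coordinates of $\alpha$ to have positive value while making $\mathcal{O}_L^\times$ have value zero, so it automatically restricts to the given valuation on $L$. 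You instead split the construction into two classical steps: first take a valuation ring $R$ of $L(V)$ dominating the local ring $\mathcal{O}_{V,0}$ (which gives an $L$-place of $L(V)$ whose residue field contains $L$, but which is trivial on $L$ itself), then extend $v_L$ to the residue field $\kappa(R)$ via Chevalley and pass to the composite valuation ring $R'$. Your verification that $R'\cap L=\mathcal{O}_L$ and that each $\alpha_i\in R'$ is correct, and you rightly flag that the hypothesis $0\in V$ is what makes the residue field of $\mathcal{O}_{V,0}$ equal to $L$ so that the composition works. The paper's one-step construction is a bit more economical since it never needs to form a composite valuation or appeal to Chevalley's theorem explicitly, but your two-step version uses arguably more familiar ingredients (places, composite valuations) and makes the role of the hypothesis $0\in V$ transparent.
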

This Lemma is essentially Lemma 1.1 in \cite{Ducros}, but we will give give a more elementary proof based on the proof of Proposition 4.2.1 in \cite{Ershov}.
\begin{proof}Let $L(\alpha)$ be the function field of $V$, obtained by adding a generic point $\alpha$ of $V$ to the field $L$.  By the implication (c) $\implies$ (b) of Lemma~\ref{tfae-variant} applied in the case where $\phi(x)$ is the formula defining $\mathcal{O}_L^n \cap V$,
it suffices to extend the valuation on $L$ to $L(\alpha)$ in such a way that every coordinate of $\alpha$ has nonnegative valuation.

Now $L[\alpha]$ is the coordinate ring of $V$, so the fact that $0 \in V$ implies that there is an $L$-algebra homomorphism $L[\alpha] \to L$ sending every coordinate of $\alpha$ to zero.  This yields an $\mathcal{O}_L$-algebra homomorphism $f : \mathcal{O}_L[\alpha] \to \mathcal{O}_L$ sending every coordinate of $\alpha$ to $0$.  Let $\mathfrak{m}$ be the maximal ideal of $\mathcal{O}_L$, and let $\mathfrak{p} = f^{-1}(\mathfrak{m})$.  Then $\mathfrak{p}$ is a prime ideal, and $\mathfrak{p} \cap \mathcal{O}_L = \mathfrak{m}$.  Also, as $f$ kills the coordinates of $\alpha$, the coordinates of $\alpha$ live in $\mathfrak{p}$.

Since $\mathcal{O}_L[\alpha]$ is a domain, there is a valuation $v'$ on $L(\alpha)$, the fraction field of $\mathcal{O}_L[\alpha]$, with the following properties:
\begin{itemize}
\item Every element of $\mathfrak{p}$ has positive valuation.  In particular, the elements of $\mathfrak{m}$ and the coordinates of $\alpha$ have positive valuation.
\item Every element of $\mathcal{O}_L[\alpha] \setminus \mathfrak{p}$ has valuation zero.  In particular, the elements of $\mathcal{O}_L^\times = \mathcal{O}_L \setminus \mathfrak{m}$ have valuation zero.
\end{itemize}
(Indeed, it is a general fact that if $S$ is a domain and $\mathfrak{p}$ is a prime ideal, then there is a valuation on the fraction field of $S$ which assigns a positive valuation to elements of $\mathfrak{p}$ and a vanishing valuation to elements of $S \setminus \mathfrak{p}$.  To find such a valuation, take a valuation ring in $\Frac(S)$ dominating the local ring $S_\mathfrak{p}$.)

The resulting valuation on $L(\alpha)$ extends the valuation on $L$, because it assigns positive valuation to elements in $\mathfrak{m}$, and zero valuation to elements in $\mathcal{O}_L \setminus \mathfrak{m}$.  Also, the valuation of any coordinate of $\alpha$ is positive, hence non-negative, so $\alpha$ lives in the closed unit polydisk.
\end{proof}

\begin{lemma}\label{tfae-smoothness}
Let $V$ be a geometrically irreducible affine variety over $K \models T_\forall$, and let $\phi(x)$ be a quantifier-free $K$-formula.  Let $L$ be a model of $T$ extending $K$.  Suppose $\phi(x)$ defines an open subset of $V(L)$.
\begin{description}
\item[(a)] If $T$ is ACVF, then $\phi(x)$ is $V$-dense if and only if $\phi(L)$ is non-empty.
\item[(b)] In general, $\phi(x)$ is $V$-dense if $\phi(L)$ contains a smooth point of $V$.
\end{description}
\end{lemma}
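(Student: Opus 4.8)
The plan is to reduce both parts to the equivalence (a)$\Leftrightarrow$(b) of Lemma~\ref{tfae-variant}: since $V$ is geometrically irreducible over $K$, $L\models T$ extends $K$, and $\phi$ is a quantifier-free $K$-formula, $\phi$ is $V$-dense if and only if $\phi(L)$ is Zariski dense in $V(L^{alg})$. So in each case it suffices to exhibit, inside $\phi(L)$, a topologically open neighbourhood of some point of $V(L)$ which is already Zariski dense in $V(L^{alg})$.

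For (a) the ``only if'' direction is immediate: if $\phi$ is $V$-dense then $\phi(L)$ is Zariski dense in $V(L^{alg})$ by Lemma~\ref{tfae-variant}, hence non-empty. For the ``if'' direction, fix $a\in\phi(L)$. As $\phi$ defines an open subset of $V(L)$, there is $\gamma$ in the value group of $L$ with
\[ \{\,x\in V(L) : \val(x_i-a_i)\ge\gamma \text{ for } 1\le i\le n\,\}\ \subseteq\ \phi(L). \]
Pick $c\in L^\times$ with $\val(c)=\gamma$ and let $\tau$ be the $L$-defined affine automorphism $x\mapsto(x-a)/c$ of $\Aa^n$. Then $W:=\tau(V)$ is an irreducible $L$-variety containing $0$, and $\tau$ carries the displayed polydisk onto $\Oo_L^n\cap W(L)$, which is Zariski dense in $W(L^{alg})$ by Lemma~\ref{ershov}. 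Applying $\tau^{-1}$, the polydisk, and therefore $\phi(L)$, is Zariski dense in $V(L^{alg})$, so $\phi$ is $V$-dense.

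For (b) the case $T=\ACVF$ follows from (a), since a set containing a smooth point is in particular non-empty. Suppose then that $T$ is $\RCF$ or $\pCF$, fix a smooth point $a\in\phi(L)$ of $V$, and set $d:=\dim V$. The tangent space $T_aV$ is a $d$-dimensional subspace of $L^n$, so for a suitable coordinate projection $\pi\colon\Aa^n\to\Aa^d$ the differential $d\pi_a\colon T_aV\to L^d$ is an isomorphism; then $\pi$ is étale at $a$ (a morphism between smooth varieties of the same dimension that is an isomorphism on tangent spaces at a point is étale there), hence étale on a Zariski-open neighbourhood $W\ni a$ in the smooth locus of $V$. Now I invoke the fact that over a real closed or $p$-adically closed field an étale morphism is a local homeomorphism on $L$-points for the order (resp.\ valuation) topology --- this is the implicit function theorem for $\RCF$, and Hensel's lemma / the $p$-adic implicit function theorem for $\pCF$. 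So there is an open neighbourhood $U$ of $a$ in $W(L)\subseteq V(L)$ that $\pi$ maps homeomorphically onto an open neighbourhood of $\pi(a)$ in $L^d$; shrinking $U$, we may assume $U\subseteq\phi(L)$.

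It remains to see that $U$ is Zariski dense in $V(L^{alg})$. Its image $\pi(U)$ is a non-empty open subset of $L^d$, hence Zariski dense in $\Aa^d(L^{alg})$ (non-empty polydisks, resp.\ open boxes, are Zariski dense, as in the proof of Lemma~\ref{various}(a)). Thus the Zariski closure $Z$ of $U$ in $V$ maps dominantly onto $\Aa^d$ under $\pi$, so $\dim Z\ge d$; as $V$ is irreducible of dimension $d$, this forces $Z=V$. Hence $\phi(L)\supseteq U$ is Zariski dense in $V(L^{alg})$, and $V$-density follows from Lemma~\ref{tfae-variant}. The one non-formal ingredient is the local-homeomorphism property of étale maps over $\RCF$ and $\pCF$; note there is no positive-characteristic subtlety here, since those theories have characteristic zero and the $\ACVF$ case of (b) was handled through (a).
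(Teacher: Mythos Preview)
Your proof is correct and follows essentially the same strategy as the paper: part (a) reduces to Lemma~\ref{ershov} via an affine change of coordinates, and part (b) uses the implicit function theorem at a smooth point to get a local homeomorphism onto an open subset of affine $d$-space, from which density follows. One small difference worth noting: in (b) you select a coordinate projection $\pi$ that is an isomorphism on $T_aV$ and invoke \'etaleness, whereas the paper applies an $L$-linear change of coordinates (justified via Hilbert's Theorem 90) to make $T_aV$ horizontal before using the implicit function theorem; your route is slightly more direct and sidesteps the appeal to Hilbert 90, while the paper concludes via the rank criterion of Lemma~\ref{various} rather than Zariski density and Lemma~\ref{tfae-variant} as you do.
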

\begin{proof}
\begin{description}
\item[(a)] If $\phi(x)$ is $V$-dense, then certainly $\phi(L)$ is non-empty.  Conversely, suppose $\phi(L)$ is non-empty.  Let $p$ be a point in $\phi(L)$ and let $U$ be an open neighborhood of $p$, with $U \cap V \subseteq \phi(L)$.  There is some $L$-definable affine transformation $f$ which sends $p$ to the origin and moves $U$ so as to contain the closed unit polydisk.  Then $f(U \cap V) = f(U) \cap f(V)$ is Zariski dense in $f(V)$, by Lemma~\ref{ershov}.  So $\phi(L) \supseteq U \cap V$ is Zariski dense in $V$.  Thus $\phi(x)$ is $V$-dense, by Lemma~\ref{tfae-variant}.
\item[(b)] If $\phi(x)$ is $V$-dense, then $\phi(L)$ contains a smooth point of $V$, because the smooth locus of $V$ is a Zariski dense Zariski open.  Conversely, suppose $\phi(L)$ contains a smooth point $p$.
Note that $L$ is perfect.  The tangent space $T_p V$ is $L$-definable.  By Hilbert's Theorem 90, there is an $L$-definable basis of $T_p V$.  Therefore, after applying an $L$-definable change of coordinates, we may assume $T_p V$ is horizontal.  By the implicit function theorem, $V$ then looks locally around $p$ like the graph of a function.  In particular, the coordinate projection maps a neighborhood of $p$ homeomorphically to an open subset of affine $n$-space, where $n = \dim V$.  By Lemma~\ref{various}, this ensures that any neighborhood of $p$, such as $\phi(L)$, has rank at least $n$.  So $\phi(x)$ is $V$-dense. \qedhere
\end{description}
\end{proof}

\begin{remark}\label{hilbert90explanation}
  Here, and in Lemma~\ref{jacobian} below, we are using the
  \emph{model-theoretic} version of Hilbert's theorem 90.  This folk
  theorem says that if $M \models \ACF$, if $K$ is a perfect subfield
  of $M$, and if $V$ is a $K$-definable $M$-vector space, then $V$ admits
  a $K$-definable basis.  It is an easy exercise to derive this fact
  from standard Galois descent of vector spaces, which is part of
  Grothendieck's modern generalization of Hilbert's original Theorem
  90. (See III.4.10 in \cite{etalemilne}.)
\end{remark}

\begin{lemma}\label{whoops}
Let $V$ be a geometrically irreducible affine variety over $K \models
T_\forall$, and let $\phi(x)$ be a quantifier-free $K$-formula that is
$V$-dense.  Then there is a quantifier-free $K$-formula $\psi(x)$ that
is also $V$-dense, such that in any/every $L \models T$ extending $K$,
$\psi(L)$ is a topologically open subset of $V(L)$, and $\psi(L)
\subseteq \phi(L)$.
\end{lemma}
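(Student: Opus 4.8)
The plan is to expand $\phi$ using Fact~\ref{the-fact} into finitely many pieces, each of the form ``open set $\cap$ subvariety'', and to observe that $V$-density must already be witnessed by one such piece, whose subvariety part is then forced to be all of $V$.  Work in a monster model $\mathfrak{M} \models T$ extending $K$.  By Fact~\ref{the-fact} and quantifier elimination, after writing a quantifier-free $K$-formula for $\phi(x) \wedge (x \in V)$ in disjunctive normal form and collecting literals, we obtain
\[ \phi(x) \wedge (x \in V) \quad\longleftrightarrow\quad \bigvee_{j = 1}^m \bigl( \theta_j(x) \wedge \sigma_j(x) \wedge (x \in V) \bigr), \]
where each $\theta_j$ is a quantifier-free $K$-formula defining a topologically open subset $U_j \subseteq \mathfrak{M}^n$ (the ``open'' literals: strict valuation inequalities, disequations, and so on), and each $\sigma_j$ defines an affine subvariety, so that $\sigma_j(x) \wedge (x \in V)$ defines a closed subvariety $W_j$ of $V$ over $K$.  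Thus $\phi(\mathfrak{M}) \cap V(\mathfrak{M}) = \bigcup_{j=1}^m \bigl( U_j \cap W_j(\mathfrak{M}) \bigr)$.

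Since the rank of a finite union is the maximum of the ranks, $V$-density of $\phi$ yields an index $j_0$ with $\rk_K\bigl(U_{j_0} \cap W_{j_0}(\mathfrak{M})\bigr) = \dim V$.  As this set is contained in $W_{j_0}(\mathfrak{M}) \subseteq V(\mathfrak{M})$, Lemma~\ref{various} forces $\dim W_{j_0} \ge \dim V$; since $V$ is geometrically irreducible and $W_{j_0}$ a closed subvariety, its top-dimensional component must be all of $V$, so $W_{j_0} = V$ (an equality of schemes, as $V$ is reduced).  Hence $U_{j_0} \cap W_{j_0}(\mathfrak{M}) = U_{j_0} \cap V(\mathfrak{M})$.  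Take $\psi(x)$ to be the quantifier-free $K$-formula $(x \in V) \wedge \theta_{j_0}(x)$.  Then $\psi(\mathfrak{M}) = U_{j_0} \cap V(\mathfrak{M})$ is relatively open in $V(\mathfrak{M})$, has rank $\dim V$ (so $\psi$ is $V$-dense), and is contained in $\phi(\mathfrak{M})$ because it coincides with the $j_0$-th disjunct.

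It remains to check that these properties hold in an arbitrary $L \models T$ extending $K$, not merely in $\mathfrak{M}$.  That $\psi$ is $V$-dense is model-independent by the remark following the definition of $V$-density; ``$\theta_{j_0}$ defines an open set'', and hence ``$\psi(L)$ is open in $V(L)$'', transfers because openness of a definable family is definable in each of ACVF, RCF, and $p$CF.  For the inclusion $\psi(L) \subseteq \phi(L)$: since $W_{j_0} = V$ is an identity of $K$-schemes, $\psi(x) \to \phi(x)$ is equivalent over $K$ to a universal sentence with quantifier-free matrix over $K$, and such a sentence, holding in $\mathfrak{M}$, holds in every model of $T$ extending $K$ --- for RCF and $p$CF by completeness, and for ACVF because the characteristic and residue characteristic of any such model are determined by $K$, so all of them satisfy one completion of ACVF, whose theory over $K$ is complete by quantifier elimination.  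The only substantive step is the combination of the Fact~\ref{the-fact} decomposition with the rank bound; the point needing care is the inference from ``some piece attains rank $\dim V$'' to ``its subvariety part equals $V$'', which uses that a variety has rank equal to its dimension and that a closed subvariety of an irreducible variety of the same dimension is the whole variety (both routine via Lemma~\ref{various}).  I do not anticipate a serious obstacle.
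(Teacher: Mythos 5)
Your proposal is correct and rests on the same engine as the paper's proof: decompose via Fact~\ref{the-fact} into pieces of the form (open)~$\cap$~(variety), observe that a full-rank piece has variety part containing~$V$ (since a generic point of $V$ lies in it, or equivalently since a closed subvariety of the irreducible $V$ of the same dimension is $V$), and take the open part restricted to $V$. The only real difference is cosmetic: the paper defines $\psi$ canonically as the relative interior of $\phi(\mathfrak{M})$ in $V(\mathfrak{M})$ (quantifier-free over $K$ by QE) and then uses the decomposition to check $V$-density, while you select a single full-rank disjunct and take its open factor as $\psi$; your version also spells out the transfer to arbitrary $L \models T$ extending $K$, which the paper leaves implicit in QE.
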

\begin{proof}
Choose some monster model $\mathfrak{M} \models T$ extending $K$ and let $\psi(\mathfrak{M})$ pick out the topological interior of $\phi(\mathfrak{M})$ inside $V(\mathfrak{M})$.  By quantifier-elimination, we can take $\psi(x)$ to be quantifier-free with parameters from $K$.  It remains to show that $\psi(\mathfrak{M})$ is $V$-dense.  Let $\alpha \in \phi(\mathfrak{M})$ have transcendence degree $n$ over $K$, where $n = \dim V$.  By Fact~\ref{the-fact}, $\phi(\mathfrak{M})$ can be written as a finite union of finite intersections of $K$-definable opens and varieties.  Let $X$ be one of these finite intersections, containing $\alpha$.  So $X = W \cap U$ for some $K$-variety $W$ and some $K$-definable open $U$.  As $\alpha \in W$ and $\alpha$ is a generic point on $V$, we must have $V \subseteq W$.  Then
\[ \alpha \in V \cap U \subseteq W \cap U \subseteq \phi(\mathfrak{M}).\]
But $V \cap U$ is a relative open in $V(\mathfrak{M})$, so it must be part of $\psi(\mathfrak{M})$.  In particular, $\alpha \in \psi(\mathfrak{M})$.  As $\trdeg(\alpha/K) = n$, we conclude that $\psi(x)$ is $V$-dense.
\end{proof}

\subsection{Forking and Dividing}
We continue to work in one of ACVF, RCF, or $p$CF.  Recall that RCF and $p$CF have definable Skolem functions in the home sort.  Thus if $S$ is a subset of the home sort, then $\acl(S) = \dcl(S)$ is a model.  In ACVF, $\acl(S)$ is the algebraic closure of $S$, which is a model unless $\acl(S)$ is trivially valued.

We will always be working in the home sort, rather than working with imaginaries.

\begin{lemma}\label{fork-div-1}
Let $S$ be a set (in the home sort) and let $\phi(x;b)$ be a formula.  Then $\phi(x;b)$ forks over $S$ if and only if it divides over $S$.
\end{lemma}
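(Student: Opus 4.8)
The standard route is to show that every set in the home sort is an extension base, i.e. that types over such sets do not fork over themselves; by the general theory of Chernikov–Kaplan \cite{CK}, forking and dividing then coincide over all such sets in any NTP$_2$ theory, and ACVF, RCF, $p$CF are all NIP hence NTP$_2$. So the plan is: given $S$ in the home sort and a complete type $p(x) \in S(S)$, I want to show $p$ does not fork over $S$, which it suffices to do by exhibiting a single global extension $q \supseteq p$ that does not divide over $S$. The natural candidate is the "algebraically independent" extension: realize $p$ by a tuple $\alpha$ with $\trdeg(\alpha/\mathfrak{M}) = \trdeg(\alpha/S)$, which is possible by Corollary \ref{extend-types-independently} together with compactness. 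Let $q = \tp(\alpha/\mathfrak{M})$ for such an $\alpha$ chosen to be independent (in the algebraic sense) from everything; one should check $q$ is well-defined and $S$-invariant, which follows from Fact \ref{cb-stationarity-fact} / Fact \ref{cb-stationarity-variety} after replacing $S$ by $\acl(S)$ (which is relatively separably closed in any extension, and in the RCF/$p$CF cases is already a model).

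The heart of the argument is then that $q$ does not divide over $S$. Equivalently, for any formula $\phi(x;b) \in q$ and any $S$-indiscernible sequence $(b_i)_{i<\omega}$ with $b_0 = b$, the set $\{\phi(x;b_i) : i < \omega\}$ is consistent. Here I use that $q$ is a "generic" type in the rank sense: a formula $\phi(x;b)$ lies in $q$ iff, writing $V$ for the $S(b)$-variety whose generic point is a realization $\alpha \models q$, the formula cuts out a $V$-dense set — this is exactly the content of Lemmas \ref{tfae-variant} and \ref{tfae-smoothness}. By Lemma \ref{whoops} I may assume each $\phi(x;b_i)$ defines a (relatively) topologically open subset of the appropriate variety. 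Then I want to intersect the opens $\phi(L;b_i)$ and show the intersection is still $V$-dense, hence non-empty; the key geometric input is that $V$-density of an open set is a local condition detectable on a coordinate projection (Lemma \ref{various}, Lemma \ref{tfae-smoothness}), and indiscernibility of $(b_i)$ forces the varieties $V_i$ to all contain the same generic locus, so the opens are "compatible" near the generic point of $V$.

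The main obstacle I anticipate is precisely controlling the interaction between the indiscernible sequence and the Zariski geometry: showing that a finite intersection $\bigcap_{i<k} \phi(\mathfrak{M}; b_i)$ remains $V$-dense rather than dropping in rank. The clean way to handle this is to pass to $\acl(S)$ and use stationarity (Fact \ref{cb-stationarity-fact}): because $\tp(\alpha/\acl(S))$ is stationary and $\alpha$ is algebraically independent from each $b_i$ over $\acl(S)$, the type $\tp(\alpha b_i / \acl(S))$ is determined, so in particular whether $\phi(x;b_i)$ lies in the nonforking extension is determined uniformly in $i$; by indiscernibility it then holds for all $i$ simultaneously, and finite consistency follows by a compactness/rank-additivity argument as in the proof of Corollary \ref{extend-types-independently}. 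The positive-characteristic subtlety — that $\acl(S)$ need only be relatively *separably* closed, not algebraically closed — is exactly why the excerpt set up Definition \ref{nearly} and Facts \ref{cb-stationarity-fact}–\ref{cb-stationarity-variety} in that generality, so those facts do the needed work with no extra effort.
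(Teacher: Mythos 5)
Your proposal takes a fundamentally different — and far longer — route than the paper, and as written it has a gap at the point where you claim your candidate type $q$ is $S$-invariant. The paper's own proof is a two-line reduction: forking and dividing over $S$ are the same as over $\acl(S)$ (indiscernibility over $S$ equals indiscernibility over $\acl(S)$), and then one simply observes that $\acl(S)$ is a model in RCF and $p$CF (definable Skolem functions), so Chernikov–Kaplan's Theorem~1.1 applies directly, while for ACVF their Corollary~1.3 already says forking equals dividing over arbitrary sets. No global nonforking extension needs to be constructed by hand.

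The gap in your argument: you take $\alpha \models p$ with $\trdeg(\alpha/\mathfrak{M}) = \trdeg(\alpha/S)$ and set $q = \tp(\alpha/\mathfrak{M})$, then claim invariance follows from Facts~\ref{cb-stationarity-fact} and \ref{cb-stationarity-variety}. But those facts concern \emph{pure fields}: they say the amalgam $\mathfrak{M}(\alpha)$ is determined as a field, i.e., $\qftp^{\ACF}(\alpha/\mathfrak{M})$ is stationary. They say nothing about the $\mathcal{L}$-structure (valuation or ordering) on $\mathfrak{M}(\alpha)$, and indeed there are many extensions. Concretely, in RCF with $S = \acl(S)$ and $\alpha$ a singleton transcendental over $\mathfrak{M}$, the condition $\trdeg(\alpha/\mathfrak{M}) = 1$ only tells you $\alpha$ sits in some cut of $\mathfrak{M}$; most such cuts are \emph{not} $\Aut(\mathfrak{M}/S)$-invariant, and only very special ones (e.g.\ the type at $+\infty$, or a coheir of $p$ finitely satisfiable in the model $\acl(S)$) are. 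Your $V$-density machinery (Lemmas~\ref{tfae-variant}, \ref{tfae-smoothness}, \ref{whoops}) controls rank, not the finer topological data, so it cannot rescue the claim. To repair the argument you would need to specify the global type much more precisely — e.g.\ take a coheir over the model $\acl(S)$ — at which point you have reconstructed the standard proof that models are extension bases, which is precisely what Theorem~1.1 of \cite{CK} already packages for you. The ideas you invoke (Keisler measures, invariant quantifier-free types, the algebraically-independent extension) are genuinely used in this paper, but for the harder multi-valuation theory $T$ (Lemmas~\ref{apply-probabilities}, \ref{qftp-extender}, Corollary~\ref{every-extension}), not for this single-$T_i$ lemma where a citation suffices.
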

\begin{proof}
Indiscernibility over $S$ is the same thing as indiscernibility over $\acl(S)$, so $\phi(x;b)$ divides over $S$ if and only if it divides over $\acl(S)$.  Similarly, $\phi(x;b)$ forks over $S$ if and only if it forks over $\acl(S)$.  So we may assme $S = \acl(S)$.  If $T$ is RCF or $p$CF, then $S$ is a model, and therefore forking and dividing agree over $S$ by Theorem 1.1 of \cite{CK}.  If $T$ is ACVF, then forking and dividing agree over all sets, by Corollary 1.3 in \cite{CK}.
\end{proof}

We use $\forkindep$ to denote non-forking or non-dividing, and $\forkindep^{\ACF}$ to denote algebraic independence.

\begin{lemma}\label{stupid}
Let $\mathfrak{M}$ be a monster model of $T$, and let $B, C$ be small subsets of $\mathfrak{M}$, with $B$ finite. Then we can find a sequence $B_0, B_1, B_2, \ldots$ in $\mathfrak{M}$ that is $C$-indiscernible, such that $B_0 = B$ and $B_i \forkindep_C^{\ACF} B_{< i}$ for every $i$.
\end{lemma}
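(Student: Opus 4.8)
The plan is to build the $C$-indiscernible sequence by Ramsey/compactness from a long sequence of ``mutually algebraically independent'' copies of $B$ over $C$, and then check that the indiscernible sequence extracted from it retains the algebraic-independence property. First I would, using Corollary~\ref{extend-types-independently} repeatedly (or rather its iterated form via compactness), construct a sequence $(B_i')_{i < \omega}$ with each $B_i' \equiv_C B$ and with $B_i' \forkindep^{\ACF}_C B_{<i}'$; concretely, one extends the type of $B$ over $C$ to the growing parameter set $C B_{<i}'$ while keeping transcendence degree over $C$ unchanged, exactly as in Corollary~\ref{free-amalgamation}. Then I would extract a $C$-indiscernible sequence $(B_i)_{i<\omega}$ locally based on $(B_i')_{i<\omega}$ in the standard Ramsey sense: every finite-length pattern realized by an increasing tuple from $(B_i)$ is also realized by some increasing tuple from $(B_i')$. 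Since $B_0$ has the same type over $C$ as $B$, after composing with an automorphism we may assume $B_0 = B$.

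The key point to verify is that the extracted indiscernible sequence still satisfies $B_i \forkindep^{\ACF}_C B_{<i}$, i.e. that algebraic independence over $C$ is preserved under the Ramsey extraction. The clean way to see this is to rephrase the condition ``$B_i \forkindep^{\ACF}_C B_{<i}$'' in a way that is visible to the type of a finite increasing tuple. Here I would use Lemma~\ref{various}: for a fixed finite tuple $b_0 \cdots b_k$ enumerating $B_0 \cdots B_k$, the statement $\trdeg(b_k / C b_{<k}) = \trdeg(b_k/C)$ is detected by which $C b_{<k}$-definable varieties (of each dimension) the tuple $b_k$ lies on, and — crucially — by part (c) of Lemma~\ref{various}, rank does not depend on the base field and is definable in families, so ``$b_k$ lies on no $Cb_{<k}$-variety of dimension $< \trdeg(b_k/C)$'' is a property of $\tp(b_{\le k}/C)$. (One packages this: let $n = \trdeg(B/C)$, which is a fixed number since all $B_i'\equiv_C B$; then $B_i \forkindep^{\ACF}_C B_{<i}$ is equivalent to $\trdeg(b_i/Cb_{<i}) = n$, and non-membership in lower-dimensional varieties over $Cb_{<i}$ is a type-definable-over-$C$ condition on the finite tuple $b_{\le i}$, with rank computed uniformly.) Since each initial segment $B_0'\cdots B_k'$ of the original sequence satisfies this condition, and the extracted sequence realizes only patterns from the original, every initial segment $B_0\cdots B_k$ satisfies it too; that is exactly $B_i \forkindep^{\ACF}_C B_{<i}$ for all $i$.

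The main obstacle is this last transfer: one must be careful that ``algebraic independence over $C$'' is genuinely a condition on the type over $C$ of the relevant finite tuple, and not something that secretly requires the algebraic closure $\acl(C)$ or an ambient ACF-monster as extra parameters. This is handled precisely by Lemma~\ref{various}(c) (rank is base-independent and definable in families) together with Fact~\ref{the-fact} / Corollary~\ref{extend-types-independently}, which together let us express ``$\trdeg(b_i/Cb_{<i})$ is as large as possible'' via a partial type over $C$ asserting non-membership in a definable family of lower-dimensional varieties. Once that is in hand, the Ramsey extraction automatically preserves it, and a final automorphism arranges $B_0 = B$, completing the proof.
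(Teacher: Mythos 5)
Your proof is correct and follows essentially the same route as the paper's: build an algebraically independent sequence of realizations of $\tp(B/C)$ via Corollary~\ref{extend-types-independently}, extract a $C$-indiscernible sequence modeled on it, argue that algebraic independence survives extraction because it is encoded in the EM-type over $C$, and finish with an automorphism to arrange $B_0 = B$. The only difference is in how the transfer step is justified: you invoke Lemma~\ref{various}(c) to express ``transcendence degree $\geq n$ over $Cb_{<i}$'' via non-membership in a $C$-definable family of lower-dimensional varieties, whereas the paper first reorders $B$ so that a transcendence basis sits in the first $k$ coordinates and then uses the more elementary observation that algebraic independence of a sequence of \emph{singletons} over $C$ is a partial type over $C$ (non-vanishing of $C$-polynomials), which makes the EM-type membership immediate without appealing to definability of rank.
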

\begin{proof}
We may assume that $B$ is ordered as a tuple in such a way that the first $k$ elements of $B$ are a transcendence basis of $B$ over $C$.
Construct a sequence $D_0, D_1, \ldots$ of realizations of $\tp(B/C)$ such that $D_i \forkindep_C^{\ACF} D_{< i}$ for every $i$.  This is possible by using Corollary~\ref{extend-types-independently} to extend $\tp(B/C)$ to a type over $CD_{< i}$ having the same transcendence degree over $CD_{< i}$ as over $C$.
Let $B_0, B_1, B_2, \ldots$ be a $C$-indiscernible sequence modeled on $D_0, D_1, \ldots$.  Let $\pi(X)$ pick out the first $k$ elements of a tuple $X$.  Then $\pi(D_0)\concatA \pi(D_1)\concatA \pi(D_2)\concatA \cdots$ is an algebraically independent sequence of singletons over $C$.  This is part of the EM-type of the $D_i$ over $C$, so it is also true that $\pi(B_0)\concatA \pi(B_1)\concatA \pi(B_2)\concatA \cdots$ is an algebraically independent sequence of singletons over $C$.  Since $D_i \equiv_C B$ for every $i$, we also have $B_i \equiv_C B$ for every $i$.  Thus $\pi(B_i)$ is a transcendence basis for $B_i$ over $C$, and we conclude that $B_i \forkindep^{\ACF}_C B_{< i}$ for every $i$.  Finally, moving the $B_i$ by an automorphism over $C$, we may assume that $B_0 = B$.
\end{proof}

\begin{lemma}\label{stronger-than-acf}
$A \forkindep_C B$ implies $A \forkindep^{\ACF}_C B$.
\end{lemma}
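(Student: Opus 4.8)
The plan is to show the contrapositive: if $A \not\forkindep^\ACF_C B$, then $A \not\forkindep_C B$, i.e. some formula in $\tp(A/BC)$ divides over $C$. So suppose $A$ is not algebraically independent from $B$ over $C$, witnessed by a finite tuple $a \in \acl(AC)$ — in fact we may take $a \in A$ after closing off — such that $a$ is algebraic over $BC$ but $\trdeg(a/BC) < \trdeg(a/C)$. Replacing $B$ by a finite subtuple, we may assume $B$ is finite. The formula $\phi(x;B)$ I want to divide is the algebraic relation witnessing $a \in \acl(BC)$: namely a polynomial equation $p(x, B) = 0$ with coefficients in $C$ (or with $B$ as parameters) that $a$ satisfies and that cuts down the transcendence degree; concretely, $\phi(x;B)$ says ``$x$ lies on the variety $W$ cut out over $BC$ by the minimal polynomial relations of $a$ over $BC$,'' and $\dim_C W(x) \le \trdeg(a/BC) < \trdeg(a/C)$. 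Here I should be a little careful and instead use a formula asserting membership in a $BC$-definable variety of dimension $< \trdeg(a/C)$; this formula belongs to $\tp(a/BC) \subseteq \tp(A/BC)$.

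Now I apply Lemma~\ref{stupid} to the finite tuple $B$ over $C$: we obtain a $C$-indiscernible sequence $B = B_0, B_1, B_2, \ldots$ with $B_i \forkindep^\ACF_C B_{<i}$ for all $i$. I claim $\{\phi(x;B_i) : i \in \omega\}$ is inconsistent, which shows $\phi(x;B)$ divides over $C$. Suppose for contradiction that $a'$ realizes all the $\phi(x;B_i)$. Then for each $i$, $a'$ lies in a $B_iC$-definable variety of dimension $\le m := \trdeg(a/BC)$, so $\trdeg(a'/B_iC) \le m$ for every $i$. On the other hand $\trdeg(a'/C) \le \trdeg(a/C)$ need not hold a priori — but the key point is to compare $\trdeg(a'/C)$ against the independence of the $B_i$'s. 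Since the $B_i$ are mutually algebraically independent over $C$, and each $B_i$ ``captures'' at least $\trdeg(a/C) - m \ge 1$ units of algebraic dependence with $a'$, adding sufficiently many $B_i$ forces $\trdeg(a'/C) = \trdeg(a' B_{<k}/C) - \trdeg(B_{<k}/C)$ to become negative, a contradiction. Making this precise is the main obstacle.

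To make the counting rigorous, I would argue as follows. Let $d = \trdeg(a/C)$ and $m = \trdeg(a/BC)$, so $d > m$, i.e. $\trdeg(a/C) - \trdeg(a/B_0C) \ge 1$. The number $\trdeg(a'/C)$ is some fixed natural number $N$ (bounded by the length of the tuple $x$). For each $i$, since $B_i \forkindep^\ACF_C B_{<i}$ and the $\phi(x;B_i)$ make $a'$ dependent on $B_i$ over $C$: I want $\trdeg(a'/B_{<i}C) - \trdeg(a'/B_{\le i}C) \ge 1$ whenever $a'$ is still ``generic enough'' over $B_{<i}C$ that $B_i$ adds a new constraint. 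The cleanest route: use that $B_i \forkindep^\ACF_{C} B_{<i}a'$ can fail, and instead track $e_i := \trdeg(a'/B_{<i}C)$, a weakly decreasing sequence of non-negative integers; one shows $e_{i+1} < e_i$ as long as $e_i > m$, using that $\phi(x;B_i)$ forces $\trdeg(a'/B_iC) \le m < e_i$ together with the algebraic independence $B_i \forkindep^\ACF_C B_{<i}$ (via the submodularity/exchange identity $\trdeg(a'/B_{\le i}C) \ge \trdeg(a'/B_iC) + \trdeg(B_{<i}/B_iC) - \trdeg(B_{<i}/C) + \ldots$ — essentially the additivity of transcendence degree in a free amalgam). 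Once $e_i \le m$ this stabilizes, but we only needed the strict drop finitely often to reach a contradiction with $e_i \ge 0$ — wait, that is not a contradiction by itself.

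So the final ingredient is to squeeze out genuinely that $e_i$ must drop below its lower bound. Here I invoke the $C$-indiscernibility of the sequence together with the fact that the constraint cuts transcendence degree by a \emph{positive} amount \emph{that is realized}: among all realizations of $\tp(a'/B_0\cdots B_k C)$, the value $\trdeg(a'/B_0C)$ is $\le m$; but were $a'$ chosen with $\trdeg(a'/C)$ maximal among common realizations, indiscernibility of $(B_i)$ would force $\trdeg(a'/B_iC) = \trdeg(a'/C) - (d - m)$-type bookkeeping to be inconsistent for $k$ large. Concretely: pick $a'$ realizing the (partial) type $\{\phi(x;B_i)\}$; then for a suitable permutation of the $B_i$'s (legitimate by indiscernibility) one may assume $\trdeg(a'/B_{<i}C)$ is achieved by the ``last'' block, and a direct computation with $\trdeg(a' B_0 \cdots B_k / C) = \trdeg(a'/C) + \sum_i \trdeg(B_i/C)$ (by mutual algebraic independence) versus $\trdeg(a' B_0 \cdots B_k/C) = \trdeg(B_0 \cdots B_k / a' C) + \trdeg(a'/C) \le \sum_i \trdeg(B_i / a' C) + \trdeg(a'/C)$, combined with $\trdeg(B_i/a'C) \le \trdeg(B_i/C) - (d-m)$ coming from $\phi$, yields $0 \le -k(d-m) + (\text{constant})$, impossible for large $k$. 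This computation — identifying the exact inequality $\trdeg(B_i/a'C) \le \trdeg(B_i/C) - (\trdeg(a/C)-\trdeg(a/BC))$ and checking it holds for the indiscernible copies — is the technical heart, and I expect the bookkeeping with which set is ``independent over which'' to be where care is needed.
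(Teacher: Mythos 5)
The high-level strategy is the same as the paper's: use Lemma~\ref{stupid} to produce a $C$-indiscernible sequence $B_0, B_1, \ldots$ with $B_i \forkindep^\ACF_C B_{<i}$, and use it to witness dividing of some ACF-formula in $\tp(A/BC)$. But where the paper then cites the abstract stability-theoretic fact that in ACF dividing is witnessed by \emph{any} Morley sequence (which makes the argument short and does not require identifying the dividing formula concretely), you instead try to exhibit the dividing formula explicitly as ``$x$ lies on the $BC$-locus $W$ of $a$'' and to verify inconsistency by a transcendence-degree count. That is a genuinely different route, and it has a real gap.

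The gap is that the formula ``$x \in W$'' need not divide over $C$, because the translates $W_i$ (the loci over the $B_i C$) can have a nonempty $C$-definable intersection. Concretely, take $C = \acl(C)$, $B = (b)$ a transcendental singleton, $A = (t, tb)$ with $t$ transcendental over $Cb$. Then $\trdeg(A/BC) = 1 < 2 = \trdeg(A/C)$, so $A \centernot\forkindep^\ACF_C B$, and the $BC$-locus of $A$ is the line $W = \{(x,xb)\}$. For a $C$-indiscernible, $\forkindep^\ACF$-independent sequence $b = b_0, b_1, \ldots$ of copies of $b$, the intersection $\bigcap_i W_i = \{(0,0)\}$ is a single $C$-point, so $\{``x \in W_i"\}_i$ is \emph{consistent}; the formula does not divide. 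Your counting argument has the same blind spot: it needs $\trdeg(a'/C) > m$ for whatever realization $a'$ of $\bigwedge_i \phi(x;B_i)$ you pick, but nothing forces this, and the point $(0,0)$ in the example has $\trdeg(a'/C) = 0$. (Separately, the line $\trdeg(a' B_0 \cdots B_k / C) = \trdeg(a'/C) + \sum_i \trdeg(B_i/C)$ is false because $a'$ is not independent of the $B_i$; the correct identity is $\trdeg(a' B_0 \cdots B_k / C) = \trdeg(a'/B_{\le k}C) + \sum_i \trdeg(B_i/C)$. This is a smaller slip, but even after fixing it the main gap remains.) To make your explicit route work, one would have to choose a formula that also excludes the offending low-dimensional $C$-definable locus --- and doing this uniformly is exactly the work that the paper's proof offloads onto the standard fact about Morley sequences in stable theories.
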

\begin{proof}
  Assume $A \forkindep_C B$.  By Lemma~\ref{stupid}, we can find a
  sequence $B_0, B_1, B_2, \ldots$ of realizations of $\tp(B/C)$,
  indiscernible over $C$, and satisfying $B_i \forkindep_C^{\ACF}
  B_{<i}$ for every $i$.  Suppose for the sake of contradiction that
  in some ambient model of ACF, $\tp(A/BC)$ contains a formula
  $\phi(X;Y)$ which divides (in the ACF sense) over $C$.  By
  quantifier elimination in ACF, we may assume $\phi$ is
  quantifier-free.  In stable theories such as ACF, dividing is
  witnessed in any Morley sequence.  In particular
  \[ \bigwedge_i \phi(X;B_i)\]
  is inconsistent in the ambient model of ACF, hence inconsistent in
  the original smaller structure.  Thus $\phi(X;B)$ forks and divides
  over $C$ in the original structure, a contradiction.
\end{proof}




Lastly, we show that dividing is always witnessed by an algebraically independent sequence.
\begin{lemma}\label{dubious-ist}
If a formula $\phi(x;a)$ divides over a set $A$, then the dividing is witnessed by an $A$-indiscernible sequence $a = a_0, a_1, a_2, \ldots$ such that $a_i \forkindep^{\ACF}_A a_{< i}$ for every $i$.
\end{lemma}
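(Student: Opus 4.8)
The plan is to witness the dividing by a \emph{strict} Morley sequence over $A$ and then use Lemma~\ref{stronger-than-acf} to upgrade its non-forking independence to algebraic independence, so that essentially no new work is needed.

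First I would make two harmless reductions. Indiscernibility over $A$ coincides with indiscernibility over $\acl(A)$, so $\phi(x;a)$ divides over $A$ if and only if it divides over $M := \acl(A)$; and since adjoining elements of $\acl(A)$ to a base field changes no transcendence degrees, a sequence $(a_i)$ is algebraically independent over $A$ (that is, $a_i \forkindep^{\ACF}_A a_{<i}$ for all $i$) exactly when it is algebraically independent over $M$. Hence it suffices to build the witness over $M$. Note also that $M$ is an extension base: when $T$ is $\RCF$ or $\pCF$ the set $M = \acl(A) = \dcl(A)$ is a model, and models are always extension bases; when $T$ is $\ACVF$ every set is an extension base by \cite{CK}.

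Now the main point: $T$ is NIP, hence NTP$_2$, and $M$ is an extension base, so the Chernikov--Kaplan analysis of forking over extension bases applies. In particular \cite{CK} provides a strict Morley sequence over $M$ in $\tp(a/M)$ witnessing the dividing: an $M$-indiscernible sequence $a = a_0, a_1, a_2, \ldots$ of realizations of $\tp(a/M)$ with $a_i \forkindep_M a_{<i}$ for every $i$ and with $\{\phi(x;a_i) : i < \omega\}$ inconsistent. One may arrange $a_0 = a$, since $a$ realizes the restriction to $M$ of the global strictly invariant type generating the sequence.

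Finally I would read off the conclusion. For each $i$, Lemma~\ref{stronger-than-acf} turns $a_i \forkindep_M a_{<i}$ into $a_i \forkindep^{\ACF}_M a_{<i}$, which by the transcendence-degree remark above is the same as $a_i \forkindep^{\ACF}_A a_{<i}$. Since $(a_i)$ is $M$-indiscernible it is $A$-indiscernible, it starts with $a$, and $\{\phi(x;a_i)\}$ is inconsistent, so it is precisely the required witness. The only genuine input is the existence of strict Morley sequences witnessing dividing over extension bases in NTP$_2$ theories, so the one step I expect to need care is citing \cite{CK} in exactly the form used here (and, in the $\ACVF$ case, confirming that $\acl(A)$ is an extension base); the rest is bookkeeping.
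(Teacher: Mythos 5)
Your proposal is correct and takes essentially the same route as the paper: the paper also applies the Chernikov--Kaplan results (specifically Claim~3.10 of \cite{CK}, with forking as the abstract independence relation) to obtain a strict Morley sequence witnessing the dividing, and then invokes Lemma~\ref{stronger-than-acf} to upgrade $\forkindep$ to $\forkindep^{\ACF}$. The only cosmetic difference is that the paper works directly with $A$ as the extension base (justified by Lemma~\ref{fork-div-1}), letting \cite{CK} produce a model $M \supseteq A$ over which the Morley sequence lives, whereas you explicitly pass to $\acl(A)$ first; both bookkeeping choices lead to the same witness.
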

\begin{proof}
Apply Claim 3.10 of \cite{CK} with the abstract independence relation taken to be $\forkindep$ (non-forking).  Forking satisfies (1)-(7) of \cite{CK} Definition 2.9 by Fact 2.12(5) of \cite{CK}.  And $A$ is an extension base for forking by Lemma~\ref{fork-div-1} above and Theorem 1.1 of \cite{CK} (or by Fact 2.14 of \cite{CK} in the cases other than $p$CF).  So Claim 3.10 of \cite{CK} is applicable.  Consequently we get a model $M$ containing $A$, a global type $p$ extending $\tp(a/M)$, $\forkindep$-free over $A$, such that any/every Morley sequence generated by $p$ over $M$ witnesses the dividing of $\phi(x;a)$.  Because $\forkindep$ is stronger than Lascar invariance, any such Morley sequence will be $M$-indiscernible, hence $A$-indiscernible.  Because $\forkindep$ is stronger than algebraic independence (Lemma~\ref{stronger-than-acf}), and $p$ is $\forkindep$-free over $A$, any Morley sequence $a_0, a_1, \ldots$ generated by $p$ will be algebraically independent over $A$.  Specifically, $a_i \models p|_{M a_{< i}}$, so as $p$ is $\forkindep$-free over $A$, $a_i \forkindep_A M a_{< i}$, and hence $a_i \forkindep_A^{\ACF} a_{< i}$.
\end{proof}

\section{The Model Companion}\label{review}
Now we turn our attention to fields with several valuations, several orderings, and several $p$-valuations.  For $1 \le i \le n$, let $T_i$ be one of ACVF, RCF, or $p$CF (in the same languages as in the previous section).  Let $\mathcal{L}_i$ denote the language of $T_i$; assume that $\mathcal{L}_i \cap \mathcal{L}_j = \mathcal{L}_{rings}$ for $i \ne j$.  Let $T^0$ be $\bigcup_{i = 1}^n (T_i)_\forall$, the theory that would be denoted $((T_1)_\forall, (T_2)_\forall, \ldots, (T_n)_\forall)$ in van den Dries's notation.  Technically speaking, models of $T^0$ should be allowed to be domains, rather than fields.  However, we will assume that $T^0$ also includes the field axioms, sweeping domains under the rug.

One essentially knows that $T^0$ has a model companion $T$ by Chapter
III of van den Dries's thesis \cite{LvdD}.  We will quickly reprove
the existence of $T$ in this section, expressing the axioms of the
model companion in a more geometric and less syntactic form, and also
including the case of positive characteristic explicitly.

\subsection{The Axioms}\label{axiom-section}

Consider the following axioms that a model $K$ of $T^0$ could satisfy:
\begin{description}
\item[A1:] $K$ is existentially closed with respect to finite extensions, i.e., if $L/K$ is a finite algebraic extension and $L \models T^0$, then $L = K$.
\item[A1':] For every irreducible polynomial $P(X) \in K[X]$ of degree greater than 1, there is some $1 \le i \le n$ such that $P(x) = 0$ has no solution in any/every model of $T_i$ extending $K \restriction \mathcal{L}_i$.
\item[A2(m):] Let $V$ be an $m$-dimensional geometrically irreducible variety over $K$.  For $1 \le i \le n$, let $\phi_i(x)$ be a $V$-dense quantifier-free $\mathcal{L}_i$-formula with parameters from $K$.  Then $\bigcap_{i =1}^n \phi_i(K) \ne \emptyset$.
\item[A2($\le m$):] A2($m'$) holds for all $m' \le m$.
\item[A2:] A2(m) holds, for all $m$
\end{description}

\begin{remark}\label{a1-restatement}
For $K \models T^0$, A1 and A1' are equivalent.
\end{remark}
\begin{proof}
Suppose $K$ satisfies A1, and $P(X) \in K[X]$ is irreducible of degree greater than 1.  Suppose that for every $1 \le i \le n$, there is a solution $\alpha_i$ of $P(x) = 0$ in a model $M_i \models T_i$ extending $K \restriction \mathcal{L}_i$.  Then we can extend the $\mathcal{L}_i$-structure from $K$ to $K(\alpha) \cong K[X]/P(X)$.  Because this holds for every $i$, we can endow $K[X]/P(X)$ with the structure of a model of $T^0$.  By A1, $K[X]/P(X)$ must be $K$, so $P(X)$ has degree 1.

Conversely, suppose $K$ satisfies A1' but not A1.  Let $L/K$ be a counterexample to A1, and take some $\alpha \in L \setminus K$.  Let $P(X)$ be the irreducible polynomial of $\alpha$ over $K$.  This polynomial must have degree greater than 1.  For each $i$ let $M_i$ be a model of $T_i$ extending $L \restriction \mathcal{L}_i$.  Then $P(x) = 0$ has a solution in $L$, hence in $M_i$, which is a model of $T_i$ extending $K$.  This contradicts A1'.
\end{proof}

\begin{lemma}\label{main-lemma}
Let $K$ be a model of $T^0$, and $m \ge 1$.  The following are equivalent:
\begin{description}
\item[(a)] For every model $L$ of $T^0$ extending $K$, for every tuple $\alpha$ from $L$ with $\trdeg(\alpha/K) \le m$, the quantifier-free type $\qftp(\alpha/K)$ is finitely satisfiable in $K$.
\item[(b)] $K$ satisfies A1 and A2($\le m$).
\end{description}
\end{lemma}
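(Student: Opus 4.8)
The plan is to prove the two implications separately, with one subtle preliminary point to establish first: A1 forces $K$ to be relatively separably closed inside every $T^0$-extension. Indeed, if $L \models T^0$ extends $K$ and $\beta \in L \cap K^{alg}$ is separable over $K$ but not in $K$, then $K(\beta)$ is a finite proper field extension of $K$ which, being a substructure of $L$, satisfies each universal theory $(T_i)_\forall$ and hence $T^0$, contradicting A1; the inseparable case reduces to this by passing to a power $\beta^{p^e}$ that is separable (and the same argument applied to $\beta = a^{1/p}$ shows that when all $T_i$ are ACVF, A1 even forces $K$ to be perfect, while if some $T_i$ is RCF or $p$CF then $K$ has characteristic $0$ anyway). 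By Fact~\ref{cb-stationarity-variety} it follows that the locus over $K$ of any tuple from such an $L$ is geometrically irreducible, which is exactly what makes the notion ``$V$-dense'' applicable below.

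For (b) $\Longrightarrow$ (a): assuming A1 and A2($\le m$), take $L \models T^0$ extending $K$ and a tuple $\alpha$ from $L$ with $\trdeg(\alpha/K) \le m$, and let $V$ be the locus of $\alpha$ over $K$, so $V$ is geometrically irreducible with $\dim V = \trdeg(\alpha/K) \le m$ by the preliminary paragraph. If $\dim V = 0$ then $\alpha$ is a tuple from $L \cap K^{alg}$, which by that paragraph lies in $K$, and we are done; so assume $1 \le \dim V \le m$. Given a finite fragment $\theta(x)$ of $\qftp(\alpha/K)$, I would rewrite $\theta(x) \wedge (x \in V)$ as $\bigwedge_{i=1}^n \phi_i(x)$, where $\phi_i$ is the quantifier-free $\mathcal{L}_i$-formula obtained by conjoining the $\mathcal{L}_i$-literals of $\theta$ with the defining equations of $V$ — legitimate because the only non-ring symbols in each $\mathcal{L}_i$ are relation symbols, so every literal of $\theta$ lives in a single $\mathcal{L}_i$. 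Each $\phi_i$ defines a subset of $V$, is satisfied by $\alpha$, and, since $\trdeg(\alpha/K) = \dim V$, has $\rk \phi_i = \dim V$ in a monster model of $T_i$ over $K$ (using quantifier elimination in $T_i$ and Lemma~\ref{various}(c), exactly as in the remark following the definition of $V$-density); hence $\phi_i$ is $V$-dense. Now A2($\le m$) yields a point in $\bigcap_i \phi_i(K)$, which satisfies $\theta$. This proves (a).

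For (a) $\Longrightarrow$ (b): assume (a). For A1, if $L \models T^0$ is a finite algebraic extension of $K$ and $\alpha \in L$, then $\trdeg(\alpha/K) = 0 \le m$, so by (a) the formula ``$P(x) = 0$'' for $P$ the minimal polynomial of $\alpha$ over $K$ — a member of $\qftp(\alpha/K)$ — has a solution in $K$; since $P$ is irreducible over $K$ this forces $\deg P = 1$, i.e. $\alpha \in K$, so $L = K$. For A2($\le m$), let $V$ be geometrically irreducible over $K$ with $\dim V = m' \le m$ and let $\phi_1, \ldots, \phi_n$ be $V$-dense quantifier-free formulas with $\phi_i \in \mathcal{L}_i$. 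Applying the implication (a) $\Longrightarrow$ (c) of Lemma~\ref{tfae-variant} to each $i$ in turn, I would extend the $(T_i)_\forall$-structure of $K$ to the function field $K(V)$ so that the generic point $\alpha$ of $V$ satisfies $\phi_i$; doing this for all $i$ at once (the underlying field $K(V)$ is fixed, and the $\mathcal{L}_i$ overlap only in $\mathcal{L}_{rings}$) turns $K(V)$ into a model of $T^0$ extending $K$, in which $\alpha$ is a tuple with $\trdeg(\alpha/K) = m' \le m$ and $\bigwedge_i \phi_i(\alpha)$ holds. By (a) the formula $\bigwedge_i \phi_i$ is satisfiable in $K$, i.e. $\bigcap_i \phi_i(K) \ne \emptyset$, which is A2($m'$).

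I expect the main obstacle to be keeping the preliminary observation honest rather than any single hard computation: one must notice that it is A1, not merely membership in a $T^0$-extension, that delivers geometric irreducibility of the locus $V$ (so that $V$-density and Lemma~\ref{tfae-variant} can be invoked at all), and one must handle the characteristic-$p$ inseparability subtleties separating ``relatively separably closed'' from ``relatively algebraically closed.'' The remaining work is bookkeeping: splitting quantifier-free types along the languages $\mathcal{L}_i$, and transferring $V$-density back and forth via Lemmas~\ref{various} and \ref{tfae-variant}.
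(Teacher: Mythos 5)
Your proof is correct and follows essentially the same route as the paper's: (a)\,$\Rightarrow$\,(b) passes through Lemma~\ref{tfae-variant}(c) to equip $K(V)$ with a $T^0$-structure, and (b)\,$\Rightarrow$\,(a) uses A1 to get geometric irreducibility of the locus (via Fact~\ref{cb-stationarity-variety}) and then $V$-density plus A2 to satisfy the finite fragment in $K$. The only real divergence is cosmetic: you reduce a finite fragment of $\qftp(\alpha/K)$ directly to a conjunction of literals and split those along the languages $\mathcal{L}_i$, whereas the paper invokes Fact~\ref{the-fact} to decompose the formula into a positive boolean combination of open sets and subvarieties and then normalizes and extracts the disjunct that $\alpha$ realizes. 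Both reach the same pivotal point, namely that each $\phi_i$ is $V$-dense because $\alpha$ witnesses it. One small simplification you could make: A1 already forces $K$ to be relatively \emph{algebraically} closed in every $T^0$-extension $L$ (any $\beta\in L\cap K^{alg}$, separable or not, generates a finite $T^0$-substructure $K(\beta)$ of $L$, so A1 gives $K(\beta)=K$ directly), so the separate detour through powers $\beta^{p^e}$ in your preliminary paragraph is unnecessary; relative algebraic closedness is of course stronger than the "relatively separably closed" hypothesis that Fact~\ref{cb-stationarity-variety} needs.
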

\begin{proof}
(a) $\implies$ (b)
For A1, suppose that $L/K$ is a finite extension, and $L \models T$.  If $\alpha \in L$, then $\alpha$ is algebraic over $K$, so $\trdeg(\alpha/K) = 0 \le m$.  By (a), the quantifier-free type of $\alpha$ is realized in $K$.  So the irreducible polynomial of $\alpha$ over $K$ has a zero in $K$, implying $\alpha \in K$.  As $\alpha \in L$ was arbitrary, $L = K$.

For A2($m'$), let $V$ be an $m'$-dimensional geometrically irreducible variety over $K$.  For $1 \le i \le n$, let $\phi_i(x)$ be a $V$-dense quantifier-free $\mathcal{L}_i$-formula with parameters from $K$.  By Lemma \ref{tfae-variant}(c), we can extend the $\mathcal{L}_i$-structure to $K(V)$ in such a way that the generic point satisfies $\phi_i(x)$.  Doing this for all $i$, we make $K(V)$ be a model of $T^0$ extending $K$, such that if $\alpha \in K(V)$ denotes the generic point, then $\bigwedge_{i = 1}^n \phi_i(\alpha)$ holds.  Now $\trdeg(\alpha/K) = \dim V \le m$, so by (a), $\qftp(\alpha/K)$ is finitely satisfiable in $K$.  In particular, the formula $\bigwedge_{i = 1}^n \phi(x)$ is satisfiable in $K$, which is the conclusion of A2($m'$).

(b) $\implies$ (a).  Suppose $L$ is a model of $T^0$ extending $K$ and
$\alpha$ is a tuple from $L$, with $\trdeg(\alpha/K) \le m$.  By A1,
$K$ is relatively algebraically closed in $L$.  Let $V$ be the
$K$-variety of which $\alpha$ is a generic point.  Then $V$ is
geometrically irreducible, by Fact~\ref{cb-stationarity-variety}.
Also, $m' := \dim V = \trdeg(\alpha/K) \le m$.  Let $\psi(x)$ be any
formula in $\qftp(\alpha/K)$.  We want to show that $\psi$ is
satisfied by an element of $K$.  We may assume that $\psi(x)$ includes
the statement that $x \in V$.  By Fact~\ref{the-fact}, $\psi(x)$ is a
positive boolean combination of statements of the form
\begin{itemize}
\item $x \in W$, for some $K$-definable affine variety $W$.  Since we intersected $\psi(x)$ with $V$, we may assume $W \subseteq V$.
\item $\theta(x)$, where $\theta(x)$ is a quantifier-free $\mathcal{L}_i$-formula for some $i$, such that $\theta(L)$ is an open subset of the ambient affine space, for any/every $L \models T_i$ extending $K \restriction \mathcal{L}_i$.
\end{itemize}
Writing $\psi(x)$ as a disjunction of conjunctions of such statements, and replacing $\psi(x)$ by whichever disjunct $\alpha$ satisfies, we may assume that $\psi(x)$ is a conjunction of such statements.  An intersection of $K$-varieties is a $K$-variety, and an intersection of open subsets of affine space is an open subset of affine space, so we may assume
\[ \psi(x) \equiv ``x \in W\text{''} \wedge \bigwedge_{i = 1}^n \phi_i(x),\]
where $W$ is some $K$-variety contained in $V$, and where $\phi_i(x)$ is a quantifier-free $\mathcal{L}_i$-formula defining an open subset of the ambient affine space, when interpreted in any/every model of $T_i$ extending $K \restriction \mathcal{L}_i$.

Because $\alpha$ satisfies $\psi(x)$, and $\alpha$ is a generic point of $V$, $W$ must be $V$.  Rewrite $\psi$ as $\bigwedge_{i = 1}^n \phi'_i(x)$, where each $\phi'_i(x)$ asserts that $x \in V$ and $\phi_i(x)$ holds.  Because $K$ satisfies axiom A2($m'$), $\psi(x)$ will be satisfiable in $K$ as long as $\phi'_i(x)$ is $V$-dense for each $i$.  But note that $L$ provides a way of extending the $\mathcal{L}_i$-structure from $K$ to $K(\alpha) \cong K(V)$ in such a way that $\phi'_i(\alpha)$ holds, so $\phi'_i$ is $V$-dense by Lemma~\ref{tfae-variant}(c).
\end{proof}

\begin{theorem} \label{axioms-are-correct}
The theory $T^0$ has a model companion $T$, whose models are exactly the $K \models T^0$ satisfying A1 and A2.
\end{theorem}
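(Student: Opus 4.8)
The plan is to deduce the theorem from Lemma~\ref{main-lemma} together with the standard fact that a theory has a model companion as soon as its existentially closed models form an elementary class. Taking the equivalence of Lemma~\ref{main-lemma} over all $m \ge 1$, a model $K \models T^0$ satisfies A1 and A2 if and only if for every $L \models T^0$ extending $K$ and every finite tuple $\alpha$ from $L$ the quantifier-free type $\qftp(\alpha/K)$ is finitely satisfiable in $K$; and this last condition says exactly that $K$ is existentially closed in the class of models of $T^0$ (an existential formula $\exists \bar x\, \psi(\bar x)$ with $\psi$ quantifier-free and parameters in $K$ that is solved in some extension $L$ is witnessed by a tuple $\alpha$, so $\psi \in \qftp(\alpha/K)$, and finite satisfiability produces a witness already in $K$; the converse is immediate). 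So it suffices to show that A1 and A2 are each expressible by a first-order theory over $T^0$, for then $T := T^0 \cup \{\textrm{A1}\} \cup \{\textrm{A2}\}$ is a first-order theory axiomatizing the existentially closed models of $T^0$.

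For A1: by Remark~\ref{a1-restatement} it is equivalent to A1', which I would turn into a first-order schema degree by degree. Fix $d > 1$ and $1 \le i \le n$, and let $\bar a$ range over the coefficient tuples of monic degree-$d$ polynomials $P_{\bar a}(X)$. By quantifier elimination in $T_i$, the $\mathcal{L}_i$-formula $\exists x\, (P_{\bar a}(x) = 0)$ is $T_i$-equivalent to a quantifier-free $\mathcal{L}_i$-formula $\chi_{d,i}(\bar a)$. For $K \models T^0$ with $\bar a$ from $K$, the equation $P_{\bar a}(x) = 0$ has a solution in some model of $T_i$ extending $K\restriction\mathcal{L}_i$ if and only if $K\restriction\mathcal{L}_i \models \chi_{d,i}(\bar a)$: if some such model $M_i$ solves it then $M_i \models \chi_{d,i}(\bar a)$, whence $K\restriction\mathcal{L}_i \models \chi_{d,i}(\bar a)$ since $\chi_{d,i}$ is quantifier-free; conversely if $K\restriction\mathcal{L}_i \models \chi_{d,i}(\bar a)$ then every model of $T_i$ extending $K\restriction\mathcal{L}_i$ satisfies $\chi_{d,i}(\bar a)$ and thus solves the equation. (This also explains the ``any/every'' in the statement of A1'.) Since irreducibility over $K$ of a fixed-degree polynomial is first-order, A1' for degree $d$ becomes the single sentence: for every coefficient tuple $\bar a$, if $P_{\bar a}$ is irreducible of degree $d$ then $\bigvee_{i=1}^n \neg\chi_{d,i}(\bar a)$. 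Conjoining over $d$ gives a first-order theory equivalent to A1 over $T^0$.

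For A2: fix $m$, an ambient dimension $N$, a degree bound $e$, and complexity bounds for the $\mathcal{L}_i$-formulas involved. The geometrically irreducible $m$-dimensional subvarieties of $\mathbb{A}^N$ cut out by polynomials of degree $\le e$ form a first-order definable family, parametrized by a coefficient tuple $\bar c$, because geometric irreducibility and having dimension $m$ are uniformly definable conditions (definability of dimension in $\ACF$). For each $i$, the quantifier-free $\mathcal{L}_i$-formulas of the prescribed complexity form a definable family $\phi_i(x;\bar c_i)$, and ``$\phi_i(\,\cdot\,;\bar c_i)$ is $V_{\bar c}$-dense'' is a first-order condition on $(\bar c,\bar c_i)$ by definability of rank in families (Lemma~\ref{various}(c)). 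Then the instance of A2 for these bounds is the sentence: for all $\bar c, \bar c_1, \ldots, \bar c_n$, if $V_{\bar c}$ is $m$-dimensional geometrically irreducible and each $\phi_i(\,\cdot\,;\bar c_i)$ is $V_{\bar c}$-dense, then $\exists x\, \bigwedge_{i=1}^n \phi_i(x;\bar c_i)$. Every geometrically irreducible variety and every quantifier-free formula appears once the bounds are large enough, so conjoining over all $m, N, e$ and all complexity bounds yields a first-order theory equivalent to A2 over $T^0$.

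It remains to see that the resulting theory $T$ is the model companion of $T^0$. Every model of $T$ is a model of $T^0$ by construction; conversely every model of $T^0$ embeds into an existentially closed one by the usual transfinite chain construction (repeatedly adjoin solutions of existential formulas solvable in some extension, using that models of $T^0$ are closed under unions of chains), so $T$ and $T^0$ have the same universal consequences. Finally, if $K \subseteq L$ with $K, L \models T$, then $L$ is a model of $T^0$ extending $K$ and $K$ is existentially closed among models of $T^0$, so $K \preceq_1 L$; by Robinson's test $T$ is model-complete, and a model-complete theory with the same universal consequences as $T^0$ is, by definition, its model companion. The main technical point I expect is the first-order axiomatizability of A2: one must verify that ``geometrically irreducible of dimension $m$'' and ``$V$-dense'' are uniformly first-order, so that A2($m$)---despite its quantification over all varieties and all quantifier-free formulas---is captured by a first-order schema; this rests on definability of dimension and geometric irreducibility in $\ACF$ together with Lemma~\ref{various}(c).
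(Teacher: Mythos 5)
Your proposal is correct and follows the paper's own argument: both reduce to Lemma~\ref{main-lemma} to identify A1 and A2 with existential closedness, then establish first-order expressibility of A1 via Remark~\ref{a1-restatement} and quantifier elimination in the $T_i$, and of A2 via definability of rank in families (Lemma~\ref{various}(c)), quantifier elimination, and definability of geometric irreducibility, before invoking the standard fact about model companions of inductive theories. You simply spell out in more detail the first-order schemas and the final Robinson's-test step that the paper summarizes as ``basic facts about model companions of $\forall\exists$-theories.''
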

\begin{proof}
It is well known that a model $K$ is existentially closed if and only
if for every model $L$ extending $K$ and for every tuple $\alpha$ from
$L$, the quantifier-free type $\qftp(\alpha/K)$ is finitely
satisfiable in $K$.  So by Lemma~\ref{main-lemma}, a model of $T^0$ is
existentially closed if and only if it satisfies A1 and A2.  By basic
facts about model companions of $\forall\exists$-theories, it remains
to show that A1 and A2 are first order.  For A1, this comes from
Remark~\ref{a1-restatement}, because A1' is first order by
quantifier-elimination in the $T_i$.  Axiom A2 is first order by
quantifier-elimination in the $T_i$, by Lemma~\ref{various}(c), and by
the fact that geometric irreducibility is definable by a
quantifier-free formula in the language of fields (this is well-known
and proven in Chapter IV of \cite{LvdD}).
\end{proof}

Henceforth, we will use $T$ to denote the model companion.  Also, we
will write $T_\forall$ for $T^0$, sweeping the distinction
between domains and fields under the rug.

We make several remarks about the axioms:
\begin{remark}\label{a1-simplification}
In the case where $T_i$ is ACVF for $i > 1$, axiom A1 merely says that $K \restriction \mathcal{L}_1$ is a model of $T_1$, i.e., is algebraically closed or real closed or $p$-adically closed.
\end{remark}
\begin{remark}\label{smooth-hmmm}
In Axiom A2($m$), it suffices to consider the case of smooth $V$.  If $V$ is \emph{not} smooth, one can find an open subvariety $V'$ of $V$ which is smooth, and which is isomorphic to an affine variety.  (Use the facts that the smooth locus of an irreducible variety is a Zariski dense Zariski open, and that the affine open subsets of a scheme form a basis for its topology.)  If $\phi_i(x)$ is $V$-dense, then $\phi_i(x) \wedge \text{``}x \in V'\text{''}$ is $V'$-dense, essentially by Lemma~\ref{tfae-variant}(b). Then applying the smooth case of A2($m$) to $V'$ yields a point in $V'$ satisfying $\bigwedge_{i = 1}^n \phi_i(x)$.
\end{remark}
\begin{remark}\label{open-whoops}
In Axiom A2, it suffices to consider $V$-dense formulas $\phi_i(x)$ such that $\phi_i(L)$ defines an \emph{open} subset of $V(L)$ for any/every $L \models T$ extending $K \restriction \mathcal{L}_i$.  This follows by Lemma~\ref{whoops}.
\end{remark}
\begin{remark}\label{combo-restatement}
We can combine the previous two remarks.  Then Lemma~\ref{tfae-smoothness}(b), yields the following restatement of A2($m$): if $V$ is a geometrically irreducible $m$-dimensional smooth affine variety defined over $K$, and if $\phi_i(x)$ is a quantifier-free $\mathcal{L}_i$-formula over $K$ for each $1 \le i \le n$, and if $\phi_i(K_i)$ is a \emph{non-empty} open subset of $V(K_i)$ for any/every $K_i \models T$ extending $K \restriction \mathcal{L}_i$, then $\bigcap_{i = 1}^n \phi_i(K) \ne \emptyset$.
\end{remark}
\begin{remark}\label{a2-simplification}
If every $T_i$ is ACVF, then A1 merely says that $K$ is algebraically closed.  Consequently, in Remark~\ref{combo-restatement} the $K_i$ can be taken to be $K$ itself.  Thus A2(m) ends up being equivalent to the statement that if $V$ is a smooth irreducible $m$-dimensional affine variety, and $\phi_i(x)$ is a quantifier-free $\mathcal{L}_i$-formula defining a non-empty open subset of $V$ for $1 \le i \le n$, then $\bigcap_{i = 1}^n \phi_i(K)$ is non-empty.  Even more concisely, this means that for every smooth $m$-dimensional variety $V$, the diagonal map $V(K) \to \prod_{i = 1}^n V(K)$ has dense image in the product topology, using the topology from the $i^{\text{th}}$ valuation for the $i^{\text{th}}$ entry in the product.

In fact, in Section~\ref{sec-miracle}, we will see that it suffices to check the case of $V = \mathbb{A}^1$, the affine line(!)
\end{remark}

\subsection{Quantifier-Elimination up to Algebraic Covers}
As in the previous section, $T_\forall$ is the theory of fields with $(T_i)_\forall$ structure for each $1 \le i \le n$, and $T$ is the model companion of $T_\forall$.

\begin{lemma}\label{precise-amalgamation}
Let $K$ be a model of $T_\forall$.  Let $L$ and $L'$ be two models of $T_\forall$ extending $K$.  Suppose that $K$ is relatively separably closed in $L$ or $L'$ (Definition~\ref{nearly}).  Then $L$ and $L'$ can be amalgamated over $K$, and this can be done in such a way that $L$ and $L'$ are algebraically independent over $K$.
\end{lemma}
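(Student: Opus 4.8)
The plan is to reduce the statement to the case where each $T_i$ is handled separately, using the fact that amalgamation of $T_\forall$-structures is controlled coordinatewise by amalgamation of $(T_i)_\forall$-structures. First I would set up the purely field-theoretic amalgamation: by Corollary~\ref{free-amalgamation} (applied to the pure fields $L, L'$ over $K$, viewing them inside a monster model of $\ACF$), there is a field $\Omega$ containing copies of $L$ and $L'$ with $L \forkindep^\ACF_K L'$. If $K$ is relatively separably closed in $L$ (the case where it is relatively separably closed in $L'$ is symmetric), then by Fact~\ref{cb-stationarity-fact} this amalgamated field is unique up to $K$-isomorphism, so there is really only one candidate $\Omega$ for the underlying field.

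Next I would put the $(T_i)_\forall$-structure on $\Omega$ for each $i$ in turn. Fix $i$. The $\mathcal{L}_i$-reducts $L \restriction \mathcal{L}_i$ and $L' \restriction \mathcal{L}_i$ are models of $(T_i)_\forall$ sharing the common subfield $K \restriction \mathcal{L}_i$, and they are algebraically independent over $K$ inside $\Omega$. By Corollary~\ref{free-amalgamation} applied to $T_i$ — or more precisely, by the argument behind it via Corollary~\ref{extend-types-independently} — one can extend the $\mathcal{L}_i$-structure to $\Omega$ so that $L \restriction \mathcal{L}_i$ and $L' \restriction \mathcal{L}_i$ sit inside a model of $T_i$ and remain algebraically independent; since $\Omega$ is generated as a field by $L$ and $L'$ (they are algebraically independent, so their compositum is already the full amalgam, at least after passing to the relevant subfield of the $\ACF$-monster) this determines a $(T_i)_\forall$-structure on $\Omega$ extending those on $L$ and $L'$. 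Here I should be slightly careful: Corollary~\ref{free-amalgamation} gives an amalgam realizing algebraic independence, and I want that amalgam to have the \emph{prescribed} underlying field $\Omega$; this is where relative separable closedness is used again, now at the level of $T_i$, via Fact~\ref{cb-stationarity-fact}, since $K$ being relatively separably closed in $L$ (as pure fields) is the same condition regardless of the extra structure. Doing this for each $i = 1, \ldots, n$ equips $\Omega$ with a $T_\forall$-structure extending $L$ and $L'$, with $L \forkindep^\ACF_K L'$, which is exactly the conclusion.

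The main obstacle I expect is the bookkeeping around \emph{which} underlying field carries the amalgam: Corollary~\ref{free-amalgamation} and its proof produce an algebraically independent amalgam, but a priori inside some monster model of $T_i$, and I need all $n$ of these amalgams to live on one and the same field $\Omega$. Relative separable closedness of $K$ in $L$ is precisely what collapses this ambiguity — by Fact~\ref{cb-stationarity-fact}, the algebraically independent amalgam of $L$ and $L'$ over $K$ as pure fields is unique, so every $\mathcal{L}_i$-amalgam, which in particular is such a field-amalgam, must be (isomorphic over $K$ to) $\Omega$. Once this uniqueness is invoked, transporting the $\mathcal{L}_i$-structure from the abstract $T_i$-amalgam back onto $\Omega$ is routine, and the pieces for different $i$ are compatible because they all restrict to the fixed field structure on $\Omega$. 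A secondary point to check is that the field $\Omega$ obtained from $L$ and $L'$ being algebraically independent is literally their compositum (no further transcendentals needed), which is immediate from the definition of algebraic independence.
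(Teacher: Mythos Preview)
Your proposal is correct and follows essentially the same approach as the paper's proof: amalgamate the $\mathcal{L}_i$-reducts independently over $K$ for each $i$ via Corollary~\ref{free-amalgamation}, then use Fact~\ref{cb-stationarity-fact} (uniqueness of the algebraically independent field-amalgam when $K$ is relatively separably closed in one factor) to identify all the resulting compositums $LL'$ as a single field carrying all $n$ structures at once. The paper is slightly more streamlined in that it does not first fix a reference field $\Omega$ from an ACF-amalgamation but goes straight to the $T_i$-amalgams and then invokes uniqueness; your extra step is harmless, and your ``main obstacle'' paragraph pinpoints exactly the issue the paper's proof is addressing.
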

\begin{proof}
For each $1 \le i \le n$, we can find some amalgam $M_i \models (T_i)_\forall$ of $L \restriction \mathcal{L}_i$ and $L' \restriction \mathcal{L}_i$ over $K \restriction \mathcal{L}_i$, by Corollary~\ref{free-amalgamation}.  The resulting compositums $LL'$ must be isomorphic on the level of fields, by Fact~\ref{cb-stationarity-fact}.  Consequently, we can endow the canonical field $LL'$ with a $(T_i)_\forall$-structure extending those on $L$ and $L'$, for each $i$.  This gives $LL'$ the structure of a $T_\forall$-model.  And $L$ and $L'$ are algebraically independent inside $LL'$.
\end{proof}

\begin{corollary}\label{a1-rel-closed}
Let $K$ be a model of $T_\forall$ and let $L$ be a model of $T$ extending $K$.  Then $K$ is relatively algebraically closed in $L$ if and only if $K$ satisfies axiom A1.  (In particular, this does not depend on $L$.)
\end{corollary}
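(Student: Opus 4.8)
The plan is to reduce everything to the reformulation A1$'$ of A1 from Remark~\ref{a1-restatement} and treat the two implications separately; once the equivalence is proved, the parenthetical claim that relative algebraic closedness of $K$ in $L$ is independent of $L$ is automatic, since A1 is a condition on $K$ alone.

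For the implication ``$K$ satisfies A1 $\implies$ $K$ is relatively algebraically closed in $L$'', I would argue as follows. Assume A1, hence A1$'$. Let $\beta \in L$ be algebraic over $K$ with minimal polynomial $P(X) \in K[X]$, and suppose toward a contradiction that $\deg P > 1$. Since $L \models T$, we have $L \restriction \mathcal{L}_i \models (T_i)_\forall$ for each $i$, so $L \restriction \mathcal{L}_i$ embeds into a model $M_i \models T_i$ extending $K \restriction \mathcal{L}_i$, and $M_i$ contains the root $\beta$ of $P$. As this happens for every $i$, A1$'$ is violated. Hence $\deg P = 1$ and $\beta \in K$.

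For the converse I would prove the contrapositive: if $K$ fails A1, then $K$ is not relatively algebraically closed in $L$. By Remark~\ref{a1-restatement}, $K$ also fails A1$'$, so there is an irreducible $P(X) \in K[X]$ of degree $> 1$ having a root in some model of $T_i$ extending $K \restriction \mathcal{L}_i$, for every $i$. Exactly as in the proof of Remark~\ref{a1-restatement}, this lets me equip $K(\alpha) := K[X]/(P(X))$ with a $T_\forall$-structure extending that of $K$. If $K$ is not even relatively separably closed in $L$, then there is an element of $L$ algebraic over $K$ but not in $K$, so $K$ is not relatively algebraically closed in $L$ and we are done. Otherwise $K$ is relatively separably closed in $L$, and Lemma~\ref{precise-amalgamation} produces an amalgam $N \models T_\forall$ of $L$ and $K(\alpha)$ over $K$. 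Since $L$ is a model of the model companion, it is existentially closed among models of $T_\forall$ (see the proof of Theorem~\ref{axioms-are-correct}), so the existential $\mathcal{L}(K)$-sentence ``$\exists x \; P(x) = 0$'', which holds in $N$, already holds in $L$. A root of $P$ in $L$ is algebraic over $K$ but not in $K$, since $P$ is irreducible of degree $> 1$ over $K$; hence $K$ is not relatively algebraically closed in $L$.

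The argument is essentially an assembly of results proved above, so there is no serious obstacle. The only delicate point is the positive-characteristic gap between ``relatively separably closed'' and ``relatively algebraically closed'': Lemma~\ref{precise-amalgamation} only supplies an amalgam under the weaker hypothesis, so in the converse direction I first dispose of the case where $K$ fails to be relatively separably closed in $L$ (where the desired conclusion is immediate), and only then invoke the amalgamation and existential closedness of $L$.
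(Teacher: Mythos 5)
Your proof is correct and follows essentially the same route as the paper's: reduce A1 to A1$'$, and for the substantive direction use Lemma~\ref{precise-amalgamation} to amalgamate $L$ with a proper finite $T_\forall$-extension of $K$, then use the model-completeness/existential-closedness of $L$ to pull a root back into $L$. The only structural difference is that the paper proves the contrapositive (``$K$ relatively algebraically closed in $L$ and $K$ fails A1'' is contradictory), so the hypothesis of Lemma~\ref{precise-amalgamation} is available for free, whereas you prove ``$K$ fails A1 $\implies$ $K$ not relatively algebraically closed in $L$'' directly and therefore need the extra case split to obtain relative separable closedness before invoking the lemma; you identify and handle this correctly, so the argument goes through.
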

\begin{proof}
If $K$ satisfies axiom A1, then obviously $K$ is relatively
algebraically closed in $L$.  Conversely, suppose that $K$ is
relatively algebraically closed in $L$ but does not satisfy A1.  Then
there is some model $L'$ of $T_\forall$ extending $K$, with $L'/K$
finite and $L' \ne K$.  By Lemma~\ref{precise-amalgamation}, we can
amalgamate $L$ and $L'$ over $K$.  Embed the resulting compositum
$LL'$ in a model $M$ of $T$.  Because $T$ is model-complete, $L
\preceq M$.  Now choose some $\alpha \in L' \setminus K$.  The
irreducible polynomial of $\alpha$ over $K$ has a root in $M$, and
hence has a root in $L$, contradicting the fact that $K$ is relatively
algebraically closed in $L$.
\end{proof}

\begin{corollary}\label{qe-version-1}
Let $K$ be model of $T_\forall$, and suppose $K$ satisfies A1.  Then the type of $K$ is determined, i.e., if $L$ and $L'$ are two models of $T$ extending $K$, then $K$ has the same type in $L$ and $L'$.  Equivalently, the diagram of $K$ implies the elementary diagram of $K$, modulo the axioms of $T$.
\end{corollary}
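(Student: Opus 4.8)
The statement to prove is Corollary~\ref{qe-version-1}: if $K \models T_\forall$ satisfies A1, then any two models $L, L'$ of $T$ extending $K$ realize the same type over $K$ — equivalently, the diagram of $K$ plus the axioms of $T$ imply the elementary diagram of $K$. The natural approach is a back-and-forth / amalgamation argument. Since $T$ is model complete (it is the model companion of $T_\forall$), it suffices to embed $L$ and $L'$ over $K$ into a common model of $T$; then model completeness makes both embeddings elementary, so $K$ has the same type in both.

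First I would invoke Corollary~\ref{a1-rel-closed}: because $K$ satisfies A1 and $L \models T$ extends $K$, the field $K$ is relatively algebraically closed in $L$. In particular $K$ is relatively separably closed in $L$ (Definition~\ref{nearly}). This is exactly the hypothesis needed to apply Lemma~\ref{precise-amalgamation} to the pair $L, L'$ over $K$: we obtain a model $M_0 \models T_\forall$ amalgamating $L$ and $L'$ over $K$, with $L$ and $L'$ algebraically independent over $K$ inside $M_0$. (The algebraic-independence clause is not strictly needed for this corollary, but it costs nothing.) Now embed $M_0$ into a monster model $M \models T$, which exists since $T$ is the model companion of $T_\forall$. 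We then have $K$-embeddings $L \hookrightarrow M$ and $L' \hookrightarrow M$.

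The final step uses model completeness of $T$: since $L, L', M$ are all models of $T$ and $L \subseteq M$, $L' \subseteq M$ (as substructures via the embeddings), we get $L \preceq M$ and $L' \preceq M$. Hence the type of $K$ computed inside $L$ equals the type of $K$ inside $M$, which equals the type of $K$ inside $L'$. That gives the first formulation. For the "equivalently" clause, this is just the standard reformulation: saying that every model of $T$ extending $K$ realizes the same (complete) type over $K$ is precisely saying that the atomic diagram of $K$, together with $T$, axiomatizes a complete theory in the language expanded by constants for $K$ — i.e., it implies the elementary diagram of $K$. I would spell this out in one sentence rather than belabor it.

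The only real subtlety — the place I expect to have to be careful — is checking that the hypotheses of Lemma~\ref{precise-amalgamation} are genuinely met, i.e. that "$K$ satisfies A1" gives "$K$ relatively \emph{separably} closed in $L$" and not merely relatively algebraically closed; but this is immediate since the perfect closure of $K$ contains $K$, so relatively algebraically closed implies relatively separably closed. Beyond that the argument is a routine packaging of model completeness plus the amalgamation lemma, so I do not anticipate a significant obstacle.
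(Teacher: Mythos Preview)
Your proposal is correct and follows essentially the same route as the paper: use Corollary~\ref{a1-rel-closed} to get $K$ relatively algebraically closed in $L$ (and $L'$), amalgamate via Lemma~\ref{precise-amalgamation}, embed into a model of $T$, and conclude by model completeness that $L \preceq M \succeq L'$. The paper's proof is just a terser version of exactly this argument.
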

\begin{proof}
By Corollary~\ref{a1-rel-closed}, $K$ is relatively algebraically closed in $L$ and $L'$.  So we can amalgamate $L$ and $L'$ over $K$, by Lemma~\ref{precise-amalgamation}.  If $M$ is a model of $T$ extending $LL'$, then by model completeness $L \preceq M \succeq L'$, ensuring that $K$ has the same type in each.
\end{proof}

\begin{corollary}\label{acl-char}
In models of $T$, field-theoretic algebraic closure agrees with model-theoretic algebraic closure.
\end{corollary}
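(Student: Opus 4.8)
The plan is to prove the two inclusions separately. The inclusion $\acl(A) \supseteq \acl^{\ACF}(A)$ (relative field-theoretic algebraic closure) is immediate: an element algebraic over $A$ in the field sense has only finitely many conjugates in any model of $T$, since every $A$-automorphism permutes the roots of its minimal polynomial, so it lies in $\acl(A)$. For the reverse inclusion, suppose $K \models T$, $A \subseteq K$, and $\alpha \in K$ is model-theoretically algebraic over $A$; the goal is $\trdeg(\alpha/A) = 0$. Work inside a monster $\mathfrak{M} \succeq K$ and write $\operatorname{ra}(X)$ for the relative (field-theoretic) algebraic closure of $X$ in $\mathfrak{M}$. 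Replacing $A$ by $\operatorname{ra}(A)$ changes neither $\acl(A)$ (as $\operatorname{ra}(A) \subseteq \acl(A)$ and $\acl$ is a closure operator) nor $\trdeg(\alpha/A)$, so we may assume $A$ is relatively algebraically closed in $\mathfrak{M}$; then $A \models T_\forall$, and by Corollary~\ref{a1-rel-closed} $A$ satisfies axiom A1. Assume for contradiction that $\trdeg(\alpha/A) \ge 1$.

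Set $B := \operatorname{ra}(A(\alpha))$. Then $B \models T_\forall$, $B$ is relatively algebraically closed in $\mathfrak{M}$ — hence satisfies A1, again by Corollary~\ref{a1-rel-closed} — and $A$ is relatively separably closed in $B$. The core step is to manufacture infinitely many algebraically independent copies of $B$ over $A$. Starting from $N_0 := A$, I would iterate Lemma~\ref{precise-amalgamation} (which applies since $A$ is relatively separably closed in $B$): at stage $j$ amalgamate $N_{j-1}$ freely with a fresh copy $B_j$ of $B$ over $A$, obtaining a $T_\forall$-model $N_j \supseteq N_{j-1}$ with an $\mathcal{L}$-isomorphism $f_j : B \to B_j$ fixing $A$, so that the $B_j$ are algebraically independent over $A$ inside $N := \bigcup_j N_j$. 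Embed $N$ into a model $\mathfrak{M}' \models T$; since $A$ satisfies A1, Corollary~\ref{qe-version-1} gives that $A$ has the same type in $\mathfrak{M}'$ as in $\mathfrak{M}$, so by homogeneity we may arrange $\mathfrak{M}' \preceq \mathfrak{M}$ over $A$.

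Now let $\alpha_j := f_j(\alpha) \in B_j \subseteq \mathfrak{M}'$. Axiom A1 is a property of the isomorphism type of an $\mathcal{L}$-structure, so each $B_j \cong_{\mathcal{L}} B$ satisfies A1; transporting the situation along $f_j$ and applying Corollary~\ref{qe-version-1} shows $f_j$ is elementary over $A$, whence $\tp(\alpha_j/A) = \tp(\alpha/A)$ for every $j$. On the other hand $\trdeg(\alpha_j/A) = \trdeg(\alpha/A) \ge 1$, and the $B_j$ are algebraically independent over $A$, so $\alpha_j \notin \operatorname{ra}(A B_0 \cdots B_{j-1}) \ni \alpha_0,\dots,\alpha_{j-1}$; thus the $\alpha_j$ are pairwise distinct. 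Hence $\tp(\alpha/A)$ has infinitely many realizations in $\mathfrak{M}$, contradicting $\alpha \in \acl(A)$. Therefore $\trdeg(\alpha/A) = 0$, i.e.\ $\alpha \in \acl^{\ACF}(A)$, which finishes the reverse inclusion.

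The one point requiring genuine care is that Corollary~\ref{qe-version-1} cannot be applied to the structure $A(\alpha)$ itself, which need not satisfy A1; this is exactly why one must pass to the relative algebraic closure $B \supseteq A(\alpha)$ before amalgamating. The other delicate ingredient is that the free amalgamation has to be carried out simultaneously in all $n$ of the languages $\mathcal{L}_i$ — which is precisely what Lemma~\ref{precise-amalgamation} provides, its proof resting on Fact~\ref{cb-stationarity-fact} and hence on the relative separable closedness of $A$ in $B$. Everything else is routine bookkeeping in the algebraic-closure pregeometry.
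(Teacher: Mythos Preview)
Your proof is correct, but the paper takes a shorter route. Instead of fixing a putative transcendental $\alpha$ and manufacturing infinitely many independent copies of $B = \operatorname{ra}(A(\alpha))$, the paper amalgamates the entire model $M$ with a single copy $M'$ of itself over $K = \operatorname{ra}(S)$ (one application of Lemma~\ref{precise-amalgamation}), embeds the compositum in some $N \models T$, and uses model completeness to get $M \preceq N \succeq M'$. Then $\acl(S)$ computed in $M$, $N$, or $M'$ coincides, so $\acl(S) \subseteq M \cap M'$; algebraic independence of $M$ and $M'$ over $K$, together with $K$ being relatively algebraically closed in each, forces $M \cap M' = K$. This handles all elements at once and sidesteps your appeal to Corollary~\ref{qe-version-1} on the substructures $B_j$: because full models are being amalgamated, model completeness alone yields elementarity. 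Your argument is a legitimate alternative---the ``infinitely many conjugates'' version of the same idea---and has the virtue of making explicit which element is being chased, at the cost of an inductive tower of amalgamations and the extra step of checking that each $B_j$ inherits axiom A1.
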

\begin{proof}
Let $M$ be a model of $T$.  Let $S$ be a subset of $M$.  Let $K$ be the field-theoretic algebraic closure of $S$, i.e., the relative algebraic closure of $S$ in $M$.  By Lemma~\ref{precise-amalgamation}, we can amalgamate $M$ and a copy $M'$ of $M$ over $K$ in a way that makes $M$ and $M'$ be algebraically independent over $K$.    Embedding $MM'$ into a model $N$ of $T$, and using model completeness, we get $M \preceq N \succeq M'$.  Now $\acl(S)$ is the same when computed in $M$, $N$, or $M'$.  In particular, $\acl(S) \subseteq M \cap M'$.  Since $M$ and $M'$ are algebraically independent over $K$ and $K$ is relatively algebraically closed in each, $M \cap M' = K$.  Thus $\acl(S) \subseteq K$.  Obviously $K \subseteq \acl(S)$.
\end{proof}

For $K$ a field, let $\Abs(K)$ denote the algebraic closure of the prime field in $K$.
\begin{corollary}\label{completions}
Two models $M_1, M_2 \models T$ are elementarily equivalent if and only if $\Abs(M_1)$ and $\Abs(M_2)$ are isomorphic as models of $T_\forall$.
\end{corollary}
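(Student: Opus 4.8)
The plan is to prove the two implications separately, using Corollary~\ref{qe-version-1} for the ``if'' direction and Corollary~\ref{acl-char} for the ``only if'' direction; in both cases the first thing to pin down is that $\Abs(M)$, for $M \models T$, is a model of $T_\forall$ which is relatively algebraically closed in $M$. The latter holds because anything algebraic over $\Abs(M)$ is algebraic over the prime field, hence lies in $\Abs(M)$; and $\Abs(M)$ is a field with an induced $\mathcal{L}_i$-substructure for each $i$, so it models $T_\forall$. Consequently, by Corollary~\ref{a1-rel-closed}, $\Abs(M)$ satisfies axiom A1.

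For the ``if'' direction, suppose $\Abs(M_1) \cong \Abs(M_2)$ as models of $T_\forall$. Transporting along this isomorphism, I can regard a single copy $K := \Abs(M_1)$ as a common $T_\forall$-substructure of both $M_1$ and $M_2$. Since $K$ satisfies A1 and $M_1, M_2$ are two models of $T$ extending $K$, Corollary~\ref{qe-version-1} gives that $K$ has the same type in $M_1$ as in $M_2$. Restricting that type to the formulas with no free variables (each of which is in particular a formula over $\emptyset \subseteq K$) yields $\mathrm{Th}(M_1) = \mathrm{Th}(M_2)$, i.e.\ $M_1 \equiv M_2$.

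For the ``only if'' direction, suppose $M_1 \equiv M_2$, fix a monster model $\mathfrak{N}$ of their common theory, and fix elementary embeddings $f_j : M_j \hookrightarrow \mathfrak{N}$. The key step is to show $f_j(\Abs(M_j)) = \Abs(\mathfrak{N})$ for each $j$. First, by Corollary~\ref{acl-char}, $\Abs(\mathfrak{N}) = \acl^{\mathfrak{N}}(\emptyset)$ (model-theoretic algebraic closure), and $\acl^{\mathfrak{N}}(\emptyset)$ is contained in every elementary substructure of $\mathfrak{N}$, hence in $f_j(M_j)$. Second, since $f_j$ is in particular a ring embedding, it carries $\Abs(M_j)$ onto the set of elements of $f_j(M_j)$ that are algebraic over the prime field, which equals $\Abs(\mathfrak{N}) \cap f_j(M_j) = \Abs(\mathfrak{N})$ by the previous sentence. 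Composing, $(f_2 \restriction \Abs(M_2))^{-1} \circ (f_1 \restriction \Abs(M_1))$ is an $\mathcal{L}$-isomorphism $\Abs(M_1) \to \Abs(M_2)$, which is exactly a $T_\forall$-isomorphism.

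I expect no serious obstacle here: the only delicate points are that $\Abs(M)$ satisfies A1 (handled by Corollary~\ref{a1-rel-closed} once relative algebraic closedness is noted) and that being algebraic over the prime field coincides with lying in $\acl(\emptyset)$ (which is precisely Corollary~\ref{acl-char}, and is what makes $\Abs(\mathfrak{N})$ sit inside every elementary substructure). With those two facts in hand, both directions are short.
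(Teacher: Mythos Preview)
Your proof is correct and follows essentially the same approach as the paper's. For the ``if'' direction you apply Corollary~\ref{qe-version-1} directly (the paper instead re-runs its proof via amalgamation and model completeness, but this amounts to the same thing); for the ``only if'' direction both you and the paper embed $M_1,M_2$ into a common elementary extension and identify their $\Abs$ parts there, the only cosmetic difference being that you justify $\Abs(\mathfrak{N}) \subseteq f_j(M_j)$ via Corollary~\ref{acl-char}, whereas the paper implicitly uses the more elementary fact that roots of a fixed integer polynomial form a finite set and hence already lie in any elementary substructure.
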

\begin{proof}
If $M_1$ and $M_2$ are elementarily equivalent, we can embed them as elementary substructures into a third model $M_3 \models T$.  Then $\Abs(M_1) = \Abs(M_3) = \Abs(M_2)$, so certainly $\Abs(M_1)$ is isomorphic to $\Abs(M_2)$.

Conversely, suppose $\Abs(M_1) \cong \Abs(M_2)$.  Then, as $\Abs(M_1)$ is relatively algebraically closed in $M_1$ and in $M_2$, it follows by Corollaries \ref{a1-rel-closed} and \ref{qe-version-1} that we can amalgamate $M_1$ and $M_2$ over $\Abs(M_1)$.  Embedding the resulting compositum into a model of $T$ and using model completeness, we get $M_1 \equiv M_2$.
\end{proof}

\begin{corollary}\label{qe-version-special}
Suppose $T_1 \ne \ACVF$ and $T_i$ \emph{is} ACVF for $i > 1$.  Consider the expanded theory where we add in symbols for every zero-definable $T_1$-definable function.  (This makes sense because if $M \models T$, then $M \restriction \mathcal{L}_1 \models T_1$, by Remark \ref{a1-simplification}.)  Then $T$ has quantifier-elimination.
\end{corollary}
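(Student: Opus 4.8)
The plan is to invoke the standard test that a model-complete theory enjoys quantifier elimination once it has the amalgamation property over arbitrary substructures, and then to verify that amalgamation property using the machinery already in place, after identifying what substructures of the expanded theory look like.

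First I would record that $T$ is model complete, being a model companion, and that this survives the expansion by the new function symbols: those symbols are introduced by explicit $\mathcal{L}$-formulas without parameters, so an inclusion of models of the expanded theory restricts to an inclusion of models of $T$, hence is $\mathcal{L}$-elementary, hence (since elementary maps respect explicitly-definable functions) elementary for the expanded language. Thus it suffices to show: whenever $M_1, M_2 \models T$ share a common substructure $A$ in the expanded language, there is $N \models T$ and embeddings $M_1 \hookrightarrow N \hookleftarrow M_2$ agreeing on $A$.

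The key point is that such an $A$ is relatively algebraically closed in both $M_1$ and $M_2$. Indeed, $A$ is a field carrying, for each $i$, a substructure of a model of $T_i$, so $A \models T_\forall$; and $A$ is closed under every $\emptyset$-definable $T_1$-function. Since $T_1$ is $\RCF$ or $p$CF and therefore has $\emptyset$-definable Skolem functions in the home sort, this forces the $\mathcal{L}_1$-reduct of $A$ to be a model of $T_1$. By Remark~\ref{a1-simplification} (this is exactly where $T_i = \ACVF$ for $i > 1$ is used), that says precisely $A \models \text{A1}$. Corollary~\ref{a1-rel-closed} then gives that $A$ is relatively algebraically closed — a fortiori relatively separably closed (Definition~\ref{nearly}) — in any model of $T$ extending it, in particular in $M_1$ and in $M_2$.

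With that in hand, Lemma~\ref{precise-amalgamation} applies with $K = A$, $L = M_1$, $L' = M_2$, yielding a compositum $M_1 M_2 \models T_\forall$ amalgamating $M_1$ and $M_2$ over $A$. Embedding $M_1 M_2$ into a model $N \models T$ and applying model completeness of $T$ gives $M_1 \preceq N \succeq M_2$ with the two embeddings agreeing on $A$; as elementary maps respect the ($\mathcal{L}$-definable) new symbols, these are embeddings of the expanded structures, which is the desired amalgamation. The step I expect to demand the most care is the third paragraph — pinning down that substructures of the expanded language are exactly $T_\forall$-models satisfying A1 — since this is the only place the hypothesis $T_1 \ne \ACVF$ does any work: it is what allows $\dcl$ (closure under definable Skolem functions) to do the job that $\acl$ would otherwise be needed for, and hence what makes genuine quantifier elimination, rather than quantifier elimination up to algebraic covers, possible.
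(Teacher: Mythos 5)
Your proof is correct and takes essentially the same route as the paper's: identify substructures in the expanded language as exactly the $T_\forall$-models satisfying axiom A1 (via definable Skolem functions for $T_1$ and Remark~\ref{a1-simplification}), then get quantifier elimination from amalgamation. The only cosmetic difference is that the paper cites Corollary~\ref{qe-version-1} directly to obtain substructure completeness, whereas you unroll its proof inline (Corollary~\ref{a1-rel-closed} plus Lemma~\ref{precise-amalgamation} plus model completeness); the underlying argument is identical.
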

\begin{proof}
After adding in these new symbols, a substructure is the same as a subfield $K$ closed under all $T_1$-definable functions.  As RCF and $p$CF have definable Skolem functions, this is equivalent to $K \restriction \mathcal{L}_1$ being a model of $T_1$, which is equivalent to $K$ satisfying axiom A1, as noted in Remark \ref{a1-simplification}.  Now apply Corollary~\ref{qe-version-1} to get substructure completeness, which is the same thing as quantifier-elimination.
\end{proof}
This probably also holds if $T_i \ne \ACVF$ for more than one $i$,
though the extra functions would become partial functions.

Without adding in extra symbols, quantifier elimination fails.  But we
still get quantifier-elimination up to algebraic covers, in a certain
sense.
\begin{theorem}\label{qe-version-2}
In $T$, every formula $\phi(\vec{x})$ is equivalent to one of the form
\begin{equation}
  \exists y : \left(P(y,\vec{x}) = 0 \wedge \psi(y,\vec{x})\right), \label{form}
\end{equation}
where $y$ is a singleton, $\psi(y,\vec{x})$ is quantifier-free, and $P(y,x)$ is a polynomial in $\Zz[\vec{x},y]$, monic as a polynomial in $y$.
\end{theorem}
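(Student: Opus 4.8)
The plan is to prove that the class $\mathcal{C}$ of formulas equivalent modulo $T$ to one of the shape~(\ref{form}) is the class of all formulas. Two easy closure facts get things started. First, $\mathcal{C}$ contains every quantifier-free $\phi(\vec x)$: take $P(y,\vec x) = y$ and $\psi(y,\vec x) = \phi(\vec x)$, so that $\exists y\,(y=0 \wedge \phi(\vec x)) \equiv \phi(\vec x)$. Second, $\mathcal{C}$ is closed under finite disjunctions: the disjunction of $\exists y\,(P_1(y,\vec x)=0 \wedge \psi_1)$ and $\exists y\,(P_2(y,\vec x)=0 \wedge \psi_2)$ is equivalent to $\exists y\,\bigl(P_1(y,\vec x)P_2(y,\vec x) = 0 \wedge ((P_1=0 \wedge \psi_1) \vee (P_2=0 \wedge \psi_2))\bigr)$, and a product of monic polynomials is monic. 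So it suffices to show that for every $\phi(\vec x)$, every $K \models T$ and every tuple $\vec a$ from $K$ with $K \models \phi(\vec a)$, there is $\chi(\vec x) \in \mathcal{C}$ with $K \models \chi(\vec a)$ and $T \models \forall \vec x\,(\chi(\vec x)\to\phi(\vec x))$: compactness then gives $\phi \equiv \bigvee_{j=1}^N \chi_j$ with $\chi_j \in \mathcal{C}$, and closure under disjunction finishes.

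The key reduction uses Corollary~\ref{qe-version-1}. Let $F$ be the relative algebraic closure of the subfield generated by $\vec a$ inside $K$; then $F \models$ A1 by Corollary~\ref{a1-rel-closed}, so $\mathrm{Diag}(F) \cup T \vdash \phi(\vec a)$ by Corollary~\ref{qe-version-1}. Compactness yields a quantifier-free $\delta_0$ and a finite tuple $\vec f = (f_1,\dots,f_m)$ from $F$ with $K \models \delta_0(\vec a,\vec f)$ and $T \models \forall\vec x\vec y\,(\delta_0(\vec x,\vec y)\to\phi(\vec x))$. Each $f_j$ is algebraic over the field generated by $\vec a$; after multiplying each $f_j$ by a suitable polynomial in $\vec a$ that is nonzero at $\vec a$, and rewriting $\delta_0$ accordingly, we may assume $f_j$ is integral over $\Zz[\vec a]$, and may further take $\delta_0$ to contain monic equations witnessing this, say $\delta_0 = \bigwedge_j P_j(y_j,\vec x)=0 \wedge \rho(\vec x,\vec y)$ with $P_j \in \Zz[\vec x,y_j]$ monic in $y_j$. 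Thus $\phi$ is implied modulo $T$ by, and holds at $\vec a$ together with, the ``bounded tuple-existential'' $\exists\vec y\,\bigl(\bigwedge_j P_j(y_j,\vec x)=0 \wedge \rho(\vec x,\vec y)\bigr)$. It remains to collapse $\vec y$ to a single variable, placing this formula in $\mathcal{C}$; this is the main obstacle, and it is handled differently in the two characteristics.

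In characteristic zero the extension generated by $\vec x,\vec y$ over that generated by $\vec x$ is separable, and one merges two integral variables $y_1,y_2$ at a time via a primitive element $\beta = y_1 + t\,y_2$. Only boundedly many $t$ (in terms of $\deg_y P_1,\deg_y P_2$) cause a collision $\gamma_i + t\gamma'_j = \gamma_{i'} + t\gamma'_{j'}$ among roots, so we take the finite disjunction over $t$ in a long enough initial segment of $\Zz$; for a good $t$, the pair $(y_1,y_2)$ is recovered from $(\beta,\vec x)$ as values of fixed rational functions produced by Euclid's algorithm on $P_1(Y,\vec x)$ and $t^{\deg P_2}P_2((\beta - Y)/t,\vec x)$ — whose having linear gcd in $Y$ is a quantifier-free condition — while $\beta$ is integral over $\Zz[\vec x]$, a root of the monic resolvent $\prod_{i,j}(w - \gamma_i - t\gamma'_j) \in \Zz[\vec x,w]$. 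Substituting these rational functions into $\rho$ and clearing denominators keeps the matrix quantifier-free, and iterating collapses the tuple to one variable.

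In characteristic $p$ every $T_i$ is ACVF, hence by Remark~\ref{a2-simplification} every model of $T$ is algebraically closed; the point is that $p$-power roots are unique with valuations merely rescaled, so contribute nothing quantifier-free. Write $F$ as the algebraic closure of the perfect field $K_\infty = (\text{field generated by }\vec a)^{\mathrm{perf}}$; since $F/K_\infty$ is separable, the finitely many $f_j$ lie in $K_e(\beta)$ for a single separable $\beta$ and some $e$, where $K_e$ is generated by $\vec b := \vec a^{1/p^e}$, and after rescaling $\beta$ is a root of a monic $R(w,\vec b) \in \Zz[\vec b,w]$. The bounded tuple-existential is then equivalent, over algebraically closed valued fields, to $\exists\vec u\,\exists w\,\bigl(\vec u^{p^e} = \vec x \wedge R(w,\vec u)=0 \wedge \delta(\vec u,w)\bigr)$ with $\delta$ quantifier-free and $\vec u$ forced to equal $\vec x^{1/p^e}$. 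Eliminating $\vec u$ by raising polynomial equations and valuation comparisons in $(\vec u, w, \vec x)$ to the $p^e$-th power — which replaces $\vec u^{p^e}$ by $\vec x$ throughout — turns $R(w,\vec x^{1/p^e})=0$ into the monic equation $R(w^{p^e},\vec x)=0$ and $\delta(\vec x^{1/p^e},w)$ into a quantifier-free formula in $(w,\vec x)$, yielding the single-variable form~(\ref{form}). The genuinely delicate points, which I expect to require the most care, are the uniformity of the primitive-element step in characteristic zero and the correctness of the Frobenius substitution in characteristic $p$.
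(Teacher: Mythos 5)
Your outer skeleton matches the paper's exactly: observe closure of the class $\mathcal{C}$ of formulas of shape~(\ref{form}) under disjunction, reduce by compactness to producing, for each $K\models T$ and $\vec a$ with $K\models\phi(\vec a)$, a single $\chi\in\mathcal{C}$ with $K\models\chi(\vec a)$ and $T\vdash\chi\to\phi$, and obtain the key formula from the fact that (by Corollaries~\ref{a1-rel-closed} and~\ref{qe-version-1}) $T$ together with the diagram of the relative algebraic closure of $\vec a$ implies $\phi(\vec a)$. Where you diverge is the crucial production of a \emph{single-variable} witness, and here the two routes are genuinely different in spirit.

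The paper's route is much shorter. It passes first to $K:=$ the smallest perfect subfield containing $\Zz[\vec\alpha]$; since $K$ is perfect and every model of $T$ is perfect, $\overline K/K$ is separable, compactness yields a finite extension $L/K$ whose diagram suffices, and $L=K(\beta)$ by the primitive element theorem. What bridges the gap between the (possibly infinite) diagram of $K$ and the actual parameters $\vec\alpha$ is Lemma~\ref{confusing2}: the $T_\forall$-structure on $\Zz[\vec\alpha]$ extends \emph{uniquely} to its perfect-closure fraction field, so the diagram of $\Zz[\vec\alpha,\beta]$ already implies $\phi(\vec\alpha)$, and one more application of compactness gives the quantifier-free $\psi(y,\vec x)$. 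This lemma is precisely the idea your proposal is missing, and it makes the Frobenius bookkeeping unnecessary: in positive characteristic the infinite tower of $p$-power roots sits harmlessly inside $K$ and never needs to be manipulated symbolically.

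By contrast you try to produce the one-variable form explicitly. Two remarks. First, in characteristic zero your ``merge two quantified variables via $\beta=y_1+ty_2$, Euclid's algorithm, and a finite disjunction over $t$'' is an over-engineering: once you have the \emph{specific} algebraic elements $\vec f\in F$, the field $\Qq(\vec a)(\vec f)$ is a finite separable extension of $\Qq(\vec a)$, so you can take a \emph{specific} primitive element $\gamma$, write each $f_j$ as a rational function of $\gamma$ over $\Qq(\vec a)$, clear denominators, and substitute directly into $\delta_0$; there is no need to merge variables symbolically or to worry about denominator-vanishing case splits in the Euclidean algorithm. Second, your positive-characteristic route via $K_e$, a separable $\beta$ over $K_e$, and raising $\delta$ to the $p^e$-th power is plausible but genuinely delicate, as you acknowledge: one has to check that every atom of $\delta$ (which may involve rational expressions in $\vec u=\vec x^{1/p^e}$ coming from solving for $\vec f$) behaves correctly under the $p^e$-th power map, including the nonvanishing of denominators and the preservation of the $V$-density and of all the $\mathcal{L}_i$-predicates (Macintyre predicates, ordering, etc.\ — but this case is actually moot since char $p$ forces all $T_i=\mathrm{ACVF}$). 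None of this is wrong in outline, but the paper's Lemma~\ref{confusing2} replaces the entire Frobenius calculation with the one-line observation that valuations and orderings extend uniquely to fraction fields and to purely inseparable extensions. The main thing your proposal buys is perhaps an explicit description of the polynomial $P$ and formula $\psi$ in terms of the original $\phi$; the paper's proof is non-constructive (twice invoking compactness) but far cleaner.
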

\begin{proof}
Let $\Sigma(\vec{x})$ be the set of all formulas of the form (\ref{form}).  First we observe that $\Sigma(\vec{x})$ is closed under disjunction, because
\[ \left( \exists y : P(y,\vec{x}) = 0 \wedge \psi(y,\vec{x}) \right) \vee \left( \exists y : Q(y,\vec{x}) = 0 \wedge \psi'(y,\vec{x}) \right)\]
is equivalent to
\[ \exists y : P(y,\vec{x})Q(y,\vec{x}) = 0 \wedge \psi''(y,\vec{x}),\]
where $\psi''(y,\vec{x})$ is the quantifier-free formula
\[ \left(P(y,\vec{x}) = 0 \wedge \psi(y,\vec{x}) \right) \vee \left(Q(y,\vec{x}) = 0 \wedge \psi'(y,\vec{x}) \right).\]

Now given a formula $\phi(\vec{x})$, not quantifier-free, let $\Sigma_0(\vec{x})$ be the set of formulas in $\Sigma(\vec{x})$ which imply $\phi(\vec{x})$, i.e.,
\[ \Sigma_0(\vec{x}) = \{ \sigma(\vec{x}) \in \Sigma(\vec{x}) ~|~ T \vdash \forall \vec{x} : ( \sigma(\vec{x}) \rightarrow \phi(\vec{x}))\}.\]
Of couse $\Sigma_0(\vec{x})$ is closed under disjunction.
It suffices to show that $\phi(\vec{x})$ implies a finite disjunction of formulas in $\Sigma_0(\vec{x})$, because then $\phi(\vec{x})$ implies and is implied by a formula in $\Sigma_0(\vec{x})$.

Suppose for the sake of contradiction that $\phi(\vec{x})$ does not imply a finite disjunction of formulas in $\Sigma_0(\vec{x})$.  Then the partial type
\[ \{\phi(\vec{x})\} \cup \{\neg \sigma(\vec{x}) : \sigma(\vec{x}) \in \Sigma_0(\vec{x})\}\]
is consistent with $T$.  Let $M$ be a model of $T$ containing a tuple $\vec{\alpha}$ realizing this partial type.
So $\phi(\vec{\alpha})$ holds in $M$, but not because of any formula of the form (\ref{form}).

Let $R$ be the ring $\mathbb{Z}[\vec{\alpha}] \subseteq M$.  Let $K \subseteq M$ be the smallest perfect field containing $R$; note that $M$ itself is perfect so this makes sense.  Indeed, if every $T_i$ is ACVF, then $M$ is algebraically closed by Remark~\ref{a1-simplification}.  Otherwise, one of the $T_i$'s is RCF or $p$CF, making $M$ be characteristic zero.

Let $\overline{K}$ be the relative algebraic closure of $K$ (or equivalently, $\vec{\alpha}$) inside $M$.  By Corollaries \ref{a1-rel-closed} and \ref{qe-version-1}, the diagram of $\overline{K}$ implies the elementary diagram of $\overline{K}$.  In particular, the diagram of $\overline{K}$ implies $\phi(\vec{\alpha})$.  By compactness, the diagram of $L$ implies $\phi(\vec{\alpha})$, for some finite extension $L$ of $K$.  Because $K$ is perfect, $L = K(\beta)$ for some \emph{singleton} $\beta$.  Multiplying $\beta$ by an appropriate element from $R$, we may assume that $\beta$ is integral over $R$.  Note that $L$ is perfect, because it is an algebraic extension of a perfect field, and in fact $L$ is the smallest perfect field containing $\vec{\alpha}$ and $\beta$.

As the diagram of $L$ implies $\phi(\vec{\alpha})$, so does the diagram of $\mathbb{Z}[\vec{\alpha},\beta]$, by Lemma~\ref{confusing2} below.
By compactness, there is some quantifier-free formula $\psi(y,\vec{x})$ which is true of $(\beta,\vec{\alpha})$ such that
\[ T \vdash \forall y ~ \forall \vec{x} : \psi(y,\vec{x}) \rightarrow \phi(\vec{x}).\]
Let $P(y,\vec{x})$ be the polynomial witnessing integrality of $\beta$ over $R$.  Then clearly
\[ T \vdash \forall \vec{x} : \left(\exists y :  P(y,\vec{x}) = 0 \wedge \psi(y,\vec{x}) \right) \rightarrow \phi(\vec{x}),\]
so $\exists y :  P(y,\vec{x}) = 0 \wedge \psi(y,\vec{x})$ is in $\Sigma_0(\vec{x})$, contradicting the fact that it holds of $\vec{\alpha}$ in $M$.
\end{proof}

\begin{lemma}\label{confusing2}
Let $M$ be a model of $T$ and $R$ be a subring of $M$.  Let $K \subseteq M$ be the smallest perfect field containing $R$.  Let $\alpha$ be a tuple from $R$, and $\phi(x)$ be a formula such that $M \models \phi(\alpha)$.  If $T$ and the diagram of $K$ imply $\phi(\alpha)$, then $T$ and the diagram of $R$ imply $\phi(\alpha)$.
\end{lemma}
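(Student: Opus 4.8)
The plan is to prove the statement in its semantic form. A model of ``$T$ plus the diagram of $R$'' is precisely a model $N \models T$ together with an embedding $e \colon R \hookrightarrow N$ of $T_\forall$-structures, and a model of ``$T$ plus the diagram of $K$'' is a model $N \models T$ together with an embedding $K \hookrightarrow N$. So it suffices to show that every embedding $e \colon R \hookrightarrow N$ of $T_\forall$-structures, with $N \models T$, extends to an embedding $K \hookrightarrow N$; then the hypothesis gives $N \models \phi(e(\alpha))$, and as $N$ was arbitrary we conclude that $T$ and the diagram of $R$ imply $\phi(\alpha)$.

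First I would extend $e$ from $R$ to $\Frac(R)$. The $\mathcal{L}_i$-structure that $\Frac(R)$ inherits as a subfield of $M$ is forced by the structure on $R$: the valuation (when $T_i$ is ACVF or $p$CF) must send $a/b$ to $v_i(a) - v_i(b)$, the ordering (when $T_i$ is RCF) must satisfy $a/b > 0 \iff ab > 0$, and the Macintyre predicates are likewise determined since each $b^n$ is an $n$th power. The same identities describe the $\mathcal{L}_i$-structure that $\Frac(e(R))$ inherits from $N$, so $a/b \mapsto e(a)/e(b)$ is a $T_\forall$-embedding $\hat e \colon \Frac(R) \hookrightarrow N$.

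Next I would extend $\hat e$ to $K$. If $\characteristic K = 0$ then $K = \Frac(R)$ and there is nothing to do. If $\characteristic K = p > 0$, then $M \supseteq K$ has characteristic $p$, so $T$ has a model of positive characteristic; since ordered fields and $p$-valued fields have characteristic $0$, every $T_i$ must be ACVF, and hence $N$ is algebraically closed by Remark~\ref{a1-simplification}, in particular perfect. Now $K$ is the perfect closure of $\Frac(R)$, a purely inseparable algebraic extension; as Frobenius is bijective on $N$, each element of $\Frac(R)$ has a unique $p^k$th root in $N$, and matching $p^k$th roots to $p^k$th roots yields the unique field embedding $K \hookrightarrow N$ extending $\hat e$. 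For each $i$, the valuation $v_i$ extends uniquely along the purely inseparable extension $K/\Frac(R)$, so the $v_i$ that $K$ inherits from $M$ coincides with the one pulled back from $N$; thus this field embedding is an $\mathcal{L}_i$-embedding for every $i$, i.e.\ a $T_\forall$-embedding $K \hookrightarrow N$.

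Composing, we obtain an embedding $K \hookrightarrow N$ agreeing with $e$ on $R$, so $N$ (with this embedding) models $T$ plus the diagram of $K$; by hypothesis $N \models \phi(\alpha)$, which finishes the argument. The only genuine subtlety is the positive-characteristic step: one must check that the purely inseparable extension $K/\Frac(R)$ embeds into $N$ compatibly with \emph{all $n$} valuations at once, and this reduces to uniqueness of valuation extensions along purely inseparable extensions together with the observation that in positive characteristic every $T_i$ is necessarily ACVF.
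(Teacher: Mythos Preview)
Your proof is correct and follows essentially the same approach as the paper: both argue that any model $N \models T$ extending $R$ automatically extends $K$ with the same $T_\forall$-structure as the one inherited from $M$, because the $\mathcal{L}_i$-structure on $R$ extends uniquely to $\Frac(R)$ and then to its perfect closure. Your version is more explicit about the two extension steps and the positive-characteristic case (noting that all $T_i$ must be ACVF there), whereas the paper compresses this into the single remark that ``each valuation/ordering/$p$-valuation on $R$ extends uniquely to $K$, by quantifier elimination in the $T_i$.''
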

\begin{proof}
If not, then there is a model $N$ of $T$ extending $R$, in which $\phi(\alpha)$ fails to hold.  This model $N$ must not satisfy the diagram of $K$.  Now $N$ certainly contains a copy of the pure field $K$, because the fraction field and perfect closure of a domain are unique.  Consequently, there must be at least two ways to extend the $T$-structure from $R$ to $K$, one coming from $M$ and one coming from $N$.  But this is absurd, because each valuation/ordering/$p$-valuation on $R$ extends uniquely to $K$, by quantifier elimination in the $T_i$.
\end{proof}

\subsection{Simplifying the axioms down to curves}

\begin{lemma}\label{curve-reduction-temp}
Let $K$ be an $\aleph_1$-saturated and $\aleph_1$-strongly homogeneous model of $T_\forall$ satisfying axioms A1 and A2(1).  Let $\mathfrak{M}$ be a monster model of $T$ extending $K$.  Let $S$ be a countable subset of $K$ and $\alpha$ be a countable tuple from $\mathfrak{M}$.  Then $\tp(\alpha/S)$ is realized in $K$.
\end{lemma}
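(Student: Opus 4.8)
The plan is to build the realization of $\tp(\alpha/S)$ inside $K$ by a back-and-forth/transfinite construction, handling one coordinate of $\alpha$ at a time and reducing everything to the one-dimensional case where axiom A2(1) applies. Write $\alpha = (\alpha_0, \alpha_1, \alpha_2, \ldots)$. I will construct a sequence $\beta_0, \beta_1, \ldots$ in $K$ such that for every finite $k$, $(\beta_0, \ldots, \beta_k)$ has the same quantifier-free type over $S$ in $K$ as $(\alpha_0, \ldots, \alpha_k)$ has in $\mathfrak{M}$; by Corollary~\ref{qe-version-1} (or rather its consequence that A1 forces quantifier-free types over relatively algebraically closed sets to determine the full type), and by model completeness of $T$, matching quantifier-free types over $S \cup \{\beta_0, \ldots, \beta_k\}$ will suffice, provided at each stage the set $S \cup \{\beta_0, \ldots, \beta_k\}$ can be taken relatively algebraically closed — or rather, provided I track enough of the algebraic data. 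Saturation and strong homogeneity of $K$ are what let the construction run through a countable sequence of steps.

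**The inductive step.**
Suppose $\beta_{<k}$ has been constructed with $\qftp^K(\beta_{<k}/S) = \qftp^{\mathfrak{M}}(\alpha_{<k}/S)$, realized by an $S$-isomorphism of the generated substructures that I can assume (by strong homogeneity of $K$) extends to an automorphism identifying the relevant parameter sets. I want to find $\beta_k \in K$ realizing $\qftp^{\mathfrak{M}}(\alpha_k / S\beta_{<k})$ transported across this identification. Let $S' = S \cup \{\beta_{<k}\}$, a countable subset of $K$. There are two cases. If $\alpha_k$ is algebraic over $S\alpha_{<k}$: then its image under the identification is a root of a polynomial over $S'$, and axiom A1 for $K$ (which $K$ satisfies by hypothesis) plus the fact that the corresponding quantifier-free $\mathcal{L}_i$-data is consistent forces this root to already lie in $K$ — more precisely, if $P$ is the minimal polynomial, then $P$ is irreducible over the relative algebraic closure of $S'$ in $K$ only if it has a root in each $T_i$-closure, but then A1 would be violated; so $P$ has degree matching and a root $\beta_k \in \overline{S'} \subseteq K$ realizing the right type. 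If $\alpha_k$ is transcendental over $S\alpha_{<k}$: this is the heart of the matter and where A2(1) enters.

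**The transcendental case via A2(1).**
Here $\alpha_k$ is a generic point of the affine line over (the algebraic closure of) $S\alpha_{<k}$, cut out by finitely many — hence, by compactness and $\aleph_1$-saturation, by a single countable-parameter — quantifier-free conditions, which decompose by Fact~\ref{the-fact} into an $\mathcal{L}_i$-open condition $\phi_i$ for each $i$, each $V$-dense on $V = \mathbb{A}^1$ over the relevant base field, because $\alpha_k$ realizes them with transcendence degree $1$. Transport these to conditions $\phi_i'$ over $S' \subseteq K$: each $\phi_i'(x)$ is a quantifier-free $\mathcal{L}_i$-formula over a countable subset of $K$ defining a nonempty $\mathcal{L}_i$-open, hence $\mathbb{A}^1$-dense by Lemma~\ref{tfae-smoothness}(b) (the line is smooth). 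By $\aleph_1$-saturation of $K$ I may assume the parameters lie in $K$ and there are countably many conditions; A2(1) for $K$ gives $\bigcap_i \phi_i'(K) \ne \emptyset$ — but I must be careful that A2(1) is stated for a single formula per $i$, so I first pass to a type-definable set and use saturation to extract a single $\beta_k \in K$ satisfying all the countably many conditions simultaneously, which is exactly where $\aleph_1$-saturation is essential. Any such $\beta_k$ is transcendental over $S'$ (it satisfies the density conditions which forbid membership in proper subvarieties), so $\qftp^K(\beta_{\le k}/S) = \qftp^{\mathfrak{M}}(\alpha_{\le k}/S)$.

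**Main obstacle.**
The delicate point is the bookkeeping across stages: A2(1) only handles one variety and one formula per $i$ at a time, so at stage $k$ I am genuinely working over the growing base $S\beta_{<k}$, and I must verify that the quantifier-free type of $\alpha_k$ over this base really is captured by an $\mathbb{A}^1$-density statement plus algebraic conditions and nothing more — this rests on Fact~\ref{the-fact}, Lemma~\ref{tfae-smoothness}(b), and the characterization of $\acl$ via Corollary~\ref{acl-char}. Then compactness/saturation must be invoked carefully to collapse a countable conjunction of $\mathbb{A}^1$-dense open conditions into something A2(1) can be applied to (passing through finite conjunctions, each still $\mathbb{A}^1$-dense and open, and then using $\aleph_1$-saturation of $K$ to realize the whole partial type). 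I expect this saturation argument, together with checking that the transported formulas remain $V$-dense over the new $K$-parameters, to be the main thing requiring care; the algebraic case is routine given A1 and Corollary~\ref{a1-rel-closed}.
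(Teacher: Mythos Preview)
Your strategy—induction on coordinates, reducing to the one-dimensional case—is the same as the paper's in spirit, but there is a real gap in the execution, and it is precisely the one you flag but do not resolve: the passage from matching quantifier-free types to matching full types.

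Concretely, your inductive step maintains only $\qftp(\beta_{<k}/S) = \qftp(\alpha_{<k}/S)$. In the transcendental case, A2(1) and saturation give you a $\beta_k \in K$ realizing the transported quantifier-free type of $\alpha_k$ over $S\beta_{<k}$; but this does \emph{not} give $\tp(\beta_{\le k}/S) = \tp(\alpha_{\le k}/S)$, since $S\beta_{<k}$ is not relatively algebraically closed and Corollary~\ref{qe-version-1} does not apply. So if your inductive hypothesis is about full types, the transcendental step fails to maintain it. If instead your hypothesis is only about quantifier-free types, then your algebraic case breaks: you cannot invoke an automorphism of $\mathfrak{M}$ to move $\alpha_k$ over $S\beta_{<k}$, and your direct argument via A1 does not show that a root of the transported polynomial $P'$ exists in $K$ \emph{with the correct $(T_i)_\forall$-structure} on $S\beta_{<k}(\beta_k)$—different roots of $P'$ in $K$ can carry different quantifier-free types, and A1 by itself does not select the right one.

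The paper handles this by an explicit ``enlarge to $\acl$'' step. It first packages A1 and A2(1) via Lemma~\ref{main-lemma} to get that finite tuples $\gamma$ with $\trdeg(\gamma/S) \le 1$ have $\qftp(\gamma/S)$ realized in $K$ (statement $A_1$); by saturation this extends to countable tuples ($B_1$); then applying $B_1$ to $\alpha' := \acl(\alpha S)$ and invoking Corollary~\ref{qe-version-1} upgrades to full types ($C_1$). The induction on transcendence degree ($C_k \wedge C_j \Rightarrow C_{k+j}$) then runs over \emph{full} types, using automorphisms of $\mathfrak{M}$, so both the algebraic and transcendental difficulties dissolve. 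Your phrase ``track enough of the algebraic data'' is exactly this $B_1 \Rightarrow C_1$ move; without it, the coordinate-by-coordinate argument does not close.
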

\begin{proof}
Consider the following statements:
\begin{itemize}
\item $A_k$: if $\alpha$ is a finite tuple from $\mathfrak{M}$, with $\trdeg(\alpha/S) \le k$, then $\qftp(\alpha/S)$ is realized in $K$.
\item $B_k$: if $\alpha$ is a countable tuple from $\mathfrak{M}$, with $\trdeg(\alpha/S) \le k$, then $\qftp(\alpha/S)$ is realized in $K$.
\item $C_k$: if $\alpha$ is a countable tuple from $\mathfrak{M}$, with $\trdeg(\alpha/S) \le k$, then $\tp(\alpha/S)$ is realized in $K$.
\end{itemize}
There are several implications between these statements:
\begin{itemize}
\item For each $k$, $A_k$ implies $B_k$, by compactness.
\item For each $k$, $B_k$ implies $C_k$.  Indeed, if $\alpha$ is as in $C_k$, apply $B_k$ to $\alpha' := \acl(\alpha S)$ and use Corollary~\ref{qe-version-1}.
\item $C_k$ for all $k$ implies the statement of the Lemma, by compactness.
\end{itemize}
Finally, observe that $C_k$ and $C_j$ imply $C_{k+j}$: if $\alpha$ has transcendence degree $k + j$ over $S$, let $\beta$ be a subtuple of $\alpha$ with transcendence
degree $k$.  Then $\trdeg(\beta/S) \le k$ and $\trdeg(\alpha/\beta S) \le j$.  By $C_k$, we can apply an automorphism over $S$ to move $\beta$ inside $K$.  By $C_j$ applied to $\tp(\alpha/\beta S)$, we can then find a further automorphism moving $\alpha$ inside $K$.

Lemma~\ref{main-lemma} and $\aleph_1$-saturation of $K$ imply $A_1$.  By the above comments, this implies $C_1$, which in turn implies $C_{1+1}, C_3, C_4, \ldots$.  By compactness, the Lemma is true.
\end{proof}

\begin{theorem}\label{curve-reduction}
A field $K \models T_\forall$ is existentially closed, i.e., a model of $T$, if and only if it satisfies A1 and A2(1).
\end{theorem}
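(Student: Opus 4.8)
The plan is to deduce this from Theorem~\ref{axioms-are-correct} together with Lemma~\ref{curve-reduction-temp}. The forward direction is immediate: if $K \models T$, then $K$ satisfies A1 and A2 by Theorem~\ref{axioms-are-correct}, and in particular it satisfies A1 and A2(1).

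For the converse, suppose $K \models T_\forall$ satisfies A1 and A2(1). The first point I would make is that the conditions ``$K \models T_\forall$'', ``$K$ satisfies A1'', and ``$K$ satisfies A2(1)'' are all first-order: $T_\forall = T^0$ is a universal theory, A1 is first-order by Remark~\ref{a1-restatement} (it is equivalent to A1$'$), and A2(1) is first-order by the argument used in the proof of Theorem~\ref{axioms-are-correct} (quantifier-elimination in the $T_i$, Lemma~\ref{various}(c), and the fact that geometric irreducibility is quantifier-free definable). Hence I may pass to an elementary extension $K \preceq K^*$ that is $\aleph_1$-saturated and $\aleph_1$-strongly homogeneous, and $K^*$ still models $T_\forall$, A1, and A2(1).

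Next I would show that $K^*$ is existentially closed, i.e.\ $K^* \models T$. Recall that a model of $T_\forall$ is existentially closed if and only if for every $L \models T_\forall$ extending it and every finite tuple $\alpha$ from $L$, the quantifier-free type $\qftp(\alpha/K^*)$ is finitely satisfiable in $K^*$. So let $L \models T_\forall$ extend $K^*$ and let $\alpha$ be a finite tuple from $L$. Embed $L$ into a model of $T$ and then into a monster model $\mathfrak{M} \models T$, so that $K^* \subseteq L \subseteq \mathfrak{M}$ and $\mathfrak{M}$ is a monster model of $T$ extending $K^*$. Given a quantifier-free formula $\psi(x;\bar b)$ with $\bar b$ a finite tuple from $K^*$ and $L \models \psi(\alpha;\bar b)$, we get $\mathfrak{M} \models \psi(\alpha;\bar b)$ since $\psi$ is quantifier-free. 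As $\bar b$ is a countable subset of $K^*$ and $\alpha$ a countable (indeed finite) tuple from $\mathfrak{M}$, Lemma~\ref{curve-reduction-temp} provides $\alpha' \in K^*$ realizing $\tp(\alpha/\bar b)$ in $\mathfrak{M}$; then $\mathfrak{M} \models \psi(\alpha';\bar b)$, and since $\psi$ is quantifier-free and $K^*$ is a substructure of $\mathfrak{M}$, also $K^* \models \psi(\alpha';\bar b)$. Thus every finite part of $\qftp(\alpha/K^*)$ is realized in $K^*$, so $K^*$ is existentially closed. Finally, since $K^* \models T$, $T$ is a first-order theory, and $K \preceq K^*$ in the full language $\bigcup_i \mathcal{L}_i$, we conclude $K \models T$.

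I expect the only delicate point to be the bookkeeping in the middle step: one must notice that the hypotheses A1 and A2(1) are first-order precisely so that a \emph{saturated elementary} extension is available, using the saturation and strong homogeneity to invoke Lemma~\ref{curve-reduction-temp} and the elementarity to transport the conclusion $K^* \models T$ back down to $K$. All the genuine geometric content has already been absorbed into Lemma~\ref{main-lemma} and Lemma~\ref{curve-reduction-temp}, so the remaining argument is essentially formal.
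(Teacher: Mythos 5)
Your proof is correct and follows essentially the same route as the paper: pass to an $\aleph_1$-saturated, $\aleph_1$-strongly homogeneous elementary extension (noting A1 and A2(1) are first-order), invoke Lemma~\ref{curve-reduction-temp}, and transfer back. The only cosmetic difference is that you verify existential closedness via finite satisfiability of quantifier-free types while the paper applies the Tarski--Vaught test to conclude $K' \preceq \mathfrak{M}$; both packagings rely on exactly the same ingredients.
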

\begin{proof}
If $K$ is existentially closed, then certainly $K$ satisfies A1 and A2(1).  Conversely, suppose $K$ satisfies A1 and A2(1).  Let $K'$ be an $\aleph_1$-saturated $\aleph_1$-strongly homogeneous elementary extension of $K$.  As $K \equiv K'$, it suffices to show that $K' \models T$.  Let $\mathfrak{M}$ be a monster model of $T$, extending $K'$.  It suffices to show that $K' \preceq \mathfrak{M}$.  It suffices to show that if $D$ is a non-empty $K'$-definable subset of $\mathfrak{M}$, then $D$ intersects $K'$.  Let $S$ be a finite subset of $K'$ that $D$ is defined over, and let $\alpha$ be a point in $D$.  By Lemma~\ref{curve-reduction-temp}, $\tp(\alpha/S)$ is realized in $K'$.  Such a realization must live in $D$.
\end{proof}

Consequently, in checking the axioms one only needs to consider curves.  In fact, one only needs to consider smooth curves, by Remark~\ref{smooth-hmmm}.

\section{A Special Case}\label{sec-miracle}
In the case where almost every $T_i$ is ACVF, the axioms can be drastically simplified.

\begin{theorem}\label{miracle}
Suppose $T_2, \ldots T_n$ are all ACVF.  A model $K \models T_\forall$ is existentially closed (i.e., a model of $T$) if and only if the following three conditions hold:
\begin{itemize}
\item $K \restriction \mathcal{L}_1 \models T_1$
\item Each valuation $v_2, \ldots, v_n$ is non-trivial.
\item $T_i$ and $T_j$ do not induce the same topology on $K$, for $i \ne j$.
\end{itemize}
\end{theorem}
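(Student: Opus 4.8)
The plan is to invoke Theorem~\ref{curve-reduction}, which reduces membership in $T$ to axioms A1 and A2(1), and then match these against the three displayed conditions. So I will prove: a model $K \models T_\forall$ satisfies A1 and A2(1) if and only if $K \restriction \mathcal{L}_1 \models T_1$, each of $v_2,\dots,v_n$ is non-trivial, and $T_i$ and $T_j$ induce distinct topologies on $K$ whenever $i \ne j$.

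\emph{Necessity.} Assume $K$ satisfies A1 and A2(1). Since $T_2,\dots,T_n$ are ACVF, A1 is literally $K \restriction \mathcal{L}_1 \models T_1$ by Remark~\ref{a1-simplification} (which also forces $v_1$ non-trivial when $T_1$ is ACVF or $p$CF). For the other two conditions I argue contrapositively, always taking $V = \mathbb{A}^1$ in A2(1) and exhibiting $\mathbb{A}^1$-dense quantifier-free $\mathcal{L}_i$-formulas $\phi_1,\dots,\phi_n$ with $\bigcap_i \phi_i(K) = \emptyset$. If $v_i$ is trivial for some $i \ge 2$, take $\phi_i(x) := (x \ne 0 \wedge \val_i(x) > 0)$ and $\phi_j(x) := (x \ne 0)$ for $j \ne i$: these are $\mathbb{A}^1$-dense by Lemma~\ref{tfae-variant}(c), since the relevant structure on $K$ extends to the function field $K(t)$ with $t$ as generic point (for $v_i$, extend the trivial valuation to the $t$-adic one), yet a witness in $K$ would be a non-zero element of positive $v_i$-value. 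Hence $v_2,\dots,v_n$ are all non-trivial, so each topology $\mathcal{T}_k$ on $K$ is non-discrete. Now if $T_i$ and $T_j$ induce the same topology for some $i \ne j$, that common topology is a Hausdorff field topology, so $0$ and $1$ have disjoint neighbourhoods; picking basic $T_i$- and $T_j$-balls $U \ni 0$, $U' \ni 1$ with $U \cap U' = \emptyset$ (possible by non-discreteness), and letting $\phi_i$ define $U$, $\phi_j$ define $U'$, $\phi_k := (x = x)$ otherwise, both $\phi_i$ and $\phi_j$ are again $\mathbb{A}^1$-dense by Lemma~\ref{tfae-variant}(c) --- for instance in the ordered case extend the order with $t = 1 + \delta$, $\delta$ a positive infinitesimal --- while no element of $K$ lies in $U \cap U'$.

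\emph{Sufficiency.} Assume the three conditions; by Remark~\ref{a1-simplification} axiom A1 holds, so by Theorem~\ref{curve-reduction} it remains to check A2(1). By Remarks~\ref{smooth-hmmm} and \ref{combo-restatement}, A2(1) reduces to: given a geometrically irreducible smooth affine curve $V$ over $K$ and quantifier-free $\mathcal{L}_i$-formulas $\phi_i$ with $\phi_i(K_i)$ a non-empty open subset of $V(K_i)$ for $K_i \models T_i$ extending $K \restriction \mathcal{L}_i$, show $\bigcap_i \phi_i(K) \ne \emptyset$. Since $K \restriction \mathcal{L}_1 \models T_1$ and $\phi_1$ is $V$-dense (Lemma~\ref{tfae-smoothness}), the implication (a) $\Rightarrow$ (b) of Lemma~\ref{tfae-variant} applied inside $T_1$ with $L = K \restriction \mathcal{L}_1$ shows $\phi_1(K)$ is Zariski dense in $V(K^{alg})$; so $V(K)$ is Zariski dense and $\phi_1(K)$ contains a non-empty $\mathcal{T}_1$-open subset of $V(K)$. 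The remaining, substantial step is a weak-approximation argument on $V$ over the field $K$ with its pairwise independent non-trivial V-topologies $\mathcal{T}_1,\dots,\mathcal{T}_n$: one must find a single point of $V(K)$ lying in every $\phi_i$. For $i \ge 2$ one uses that $\phi_i$ is $V$-dense and open, so by Lemma~\ref{tfae-smoothness}(a) and the polydisk argument of Lemma~\ref{ershov} (carried to a smooth point through a local chart) it contains the intersection with $V$ of a small $v_i$-polydisk about any of its points; one then runs the classical weak-approximation argument for independent V-topologies. A key point is to extract from the conditions the requisite richness of $K$ --- notably that each residue field $k_{v_i}$ is algebraically closed, which for $T_1 = \RCF$ follows from $K(\sqrt{-1})$ being algebraically closed together with the independence of $v_i$ from the order, for $T_1 = \ACVF$ is automatic, and for $T_1 = \pCF$ uses the analogous structure of $p$-adically closed fields --- since this is what ensures that each $\phi_i$ has $K$-points near its $K^{alg}$-points; here the amalgamation results of Section~\ref{sec-misc} (e.g.\ Corollary~\ref{extend-types-independently}, Lemma~\ref{precise-amalgamation}) are used to descend to an honest $K$-point. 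It is convenient to split according to whether $T_1$ is ACVF, in which case $K = K^{alg}$, each $\phi_i(K)$ is already a non-empty $v_i$-open, and the problem becomes pure weak approximation of non-empty opens on a smooth curve over a field with several independent valuations.

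The main obstacle is this weak-approximation step. One delicate feature is that although $K \restriction \mathcal{L}_1$ is a model of $T_1$, hence henselian or real closed (so the implicit function theorem is available for $\mathcal{T}_1$), the valued fields $K \restriction \mathcal{L}_i$ with $i \ge 2$ need not be henselian, so $V$ cannot simply be pulled back to $\mathbb{A}^1$ over $K$ through an analytic chart; instead the smoothness of $V$ must be used via the algebraic tools of Section~\ref{sec-misc} (Hilbert 90, Lemmas~\ref{ershov}, \ref{tfae-smoothness}, \ref{whoops}), and it is precisely the \emph{independence} of the topologies that makes the several local conditions simultaneously satisfiable. Everything else --- the reduction via Theorem~\ref{curve-reduction}, the identification of A1 with the first condition, and the construction of the separating $\mathbb{A}^1$-dense formulas for the necessity direction --- is routine.
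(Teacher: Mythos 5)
Your necessity direction is essentially the paper's argument, and it is fine: identify A1 with the first bullet via Remark~\ref{a1-simplification}, and for the other two exhibit $\mathbb{A}^1$-dense formulas that violate A2 if a valuation is trivial or two topologies coincide.

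The sufficiency direction, however, has a genuine gap exactly where you flag ``the remaining, substantial step.'' After correctly reducing to A2(1) via Theorem~\ref{curve-reduction} and Remarks~\ref{smooth-hmmm}, \ref{combo-restatement}, you declare that one ``runs the classical weak-approximation argument for independent V-topologies'' on the smooth curve $V$. But Stone's approximation theorem (Fact~\ref{strong-approximation}) is a statement about $K^1 = \mathbb{A}^1$: non-empty $\mathcal{T}_i$-opens in $K$ have common points. It does not assert anything about a general smooth curve $V$, and it does not follow formally from independence of the topologies on $K$ that the induced topologies on $V(K)$ are independent in the same sense; proving that for curves is precisely the nontrivial content of the theorem. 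The paper's proof supplies a reduction from $V$ to $\mathbb{A}^1$, and your proposal has no analogue of it. Specifically, the paper proceeds by induction on $n$; it invokes Lemma~\ref{jacobian} (applicable only because, by the inductive hypothesis, $K \restriction \bigcup_{i<n}\mathcal{L}_i$ is already a model of the smaller model companion) to produce a $K$-rational map $f \colon \overline{C} \to \mathbb{P}^1$ whose zero divisor is a sum of distinct points in $\bigcap_{i<n}\phi_i(K)$ with no multiplicities; it shows $K$ is topologically dense in $K^{alg}$ for each $v'_i$ (Claim~\ref{uno}, a nontrivial consequence of independence and finite-to-one polynomial maps, not the residue-field statement you gesture at); and then, via the implicit function theorem near the simple zeros of $f$ (Claims~\ref{dos} and \ref{trees}), it chooses $y \in K$ simultaneously close to $0$ in all $n$ topologies -- this is the one place Stone's theorem is used, on $\mathbb{A}^1$ -- and concludes that the fibre $f^{-1}(y)$ is contained in $\bigcap_i\phi_i(K)$. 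Without such a covering map to $\mathbb{P}^1$, ``run the classical weak-approximation argument'' is not a proof step, and the appeal to Lemma~\ref{ershov} plus ``a local chart'' does not supply one (for $i \ge 2$ the valuations on $K$ need not be henselian, as you note yourself, so no analytic chart is available). You correctly identified the hard point but did not close it; the induction on $n$, Lemma~\ref{jacobian}, and the map $f$ are the missing ideas.
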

For example, if we are considering the theory of ordered valued
fields, this says that a model is existentially closed if and only if
the field is real closed, the valuation is non-trivial, and the
ordering and valuation induce different topologies on $K$.  A field
with $n$ valuations is existentially closed if and only if it is
algebraically closed and the valuations induce distinct non-discrete
topologies on the field.  Using this, we can easily see that
$\mathbb{Q}^{alg}$ with $n$ distinct valuations is an existentially
closed field with $n$ valuations.  This surprised me, since I expected
the Rumely Local-Global principle (Theorem 1 of \cite{rumely}) to be
necessary in the proof.

Theorem~\ref{miracle} is \emph{not} model theoretic, and is presumably
known to experts in algebraic geometry or field theory.  

In the proof of Theorem~\ref{miracle}, we will use A. L. Stone's Approximation Theorem (\cite{stoneapprox}, Theorem 3.4):
\begin{fact}\label{strong-approximation}
Let $K$ be a field.  Let $t_1, \ldots, t_n$ be topologies on $K$ arising from orderings and non-trivial valuations.  Suppose that $t_i \ne t_j$ for $i \ne j$.  Then the $\{t_i\}$ are independent, i.e., if $U_i$ is a non-empty $t_i$-open subset of $K$ for each $i$, then $\bigcap_{i = 1}^n U_i$ is non-empty.  Equivalently, the diagonal map $K \to \prod_{i = 1}^n K$ has dense image with respect to the product topology, using the topology $t_i$ for the $i^{\text{th}}$ term in the product.
\end{fact}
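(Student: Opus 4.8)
The plan is to prove the statement in its density form: given a nonempty $t_i$-open set $U_i$ for each $i$, produce a point of $\bigcap_{i=1}^n U_i$. Each $t_i$ is a field topology (order topologies and valuation topologies make $K$ a topological field), so each $U_i$ contains a translate $c_i + N_i$ with $c_i \in U_i$ and $N_i$ a $t_i$-neighbourhood of $0$. Hence it suffices to establish \emph{simultaneous approximation}: for all $c_1,\dots,c_n \in K$ and all prescribed $t_i$-neighbourhoods $N_i$ of $0$, there is $x \in K$ with $x - c_i \in N_i$ for every $i$.

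I would first reduce simultaneous approximation to the existence of \emph{approximate partitions of unity}: for each $i$, and for prescribed $t_j$-neighbourhoods $M_j$ of $0$ ($1\le j\le n$), an element $e_i \in K$ with $e_i - 1 \in M_i$ and $e_i \in M_j$ for $j\ne i$. Given these, take $x = \sum_{i=1}^n c_i e_i$; then for each $j$,
\[ x - c_j = c_j(e_j - 1) + \sum_{i \ne j} c_i e_i, \]
and continuity of addition and of multiplication by a fixed scalar shows that choosing the finitely many $M_j$ (one condition per pair $(i,j)$) fine enough forces every summand, hence $x - c_j$, into $N_j$. Next, since each $t_i$ is a \emph{V-topology} --- for every neighbourhood $U$ of $0$ there is a neighbourhood $W$ of $0$ with $x \notin U \Rightarrow x^{-1} \in W$, which is immediate for order topologies ($|x|\ge\varepsilon \Rightarrow |x^{-1}|\le \varepsilon^{-1}$) and valuation topologies ($v(x)\le\gamma \Rightarrow v(x^{-1})\ge -\gamma$) --- it is enough to produce, for each $i$, a \emph{separating net} $(g_\lambda)_\lambda$ in $K$ with $g_\lambda^{-1}\to 0$ in $t_i$ and $g_\lambda \to 0$ in $t_j$ for every $j\ne i$: then $e_\lambda := g_\lambda/(1+g_\lambda) = 1/(1+g_\lambda^{-1})$ is eventually defined and tends to $1$ in $t_i$ and to $0$ in each $t_j$ with $j\ne i$, so suitable terms of the net serve as the $e_i$.

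The heart of the matter is the case $n = 2$: for distinct V-topologies $t_1 \ne t_2$ on $K$ there is a net $(z_\lambda)$ with $z_\lambda \to 0$ in $t_2$ and $z_\lambda^{-1} \to 0$ in $t_1$ (such a net separates index $1$, and its inverse separates index $2$, so one orientation suffices). Here I would run a routine case analysis on the nature of $t_1, t_2$. If both are valuation topologies, distinctness forces the valuations $v_1, v_2$ to be \emph{independent} --- two dependent valuations have a nontrivial common coarsening, hence induce the same topology --- so the classical approximation theorem for independent valuations furnishes, for any prescribed $\gamma_1, \gamma_2$, an element $z$ with $v_1(z^{-1}) > \gamma_1$ and $v_2(z) > \gamma_2$, and indexing over $(\gamma_1,\gamma_2)$ gives the net; in particular when every $T_i$ in Theorem~\ref{miracle} is ACVF this is \emph{all} that is needed. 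If instead $t_1$ or $t_2$ is archimedean (hence an archimedean absolute value topology), the analogous element comes from the classical weak-approximation argument for inequivalent absolute values (an element $w$ with $|w|_1 < 1 < |w|_2$, whose powers give the net); and a non-archimedean order topology is the topology of its natural convex valuation, which reduces it to the previous cases. In every instance it is exactly the distinctness $t_1 \ne t_2$ that provides an element with opposite ``size'' behaviour for the two topologies.

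Finally I would pass from $n=2$ to general $n$ by induction. Given pairwise distinct $t_1,\dots,t_n$, apply the inductive hypothesis to the two $(n-1)$-element subfamilies $\{t_1,\dots,t_{n-1}\}$ and $\{t_1,\dots,t_{n-2},t_n\}$ to get separating nets $(a_\lambda)$, $(b_\lambda)$ for index $1$; the product $(a_\lambda b_\lambda)$ is $t_1$-large and $t_j$-small for $2\le j\le n-2$, and one wants it $t_{n-1}$- and $t_n$-small --- which requires $b_\lambda$ to be $t_{n-1}$-bounded and $a_\lambda$ to be $t_n$-bounded. This is where the real work lies: one must strengthen the inductive statement so that a separating net for a subfamily is additionally bounded (in the valuation-ring sense for the valuation topologies, the $|\cdot|\le 1$ sense for the archimedean ones) at the ambient topologies it does not kill, and then check that this boundedness clause propagates through both the multiplicative combination and the $n=2$ construction. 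Carrying this bookkeeping is the technical core of the theorem --- it is essentially the substance of Stone's proof --- and is the step I expect to be the main obstacle; the reductions of the first two paragraphs are routine by comparison. With the separating nets in hand, those reductions deliver simultaneous approximation, hence the independence of $t_1,\dots,t_n$.
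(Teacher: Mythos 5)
First, note that the paper does not actually prove Fact~\ref{strong-approximation}: it is quoted as A.~L.~Stone's Approximation Theorem (\cite{stoneapprox}, Theorem 3.4), with \cite{prestel-ziegler}, Theorem 4.1 cited for a self-contained proof. So you are reproving a classical result, and your outline is essentially the standard proof from those sources: reduce independence of the topologies to simultaneous approximation, reduce that to approximate idempotents $e_i$, reduce that via the V-topology axiom to ``separating'' elements that are large at one topology and small at the others, settle the case $n=2$ by a case analysis on the kinds of topologies (using that a non-archimedean order topology coincides with the topology of its convex valuation), and then induct on $n$. All of these reductions are correct as you state them, and the $n=2$ analysis is essentially right, with one small looseness: ``archimedean order topology versus the topology of a Krull valuation of arbitrary rank'' is not literally a pair of inequivalent absolute values, though it is easy to handle directly (take $a$ with $v(a)>\gamma$ and note that $v(na)\ge v(a)$ for $n\in\mathbb{Z}$ while $|na|\to\infty$).

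The genuine gap is the one you flag yourself: the passage from $n=2$ to general $n$. That step is not bookkeeping around your reductions --- it is the actual content of the theorem beyond pairwise weak approximation, and as written your induction does not close. Your proposed fix (strengthen the inductive statement so that a separating net for a subfamily is additionally bounded at the remaining topologies) is a plausible route, but you have not verified that the boundedness clause survives either the multiplicative combination $a_\lambda b_\lambda$ or the $n=2$ construction, and there is no a priori reason why an element produced by weak approximation for two topologies should lie in the valuation ring of an unrelated third valuation. The cleaner standard device (Artin--Whaples, adapted to V-topologies as in \cite{prestel-ziegler}) is: with $z$ separating $t_1$ from $t_2,\dots,t_{n-1}$ and $y$ separating $t_1$ from $t_n$, pass to $z^m y/(1+z^m)$ for large $m$, splitting into cases according to whether $z^m$ tends to $0$, stays bounded away from $0$ and $\infty$, or tends to $\infty$ in $t_n$; continuity of inversion away from $0$ in a V-topology makes each case work. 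Until you carry out one of these arguments, what you have is a correct skeleton of the standard proof with its load-bearing step missing.
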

Note that Fact~\ref{strong-approximation} does not contradict the
existence of valuations which refine each other, because two
non-trivial valuations which refine each other always induce the same
topology.  A self-contained model-theoretic proof of Stone Approximation is
given in \cite{prestel-ziegler}, Theorem 4.1.

Also, we will need the following straightforward lemma.
\begin{lemma}\label{jacobian}
Let $K$ be a model of $T$.  Let $C$ be an affine smooth curve over $K$, geometrically irreducible.  Let $\overline{C}$ be the canonical smooth projective model (as an abstract variety).  For each $i$, let $\phi_i(x)$ be a $C$-dense quantifier-free $\mathcal{L}_i$-formula with parameters from $K$.  Then we can find a $K$-definable rational function $f : \overline{C} \to \mathbb{P}^1$ which is non-constant, and has the property that the divisor $f^{-1}(0)$ is a sum of distinct points in $\bigcap_{i = 1}^n \phi_i(K)$, with no multipliticities.  (In particular, the support of the divisor contains no points from $C(K^{alg}) \setminus C(K)$ and no points from $\overline{C} \setminus C$.)
\end{lemma}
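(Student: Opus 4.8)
The plan is to build the rational function $f$ as follows. First, I would choose for each $i$ a point $p_i \in \phi_i(K)$. Since $\phi_i$ is $C$-dense and defines (after intersecting with an open, using Remark~\ref{open-whoops} / Lemma~\ref{whoops} and Lemma~\ref{tfae-smoothness}) a non-empty open subset of $C(K_i)$ — where $K_i \models T_i$ extends $K \restriction \mathcal{L}_i$ — the set $\phi_i(K_i)$ is a non-empty $t_i$-open subset of $C(K_i)$. I want, however, a point lying in $C(K)$, not merely $C(K_i)$; this is exactly what axiom A2(1) gives us, since $C$ is a geometrically irreducible curve over $K$. So there is a point $q_0 \in \bigcap_{i=1}^n \phi_i(K)$. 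The issue is that I need the \emph{divisor} $f^{-1}(0)$ to consist of such points, with multiplicity one, and supported away from $\overline{C} \setminus C$ and away from non-rational points; so I need to produce many such points and a function vanishing exactly on a chosen finite set of them.

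The key step is a Riemann–Roch argument. Fix a large enough effective divisor $E$ supported on $C(K)$ (but away from the chosen points): for $\deg E$ exceeding $2g-2$ where $g$ is the genus of $\overline{C}$, the linear system $|E|$ is base-point free and the map $\overline{C} \to \mathbb{P}^N$ it defines is a morphism. More directly, I would argue: pick a rational function $h$ on $\overline{C}$ defined over $K$, non-constant, with a pole only at some fixed rational point $q_\infty \in C(K) \cap \bigcap_i \phi_i(K)$ (such $h$ exists for $\deg$ large by Riemann–Roch, since $\dim_K L(d \cdot q_\infty) \to \infty$). Then the zero divisor of $h$ is an effective $K$-rational divisor of degree $d = \deg(h)$. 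The problem is that this zero divisor need not be reduced, and its support need not lie inside $\bigcap_i \phi_i(K)$.

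To fix both defects I would use a \emph{moving} argument combined with Stone Approximation (Fact~\ref{strong-approximation}). The space of functions $h$ with pole divisor bounded by $d \cdot q_\infty$ is a $K$-vector space $L(d \cdot q_\infty)$ of dimension $d + 1 - g$; consider the associated morphism $\varphi : \overline{C} \to \mathbb{P}^{d-g}$ (for $d > 2g$, this is an embedding). A generic hyperplane section of $\varphi(\overline{C})$ is a reduced divisor of degree $d$ all of whose points I can try to force into $\bigcap_i \phi_i(K)$. Here is where the topologies enter: for each $i$, the condition that a hyperplane (a point of the dual projective space over $K_i$) meets $\varphi(\overline{C})$ only in points of $\phi_i(K_i)$, transversally, is a non-empty $t_i$-open condition on the dual space — non-empty because $\phi_i$ is $C$-dense so its image is $t_i$-dense in $C(K_i)$, hence we can find $d$ points of $\phi_i(K_i)$ in "general position" and a hyperplane through them; and it is $t_i$-open by transversality and by the fact that being in $\phi_i$ is an open condition. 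By Fact~\ref{strong-approximation} (applied on the dual projective space, via A2(1) or directly via Stone for the field $K$, noting the topologies are independent), the intersection of these $n$ opens contains a $K$-rational hyperplane $H$. Then $f := $ (linear form defining $H$) $/ $ (linear form defining the hyperplane at infinity, i.e. $q_\infty$'s) gives the desired function: $f^{-1}(0) = \varphi^{-1}(H)$ is a reduced $K$-rational divisor, every point of which lies in $\phi_i(K_i) \cap C(K) = \phi_i(K)$ for all $i$, and contains no point of $\overline{C} \setminus C$ (arrange $q_\infty$ and the generic position to avoid the finitely many such points) and no non-$K$-rational point (since $H$ is chosen $K$-rationally transverse).

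The main obstacle is the last paragraph: making precise the claim that "the hyperplanes $H$ meeting $\varphi(\overline{C})$ only in points of $\phi_i(K_i)$, transversally, form a non-empty $t_i$-open set," and then invoking Stone Approximation on the dual space rather than on $K$ itself. One must be careful that Stone's theorem as quoted is about $K$, so strictly one should reduce the dual-space statement to the field statement — parametrize the dual $\mathbb{P}^{d-g}$ by affine coordinates over $K$ and use independence of the induced topologies on $K^{d-g}$, which follows from independence on $K$ — or equivalently invoke axiom A2(1) for the curve-level density directly and bootstrap. A secondary technical point is handling positive characteristic: $\overline{C}$ is the \emph{canonical smooth projective model as an abstract variety}, so one should work with the function field $K(\overline{C}) = K(C)$ and places rather than with an explicit projective embedding when $p \mid d$ causes wild ramification subtleties; but since we only need the zero divisor of a single function to be reduced and $K$-rational, and we get to \emph{choose} $d$ coprime to $p$ and large, this can be arranged. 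I expect the bulk of the remaining work to be bookkeeping: checking transversality is $t_i$-open (it is a non-vanishing-of-a-Jacobian condition, open by Lemma~\ref{various} and the implicit function theorem as in Lemma~\ref{tfae-smoothness}) and that avoidance of the finitely many bad points of $\overline{C} \setminus C$ and of $C(K^{alg}) \setminus C(K)$ is also open and non-empty.
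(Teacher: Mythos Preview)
Your approach is substantially more complicated than the paper's, and the non-emptiness step is a genuine gap. The paper's argument is this: using A2 on the product variety $C^{g+1}$ (where $g$ is the genus of $\overline{C}$), one finds $g+1$ \emph{distinct} points $p_1,\ldots,p_{g+1} \in \bigcap_i \phi_i(K)$. Setting $D = \sum_j p_j$, Riemann--Roch gives $\ell(D) \ge g+1+1-g = 2$, and (via Hilbert 90 to get a $K$-basis) there is a non-constant $K$-rational function $g$ with pole divisor bounded by $D$. Then $f := 1/g$ has zero divisor equal to the pole divisor of $g$, which is automatically a sum of distinct points among the $p_j$, each with multiplicity at most one. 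No moving argument, no dual projective space, no Stone approximation is needed.

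The gap in your proposal is the claim that the set of hyperplanes meeting $\varphi(\overline{C})$ transversally in $d$ points all lying in $\phi_i(K_i)$ is \emph{non-empty}. Your justification (``we can find $d$ points of $\phi_i(K_i)$ in general position and a hyperplane through them'') does not work: a hyperplane in $\mathbb{P}^{d-g}$ meets the degree-$d$ curve $\varphi(\overline{C})$ in a divisor linearly equivalent to $d \cdot q_\infty$, so $d$ chosen points $p_1,\ldots,p_d$ lie on a common hyperplane section if and only if $\sum p_j \sim d \cdot q_\infty$, a nontrivial constraint in $\operatorname{Pic}(\overline{C})$. If instead you choose only $d-g$ points (enough to pin down a hyperplane), the residual $g$ points of the section are forced on you and need not lie in $\phi_i(K_i)$, nor even in $C(K_i)$ when $T_i$ is RCF or $p$CF. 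One can probably rescue this using that $K_i$ is a large field in the sense of Pop, but that is real additional work you have not done. The paper sidesteps the whole issue by the ``take $f = 1/g$'' trick: rather than controlling the zeros of a function directly, it controls the \emph{poles} via Riemann--Roch on a divisor built from points already known to be good, and then inverts.
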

\begin{proof}
Let $g$ be the genus of $\overline{C}$.

\begin{claim}
We can find $g+1$ distinct points $p_1, \ldots, p_{g+1}$ in $\bigcap_{i = 1}^n \phi_i(K) \subseteq C(K)$.
\end{claim}
\begin{claimproof}
For each $i$, choose a model $K_i$ of $T_i$ extending $K \restriction \mathcal{L}_i$.  Then $\phi_i(K_i)$ is Zariski dense in $C(K_i^{alg})$.  This (easily) implies that $\phi_i(K_i)^{g+1}$ is Zariski dense in $C^{g+1}(K_i^{alg})$.  If $U$ denotes the subset of $C^{g+1}$ consisting of $(x_1, \ldots, x_{g+1})$ such that $x_i \ne x_j$ for every $i$ and $j$, then $U$ is a Zariski dense Zariski open subset of $C^{g+1}$, because its complement is a closed subvariety of lower dimension.  The intersection of a Zariski dense set with a Zariski dense Zariski open is still Zariski dense.  So $\phi_i(K_i)^{g+1} \cap U$ is still Zariski dense in $C^{g+1}$.
Let $\psi_i(x_1, \ldots, x_{g+1})$ be the following quantifier-free $\mathcal{L}_i$-formula over $K$:
\[ \bigwedge_{j = 1}^{g+1} \phi_i(x_j) \wedge \bigwedge_{j \ne k} x_j \ne x_k.\]
Then $\psi_i(K_i) = \phi_i(K_i)^{g+1} \cap U$ is Zariski dense in $C^{g+1}(K_i^{alg})$, so $\psi_i(-)$ is $C^{g+1}$-dense.  By Axiom A2, it follows that some tuple $(p_1, \ldots, p_{g+1})$ satisfies
\[ \bigwedge_{i = 1}^n \psi_i(x_1, \ldots, x_{g+1}) \equiv \left(\bigwedge_{i = 1}^n \bigwedge_{j = 1}^{g+1} \phi_i(x_j) \right) \wedge \left(\bigwedge_{j \ne k} x_j \ne x_k\right).\]
Then $(p_1, \ldots, p_{g+1})$ has the desired properties.
\end{claimproof}

Now let $D$ be the divisor $\sum_j p_j$ on the curve $\overline{C}$.  By Riemann-Roch, $l(D) \ge \deg D + 1 - g = 2$.  The space of global sections of $\mathcal{O}(D)$ is a $K$-definable vector space of dimension at least 2.  Now $K$ is either algebraically closed or has characteristic zero, so $K$ is perfect.  Therefore, by Hilbert's Theorem 90 we know that this vector space has a $K$-definable basis (see Remark~\ref{hilbert90explanation}).  Thinking of the sections of $\mathcal{O}(D)$ as functions with poles no worse than $D$, we can find a non-constant meromorphic function $g$, with $(g) - D \ge 0$.  Then the divisor of poles of $g$ is a subset of $D$, so every pole of $g$ has multiplicity 1 and is in $\bigcap_{i = 1}^n \psi_i(K)$.  Take $f = 1/g$.
\end{proof}

\begin{proof}[Proof (of Theorem~\ref{miracle})]
If $K \models T$, then $K$ satisfies Axioms A1 and A2.  Axiom A1 implies that $K$ is algebraically closed or real closed or $p$-adically closed (Remark~\ref{a1-simplification}).  As $K$ is existentially closed, it is also reasonably clear that all the named valuations must be non-trivial.  Consequently $K \restriction \mathcal{L}_1 \models T_1$ and $v_2, \ldots, v_n$ are non-trivial.  Lastly, suppose $T_i$ and $T_j$ induce the same topology on $K$ for some $i$.  For notational simplicity assume $i = 1$ and $j = 2$.  As the topologies are Hausdorff, we can find non-empty $U_1$ and $U_2$ with $U_1$ a $T_1$-open, $U_2$ a $T_2$-open, and $U_1 \cap U_2 = \emptyset$.  Since the topologies from $T_1$ and $T_2$ have a basis of open sets consisting of quantifier-free definable sets, we can shrink $U_1$ and $U_2$ a little, and assume $U_1$ is quantifier-free definable in $\mathcal{L}_1$ and $U_2$ is quantifier-free definable in $\mathcal{L}_2$.  Now $U_1$ and $U_2$ are both Zariski dense in the affine line, so the formulas defining $U_1$ and $U_2$ are $\mathbb{A}^1$-dense.  Hence, by Axiom A2, $U_1$ must intersect $U_2$, a contradiction.

The other direction of the theorem is harder.  We proceed by induction on $n$, the number of orderings and valuations.  The base case where $n = 1$ is easy/trivial, so suppose $n > 1$.  Suppose $K$ satisfies the assumptions of the Theorem.  By Fact~\ref{strong-approximation}, we know that the $n$ different topologies on $K^1$ are independent.  The first bullet point ensures that $K$ satisfies axiom A1.  By Theorem~\ref{curve-reduction}, it suffices to prove axiom A2(1).  By Remark~\ref{combo-restatement}, we merely need to prove the following:
\begin{quote}
Let $C$ be a geometrically irreducible smooth affine curve defined over $K$.  Let $\phi_1(x)$ be a quantifier-free $\mathcal{L}_1$-formula with parameters from $K$ such that $\phi_1(K)$ is a non-empty open subset of $C$.  For $2 \le i \le n$, let $\phi_i(x)$ be a quantifier-free $\mathcal{L}_i$-formula with parameters from $K$ such that $\phi_i(x)$ defines a non-empty open subset of $C(K^{alg})$ with respect to any/every extension of the $i$th valuation $v_i$ from $K$ to $K^{alg}$.  THEN $\bigcap_{i = 1}^n \phi_i(K)$ is non-empty.
\end{quote}
Here we are using the facts that $K \restriction \mathcal{L}_1$ is already a model of $T_1$, and that for $i > 1$, the field $K^{alg}$ with any extension of $v_i$ will be a model of $T_i = \ACVF$.

For $1 < i \le n$, choose some extension $v'_i$ of the valuation $v_i$ to $K^{alg}$.
\begin{claim}\label{uno}
$K$ is dense in $K^{alg}$ with respect to the $v'_i$-adic topology on $K^{alg}$.
\end{claim}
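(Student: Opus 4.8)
The plan is to show that the Henselization of $(K,v_i)$ is already algebraically closed; the claim then follows because a valued field is dense in its Henselization. Concretely, let $K^h\subseteq K^{alg}$ be the Henselization of $(K,v_i)$ determined by $v'_i$, so that $v'_i$ restricts to the Henselian valuation on $K^h$. By standard properties of Henselizations, $K$ is $v'_i$-dense in $K^h$, so it suffices to prove $K^h=K^{alg}$. If $T_1=\ACVF$ then $K$ is algebraically closed by the first bullet point and there is nothing to do, so assume $K$ has characteristic $0$; then $K$ and $K^h$ are perfect, and it is enough to show $K^h$ is separably closed.

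First I would check that $(K,v_i)$ is \emph{not} itself Henselian. Suppose it were. If $T_1=\RCF$, then applying Hensel's Lemma to $T^2-(1+m)$ for $m\in\mathfrak{m}_{v_i}$ --- whose reduction $T^2-1$ has simple roots in $k_{v_i}$, since $\operatorname{char}k_{v_i}\ne 2$ --- gives $1+\mathfrak{m}_{v_i}\subseteq(K^\times)^2$; hence each element of $1+\mathfrak{m}_{v_i}$ is a square, so positive, so $\mathfrak{m}_{v_i}\subseteq(-1,1)$, from which one checks readily that $\mathcal{O}_{v_i}$ is convex. But a nontrivial convex valuation on an ordered field induces the order topology, contradicting the third bullet point. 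If instead $T_1=p\CF$, then $K$ is also Henselian for its $p$-adic valuation $v_p$, and by the third bullet point together with Stone Approximation (Fact~\ref{strong-approximation}) the valuations $v_i$ and $v_p$ are independent; then F.~K.~Schmidt's theorem (a field Henselian with respect to two independent nontrivial valuations is separably closed) contradicts $K\models p\CF$. So in all cases $(K,v_i)$ is not Henselian, and $K^h$ is a \emph{proper} algebraic extension of $K$.

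It remains to conclude $K^h=K^{alg}$. When $T_1=\RCF$ this is immediate, since $[K^{alg}:K]=2$. When $T_1=p\CF$, observe that $K^h$, as an algebraic extension of the $v_p$-Henselian field $K$, is Henselian for the unique extension of $v_p$; it is also Henselian for $v'_i$ by construction; and these two valuations of $K^h$ are still independent, since a common coarsening would restrict to a common coarsening of $v_i,v_p$ on $K$ and a valuation trivial on $K$ is trivial on the algebraic extension $K^h$. Hence F.~K.~Schmidt applies once more and $K^h$ is separably closed, so (being perfect) algebraically closed, i.e. $K^h=K^{alg}$. The delicate part is this $p\CF$ branch --- ensuring independence and Henselianity of the pair $(v'_i,v_p)$ persist on $K^h$, and using that models of $p\CF$ are genuinely not separably closed; the $\RCF$ branch is a short computation once one knows $K^{alg}/K$ has degree $2$.
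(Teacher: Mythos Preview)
Your strategy differs from the paper's and nearly works, but there is a genuine gap: the claim that $K$ is $v'_i$-dense in its Henselization $K^h$ is \emph{false} when $v_i$ has rank greater than one, and nothing in Theorem~\ref{miracle} bounds the rank of $v_i$. For a counterexample, let $K=k(x,y)$ (with $\operatorname{char}k\ne 2$) carry the rank-two valuation with $v(x)=(0,1)$ and $v(y)=(1,0)$ in $\mathbb{Z}^2$ ordered lexicographically. The root $\beta$ of $T^2-T-x$ with $v(\beta)>0$ lies in $K^h$ (the two roots have valuations $(0,1)$ and $0$, so the nontrivial Galois element moves the valuation and the decomposition group is trivial), yet no $a\in K$ satisfies $v(\beta-a)\ge(1,0)$: passing to the rank-one coarsening $v_1$ (the $y$-adic valuation on $k(x)(y)$), such an $a$ would have $v_1(\beta-a)\ge 1$, forcing the $v_1$-residues to agree, but the residue of any $a\in K$ lies in $k(x)$ while $\beta$ reduces to a root of $T^2-T-x$ outside $k(x)$. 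One can largely repair your argument by replacing $v_i$ with a rank-one coarsening throughout (same topology, and density in the rank-one Henselization does hold), though this presumes such a coarsening exists. There is also a smaller slip in the RCF branch: the assertion $\operatorname{char}k_{v_i}\ne 2$ is unjustified; take instead $I=\{m:v_i(m)>2v_i(2)\}$, so that Hensel gives $1+I\subseteq(K^\times)^2$ regardless of residue characteristic, whence $I\subseteq(-1,1)$, and then the disjoint nonempty opens $I$ and $(1,\infty)$ directly contradict independence of the two topologies via Fact~\ref{strong-approximation}.

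The paper's proof avoids all of this by arguing directly. For each finite Galois $L=K(\zeta)$, the map $x\mapsto P(x)$ (with $P$ the minimal polynomial of $\zeta$) is finite-to-one on $K$, so its image is infinite and hence by Fact~\ref{the-fact} contains a nonempty $T_1$-open set; independence of the $T_1$- and $v_i$-topologies on $K$ then yields $x\in K$ with $v'_i(P(x))=\sum_j v'_i(x-\zeta_j)$ arbitrarily large, forcing some conjugate $\zeta_j$ into the $v'_i$-closure of $K$, and with it all of $L=K(\zeta_j)$. This uses neither Henselizations nor F.~K.~Schmidt and is insensitive to the rank of $v_i$.
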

\begin{claimproof}
The claim is trivial if all the $T_i$ are ACVF, in which case $K = K^{alg}$.  So we may assume characteristic zero.  It suffices to show that $K$ is dense in every finite Galois extension of $K$.\footnote{Note that the value group $v'_i(K)$ is cofinal in $v'_i(K^{alg})$, so e.g. the $v_i$-adic topology on $K$ is the restriction of the $v'_i$-adic topology on $K^{alg}$ to $K$.  Various pathologies are thus avoided.}  Let $L/K$ be a finite Galois extension.  We can write $L$ as $K(\zeta)$ for some singleton $\zeta$.  Let $P(X) \in K[X]$ be the minimal polynomial of $\zeta$ over $K$.  The function $x \mapsto P(x)$ from $K$ to $K$ is finite-to-one, so it has infinite image.  As $K$ is a model of ACVF, $p$CF, or RCF, we see by Fact~\ref{the-fact} that the image $P(K)$ of this map contains an open subset of $K$ with respect to the $T_1$-topology.  Because the $v_i$-adic topology on $K$ is independent from the $T_1$-topology on $K$, we can find elements of $P(K)$ of arbitrarily high $v$-valuation.  By the cofinality of the value groups, for every $\gamma \in v'_i(K^{alg})$, we can find an $x \in K$ with $v_i(P(x)) > \gamma$.  Let $\zeta_1, \ldots, \zeta_m \in L$ be the conjugates of $\zeta$ over $K$, counted with multiplicities.  Then we have just seen that for any $\gamma \in v'_i(K^{alg})$, we can find an $x \in K$ with
\[ \gamma < v'_i(P(x)) = \sum_{ i = m}^n v'_i(x - \zeta_i).\]
This implies that at least one of the $\zeta_i$'s is in the topological closure of $K$ with respect to $v'_i$.  Consequently, the $v'_i$-topological closure of $K$ in $L$ must contain $K[\zeta_i]$ for some $i$.  But $K[\zeta_i] = L$, so $K$ is $v'_i$-dense in $L$.
\end{claimproof}

Now suppose we are given a geometrically irreducible smooth affine curve $C$ defined over $K$, and we have $\mathcal{L}_i$-formulas $\phi_i(x)$ with parameters from $K$, such that $\phi_1(K)$ is a non-empty open subset of $C(K)$, and for $1 < i \le n$, $\phi_i(K^{alg})$ is a non-empty $v_i'$-open subset of $C(K^{alg})$.  (Here we are interpreting $\phi_i(K^{alg})$ using $v_i'$.)  By the inductive hypothesis, $K \restriction \bigcup_{i < n} \mathcal{L}_i$ is an existentially closed model of $\bigcup_{i < n} (T_i)_\forall$.  Applying Lemma~\ref{jacobian} to it, we can find a $K$-definable rational function $f : C \to \mathbb{P}^1$, whose divisor of zeros has no multiplicities and consists entirely of points in $\bigcap_{i < n} \phi_i(K)$ (and no points at infinity and no points in $C(K^{alg}) \setminus C(K)$).  Write this divisor as $\sum_{j = 1}^m (P_j)$, where the $P_j$ are $m$ distinct points in $\bigcap_{i < n} \phi_i(K)$.  Note that $m$ is the degree of $f$.

\begin{claim}\label{dos} There is a $T_1$-open neighborhood $U \subseteq K$ of zero such that for every $y \in U_1$, the divisor $f^{-1}(y)$ consists of $j$ distinct points in $\phi_1(K)$.  In particular, it contains no points in $C(K^{alg}) \setminus C(K)$ and no points in $\overline{C} \setminus C$.
\end{claim}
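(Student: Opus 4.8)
The plan is to deduce this from the fact that $f$ is unramified over $0$, combined with the implicit function theorem applied inside $K$ --- which is legitimate because $K \restriction \mathcal{L}_1 \models T_1$ and $T_1$ is one of $\ACVF$, $\RCF$, $p\mathrm{CF}$. First I would note that, since the divisor $f^{-1}(0) = \sum_{j=1}^m (P_j)$ consists of $m = \deg f$ distinct points with no multiplicities, $f$ has ramification index $1$ at each $P_j$; equivalently the differential $df_{P_j} \colon T_{P_j}\overline{C} \to T_0 \mathbb{P}^1$ is an isomorphism of one-dimensional $K$-vector spaces. By the implicit function theorem (used in the same way in the proof of Lemma~\ref{tfae-smoothness}), for each $j$ there is then a $T_1$-open neighbourhood $U_j \ni 0$ in $K$ and a $T_1$-continuous section $s_j \colon U_j \to \overline{C}(K)$ with $s_j(0) = P_j$ and $f \circ s_j = \id$, whose image $s_j(U_j)$ is a $T_1$-open neighbourhood of $P_j$ in $C(K)$.

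Next I would shrink the $U_j$. Each $P_j$ lies in $\bigcap_{i < n}\phi_i(K)$, and since $n > 1$ this includes $\phi_1(K)$, which by hypothesis is $T_1$-open in $C(K)$; so after shrinking $U_j$ we may assume $s_j(U_j) \subseteq \phi_1(K)$. As the $P_j$ are distinct and the $T_1$-topology is Hausdorff, a further shrink makes the sets $s_j(U_j)$ pairwise disjoint. Put $U := \bigcap_{j=1}^m U_j$, a $T_1$-open neighbourhood of $0$.

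The degree count then finishes the argument, and is the only place one must be slightly careful. Fix $y \in U$. The points $s_1(y), \dots, s_m(y)$ are $m$ pairwise distinct $K$-points of $C$, each lying over $y$, so --- viewed in $\overline{C}(K^{alg})$ --- the divisor $f^{-1}(y)$ contains $\sum_{j=1}^m (s_j(y))$ and hence has degree at least $m$. But $\deg f^{-1}(y) = \deg f = m$, so $f^{-1}(y) = \sum_{j=1}^m (s_j(y))$ with every multiplicity equal to $1$ and no further points; in particular its support meets neither $C(K^{alg}) \setminus C(K)$ nor $\overline{C} \setminus C$, which is exactly the claim. The only potential obstacle is the validity of the implicit function theorem in this setting, but this is classical in each of the three cases (Hensel's lemma for $\ACVF$, and the real and $p$-adic implicit function theorems for $\RCF$ and $p\mathrm{CF}$) and is invoked in the same fashion elsewhere in the paper, so no real difficulty arises.
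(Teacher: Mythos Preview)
Your argument is correct and follows essentially the same route as the paper: both proofs observe that $f$ is unramified at each $P_j$, invoke the implicit function theorem in $K \restriction \mathcal{L}_1 \models T_1$ to obtain local inverses, shrink so that the local images are disjoint and contained in $\phi_1(K)$, and finish by the degree count. The only cosmetic difference is that the paper phrases the implicit function theorem via neighborhoods $W_j \subseteq C(K)$ on which $f$ is a homeomorphism (setting $U = \bigcap_j f(W_j)$), whereas you phrase it via sections $s_j$ defined on neighborhoods $U_j$ of $0$ (setting $U = \bigcap_j U_j$); these are equivalent formulations.
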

\begin{claimproof}
Because the $P_j$ are distinct, they have multiplicity one, so $f$ does not have a critical point at any of the $P_j$'s.  Consequently, by the implicit function theorem there is a $T_1$-open neighborhood $W_j \subseteq C(K)$ of $P_j$ such that $f$ induces a $T_1$-homeomorphism from $W_j$ to an open neighborhood of $0$.  By shrinking $W_j$ if necessary, we may assume that $W_j \subseteq \phi_1(K)$, and that $W_j \cap W_{j'} = \emptyset$ for $j \ne j'$.  Now let $U = \bigcap_{j = 1}^m f(W_j)$.  This is an open neighborhood of $0$ in the affine line $K^1$.  And if $y \in U$, then $f^{-1}(y)$ contains at least one point in each $W_j$.  Since the $W_j$ are distinct, these points are distinct.  Since $f$ is a degree-$m$ map, this exhausts the divisor $f^{-1}(y)$.
\end{claimproof}

\begin{claim}\label{trees} For $1 < i < n$, there is a $\gamma_i \in v_i(K)$ such that if $y \in K^{alg}$ and $v_i'(y) > \gamma_i$, then $f^{-1}(y)$ are all in $\phi_i(K^{alg})$.
\end{claim}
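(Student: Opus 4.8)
The plan is to transplant the implicit-function-theorem argument of Claim~\ref{dos} from the $T_1$-topology to the $v_i'$-adic topology, working inside the model of $\ACVF$ given by $K^{alg}$ with the valuation $v_i'$. Recall that the divisor of zeros of $f$ is $\sum_{j=1}^m (P_j)$ with the $P_j$ distinct points of $C(K)$, and $m = \deg f$. Since $i < n$, each $P_j$ lies in $\phi_i(K) \subseteq \phi_i(K^{alg})$, and by hypothesis $\phi_i(K^{alg})$ is a $v_i'$-open subset of $C(K^{alg})$. As the $P_j$ have multiplicity one, $f$ is a uniformizer at each $P_j$ and so has no critical point there (its derivative is a unit; this remains true in positive characteristic, where in any case $K = K^{alg}$). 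Hence, by the implicit function theorem in the valued field $K^{alg}$, there is for each $j$ a $v_i'$-open neighborhood $W_j \subseteq C(K^{alg})$ of $P_j$ on which $f$ restricts to a $v_i'$-homeomorphism onto a $v_i'$-open neighborhood of $0$ in $\mathbb{A}^1(K^{alg})$.

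Next I would shrink the $W_j$ so that in addition $W_j \subseteq \phi_i(K^{alg})$ (possible since $\phi_i(K^{alg})$ is $v_i'$-open and contains $P_j$) and the $W_j$ are pairwise disjoint (possible since the $P_j$ are distinct and the $v_i'$-topology is Hausdorff). Put $V = \bigcap_{j=1}^m f(W_j)$, a $v_i'$-open neighborhood of $0$. For any $y \in V$ the divisor $f^{-1}(y)$ meets each $W_j$ in at least one point, and these $m$ points are distinct because the $W_j$ are disjoint; since $f$ has degree $m$, they exhaust $f^{-1}(y)$ with multiplicity one and with no further points, in particular no points of $C(K^{alg}) \setminus C(K)$ and none of $\overline{C} \setminus C$. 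Thus $f^{-1}(y) \subseteq \bigcup_{j=1}^m W_j \subseteq \phi_i(K^{alg})$. Finally $V$ contains a ball $\{y : v_i'(y) > \delta\}$ for some $\delta \in v_i'(K^{alg})$, and since $v_i(K)$ is cofinal in $v_i'(K^{alg})$ (the footnote following Claim~\ref{uno}) I would pick $\gamma_i \in v_i(K)$ with $\gamma_i \ge \delta$; then $v_i'(y) > \gamma_i$ forces $y \in V$, which gives the claim.

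There is no real obstacle here: this is essentially a word-for-word translation of Claim~\ref{dos}, using that $(K^{alg}, v_i')$ is a model of $\ACVF$ and hence supports the implicit function theorem and has a well-understood value group. The one step that deserves a little care is the degree count — verifying that the $m$ points produced genuinely account for the whole fibre $f^{-1}(y)$ — which is exactly where the equality of $m$ with $\deg f$ (equivalently, the fact that the zero divisor $\sum_{j=1}^m(P_j)$ of $f$ is multiplicity-free, as arranged by Lemma~\ref{jacobian}) is used.
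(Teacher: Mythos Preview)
Your proposal is correct and is exactly the approach the paper takes: the paper's proof of Claim~\ref{trees} consists of the single sentence ``Use the same argument as Claim~\ref{dos},'' and you have faithfully carried out that translation to the $v_i'$-adic topology on $K^{alg}$, including the cofinality step to land $\gamma_i$ in $v_i(K)$.
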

\begin{claimproof}
Use the same argument as Claim~\ref{dos}.
\end{claimproof}

By Claim~\ref{uno}, $K$ is dense in $K^{alg}$ with respect to the $v'_n$-adic topology.  Also, by assumption, $\phi_n(x)$ interpreted in $(K^{alg},v'_n)$ yields a non-empty $v'_n$-open subset $W \subseteq C(K^{alg})$.  Since $f$ is finite-to-one, the image $f(W)$ is an infinite subset of $\mathbb{P}^1(K^{alg})$, hence it has non-empty $v'_n$-interior.  Let $V$ be a $v'_n$-open subset of $\mathbb{P}^1(K^{alg})$ contained in $f(W)$.
Now, as $K$ is $v'_n$-adically dense in $K^{alg}$, $V$ must intersect $K$.  In particular, $V \cap K$ is a \emph{non-empty} $v_n$-adic open subset of $K$.  By independence of the topologies, we can find a $y$ in $\mathbb{A}^1(K)$ such that
\begin{itemize}
\item $y$ is in $U$, the $T_1$-open neighborhood of $0$ from Claim~\ref{dos}.
\item $v_i(y) > \gamma_i$, for $1 < i < n$, where the $\gamma_i$ are from Claim~\ref{trees}
\item $y$ is in $V \cap K$.
\end{itemize}
Having chosen such a $y$, we know by Claim~\ref{dos} that $f^{-1}(y)$ consists of $j$ distinct points in $\phi_1(K)$.  In particular, each point in $f^{-1}(y)$ is a point of $C(K)$.  And by Claim~\ref{trees}, each of these points also belongs to $\phi_i(K^{alg})$, hence satisfies $\phi_i(-)$, for $i < n$.
Finally, because $y$ is in $V \cap K$, $y$ is in the image of $\phi_n(K^{alg})$ under $f$.  So there is some $x \in \phi_n(K^{alg})$ mapping to $y$.  But we said that every point in $C(K^{alg})$ mapping to $y$ is already in $C(K)$ and even in $\bigcap_{i < n} \phi_i(K)$.  Thus
\[ x \in \phi_n(K^{alg}) \cap \bigcap_{i < n} \phi_i(K) = \bigcap_{i = 1}^n \phi_i(K).\]
In particular some point in $C(K)$ satisfies $\bigwedge_{i = 1}^n \phi_i(x)$, and the theorem is proven.
\end{proof}

\section{Keisler Measures}\label{probabilities}
To establish NTP$_2$ and analyze forking and dividing in $T$, we need the following tool.
\begin{theorem}\label{dream-thm}
Let $T$ be one of the model companions from \S\ref{review}.  For each $K \models T_\forall$ that is a perfect field, each formula $\phi(x)$ and each tuple $a$ from $K$, we can assign a number $P(\phi(a), K) \in [0,1]$ such that the following conditions hold:
\begin{itemize}
\item If $K$ is held fixed, the function $P(-, K)$ is a Keisler measure on the space of completions of the quantifier-free type of $K$.  Thus
\[ P(\phi(a), K) + P(\psi(b), K) = P(\phi(a) \wedge \psi(b), K) + P(\phi(a) \vee \psi(b), K)\]
\[ P(\neg \phi(a), K) = 1 - P(\phi(a), K)\]
for sentences $\phi(a)$ and $\psi(b)$ over $K$.  And if $\phi(a)$ holds in every model of $T$ extending $K$, then $P(\phi(a), K) = 1$.  For example, if $\phi(x)$ is quantifier-free, then $P(\phi(a), K)$ is 1 or 0 according to whether or not $K \models \phi(a)$.  And if $K$ is satisfies axiom A1 of \S\ref{axiom-section}, then $P(\phi(a), K) \in \{0,1\}$ for every $\phi(a)$, by Corollary~\ref{qe-version-1}.
\item Isomorphism invariance: if $K, L$ are two perfect fields satisfying $T_\forall$, and $f : K \to L$ is an isomorphism of structures, then $P(\phi(a), K) = P(\phi(f(a)), L)$ for every $K$-sentence $\phi(a)$.
\item Extension invariance: if $K_0 \subseteq K$ are perfect fields satisfying $T_\forall$, and $K_0$ is relatively algebraically closed in $K$, and $\phi(a)$ is a formula with parameters from $K_0$, then $P(\phi(a), K_0) = P(\phi(a), K)$.
\item Density: if $K \models T_\forall$ is a perfect field and $\phi(a)$ is a $K$-formula, and if $M \models \phi(a)$ for at least one $M \models T$ extending $K$, then $P(\phi(a), K) > 0$.  In other words, the associated Keisler measure is spread out throughout the entire Stone space of completions of $\qftp(K)$.
\end{itemize}
\end{theorem}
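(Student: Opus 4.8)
The plan is to realize $P(-,K)$ as the pushforward of a product of canonical Haar-type probability measures along a continuous surjection onto the space $S_K$ of completions of $\qftp(K)$, exploiting the ``quantifier elimination up to algebraic covers'' of Theorem~\ref{qe-version-2} to control that map. First I would record the following description of $S_K$: if $M \models T$ extends $K$, then by Corollary~\ref{acl-char} the algebraic closure $\overline{K} := \acl_M(K)$ is the relative field-theoretic algebraic closure of $K$ in $M$, it satisfies axiom A1 by Corollary~\ref{a1-rel-closed}, and by Corollary~\ref{qe-version-1} the completion of $\qftp(K)$ realized in $M$ depends only on the $K$-isomorphism type of $\overline{K}$ as a $T_\forall$-structure. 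So a point of $S_K$ is the same thing as a $K$-isomorphism type of an algebraic $T_\forall$-extension of $K$ satisfying A1; and such an extension carries, for each $i$, an extension of the $\mathcal{L}_i$-reduct of $K$.

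Accordingly, fix a separable closure $K^{sep}$ (equal to $K^{alg}$, as $K$ is perfect), and for each $i$ let $X_i$ be the set of ``$i$-direction closures of $K$'' inside $K^{sep}$ --- the models of $T_i$ that are algebraic over, and extend, $K \restriction \mathcal{L}_i$, i.e.\ the extensions of the valuation $v_i$ to $K^{sep}$ when $T_i = \ACVF$, the real closures of $(K, \le_i)$ when $T_i = \RCF$, and the $p$-adic closures when $T_i = p$CF. Each $X_i$ is a profinite $\Gal(K^{sep}/K)$-set; equip it with the canonical $\Gal$-invariant probability measure $\mu_i$ of full support coming from the Haar measure of $\Gal(K^{sep}/K)$, and set $X := \prod_{i=1}^n X_i$, $\mu := \bigotimes_{i=1}^n \mu_i$. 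Given $w = (w_i) \in X$, define $\overline{K}^{(w)} \subseteq K^{sep}$ to be the smallest subfield containing $K$ that is closed under the rule ``adjoin $\alpha$ whenever, for every $i$, $\alpha$ lies in the relative algebraic closure of the current field inside $w_i$.'' One checks that $\overline{K}^{(w)}$, with the induced $\mathcal{L}_i$-structures, is an algebraic $T_\forall$-extension of $K$ satisfying A1, so by Corollary~\ref{qe-version-1} it determines a well-defined completion $r(w) \in S_K$, namely the type of $K$ in any model of $T$ extending $\overline{K}^{(w)}$. Then I put $P(-,K) := r_*\mu$.

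The four properties should then fall out. The Keisler-measure identities, and $P(\phi(a),K) = 1$ when $\phi(a)$ holds in all models of $T$ extending $K$, hold because $r_*\mu$ is a probability measure and each $[\phi(a)]$ is clopen in $S_K$; and $P(\phi(a),K) \in \{0,1\}$ for quantifier-free $\phi$ (its truth depends on $K$ alone) and, by Corollary~\ref{qe-version-1}, for all $\phi$ when $K$ satisfies A1 (then $|S_K| = 1$). Isomorphism invariance is immediate, everything in the construction being canonical. Extension invariance uses that when $K_0$ is relatively algebraically closed in $K$ the restriction map $\Gal(K^{sep}/K) \to \Gal(K_0^{sep}/K_0)$ is an isomorphism; this induces measure-preserving identifications $X_i(K) \cong X_i(K_0)$ intertwining the two maps $r$ on formulas over $K_0$ (the relevant roots and closure conditions all live in $K_0^{sep} = K^{sep} \cap K_0^{alg}$). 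For density, I would use Theorem~\ref{qe-version-2} to write $\phi(\vec{x}) \equiv \exists y\,(P(y,\vec{x}) = 0 \wedge \psi(y,\vec{x}))$ with $\psi$ quantifier-free: a witness is a root of $P(y,a)$, hence algebraic over $K$, hence in $\acl_M(K) = \overline{K}^{(w)}$ by Corollary~\ref{acl-char}, and $\psi$ being quantifier-free one gets $\phi(a) \in r(w)$ iff some (of the finitely many) root $\beta$ of $P(y,a)$ lies in $\overline{K}^{(w)}$ and satisfies $\psi(\beta,a)$ there --- a condition on $w$ depending only on each $w_i \restriction K(\beta)$, hence clopen in $X$. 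A nonempty clopen subset of $X$ has positive $\mu$-measure since $\mu$ has full support; and if $\phi(a)$ holds in some $M \models T$ extending $K$, then choosing $w$ to name $i$-direction closures of $\acl_M(K)$ gives $r^{-1}[\phi(a)] \neq \emptyset$ (here one uses that $\acl_M(K)$, satisfying A1, is recovered as $\overline{K}^{(w)}$), so $P(\phi(a),K) > 0$.

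The hard part will be the construction and analysis of $r$: showing that the relative algebraic closure of $K$ in a model of $T$ is canonically rebuilt, as a $T_\forall$-structure, from the tuple $(w_i)$ of extensions in the separate $\mathcal{L}_i$-directions, and that $r$ is continuous and surjective. This is delicate precisely when several $T_i$ are non-ACVF, since then $\overline{K}$ need not coincide with any one of the individual real or $p$-adic closures; the crucial facts --- that the ``forced-adjunction'' closure of $K$ relative to $(w_i)$ satisfies A1 and coincides with $\acl_M(K)$ for a suitable $M \models T$ (equivalently, for any $M$ extending it) --- rest on the characterization of A1 in Corollary~\ref{a1-rel-closed} together with the uniqueness of types over A1-structures (Corollary~\ref{qe-version-1}). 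Everything else is bookkeeping on top of Theorem~\ref{qe-version-2}.
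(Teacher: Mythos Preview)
Your approach coincides with the paper's in the all-ACVF case: there each $w_i$ is $K^{alg}$ equipped with an extension of $v_i$, so $\overline{K}^{(w)} = K^{alg}$ with $n$ random valuations, and pushing forward the product Haar measure is exactly what the paper does. The gap is in the general case. Your closure rule ``adjoin $\alpha$ whenever $\alpha$ lies in the relative algebraic closure of the current field inside every $w_i$'' stabilizes immediately at $\bigcap_i w_i$, since for $K \subseteq F \subseteq K^{sep}$ the relative algebraic closure of $F$ in $w_i$ is just $w_i$. But $\bigcap_i w_i$ almost never satisfies A1. Concretely, take $T_1 = T_2 = \RCF$, $K = \mathbb{Q}$, and let $w_1, w_2$ be two real closures inside $\overline{\mathbb{Q}}$. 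Each $w_j$ contains exactly one root of $X^3 - 2$; if these two roots differ (probability $2/3$ under your measure), then $F := w_1 \cap w_2$ contains no cube root of $2$, yet $X^3 - 2$ is irreducible over $F$ and has a root in the model $w_1$ of $T_1$ and in the model $w_2$ of $T_2$ extending $F$, contradicting A1$'$. Running the same argument over the family $X^3 - p$ drives the probability that $F$ satisfies A1 to zero. So your map $r$ is undefined on a full-measure set, and the pushforward does not exist.

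The paper confronts exactly this obstruction and repairs it not by a deterministic closure of a single random tuple $(w_i)$, but by a Markov chain that injects \emph{fresh} randomness at every step: from the current field $K^{(j)}$ one picks new independent Haar-random $\sigma_i \in \Gal(K^{alg}/K^{(j)})$ and passes to $K^{(j+1)} = \bigcap_i \sigma_i(L_i^{(j)})$, where $L_i^{(j)}$ is a $T_i$-closure of $K^{(j)}$. Restricted to any finite Galois $L/K$, this chain has finitely many states, no non-trivial cycles, and positive probability of reaching any maximal state (Lemmas~\ref{probability-epsilon}--\ref{move-on}); the limiting distribution $\mu^\infty_{L/K}$ is concentrated on the maximal elements of $\mathfrak{S}(L/K)$, which by Lemma~\ref{yet-another-lemma} and Remark~\ref{nearly-there} are precisely the $M \cap L$ for $M \models T$ extending $K$. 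One then defines $P(\phi(a),K)$ via $\mu^\infty_{L/K}$ for any $L$ determining the truth of $\phi(a)$, and checks independence of $L$ and extension invariance using the compatibility of the chains under restriction (Lemma~\ref{local-calc} and Corollary~\ref{loc-calc-infty}). The essential point your proposal misses is that a single product of Haar measures on $\prod_i X_i$ does not carry enough entropy to land in an A1 extension; the paper's iterated randomness is what forces almost-sure convergence to a maximal state.
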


\subsection{The Algebraically Closed Case}

We first prove Theorem~\ref{dream-thm} in the case where every $T_i$ is a model of ACVF, i.e., the case of existentially closed fields with $n$ valuations.  Define $P(\phi(a), K)$ as follows.  Fix some algebraic closure $K^{alg}$ of $K$.  For each $1 \le i \le n$, let $v_i'$ be an extension to $K^{alg}$ of the $i^{\text{th}}$ valuation $v_i$ on $K$.  Choose automorphisms $\sigma_1, \ldots, \sigma_n \in \Gal(K^{alg}/K)$ randomly with respect to Haar measure on $\Gal(K^{alg}/K)$.  Then
\[ K_{\sigma_1, \ldots, \sigma_n} := (K^{alg}, v'_1 \circ \sigma_1, v'_2 \circ \sigma_2, \ldots, v'_n \circ \sigma_n)\]
is a model of $T_\forall$ satisfying axiom A1 of \S\ref{axiom-section}.  In particular, whether or not $\phi(a)$ holds in a model of $T$ extending $K_{\sigma_1,\ldots,\sigma_n}$ does not depend on the choice of the model, by Corollary~\ref{qe-version-1}.  Define $P(\phi(a), K)$ to be the probability that $\phi(a)$ holds in any/every model of $T$ extending $K_{\sigma_1, \ldots, \sigma_n}$.  This probability exists, i.e., the relevant event is measurable, because whether or not $\phi(a)$ holds is determined by the behavior of the valuations on some finite Galois extension $L/K$, by virtue of Theorem~\ref{qe-version-2}.

Note that the choice of the $v'_i$ does not matter.  If $v$ is a
valuation on $K$ and $w_1$ and $w_2$ are two extensions of $v$ to
$K^{alg}$, then there is a $\tau$ in $\Gal(K^{alg}/K)$ such that $w_1
= w_2 \circ \tau$.  
Thus, if $\sigma$ is a randomly chosen element of
$\Gal(K^{alg}/K)$, then $w_1 \circ \sigma$ and $w_2 \circ \sigma$ have
the same distribution.  Consequently the choice of the valuations
$v'_i$ does not effect the resulting value of $P(\phi(a), K)$.

So we have a well-defined number $P(\phi(a), K)$, and it is defined canonically.  The first two bullet points of Theorem~\ref{dream-thm} are therefore clear.  The density part can be seen as follows: suppose $M \models \phi(a)$ for some $M \models T$ extending $K$.  Let $K^{alg}$ be the algebraic closure of $K$ in $M$.  For the $v'_i$'s, take the restrictions of the valuations on $M$ to $K^{alg}$.  By Theorem~\ref{qe-version-2}, there is a field $K \le L \le K^{alg}$ with $L/K$ a finite Galois extension, such that $\phi(a)$ is implied by $T$ and the diagram of $L$.  Specifically, write $\phi(a)$ as $\exists y  : Q(y;a) = 0 \wedge \psi(y;a)$, and let $L$ be the splitting field of the polynomial $Q(X;a) \in K[X]$.  Now with probability $1/[L:K]^n$, every $\sigma_i$ will restrict to the identity on $L$.  Consequently, $K_{\sigma_1, \ldots, \sigma_n}$ will be a model of $T_\forall$ extending $L$, so in any model $M$ of $T$ extending $K_{\sigma_1, \ldots, \sigma_n}$, $\phi(a)$ will hold.  So $\phi(a)$ holds with probability at least $1/[L:K]^n$, and consequently $P(\phi(a),K) \ge 1/[L:K]^n$.

It remains to verify the extension invariance part of Theorem~\ref{dream-thm}.
Let $K_0 \le K$ be an inclusion of perfect fields, with $K_0$
relatively algebraically closed in $K$.  Let $\phi(a)$ be a formula
with parameters $a$ from $K_0$.  As in the previous paragraph, write
$\phi(a)$ as $\exists y : Q(y;a) = 0 \wedge \psi(y;a)$ and let $L_0$ be
the splitting field of $Q(y;a)$ over $K_0$.  At present $L_0$ is
nothing but a pure field.  Write $L_0 = K_0(\beta)$ for some singleton
$\beta \in L_0$, and let $Q(X)$ be the irreducible polynomial of
$\beta$ over $K_0$.  Let $L = L_0K = K(\beta)$; this is a Galois
extension of $K$.  There are only finitely many ways of factoring
$Q(X)$ in $K^{alg}$, so in each way of factoring $Q(X)$, the
coefficients come from $K_0^{alg}$.  In particular, if $Q(X)$ can be
factored over $K$, the coefficients would belong to $K_0^{alg} \cap K
= K_0$.  So $Q(X)$ is still irreducible over $K$.  Consequently $[L :
  K] = \deg Q(X) = [L_0 : K_0]$.  Now there is a natural restriction
map $\Gal(L/K) \to \Gal(L_0/K_0)$.  It is injective because an element
of $\Gal(K(\beta)/K)$ is determined by what it does to $\beta$.  Since
$\Gal(L/K)$ has the same size as $\Gal(L_0/K_0)$, the restriction map
must be an isomorphism.  Consequently, if $\tau$ is chosen from
$\Gal(L/K)$ randomly, its restriction to $L_0$ is a random element of
$\Gal(L_0/K_0)$.  Consequently, if $\sigma$ is a random element of
$\Gal(K^{alg}/K)$ and $\sigma_0$ is a random element of
$\Gal(K_0^{alg}/K_0)$, then $\sigma \restriction L_0$ and $\sigma_0
\restriction L_0$ have the same distribution, namely, the uniform
distribution on $\Gal(L_0/K_0)$.  From this, it follows easily that
$P(\phi(a), K) = P(\phi(a), K_0)$.

This completes the proof of Theorem~\ref{dream-thm} when every $T_i$ is ACVF.  The other cases are more complicated, though as a consolation all fields are characteristic zero, hence perfect.

A first attempt at defining $P(\phi(a), K)$ is as follows: fix some algebraic closure $K^{alg}$ of $K$.  For each $i$ such that $T_i$ is RCF, let $K_i$ be a real closure of $(K, <_i)$ inside $K^{alg}$.  For each $i$ such that $T_i$ is $p$CF, let $K_i$ be a $p$-adic closure of $(K, v_i)$ inside $K^{alg}$.  For each $i$ such that $T_i$ is ACVF, let $K_i$ be $K^{alg}$ with some valuation extending $v_i$.  In each case, there is a choice, but any two choices are related by an element of $\Gal(K^{alg}/K)$.  Now choose $\sigma_1, \ldots, \sigma_n \in \Gal(K^{alg}/K)$ randomly.  For each $i$, consider $\sigma_i(K_i)$, which is (usually) a model of $T_i$ extending $K$.  Let $K'$ be the field
\[ K' = \bigcap_{i = 1}^n \sigma_i(K_i).\]
There is an obvious way to give $K'$ the structure of a $T_\forall$-model.  If we knew that $K'$ satisfies condition A1 of \S\ref{axiom-section} with high probability, we could define $P(\phi(a), K)$ to be the probability that $\phi(a)$ holds in any/every model of $T$ extending $K'$.  Unfortunately, $K'$ usually satisfies condition A1 with probability \emph{zero}.  Instead, we will proceed by repeating the above procedure with $K'$ in place of $K$, getting a third field $K''$.  Iterating this, we get an increasing sequence $K \subseteq K' \subseteq K'' \subseteq \cdots \subseteq K^{(n)} \subseteq \cdots$ of $T_\forall$-structures on subfields of $K^{alg}$.  The union $K^\infty = \bigcup_{n = 1}^\infty K^{(n)}$ does actually turn out to satisfy axiom A1 with probability 1, and we let $P(\phi(a), K)$ be the probability that $\phi(a)$ holds in any/every model of $T$ extending $K^\infty$.

The rest of this section will make this construction more precise, and verify that it satisfies the requirements of Theorem~\ref{dream-thm}.

\subsection{The General Case}
All fields will be perfect, unless stated otherwise.  All models of $T_\forall$ and $(T_i)_\forall$ will be (perfect) fields, unless stated otherwise.  Galois extensions need not be finite Galois extensions.

We start off with some easy but confusing facts that will be needed later.
\begin{lemma}\label{loc-closed}
Let $L/K$ be a Galois extension of fields, and suppose $K$ has the structure of a $(T_i)_\forall$ model (but $L$ does not).  The following are equivalent
\begin{description}
\item[(a)] For every $F$, if $F$ is a model of $(T_i)_\forall$ extending $K$, and $F$ is a subfield of $L$, then $F = K$.
\item[(b)] There is a model $M \models T_i$ extending $K$, such that $M \cap L = K$.
\item[(c)] For every model $M \models T_i$ extending $K$, $M \cap L = K$.
\end{description}
Note that it makes sense to talk about whether $M \cap L = K$, because $L/K$ is Galois.
\end{lemma}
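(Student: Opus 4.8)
The plan is to prove the cycle of implications $\textbf{(a)} \Rightarrow \textbf{(c)} \Rightarrow \textbf{(b)} \Rightarrow \textbf{(a)}$. Two of these are routine. For $\textbf{(c)} \Rightarrow \textbf{(b)}$ it is enough to observe that \emph{some} model $M \models T_i$ extends $K$: since $K \models (T_i)_\forall$, the field $K$ satisfies every universal consequence of $T_i$, so $\mathrm{Diag}_{\mathcal{L}_i}(K) \cup T_i$ is consistent and $K$ embeds into a model of $T_i$; applying $\textbf{(c)}$ to that model yields $\textbf{(b)}$. For $\textbf{(a)} \Rightarrow \textbf{(c)}$, fix $M \models T_i$ extending $K$ and an algebraic closure $M^{alg} \supseteq M$. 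Because $L/K$ is Galois, every $K$-embedding $L \hookrightarrow M^{alg}$ has the same image $L'$, so $F := M \cap L'$ is a well-defined intermediate field; as a subfield of $M$ it is a substructure of $M$ in the language $\mathcal{L}_i$, hence a model of $(T_i)_\forall$. Transporting the $(T_i)_\forall$-structure along the $K$-isomorphism $L' \cong L$ gives a model of $(T_i)_\forall$ sitting between $K$ and $L$, so $\textbf{(a)}$ forces it to equal $K$, i.e.\ $M \cap L = K$.

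The substance is $\textbf{(b)} \Rightarrow \textbf{(a)}$, which rests on the following observation: \emph{if $P \in K[X]$ is irreducible and has a root in some model of $T_i$ extending $K$, then it has a root in every model of $T_i$ extending $K$.} To see this, let $\beta$ be a root of $P$ in some $N' \models T_i$ extending $K$, let $M \models T_i$ be an arbitrary model extending $K$, and amalgamate $N'$ and $M$ over $K$ inside a model $N \models T_i$ (both are models of $(T_i)_\forall$, so Corollary~\ref{free-amalgamation} applies). Model completeness of $T_i$ gives $M \preceq N$, so $\acl^N(K) = \acl^M(K) \subseteq M$; and $\beta \in N' \subseteq N$ is algebraic over $K$, hence $\beta \in \acl^N(K) \subseteq M$. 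Now assume $\textbf{(b)}$ via some $M$ with $M \cap L = K$, and let $F \models (T_i)_\forall$ satisfy $K \subseteq F \subseteq L$. If $a \in F \setminus K$, then $K(a)$, being a substructure of $F$, is a model of $(T_i)_\forall$ and so embeds over $K$ into a model of $T_i$; thus the minimal polynomial $P$ of $a$ over $K$ (of degree $\ge 2$) has a root there, and by the observation it has a root $b$ in $M$. Since $b$ is algebraic over $K$ and $L/K$ is normal, $b$ lies in the canonical copy of $L$ inside $M^{alg}$; hence $b \in M \cap L = K$, contradicting the irreducibility of $P$. Therefore $F = K$, which is $\textbf{(a)}$.

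The one point that needs care is the handling of the various algebraic extensions of $K$ inside a common algebraic closure: one fixes $M^{alg}$, uses the normality of $L/K$ to see that ``$M \cap L$'' is unambiguous, and checks that a root of $P$ manufactured inside $M$ (or inside the amalgam $N$ and then located in $M$) can legitimately be compared with the elements of $L$. Everything else is the simple principle that a model of $T_i$ extending $K$ absorbs, through the model-theoretic algebraic closure of $K$ that it contains, every algebraic $(T_i)_\forall$-extension of $K$, and that normality of $L/K$ then confines such an absorbed copy to $K$ itself.
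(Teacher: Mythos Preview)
Your proof is correct, but the paper's is organized more economically. The paper proves (b)$\iff$(c) in one stroke by noting that ``$M \cap L = K$'' is equivalent to the conjunction, over $x \in L \setminus K$, of ``the minimal polynomial of $x$ over $K$ has no root in $M$''; by quantifier elimination in $T_i$ this depends only on $K$, not on $M$. Then (a)$\implies$(c) is your argument (take $F = M \cap L$), and $\neg$(a)$\implies\neg$(c) is a single line: given $K \subsetneq F \subseteq L$ with $F \models (T_i)_\forall$, extend $F$ to some $M \models T_i$; this $M$ extends $K$ and has $M \cap L \supseteq F \supsetneq K$. Your (b)$\implies$(a) instead reproves the relevant piece of (b)$\iff$(c) as the ``observation'' about transferring roots of irreducible polynomials between models, using amalgamation and model completeness rather than invoking quantifier elimination directly---correct, but a detour. (Incidentally, in that observation you do not need $\beta \in \acl^N(K) \subseteq M$; from $M \preceq N$ and $N \models \exists x\, P(x)=0$ you get a root of $P$ in $M$ immediately.) What your version buys is explicitness about how the copy of $L$ sits inside $M^{alg}$, which the paper leaves to the remark in the lemma's statement.
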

\begin{proof}
The equivalence of (b) and (c) follows from quantifier elimination in $T_i$.  Indeed, the statement that $M \cap L = K$ is equivalent to the statement that for each $x \in L \setminus K$, the irreducible polynomial of $x$ over $K$ has no zeros in $M$.  This is a conjunction of first order statements about $K$, so it holds in one choice of $M$ if and only if it holds in another choice of $M$.

Suppose (a) holds.  Let $M$ be a model of $T_i$ extending $K$.  Taking $F = M \cap L$, (a) implies that $M \cap L = K$.  So (a) implies (c).

Conversely, suppose (a) does not hold.  Let $F$ witness a contradiction to (a), so $K \subsetneq F \subseteq L$, and $F$ is a model of $(T_i)_\forall$ extending $K$.  Let $M$ be a model of $T_i$ extending $F$ and hence $K$.  Then $M \cap L$ contains $F$, contradicting (c).
\end{proof}

\begin{definition}
Say that $K$ is \emph{locally $T_i$-closed in $L$} if it satisfies the equivalent conditions of the previous lemma.
\end{definition}
\begin{definition}
Let $L/K$ be a Galois extension of fields, and suppose $K$ has the structure of a $(T_i)_\forall$-model (but $L$ does not).  Let $\mathfrak{C}_i(L/K)$ denote the set of models of $(T_i)_\forall$ which extend $K$, are subfields of $L$, and are locally $T_i$-closed in $L$.
\end{definition}
The subscript on $\mathfrak{C}_i$ is present so that
$\mathfrak{C}_i(L/K)$ will be unambiguous when $K$ is a model of
$T_\forall$, in addition to being a model of $(T_i)_\forall$.

There is a natural action of $\Gal(L/K)$ on $\mathfrak{C}_i(L/K)$.

\begin{lemma}\label{random-confusion}
Suppose $L/K$ is a Galois extension of fields, and $K \models (T_i)_\forall$.
\begin{description}
\item[(a)] The action of $\Gal(L/K)$ on $\mathfrak{C}_i(L/K)$ has exactly one orbit.
\item[(b)] Suppose $K'$ is a model of $(T_i)_\forall$ extending $K$, and $L'$ is a field extension of $L$ and $K'$, with $L'$ Galois over $K'$.  If $F \in \mathfrak{C}_i(L'/K')$, then $F \cap L \in \mathfrak{C}_i(L/L \cap K')$. 
\end{description}
\end{lemma}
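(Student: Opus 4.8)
The plan is to realize everything inside one fixed algebraic closure $\Omega := K^{alg}$. Since $L/K$ is an algebraic Galois extension, $L \subseteq \Omega$; in part (b) the extensions $L'/K'$ and $K'/K$ are also algebraic, so $K'$ and $L'$ embed in $\Omega$ as well. Whenever $F \subseteq \Omega$ carries a $(T_i)_\forall$-structure, I will write \emph{a $T_i$-closure of $F$} for a model of $T_i$ extending this structure and algebraic over $F$, chosen as a subfield of $\Omega$: this is a real closure if $T_i = \RCF$, a $p$-adic closure if $T_i = \pCF$, and $\Omega$ equipped with an extension of the valuation if $T_i = \ACVF$. Two facts will be used repeatedly: (i) a $T_i$-closure of $F$ exists (models of $(T_i)_\forall$ are formally real / formally $p$-adic / valued fields) and is unique up to conjugacy, i.e. any two $T_i$-closures of $F$ inside $\Omega$ are carried to one another by an element of $\Gal(\Omega/F)$ --- Artin--Schreier for $\RCF$, conjugacy of valuation extensions for $\ACVF$, and uniqueness of the $p$-adic closure for $\pCF$; and (ii) for any $(T_i)_\forall$-model $F$ with $K \subseteq F \subseteq L$ (resp. $\subseteq L'$), the extension $L/F$ (resp. $L'/F$) is again Galois, so Lemma~\ref{loc-closed} applies with $F$ in place of the base field.

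For (a), I would first exhibit a distinguished element of $\mathfrak{C}_i(L/K)$: fix a $T_i$-closure $M$ of $K$ in $\Omega$ and put $F_0 := M \cap L$. Then $F_0$ is a substructure of $M$, hence a $(T_i)_\forall$-model, $K \subseteq F_0 \subseteq L$, and $M \models T_i$ extends $F_0$ with $M \cap L = F_0$, so Lemma~\ref{loc-closed}(b)$\Rightarrow$(a) makes $F_0$ locally $T_i$-closed in $L$; thus $F_0 \in \mathfrak{C}_i(L/K)$. The key step is then to show every $F \in \mathfrak{C}_i(L/K)$ is $\Gal(L/K)$-conjugate to $F_0$. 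Given such $F$, take a $T_i$-closure $M_F$ of $F$ in $\Omega$; local $T_i$-closedness of $F$ in $L$ and Lemma~\ref{loc-closed}(a)$\Rightarrow$(c) give $M_F \cap L = F$. But $M_F$ is a model of $T_i$ algebraic over $F$, hence over $K$, extending $K$'s structure --- so it is a $T_i$-closure of $K$, and by fact (i) there is $\sigma \in \Gal(\Omega/K)$ carrying $M$ to $M_F$ as $(T_i)_\forall$-structures. Since $L/K$ is Galois, $\sigma(L) = L$, so $\sigma$ restricts to $\Gal(L/K)$, and $\sigma(F_0) = \sigma(M) \cap \sigma(L) = M_F \cap L = F$ as structures. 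Together with nonemptiness this gives exactly one orbit.

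For (b), set $\Lambda := L \cap K'$; this is an intermediate field of $L/K$, so $L/\Lambda$ is Galois, and $\Lambda \models (T_i)_\forall$ as a substructure of $K'$. I would check the three conditions defining membership in $\mathfrak{C}_i(L/\Lambda)$. First, $F \cap L$ is a substructure of $F$, hence a $(T_i)_\forall$-model; it is a subfield of $L$ and contains $\Lambda$ (because $\Lambda \subseteq K' \subseteq F$ and $\Lambda \subseteq L$), with compatible structure. The remaining point --- the content of (b) --- is that $F \cap L$ is locally $T_i$-closed in $L$. Here I would take a $T_i$-closure $M$ of $F$ in $\Omega$; since $F \in \mathfrak{C}_i(L'/K')$ is locally $T_i$-closed in $L'$, Lemma~\ref{loc-closed}(a)$\Rightarrow$(c) gives $M \cap L' = F$, and intersecting with $L \subseteq L'$ yields $M \cap L = (M \cap L') \cap L = F \cap L$. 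Thus $M$ is a model of $T_i$ extending $F \cap L$ with $M \cap L = F \cap L$, so Lemma~\ref{loc-closed}(b)$\Rightarrow$(a) (applied with base $F \cap L$, using that $L/(F \cap L)$ is Galois) shows $F \cap L$ is locally $T_i$-closed in $L$, and $F \cap L \in \mathfrak{C}_i(L/\Lambda)$.

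I expect the main obstacle to be bookkeeping rather than a deep idea: one must be careful that all fields --- $K, L, K', L'$, the various $T_i$-closures, and their pairwise intersections --- can be placed coherently inside a single algebraic closure, so that expressions like $M \cap L$ are literal subfields and not merely defined up to isomorphism, and this is exactly where normality of the Galois extensions gets used at each step. The only genuinely external input is the uniqueness-up-to-conjugacy of the $T_i$-closure, which is classical for $\RCF$ and $\ACVF$ and is the uniqueness of the $p$-adic closure for $\pCF$.
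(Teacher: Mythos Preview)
Your argument for (b) contains a real slip: you assert that $K'/K$ is algebraic, but the lemma does not assume this, and in the paper's applications (e.g.\ the extension-invariance part of Theorem~\ref{dream-thm}) $K'$ is typically a transcendental extension of $K$. So $K'$ and $L'$ need not embed in $K^{alg}$, and your choice $\Omega = K^{alg}$ is too small to contain $F$. The fix is painless---take $\Omega = (K')^{alg}$ instead, which contains $K, L, K', L', F$ and any $T_i$-closure of $F$---and with that change your proof of (b) is essentially identical to the paper's: pick $M \models T_i$ extending $F$, use local $T_i$-closedness of $F$ in $L'$ to get $M \cap L' = F$, then intersect with $L$.

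For (a) your approach is correct but genuinely different from the paper's. You fix a $T_i$-closure $M$ of $K$ inside $\Omega$, show $F_0 = M \cap L$ lies in $\mathfrak{C}_i(L/K)$, and then for arbitrary $F \in \mathfrak{C}_i(L/K)$ observe that a $T_i$-closure of $F$ is also a $T_i$-closure of $K$, invoking the classical uniqueness-up-to-conjugacy of real closures, of extensions of valuations, and of $p$-adic closures to conjugate $F_0$ to $F$. The paper instead takes two elements $F, F' \in \mathfrak{C}_i(L/K)$, uses quantifier elimination in $T_i$ to amalgamate them over $K$ inside some $M \models T_i$, and then uses local $T_i$-closedness of both to force the compositum $\iota(F')F$ inside $L$ to equal each of them. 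Your route is more geometric and makes the Galois action explicit, but imports the uniqueness of the $p$-adic closure as an external fact; the paper's route is more self-contained, resting only on the quantifier elimination already assumed throughout.
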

\begin{proof}
\begin{description}
\item[(a)]
Note that $\mathfrak{C}_i(L/K)$ is non-empty by a Zorn's lemma argument and condition (a) of Lemma~\ref{loc-closed}.
Now suppose $F$ and $F'$ are two elements of $\mathfrak{C}_i(L/K)$.  By quantifier elimination in $T_i$, we can amalgamate $F$ and $F'$ over $K$.  Thus, we can find a model $M \models T$ extending $F$, and an embedding $\iota : F' \to M$ which is the identity on $K$.  Choosing some way of amalgamating $M$ and $L$ as fields, we get that $\iota(F') \subseteq L \supseteq F$, because of $L/K$ being Galois.  The compositum $\iota(F')F$ is a subfield of $L$ with a $(T_i)_\forall$-structure extending that on $F$ and $\iota(F')$, so by local $T_i$-closedness of $\iota(F')$ and $F$ in $L$, $\iota(F') = \iota(F')F = F$.  It follows that $F'$ and $F$ are isomorphic over $K$.  This isomorphism must extend to an automorphism of $L$, because $L/K$ is Galois.  So some automorphism on $L/K$ maps $F'$ to $F$ (as $(T_i)_\forall$-structures).
\item[(b)] Let $M$ be a model of $T_i$ extending $F$.  Choose some way of amalgamating $M$ with $L'$.  Then $M \cap L' = F$ by (c) of Lemma~\ref{loc-closed}.  Therefore, $M \cap L = M \cap L' \cap L = F \cap L$.  So by (b) of Lemma~\ref{loc-closed}, $F \cap L$ is locally $T_i$-closed in $L$.  Therefore it is in $\mathfrak{C}_i(L/L \cap K')$.  \qedhere
\end{description}
\end{proof}

Now we turn our attention from $T_i$ to $T$.
\begin{definition}
Let $K \models T_\forall$ and let $L$ be a pure field that is a Galois extension of $K$.  Let $\mathfrak{S}(L/K)$ be the set of all $K' \models T_\forall$ extending $K$, with $K'$ a subfield of $L$.  In other words, an element of $\mathfrak{S}(L/K)$ is a subfield $F$ of $L$, endowed with a $T_\forall$-structure, such that $F \supseteq K$ and the structure on $F$ extends the structure on $K$.
\end{definition}
There is a natural partial order on $\mathfrak{S}(L/K)$ coming from inclusion of substructures.  There is also a natural action of $\Gal(L/K)$ on $\mathfrak{S}(L/K)$.  One should think of $\mathfrak{S}(L/K)$ as the set of states in a Markov chain, specifically the random process described at the end of the previous section.

\begin{definition}
Suppose $K \models T_\forall$ and $L/K$ is a Galois extension of $K$.  For $1 \le i \le n$, choose some $L_i \in \mathfrak{C}_i(L/K)$.
Choose $\sigma_1, \ldots, \sigma_n \in \Gal(L/K)$ independently and randomly, using Haar measure on $\Gal(L/K)$.  Let $F$ be $\bigcap_{i = 1}^n \sigma_i(L_i)$, with the obvious choice of a $T_\forall$ structure.  So $F$ is a random variable with values in $\mathfrak{S}(L/K)$.  Let $\mu^1_{L/K}$ be the probability distribution on $\mathfrak{S}(L/K)$ obtained in this way.  The choice of the $L_i$'s is irrelevant, by Lemma~\ref{random-confusion}(a).
\end{definition}
The superscript $1$ is to indicate that this is the first step of the Markov chain.

\begin{lemma}\label{probability-epsilon}
Suppose $L/K$ is finite.  Then every event (subset of $\mathfrak{S}(L/K)$) which has positive probability with respect to $\mu^1_{L/K}$ has probability at least $1/m^n$, where $m = [ L : K ]$.
\end{lemma}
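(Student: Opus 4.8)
The plan is to observe that when $L/K$ is finite, the randomness in the definition of $\mu^1_{L/K}$ is entirely discrete and uniform, so every event automatically has probability equal to an integer multiple of $1/m^n$.

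First I would note that since $L/K$ is a finite Galois extension, $\Gal(L/K)$ is a finite group of order $m = [L:K]$, and Haar measure on a finite group is simply the normalized counting measure. Hence the tuple $(\sigma_1, \ldots, \sigma_n)$, chosen independently and uniformly from $\Gal(L/K)$, is distributed uniformly on the finite set $\Gal(L/K)^n$, which has exactly $m^n$ elements, each of probability $1/m^n$.

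Next, having fixed some choice of $L_i \in \mathfrak{C}_i(L/K)$ for each $i$ (the particular choice being irrelevant by Lemma~\ref{random-confusion}(a)), the element $F = \bigcap_{i = 1}^n \sigma_i(L_i) \in \mathfrak{S}(L/K)$ is a deterministic function of $(\sigma_1, \ldots, \sigma_n)$. Therefore $\mu^1_{L/K}$ is the pushforward of the uniform measure on $\Gal(L/K)^n$ along this function, and for any subset $S \subseteq \mathfrak{S}(L/K)$ we get
\[ \mu^1_{L/K}(S) = \frac{1}{m^n} \cdot \left| \left\{ (\sigma_1, \ldots, \sigma_n) \in \Gal(L/K)^n : \textstyle\bigcap_{i = 1}^n \sigma_i(L_i) \in S \right\} \right|, \]
which is a nonnegative integer multiple of $1/m^n$. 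If it is positive, it is at least $1/m^n$, which is exactly the claim.

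There is no real obstacle here; the statement is essentially an observation, and the only point worth spelling out is that $\mu^1_{L/K}$ is genuinely the pushforward of a uniform measure on a finite set of size $m^n$, after which the conclusion is immediate. The one thing to keep track of is to use the finiteness of $L/K$ in the right place — namely to know that $\Gal(L/K)$ is finite, so that Haar measure is the normalized counting measure and the cardinality count above makes sense.
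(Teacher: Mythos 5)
Your proof is correct and follows essentially the same approach as the paper: the measure $\mu^1_{L/K}$ is the pushforward of the uniform measure on the finite set $\Gal(L/K)^n$, so every probability is an integer multiple of $1/m^n$. You spell out the pushforward structure more explicitly than the paper's two-sentence argument, but the underlying observation is identical.
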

\begin{proof}
The only randomness comes from the $\sigma_i$'s.  Each element of $\Gal(L/K)$ has an equal probability under Haar measure, and this probability is $1/m$.  Since the $\sigma_i$'s are chosen independently, each choice of the $\sigma_i$'s has probability $1/m^n$ of occurring.
\end{proof}

\begin{lemma}\label{move-on}
Suppose $L/K$ is finite, and $F$ is a maximal element of $\mathfrak{S}(L/K)$.  Then $\mu^1_{L/K}(\{F\}) > 0$.
\end{lemma}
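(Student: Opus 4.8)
Suppose $L/K$ is finite, and $F$ is a maximal element of $\mathfrak{S}(L/K)$. Then $\mu^1_{L/K}(\{F\}) > 0$.

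The plan is to exhibit one specific outcome of the random process defining $\mu^1_{L/K}$ that equals $F$; since $L/K$ is finite, any particular choice of $(\sigma_1,\ldots,\sigma_n)$ occurs with probability $1/m^n$, where $m=[L:K]$ (this is the content of Lemma~\ref{probability-epsilon}), so producing even one such choice suffices.

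First I would, for each $1\le i\le n$, look at the $\mathcal{L}_i$-reduct $F\restriction\mathcal{L}_i$, which is a $(T_i)_\forall$-model extending $K$ and contained in $L$, and extend it to a \emph{maximal} $(T_i)_\forall$-submodel $F_i$ of $L$ lying above it; this is possible by the same Zorn's lemma argument used to show $\mathfrak{C}_i(L/K)\ne\emptyset$ (see the proof of Lemma~\ref{random-confusion}(a)), together with condition (a) of Lemma~\ref{loc-closed}. Then $F_i\in\mathfrak{C}_i(L/K)$. By transitivity of the $\Gal(L/K)$-action on $\mathfrak{C}_i(L/K)$ (Lemma~\ref{random-confusion}(a)), there is some $\sigma_i\in\Gal(L/K)$ with $\sigma_i(L_i)=F_i$ as $(T_i)_\forall$-structures, where $L_i$ is the fixed representative chosen in the definition of $\mu^1_{L/K}$.

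Next I would consider $F':=\bigcap_{i=1}^n\sigma_i(L_i)=\bigcap_{i=1}^n F_i$, equipped with the $T_\forall$-structure whose $\mathcal{L}_i$-part is the restriction of the structure on $F_i$. Since $F\restriction\mathcal{L}_i\subseteq F_i$ for every $i$, we have $F\subseteq F'$ as fields, and restricting the $\mathcal{L}_i$-structure of $F_i$ down through $F'$ to $F$ recovers $F\restriction\mathcal{L}_i$; hence $F\le F'$ in $\mathfrak{S}(L/K)$. Maximality of $F$ in $\mathfrak{S}(L/K)$ then forces $F=F'=\bigcap_{i=1}^n\sigma_i(L_i)$. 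Thus this outcome of the random process defining $\mu^1_{L/K}$ equals $F$, and therefore $\mu^1_{L/K}(\{F\})\ge 1/m^n>0$.

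The only point needing care is the bookkeeping of the several $\mathcal{L}_i$-structures on the nested fields $F\subseteq F'\subseteq F_i$ — namely that each restriction step behaves as expected — but this is immediate from the fact that universal theories (together with the field axioms) pass to substructures and from the way the $F_i$ were chosen. I do not expect any genuine obstacle once the transitivity statement of Lemma~\ref{random-confusion}(a) is in hand.
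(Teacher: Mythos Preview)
Your proof is correct and follows essentially the same strategy as the paper's: produce for each $i$ an $F_i\in\mathfrak{C}_i(L/K)$ containing $F$, observe that $\bigcap_i F_i\supseteq F$ lies in $\mathfrak{S}(L/K)$ and so equals $F$ by maximality, and conclude that this configuration is hit with positive probability. The only cosmetic differences are that the paper builds $F_i$ as $L\cap M_i$ for a model $M_i\models T_i$ extending $F\restriction\mathcal{L}_i$ (invoking Lemma~\ref{loc-closed}(b)) rather than via Zorn's lemma, and that the paper uses the freedom in choosing the base points $L_i$ (taking $L_i=F_i$ and then $\sigma_i=1$) rather than explicitly invoking transitivity to move a fixed $L_i$ onto $F_i$; both routes amount to the same thing.
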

\begin{proof}
For each $i$, let $M_i$ be a model of $T_i$ extending $F \restriction \mathcal{L}_i$, and choose some way of amalgamating $M_i$ and $L$ as fields over $F$.  Let $F_i = L \cap M_i$.  Of course $F_i \supseteq F$.  By Lemma~\ref{loc-closed}(b), $F_i \in \mathfrak{C}_i(L/K)$.  Let $F' = \bigcap_{i = 1}^n F_i$.  Then $F' \in \mathfrak{S}(L/K)$ and $F'$ extends $F$, so $F = F'$ by maximality of $F$.  Now if we choose $\sigma_1, \ldots, \sigma_n \in \Gal(L/K)$ randomly, then $\bigcap_{i =1 }^n \sigma_i(F_i)$ is distributed according to $\mu^1_{L/K}$.  Since $L/K$ is finite, there is a positive probability that $\sigma_i = 1$ for every $i$, in which case $\bigcap_{i = 1}^n \sigma_i(F_i) = F' = F$.
\end{proof}

\begin{lemma}\label{local-calc-0}
Let $L/K$ be a Galois extension, and $K$ be a model of $T_\forall$.  Let $K'$ be a model of $T_\forall$ extending $K$.  Let $L'$ be a field extending $L$ and $K'$, with $L'$ Galois over $K'$.  If $F \in \mathfrak{S}(L'/K')$ is distributed randomly according to $\mu^1_{L'/K'}$, then $F \cap L$ is distributed randomly according to $\mu^1_{L/L \cap K'}$.
\end{lemma}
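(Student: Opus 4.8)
The plan is to produce the random field $F \cap L$ by exactly the recipe that defines $\mu^1_{L/(L \cap K')}$, by transporting the randomness in $\Gal(L'/K')$ along a restriction map onto $\Gal(L/(L \cap K'))$. First I would set up that map and check that the target makes sense: $L \cap K'$ is a subfield of $K'$ containing $K$, hence (with the structure restricted from $K'$, which extends the one on $K$) a model of $T_\forall$, and $L/(L \cap K')$ is a Galois extension, being a sub-extension of $L/K$; so $\mu^1_{L/(L \cap K')}$ is defined. Every $\sigma \in \Gal(L'/K')$ fixes $K \subseteq K'$, and since $L/K$ is normal, $\sigma$ maps $L$ onto $L$; moreover $\sigma$ fixes $L \cap K'$ pointwise, so $\sigma \mapsto \sigma|_L$ gives a continuous homomorphism $\rho : \Gal(L'/K') \to \Gal(L/(L \cap K'))$ of profinite groups.

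Next I would check that $\rho$ is surjective, so that it carries normalized Haar measure to normalized Haar measure. Writing $LK'$ for the compositum inside $L'$, the extension $LK'/K'$ is Galois, so restriction $\Gal(L'/K') \twoheadrightarrow \Gal(LK'/K')$ is surjective, and $\Gal(LK'/K') \xrightarrow{\sim} \Gal(L/(L \cap K'))$, $\sigma \mapsto \sigma|_L$, is the classical compositum isomorphism (valid also for infinite extensions). The composite of these two maps is $\rho$, hence $\rho$ is surjective. Since a surjective continuous homomorphism of profinite groups pushes normalized Haar measure to normalized Haar measure, if $\sigma_1, \dots, \sigma_n$ are chosen independently and Haar-randomly in $\Gal(L'/K')$, then $\rho(\sigma_1), \dots, \rho(\sigma_n)$ are independent and Haar-random in $\Gal(L/(L \cap K'))$.

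Finally I would assemble the pieces. Choose $L'_i \in \mathfrak{C}_i(L'/K')$ for each $i$, so that $F = \bigcap_{i=1}^n \sigma_i(L'_i)$ realizes $\mu^1_{L'/K'}$. Since each $\sigma_i$ stabilizes $L$, we have $\sigma_i(L'_i) \cap L = \sigma_i(L'_i) \cap \sigma_i(L) = \sigma_i(L'_i \cap L) = \rho(\sigma_i)(L'_i \cap L)$, and therefore $F \cap L = \bigcap_{i=1}^n \rho(\sigma_i)(L'_i \cap L)$. By Lemma~\ref{random-confusion}(b), each $L'_i \cap L$ lies in $\mathfrak{C}_i(L/(L \cap K'))$; and by Lemma~\ref{random-confusion}(a), the particular choice of these elements of $\mathfrak{C}_i(L/(L \cap K'))$ does not affect the resulting distribution. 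Thus $F \cap L$ is produced by precisely the procedure defining $\mu^1_{L/(L \cap K')}$, namely intersecting Haar-random Galois translates of fixed members of the $\mathfrak{C}_i(L/(L \cap K'))$, which yields the claim.

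The step I expect to require the most care is the surjectivity of $\rho$ together with the pushforward-of-Haar-measure assertion; the remaining manipulations with intersections and Galois translates are routine, once one has checked that each $\sigma_i$ genuinely stabilizes $L$, so that $\sigma_i(L'_i) \cap L$ really does equal $\rho(\sigma_i)(L'_i \cap L)$.
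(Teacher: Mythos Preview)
Your proposal is correct and follows essentially the same approach as the paper's proof: both arguments pick $L'_i \in \mathfrak{C}_i(L'/K')$, invoke Lemma~\ref{random-confusion}(b) to get $L'_i \cap L \in \mathfrak{C}_i(L/(L\cap K'))$, verify that restriction $\Gal(L'/K') \to \Gal(L/(L\cap K'))$ is surjective (you via the compositum isomorphism, the paper via the fixed-field computation), and then compute $F \cap L = \bigcap_i (\sigma_i|_L)(L'_i \cap L)$. Your write-up is in fact slightly more explicit than the paper's about why $\sigma_i$ stabilizes $L$ and hence why the intersection-with-$L$ step commutes with the Galois action.
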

\begin{proof}
For $1 \le i \le n$, choose some $F_i \in \mathfrak{C}_i(L'/K')$.  By Lemma~\ref{random-confusion}(b), $F_i \cap L$ is in $\mathfrak{C}_i(L/L \cap K')$.
\begin{claim}
If we choose $\sigma$ from $\Gal(L'/K')$ randomly using Haar measure, then $\sigma \restriction L$ is also randomly distributed in $\Gal(L/L \cap K')$ with respect to Haar measure.
\end{claim}
\begin{claimproof}
  If $\Pi$ denotes the image of the restriction homomorphism
  $\Gal(L'/K') \to \Gal(L/L \cap K')$, then the fixed field of $\Pi$
  is clearly $L \cap \dcl(K') = L \cap K'$ (all fields are perfect).
  By Galois theory, $\Pi = \Gal(L/L \cap K')$, and the restriction
  homomorphism is surjective.
\end{claimproof}  
From the Claim, we conclude that if the $\sigma_i$ are distributed randomly from $\Gal(L'/K')$, then $\sigma_i \restriction L$ are distributed randomly from $\Gal(L/L \cap K')$.  Taking $F = \bigcap_{i = 1}^n \sigma_i(F_i)$, we get $F$ distributed according to $\mu^1_{L'/K'}$.  But
\[ F \cap L = \bigcap_{i = 1}^n (\sigma_i \restriction L)(F_i \cap L)\]
is then distributed according to $\mu^1_{L/L \cap K'}$, because $F_i \cap L \in \mathfrak{C}_i(L/(K' \cap L))$ and $\sigma_i \restriction L$ is distributed according to Haar measure on $\Gal(L/L \cap K')$.
\end{proof}

\begin{definition}\label{markov-chain}
Let $L/K$ be a Galois extension, and $K$ be a model of $T_\forall$.  Define a series of distributions $\{\mu^i_{L/K}\}_{i < \omega}$ on $\mathfrak{S}(L/K)$ as follows:
\begin{itemize}
\item $\mu^0_{L/K}$ assigns probability 1 to $\{K\} \subseteq \mathfrak{S}(L/K)$.
\item $\mu^1_{L/K}$ is as above.
\item For $i > 0$, if we choose $F \in \mathfrak{S}(L/K)$ randomly according to $\mu^i_{L/K}$, and then choose $F' \in \mathfrak{S}(L/F) \subseteq \mathfrak{S}(L/K)$ randomly according to $\mu^1_{L/F}$, then $F'$ is distributed according to $\mu^{i+1}_{L/K}$.
\end{itemize}
\end{definition}
In other words, we are running some kind of Markov chain whose states are the elements of $\mathfrak{S}(L/K)$.  The transition probabilities out of the state $F$ are given by $\mu^1_{L/F}$, and $\mu^n_{L/K}$ is the distribution of the Markov chain after $n$ steps.


\begin{lemma}
Let $L/K$ be a finite Galois extension, and $K$ be a model of $T_\forall$.  Then $\lim_{i \to \infty} \mu^i_{L/K}$ exists, and the corresponding distribution on $\mathfrak{S}(L/K)$ is concentrated on the maximal elements of $\mathfrak{S}(L/K)$.
\end{lemma}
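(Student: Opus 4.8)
The plan is to view the sequence $\{\mu^i_{L/K}\}_{i<\omega}$ as the time-$i$ marginals of the Markov chain on the finite poset $\mathfrak{S}(L/K)$ defined in Definition~\ref{markov-chain}, started from $K$, and to show this chain is absorbed into the set of maximal elements of $\mathfrak{S}(L/K)$. The first thing I would record is that $\mathfrak{S}(L/K)$ is finite: the finite extension $L/K$ has only finitely many intermediate fields, and each such field carries only finitely many $(T_i)_\forall$-structures extending the one on $K$ (a valuation, ordering, or $p$-valuation on $K$ has only finitely many extensions to a finite extension), hence only finitely many $T_\forall$-structures. Two further structural features are immediate from the construction: a one-step transition out of a state $F$ lands in $\mathfrak{S}(L/F) \subseteq \mathfrak{S}(L/K)$, so the chain is non-decreasing for the partial order; and if $F$ is maximal in $\mathfrak{S}(L/K)$ then $\mathfrak{S}(L/F) = \{F\}$ (any $G \in \mathfrak{S}(L/F)$ satisfies $G \supseteq F$ and $G \in \mathfrak{S}(L/K)$), so $\mu^1_{L/F}$ is the point mass at $F$ and the maximal elements are genuinely absorbing states.

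Next I would show that from any state there is a uniformly positive chance of being absorbed at the very next step. Fix a non-maximal $F \in \mathfrak{S}(L/K)$. Then $L/F$ is again a finite Galois extension and $F \models T_\forall$, so Lemma~\ref{move-on} applies with $F$ in place of $K$: every maximal element of $\mathfrak{S}(L/F)$ receives positive $\mu^1_{L/F}$-mass. The finite poset $\mathfrak{S}(L/F)$ is non-empty (it contains $F$), so it has a maximal element $G_F$; moreover $G_F$ is maximal in $\mathfrak{S}(L/K)$, since any $G' \supseteq G_F$ in $\mathfrak{S}(L/K)$ lies in $\mathfrak{S}(L/F)$ and hence equals $G_F$. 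Put $\epsilon_F = \mu^1_{L/F}(\{G_F\}) > 0$; if $\mathfrak{S}(L/K)$ has no non-maximal states the theorem is trivial (the chain stays at $K$), and otherwise let $\epsilon = \min\{\epsilon_F : F \text{ non-maximal}\} > 0$, a minimum over a finite non-empty set. Then, conditioned on the chain being at a non-maximal state at time $i$, the probability that it has reached a maximal (hence absorbing) state by time $i+1$ is at least $\epsilon$, and therefore the probability that the chain is still at a non-maximal state at time $i$ is at most $(1-\epsilon)^i$.

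Finally I would assemble the limit. For $F$ maximal, absorption makes the event ``the chain is at $F$ at time $i$'' increasing in $i$, so $\mu^i_{L/K}(\{F\})$ is non-decreasing and bounded above by $1$, hence convergent. For $F$ non-maximal, $\mu^i_{L/K}(\{F\})$ is bounded by the probability of being at a non-maximal state at time $i$, which is at most $(1-\epsilon)^i \to 0$, so $\mu^i_{L/K}(\{F\}) \to 0$. Since $\mathfrak{S}(L/K)$ is finite, pointwise convergence of each $\mu^i_{L/K}(\{F\})$ yields convergence of $\mu^i_{L/K}$ to a probability distribution $\mu^\infty_{L/K}$ on $\mathfrak{S}(L/K)$, and this limit assigns mass $0$ to every non-maximal state, i.e., is concentrated on the maximal elements.

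The only point requiring a little care is the bookkeeping in the middle step: checking that Lemma~\ref{move-on} is being invoked with the correct base field ($F$, not $K$), and that the maximal elements of $\mathfrak{S}(L/F)$ are exactly the maximal elements of $\mathfrak{S}(L/K)$ that dominate $F$. Granting that, the rest is the standard argument that a finite-state Markov chain with a uniform positive probability of hitting an absorbing set at each step converges to a distribution supported on that set.
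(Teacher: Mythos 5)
Your proof is correct and follows essentially the same route as the paper: treat the $\mu^i_{L/K}$ as a non-decreasing Markov chain on the finite poset $\mathfrak{S}(L/K)$, use Lemma~\ref{move-on} (applied with base field $F$) to get a uniform positive escape probability from each non-maximal state, and conclude absorption into the maximal elements. The only cosmetic difference is that you take $\epsilon$ as a minimum over a finite set of states where the paper invokes Lemma~\ref{probability-epsilon} for the explicit bound $1/[L:K]^n$; you also spell out the finiteness of $\mathfrak{S}(L/K)$, which the paper leaves implicit.
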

\begin{proof}
The fact that the limit distribution exists is a general fact about Markov chains with finitely many states such that the graph of possible transitions has no cycles other than self-loops.

It remains to check that in the limit, we land in a maximal element of $\mathfrak{S}(L/K)$ with probability one.
Let $m = [L : K]$.  If $F \in \mathfrak{S}(L/K)$ is not maximal, then the probability of moving from $F$ to some bigger element is positive by Lemma~\ref{move-on}, and at least $1/m^n$, by Lemma~\ref{probability-epsilon}.  The probability of getting stuck at $F$ is therefore bounded above by $\lim_{k \to \infty} (1 - 1/m^n)^k = 0$.  As there are finitely many non-maximal $F$, we conclude that the probability of getting stuck at any of them is zero.
\end{proof}
We let $\mu^\infty_{L/K}$ denote the limit distribution on $\mathfrak{S}(L/K)$.

\begin{lemma}\label{terminal-positive}
Let $L/K$ be a finite Galois extension, and $K$ be a model of $T_\forall$.  Then every maximal element of $\mathfrak{S}(L/K)$ has a positive probability with respect to $\mu^\infty_{L/K}$.
\end{lemma}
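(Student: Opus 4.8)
The plan is to exploit the fact that a maximal element of $\mathfrak{S}(L/K)$ is an \emph{absorbing} state of the Markov chain from Definition~\ref{markov-chain} — once the chain reaches it, it stays there forever — together with Lemma~\ref{move-on}, which already guarantees that the chain reaches such a state with positive probability on its very first step.

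First I would record the structural observation that if $F^*$ is a maximal element of $\mathfrak{S}(L/K)$, then $\mathfrak{S}(L/F^*) = \{F^*\}$. Indeed, any element of $\mathfrak{S}(L/F^*)$ is a $T_\forall$-structure on a subfield of $L$ containing and extending $F^*$, hence lies in $\mathfrak{S}(L/K)$ and dominates $F^*$ in the substructure order; maximality of $F^*$ forces it to equal $F^*$. In particular the transition distribution out of the state $F^*$, namely $\mu^1_{L/F^*}$, is the point mass at $F^*$, so $\mu^1_{L/F^*}(\{F^*\}) = 1$.

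Next I would plug this into the recursion of Definition~\ref{markov-chain}:
\[ \mu^{i+1}_{L/K}(\{F^*\}) = \sum_{F \in \mathfrak{S}(L/K)} \mu^i_{L/K}(\{F\}) \, \mu^1_{L/F}(\{F^*\}) \ge \mu^i_{L/K}(\{F^*\}) \, \mu^1_{L/F^*}(\{F^*\}) = \mu^i_{L/K}(\{F^*\}), \]
where we simply kept the $F = F^*$ summand and discarded the rest (all non-negative). So $i \mapsto \mu^i_{L/K}(\{F^*\})$ is non-decreasing, whence
\[ \mu^\infty_{L/K}(\{F^*\}) = \lim_{i \to \infty} \mu^i_{L/K}(\{F^*\}) \ge \mu^1_{L/K}(\{F^*\}). \]
Since $L/K$ is finite and $F^*$ is maximal, Lemma~\ref{move-on} gives $\mu^1_{L/K}(\{F^*\}) > 0$, which finishes the proof.

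I do not expect a genuine obstacle here: all the field-theoretic content — the amalgamation and local-$T_i$-closedness arguments needed to place the first step of the chain on a prescribed maximal state — is already packaged in Lemma~\ref{move-on}. What remains is bookkeeping: verifying that maximality of $F^*$ makes it a sink of the transition graph, and recalling (from the immediately preceding lemma) that $\mathfrak{S}(L/K)$ is finite and that $\mu^\infty_{L/K}$ is the pointwise limit of the $\mu^i_{L/K}$, so that the displayed inequality chain is legitimate.
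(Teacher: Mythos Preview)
Your argument is correct and is exactly the approach the paper takes: the paper's one-line proof invokes Lemma~\ref{move-on} together with the observation that a maximal element of $\mathfrak{S}(L/K)$ is an absorbing state of the Markov chain, and you have simply spelled out that observation and the resulting monotonicity in full detail.
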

\begin{proof}
This follows immediately from Lemma~\ref{move-on}, and the fact that once the Markov chain reaches a maximal element of $\mathfrak{S}(L/K)$, it must remain there.
\end{proof}

\begin{lemma}\label{local-calc}
Let $L/K$ be a Galois extension, with $K$ a model of $T_\forall$.  Let $K'$ be a model of $T_\forall$ extending $K$.  Let $L'$ be a field extending $K'$ and $L$, Galois over $K'$.  If $F$ is a random element of $\mathfrak{S}(L'/K')$ distributed  according to $\mu^i_{LK'/K'}$, then $F \cap L$ is distributed according to $\mu^i_{L/L \cap K'}$.
\end{lemma}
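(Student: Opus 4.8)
The plan is to induct on $i$, using Lemma~\ref{local-calc-0} as the one-step building block and Definition~\ref{markov-chain} to splice steps together. First observe that $\mu^i_{LK'/K'}$ is supported on $\mathfrak{S}(LK'/K') \subseteq \mathfrak{S}(L'/K')$, so the larger field $L'$ plays no genuine role and we may work entirely inside $\mathfrak{S}(LK'/K')$. The base case $i = 0$ is immediate: $\mu^0_{LK'/K'}$ is concentrated on $\{K'\}$, and $K' \cap L = L \cap K'$ is exactly the point carrying $\mu^0_{L/L\cap K'}$. The case $i = 1$ is a restatement of Lemma~\ref{local-calc-0}, with the role of $L'$ there played by $LK'$ (a field extending $L$ and $K'$, Galois over $K'$).

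For the inductive step, assume the statement for $i$ and let $F$ be distributed according to $\mu^{i+1}_{LK'/K'}$. By Definition~\ref{markov-chain}, we may produce $F$ in two stages: first draw $F_0 \in \mathfrak{S}(LK'/K')$ according to $\mu^i_{LK'/K'}$, and then, conditionally on $F_0$, draw $F \in \mathfrak{S}(LK'/F_0)$ according to $\mu^1_{LK'/F_0}$. By the inductive hypothesis, $F_0 \cap L$ is distributed according to $\mu^i_{L/L\cap K'}$. Now fix $F_0$. Since $K' \subseteq F_0 \subseteq LK'$ we have $LK' = LF_0$, and $LK'/F_0$ is Galois (being the top part of the Galois extension $LK'/K'$); moreover $L/(L\cap F_0)$ is Galois (an intermediate piece of $L/K$) and $L \cap F_0$ is a model of $T_\forall$, as a (field) substructure of $F_0$. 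Hence Lemma~\ref{local-calc-0}, applied with $(K, K', L, L')$ there replaced by $(L\cap F_0,\ F_0,\ L,\ LK')$, shows that conditionally on $F_0$ the field $F \cap L$ is distributed according to $\mu^1_{L/(L\cap F_0)}$.

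Combining the two stages: the random field $F \cap L$ is obtained by first drawing $F_0 \cap L$ from $\mu^i_{L/L\cap K'}$ and then, conditionally on $F_0 \cap L$, drawing $F \cap L$ from $\mu^1_{L/(F_0\cap L)}$. By Definition~\ref{markov-chain}, run inside $\mathfrak{S}(L/L\cap K')$ (noting $F_0 \cap L \in \mathfrak{S}(L/L\cap K')$ and $F \cap L \in \mathfrak{S}(L/F_0\cap L)$), this is precisely $\mu^{i+1}_{L/L\cap K'}$, which completes the induction. The only point requiring care is the bookkeeping that the map $F \mapsto F \cap L$ transports the Markov chain on $\mathfrak{S}(LK'/K')$ to the Markov chain on $\mathfrak{S}(L/L\cap K')$ one step at a time — in particular that the one-step transition kernel $\mu^1_{LK'/F_0}$ pushes forward to $\mu^1_{L/(F_0\cap L)}$ and depends on $F_0$ alone, not on the history — but that is exactly the content of Lemma~\ref{local-calc-0}. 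I expect the remaining conditional-distribution and measurability formalities to be routine, handled as elsewhere in this section.
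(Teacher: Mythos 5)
Your proof is correct and follows essentially the same route as the paper: induction on $i$ with Lemma~\ref{local-calc-0} supplying the one-step transition, and the observation that the pushed-forward kernel depends on $F_0$ only through $F_0\cap L$ so the two-stage sampling of $F\cap L$ matches the recipe for $\mu^{i+1}_{L/L\cap K'}$. The only cosmetic difference is that you work inside $\mathfrak{S}(LK'/K')$ throughout, matching the lemma's stated subscript $\mu^i_{LK'/K'}$, whereas the paper's own write-up runs the argument with $\mu^i_{L'/K'}$; the two versions agree after intersecting with $L$, so this is immaterial.
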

\begin{proof}
We proceed by induction on $i$.  For $i = 0$, $F$ is guaranteed to be $K'$, and $F \cap L$ is guaranteed to be $K' \cap L$, which agrees with $\mu^0_{L/L \cap K'}$.

For the inductive step, suppose we know the statement of the lemma for $\mu^i$, and prove it for $\mu^{i+1}$.  If we let $F \in \mathfrak{S}(L'/K')$ be chosen according to $\mu^i_{L'/K'}$, and we then choose $F' \in \mathfrak{S}(L'/F) \subseteq \mathfrak{S}(L'/K')$ according to $\mu^1_{L'/F}$, then $F'$ is randomly distributed according to $\mu^{i+1}_{L'/K'}$, by definition of $\mu^{i+1}$.  Also, $F \cap L$ is distributed according to $\mu^i_{L/L \cap K'}$, by the inductive hypothesis.  By Lemma~\ref{local-calc-0} we know that $F' \cap L$ is distributed according to $\mu^1_{L/L \cap F}$.  In particular, the distribution of $F' \cap L$ only depends on $F \cap L$.  So if we want to sample $F' \cap L$, we can simply choose $F \cap L$ using $\mu^i_{L/L \cap K'}$, and can then choose $F' \cap L$ using $\mu^1_{L/F \cap L}$.  This is the recipe for sampling the distribution $\mu^{i+1}_{L/K' \cap L}$.  So $F' \cap L$ is indeed distributed according to $\mu^{i+1}_{L/K' \cap L}$.
\end{proof}

\begin{corollary}\label{loc-calc-infty}
When $L/K$ and $L'/K'$ are finite Galois extension, Lemma~\ref{local-calc} holds for $i = \infty$.
\end{corollary}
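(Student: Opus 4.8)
The plan is to deduce the $i=\infty$ case from the finite-$i$ cases of Lemma~\ref{local-calc} by a routine continuity/limit argument, since in the finite-extension setting all the distributions in sight live on finite sets. First I would record the setup: because $L/K$ and $L'/K'$ are finite, so are the extensions $LK'/K'$, $L'/K'$, and $L/(L\cap K')$ (note $K\subseteq L\cap K'\subseteq L$, so this last one makes sense and, as $L/K$ is Galois, it is Galois). Hence the state spaces $\mathfrak{S}(L'/K')$ and $\mathfrak{S}(L/L\cap K')$ are finite sets — finitely many intermediate fields since everything is perfect, and finitely many $T_\forall$-structures on each by quantifier elimination in the $T_i$ — and consequently the limit distributions $\mu^\infty_{L'/K'}$ and $\mu^\infty_{L/L\cap K'}$ exist, by the lemma preceding the definition of $\mu^\infty$. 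The space of probability distributions on each finite set is a finite-dimensional simplex.

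Next I would check that $F\mapsto F\cap L$ defines a map $r\colon \mathfrak{S}(L'/K')\to\mathfrak{S}(L/L\cap K')$: for $F\in\mathfrak{S}(L'/K')$, the field $F\cap L$ is a subfield of $L$ containing $L\cap K'$, and it inherits from $F$ a $T_\forall$-structure extending the one on $L\cap K'$, so it is an element of $\mathfrak{S}(L/L\cap K')$. The induced pushforward on distributions, $(r_*\mu)(\{G\})=\sum_{F\in r^{-1}(G)}\mu(\{F\})$, is a linear map between finite-dimensional simplices, hence continuous.

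Then the conclusion is immediate. Lemma~\ref{local-calc} says that $r_*\mu^i_{L'/K'}=\mu^i_{L/L\cap K'}$ for every $i<\omega$ (reading the distribution of $F$ as in the proof of that lemma; the same applies with $LK'$ in place of $L'$). Applying $r_*$ to the defining limit $\mu^\infty_{L'/K'}=\lim_i\mu^i_{L'/K'}$ and using continuity of $r_*$ yields
\[ r_*\mu^\infty_{L'/K'} = \lim_{i\to\infty} r_*\mu^i_{L'/K'} = \lim_{i\to\infty}\mu^i_{L/L\cap K'} = \mu^\infty_{L/L\cap K'}, \]
which is exactly Lemma~\ref{local-calc} with $i=\infty$. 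I do not expect a genuine obstacle here: the only points that require care are confirming that all the limit distributions exist (this is where finiteness of $L/K$ and $L'/K'$ is essential) and the small bookkeeping that $r$ lands in the correct state space and that $L/(L\cap K')$ is Galois.
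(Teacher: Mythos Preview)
Your proposal is correct and is exactly the intended argument: the paper states this as a corollary with no proof, so the implicit reasoning is precisely the limit argument you give, using finiteness of the state spaces to pass $r_*$ through $\lim_{i\to\infty}$. Your minor bookkeeping points (that $L/(L\cap K')$ is Galois, that $r$ lands in the right state space, and that the limits exist by finiteness) are the only things to check, and you handle them; the justification ``finitely many intermediate fields since everything is perfect'' should really cite that $L'/K'$ is Galois (hence separable), but the conclusion is right.
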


\begin{definition}
Let $K \models T_\forall$ be a perfect field, $\phi(a)$ be a formula in the language of $T$ with parameters $a$ from $K$.  Say that a finite Galois extension $L/K$ \emph{determines the truth of $\phi(a)$} if the following holds: whenever $M$ and $M'$ are two models of $T$ extending $K$, if $M \cap L$ is isomorphic as a model of $T_\forall$ to $M' \cap L$, then $[M \models \phi(a)] \iff [M' \models \phi(a)]$.  (Note that the isomorphism class of $M \cap L$ does not depend on how we choose to form the compositum $ML$.)
\end{definition}
For every formula $\phi(a)$, there \emph{is} some finite Galois extension $L/K$ which determines the truth of $\phi(a)$.  Namely, use Theorem~\ref{qe-version-2} to write $\phi(a)$ in the form $\exists y :  Q(y;a) = 0 \wedge \psi(y;a)$, and take $L$ to be the splitting field over $K$ of $Q(X;a) \in K[X]$.

\begin{lemma}\label{yet-another-lemma}
Let $K$ be a model of $T_\forall$, $M$ be a model of $T$ extending $K$, and let $L/K$ be a Galois extension of $K$.  Assume $M$ and $L$ are embedded over $K$ into some bigger field.  Then $M \cap L$ is a maximal element of $\mathfrak{S}(L/K)$.
\end{lemma}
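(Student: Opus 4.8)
The plan is to argue by contradiction: if $M \cap L$ fails to be maximal in $\mathfrak{S}(L/K)$, I will produce a proper finite field extension of $M$ carrying a $T_\forall$-structure, contradicting axiom A1, which $M$ satisfies since $M \models T$. First I record that $M \cap L$ really is an element of $\mathfrak{S}(L/K)$, with the $T_\forall$-structure it inherits as a subfield of $M$, and that since $K \subseteq M \cap L \subseteq L$ and $L/K$ is Galois, the extension $L/(M \cap L)$ is itself Galois (in particular normal and separable).

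Now suppose some $F \in \mathfrak{S}(L/K)$ strictly extends $M \cap L$; as $F$ and $M \cap L$ are $T_\forall$-structures on subfields of $L$, this forces the underlying field of $F$ to be strictly larger, so I may pick $a \in F \setminus (M \cap L)$. Then $a \notin M$, since otherwise $a \in L \cap M = M \cap L$. Replacing $F$ by $(M \cap L)(a)$ with the induced structure, I may assume $F = (M \cap L)(a)$ is a finite extension of $M \cap L$. Let $P(X) \in (M \cap L)[X]$ be the minimal polynomial of $a$, of degree $d = [F : M \cap L] > 1$; it is separable, and by normality of $L/(M \cap L)$ all its roots lie in $L$.

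The crucial observation is that $P$ remains irreducible over $M$: the coefficients of any monic factor of $P$ in $M[X]$ are, up to sign, elementary symmetric functions of certain roots of $P$, hence lie in $L$; they also visibly lie in $M$, so they lie in $L \cap M = M \cap L$, and irreducibility of $P$ over $M \cap L$ then forces the factor to be $P$. (This is the only use of the Galois hypothesis.) Consequently $N_0 := M[X]/(P)$ is a field of degree $d > 1$ over $M$. To make it a model of $T_\forall$ extending $M$, I treat each $i$ separately: by Corollary~\ref{free-amalgamation} I can amalgamate the $(T_i)_\forall$-models $F \restriction \mathcal{L}_i$ and $M \restriction \mathcal{L}_i$ over $(M \cap L) \restriction \mathcal{L}_i$; passing to the subfield generated by the two factors, and using that $F$ is algebraic over $M \cap L$ together with the irreducibility of $P$ over $M$, the underlying field of this amalgam is isomorphic to $N_0$ as an $M$-algebra, so its $\mathcal{L}_i$-structure transports to an $\mathcal{L}_i$-structure on $N_0$ extending $M \restriction \mathcal{L}_i$. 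Collecting these for all $i$ makes $N_0$ into a model $N \models T_\forall$ with $M \subseteq N$ and $[N : M] = d > 1$, contradicting axiom A1 for $M$. Hence $M \cap L$ is maximal.

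I expect the main obstacle to be the glueing step in the previous paragraph: a priori, amalgamating $F \restriction \mathcal{L}_i$ with $M \restriction \mathcal{L}_i$ for different indices $i$ could produce different finite extensions of $M$ — namely different irreducible factors of $P$ over $M$ — and then there is no common field on which to carry all the structures at once. The irreducibility of $P$ over $M$ is exactly what rules this out, and pinning down that each amalgam fills out all of $M[X]/(P)$ rather than a proper subextension is where the care lies. The characteristic-$p$ bookkeeping is harmless here, since $P$ is separable because $L/(M \cap L)$ is.
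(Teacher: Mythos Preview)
Your proof is correct and follows essentially the same path as the paper's: reduce to a simple extension $(M\cap L)(a)$, show the minimal polynomial of $a$ over $M\cap L$ remains irreducible over $M$ using normality of $L/(M\cap L)$, and then contradict axiom A1. The only cosmetic difference is in the final step: the paper phrases the contradiction via A1$'$ (for each $i$, the polynomial has a root in some $N_i \models T_i$ extending $F$, hence by quantifier elimination over $M\cap L$ it has a root in any $M_i \models T_i$ extending $M$), whereas you build a $T_\forall$-structure on $M[X]/(P)$ directly by amalgamation; these are two sides of the same coin, and your observation that irreducibility of $P$ over $M$ is precisely what forces all the amalgams to land in one common field is exactly the point.
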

\begin{proof}
Suppose not.  Let $F$ be an element of $\mathfrak{S}(L/K)$, strictly bigger than $M \cap L$, and finitely generated over $M \cap L$.  Let $x$ be a generator of $F$ over $M \cap L$.  If $S$ denotes the set of algebraic conjugates of $x$ over $M$, then the code for the finite set $S$ is in $M$, and also in $L$ because $S \subseteq L$.  So the code for $S$ is in $M \cap L$, implying that $S$ is also the set of algebraic conjugates of $x$ over $M \cap L$.  Since we are assuming that all fields are perfect, this implies that the degree of $x$ over $M$ is the same as the degree of $x$ over $M \cap L$.  In particular, the irreducible polynomial $Q(X)$ of $x$ over $M \cap L$ remains irreducible over $M$.
For $1 \le i \le n$, let $M_i$ be a model of $T_i$ extending $M \restriction \mathcal{L}_i$.  Let $N_i$ be a model of $T_i$ extending $F \restriction \mathcal{L}_i$.  The polynomial $Q(X)$ has a zero in $F$, namely $x$.  Hence it has a zero in $N_i \supseteq F$.  As $M_i$ and $N_i$ are two models of $T_i$ extending $M \cap L$ and $Q(X)$ is defined over $M \cap L$, it follows from quantifier-elimination in $T_i$ that $Q(X)$ also has a zero in $M_i$.

Now we have a polynomial $Q(X)$ of degree $> 1$, irreducible over $M$, such that $Q(X)$ has a root in $M_i$ for every $i$.  This contradicts condition A1' of \S \ref{axiom-section}.
\end{proof}

\begin{definition}
Let $L/K$ be a Galois extension, with $K$ a model of $T_\forall$.  Let $\mathfrak{F}(L/K)$ be the set of maximal elements of $\mathfrak{S}(L/K)$.
\end{definition}
By Zorn's lemma, it is clear that every element of $\mathfrak{S}(L/K)$ is bounded above by an element of $\mathfrak{F}(L/K)$, even if $L/K$ is infinite.  When $L/K$ is a finite extension, $\mu^\infty_{L/K}$ induces a probability distribution on $\mathfrak{F}(L/K)$.

\begin{remark}\label{nearly-there} $\mathfrak{F}(L/K)$ is exactly the set of $F$ of the form $L \cap M$, where $M$ is a model of $T$ extending $K$.  One inclusion is Lemma~\ref{yet-another-lemma}.  The other inclusion is obvious: if $F$ is a maximal element of $\mathfrak{S}(L/K)$, then letting $M$ be a model of $T$ extending $F$, and combining $M$ and $L$ into a bigger field in any way we like, we have $F \subseteq M \cap L \in \mathfrak{S}(L/K)$, so maximality of $F$ forces $M \cap L = F$.
\end{remark}

Suppose that $L/K$ determines the truth of $\phi(a)$.  Then by Remark~\ref{nearly-there}, there must be a uniquely determined map $f_{\phi(a),L}$ from $\mathfrak{F}(L/K)$ to $\{\bot,\top\}$ such that for every $M \models T$ extending $K$, and every way of forming the compositum $ML$, the truth of $M \models \phi(a)$ is given by $f_{\phi(a),L}(M \cap L)$.

Another corollary of Remark~\ref{nearly-there} is that if $K \le L \le L'$, with $L'$ and $L$ Galois extensions of $K \models T_\forall$, and if $F \in \mathfrak{F}(L'/K)$, then $F = M \cap L'$ for some model $M$, and hence $F \cap L = M \cap L' \cap L = M \cap L$ is in $\mathfrak{F}(L/K)$.

\textbf{Finally, we define $P(\phi(a), K)$ to be $\mu^\infty_{L/K}(\{F  : f_{\phi(a),L}(F) = \top\})$.}
\begin{lemma}\label{thats-good}
The choice of $L$ does not matter.
\end{lemma}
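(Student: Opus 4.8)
The plan is to show that any two finite Galois extensions $L_1/K$ and $L_2/K$, each determining the truth of $\phi(a)$, give the same value for $\mu^\infty_{L/K}(\{F : f_{\phi(a),L}(F) = \top\})$. Since $L_1$ and $L_2$ both embed over $K$ into their compositum $L := L_1 L_2$, which is again finite Galois over $K$, and since — as I will check — $L$ also determines the truth of $\phi(a)$, it suffices to handle a nested pair $K \le L_0 \le L$ of finite Galois extensions with $L_0$ determining the truth of $\phi(a)$, and to prove that the value computed with $L_0$ equals the one computed with $L$; applying this with $L_0 = L_1$ and with $L_0 = L_2$ then shows both original values equal the one for $L$, hence each other.

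For the nested case I would combine two compatibilities, one of the Boolean "truth" functions and one of the measures, both expressed through the restriction map $\rho : \mathfrak{F}(L/K) \to \mathfrak{F}(L_0/K)$, $F \mapsto F \cap L_0$ (which is well-defined, landing in $\mathfrak{F}(L_0/K)$, by the remark following Remark~\ref{nearly-there}). First, that $L$ itself determines the truth of $\phi(a)$: given models $M, M'$ of $T$ extending $K$ with $M \cap L \cong M' \cap L$ over $K$, extend the isomorphism to an element of $\Gal(L/K)$ by isomorphism extension for the splitting field $L$, note it preserves $L_0$ since $L_0/K$ is Galois, and deduce $M \cap L_0 \cong M' \cap L_0$ over $K$, so $L_0$ already settles the truth. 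Second, using Remark~\ref{nearly-there} to write each $F \in \mathfrak{F}(L/K)$ as $M \cap L$ with $M \models T$ extending $K$, one gets $f_{\phi(a),L}(F) = [M \models \phi(a)] = f_{\phi(a),L_0}(M \cap L_0) = f_{\phi(a),L_0}(F \cap L_0)$, the value being independent of the choice of $M$ because $F \cap L_0$ is; hence $f_{\phi(a),L} = f_{\phi(a),L_0} \circ \rho$. Third, by Lemma~\ref{local-calc} and its $i = \infty$ form Corollary~\ref{loc-calc-infty}, applied with the ground model $K'$ taken to be $K$ itself (so that $L \cap K' = K$), the push-forward of $\mu^\infty_{L/K}$ along $\rho$ is exactly $\mu^\infty_{L_0/K}$.

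Putting these together gives $\mu^\infty_{L/K}(\{F : f_{\phi(a),L}(F) = \top\}) = \mu^\infty_{L/K}\bigl(\rho^{-1}\{F_0 : f_{\phi(a),L_0}(F_0) = \top\}\bigr) = \mu^\infty_{L_0/K}(\{F_0 : f_{\phi(a),L_0}(F_0) = \top\})$, which is the required equality. I expect the only real friction to be bookkeeping: matching the variables of Lemma~\ref{local-calc}/Corollary~\ref{loc-calc-infty} (stated for a general pair $K \le K'$ of $T_\forall$-models) to the degenerate case $K' = K$ used here so that the push-forward statement reads off cleanly, and verifying the small auxiliary points — $L = L_1 L_2$ is finite Galois over $K$, it determines the truth of $\phi(a)$, and $\rho$ is the very map intertwining both the $f$'s and the measures. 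None of this is deep, but it must be lined up carefully.
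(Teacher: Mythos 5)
Your proposal is correct and follows essentially the same route as the paper's proof: reduce to a nested pair of finite Galois extensions via the compositum, show that the restriction map $r : \mathfrak{F}(L'/K) \to \mathfrak{F}(L/K)$ intertwines the truth functions $f_{\phi(a),-}$, and invoke Corollary~\ref{loc-calc-infty} with $K' = K$ to see that $r$ pushes $\mu^\infty_{L'/K}$ forward to $\mu^\infty_{L/K}$. The only cosmetic difference is that you spell out the (easy) verification that the compositum still determines the truth of $\phi(a)$, which the paper asserts without comment.
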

\begin{proof}
If $L$ and $L'$ are two finite Galois extensions of $K$ which determine the truth of $\phi(a)$, then so does their compositum $LL'$.  So we may assume $L \subseteq L'$.  Let $r : \mathfrak{F}(L'/K) \to \mathfrak{F}(L/K)$ be the restriction map, $F \mapsto F \cap L$.  
\begin{claim} $f_{\phi(a),L'} = f_{\phi(a),L} \circ r$.
\end{claim}
\begin{claimproof}
For $F \in \mathfrak{F}(L'/K)$, we will show $f_{\phi(a),L'}(F) = f_{\phi(a),L}(r(F))$.  Write $F$ as $M \cap L'$, with $M$ a model of $T$ extending $F$.  Then $f_{\phi(a),L'}(M \cap L') = f_{\phi(a),L'}(F)$ is the truth value of $M \models \phi(a)$.  But $M \cap L = F \cap L$, so by definition of $f_{\phi(a),L}$, we also know that $f_{\phi(a),L}(M \cap L) = f_{\phi(a),L}(r(F))$ is the truth value of $M \models \phi(a)$, which is the same thing.  So $f_{\phi(a),L}(r(F)) = f_{\phi(a),L'}(F)$.
\end{claimproof}

By Corollary~\ref{loc-calc-infty} applied in the $K = K'$ case, if $F$ is a random element of $\mathfrak{F}(L'/K)$ chosen according to $\mu^\infty_{L'/K}$, then $r(F) = F \cap L$ is distributed according to $\mu^\infty_{L/K}$.  In particular, the probability of $f_{\phi(a),L'}(F)$ or equivalently of $f_{\phi(a),L}(r(F))$ is the same as the probability of $f_{\phi(a),L}(F')$, with $F'$ chosen directly from $\mu^\infty_{L/K}$.  But the former probability is $P(\phi(a),K)$ computed using $L'$, while the latter is $P(\phi(a),K)$ computed using $L$.
\end{proof}

So $P(\phi(a), K)$ is at least a well-defined number.  The ``isomorphism invariance'' part of Theorem~\ref{dream-thm} is clear from the definitions.
We need to prove the other conditions of Theorem~\ref{dream-thm}.

\begin{lemma}
For any fixed $K$, the function $P( -, K)$ is a Keisler measure on the space of completions of the quantifier-free type of $K$.
\end{lemma}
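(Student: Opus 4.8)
The plan is to reduce everything to a single finite Galois extension $L/K$ and then read off the Keisler measure axioms from the fact that $\mu^\infty_{L/K}$ is an honest (finitely additive) probability distribution on the \emph{finite} set $\mathfrak{F}(L/K)$.

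First I would check that, given finitely many $K$-formulas $\phi_1(a_1), \ldots, \phi_k(a_k)$, there is one finite Galois extension $L/K$ determining the truth of all of them and of every Boolean combination of them. For each $\phi_j$ there is such an $L_j$ by Theorem~\ref{qe-version-2} (write $\phi_j$ as $\exists y : Q_j(y;a_j) = 0 \wedge \psi_j(y;a_j)$ and take the splitting field of $Q_j$ over $K$); a finite Galois extension that determines the truth of a formula continues to do so after being enlarged, since an isomorphism $M \cap L' \to M' \cap L'$ over $K$ extends to an automorphism of $L'/K$, which maps the intermediate normal extension $L$ to itself and hence restricts to an isomorphism $M \cap L \to M' \cap L$; and a finite Galois extension determining the truth of both $\phi(a)$ and $\psi(b)$ automatically determines the truth of $\phi(a) \wedge \psi(b)$, $\phi(a) \vee \psi(b)$, and $\neg \phi(a)$. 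So $L := L_1 \cdots L_k$ works, and by Lemma~\ref{thats-good} each $P(\phi_j(a_j), K)$ may be computed using this common $L$, namely as $\mu^\infty_{L/K}(\{F : f_{\phi_j(a_j), L}(F) = \top\})$.

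Next I would record that the indicator maps $f_{-,L} : \mathfrak{F}(L/K) \to \{\bot, \top\}$ respect Boolean operations pointwise. By Remark~\ref{nearly-there} every $F \in \mathfrak{F}(L/K)$ is of the form $M \cap L$ for some $M \models T$ extending $K$, and $f_{\phi(a),L}(F)$ is exactly the truth value of $M \models \phi(a)$; hence $f_{\phi(a) \wedge \psi(b), L} = f_{\phi(a),L} \wedge f_{\psi(b),L}$, $f_{\phi(a) \vee \psi(b), L} = f_{\phi(a),L} \vee f_{\psi(b),L}$, and $f_{\neg\phi(a),L} = \neg f_{\phi(a),L}$ on $\mathfrak{F}(L/K)$. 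The same observation shows $f_{\phi(a),L}$, hence $P(\phi(a),K)$, depends only on the class of $\phi(a)$ modulo $T$ together with the diagram of $K$, so that $P(-,K)$ is well defined on the space of completions of $\qftp(K)$.

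With these facts in hand the lemma is bookkeeping with finite measures. Setting $A = \{F : f_{\phi(a),L}(F) = \top\}$ and $B = \{F : f_{\psi(b),L}(F) = \top\}$, the set attached to $\phi(a) \wedge \psi(b)$ is $A \cap B$ and that attached to $\phi(a) \vee \psi(b)$ is $A \cup B$, so inclusion--exclusion for $P$ is just $\mu^\infty_{L/K}(A) + \mu^\infty_{L/K}(B) = \mu^\infty_{L/K}(A \cap B) + \mu^\infty_{L/K}(A \cup B)$; the set attached to $\neg \phi(a)$ is $\mathfrak{F}(L/K) \setminus A$, so $P(\neg\phi(a), K) = 1 - P(\phi(a), K)$ because $\mu^\infty_{L/K}$ has total mass $1$; and if $\phi(a)$ holds in every $M \models T$ extending $K$ then $f_{\phi(a),L} \equiv \top$ on $\mathfrak{F}(L/K)$, whence $A = \mathfrak{F}(L/K)$ and $P(\phi(a),K) = 1$. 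The one step with genuine content is the reduction in the first paragraph --- producing a single Galois extension good for a whole Boolean combination and knowing (via Lemma~\ref{thats-good}) that the value of $P$ does not depend on which such extension is chosen; everything after that is formal.
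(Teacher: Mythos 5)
Your proof is correct and follows essentially the same route as the paper's: reduce to a single finite Galois extension $L/K$ determining the truth of all the formulas at hand, observe that the indicator maps $f_{-,L}$ on $\mathfrak{F}(L/K)$ respect Boolean connectives (because each $F \in \mathfrak{F}(L/K)$ is $M \cap L$ for some $M \models T$), and then read off the Keisler measure identities from the fact that $\mu^\infty_{L/K}$ is a probability distribution concentrated on $\mathfrak{F}(L/K)$. You are a bit more explicit than the paper about building the common extension $L$ as a compositum and about why enlarging a Galois extension preserves the ``determines the truth'' property; otherwise the argument is the same.
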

\begin{proof}
It suffices to prove the following:
\begin{itemize}
\item If $\phi(a)$ and $\psi(b)$ are forced to be logically equivalent by $T$ and the diagram of $K$, then $P(\phi(a), K) = P(\psi(b), K)$.  This is easy/trivial, because if we choose a finite Galois extension $L$ determining the truth of both $\phi(a)$ and $\psi(b)$, we see that $f_{\phi(a),L} = f_{\psi(b),L}$ by unwinding the definitions.
\item $P(\phi(a), K) = 1 - P(\neg \phi(a), K)$, which follows similarly, though it uses the fact that $\mu^\infty_{L/K}$ is concentrated on $\mathfrak{F}(L/K)$.
\item If $\phi(a) \wedge \psi(b)$ contradicts $T \cup \diag(K)$, then $P(\phi(a) \vee \psi(b), K) = P(\phi(a), K) + P(\psi(b), K)$.  Again, this is not difficult: if $L$ is a field determining the truth of both $\phi(a)$ and $\psi(b)$, then it is clear that
\[ f_{\phi(a), L} \wedge f_{\psi(b), L} = f_{\phi(a) \wedge \psi(b), L} = \bot\]
\[ f_{\phi(a), L} \vee f_{\psi(b), L} = f_{\phi(a) \vee \psi(b), L}.\]
Consequently, $\{F : f_{\phi(a) \vee \psi(b), L}(F) = \top\}$ is a disjoint union of $\{F : f_{\phi(a), L}(F) = \top\}$ and $\{F : f_{\psi(b), L}(F) = \top\}$, so we reduce to the fact that $\mu^\infty$ is a probability distribution.
\item $0 \le P(\phi(a), K) \le 1$, which is clear from the definition. \qedhere
\end{itemize}
\end{proof}

\begin{lemma}
If $K \subseteq K'$ are models of $T_\forall$ and $K$ is relatively algebraically closed in $K'$, and $\phi(a)$ is a formula with parameters from $K$, then $P(\phi(a), K) = P(\phi(a), K')$.
\end{lemma}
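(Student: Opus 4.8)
The plan is to imitate the proof of Lemma~\ref{thats-good}, using Corollary~\ref{loc-calc-infty} to transport the distribution $\mu^\infty_{L'/K'}$ down to $\mu^\infty_{L/K}$ along restriction of fields. First I would fix a finite Galois extension $L/K$ that determines the truth of $\phi(a)$; concretely, writing $\phi(a)$ via Theorem~\ref{qe-version-2} in the form $\exists y : Q(y;a) = 0 \wedge \psi(y;a)$ with $Q(X;a) \in K[X]$, let $L$ be the splitting field of $Q(X;a)$ over $K$. Put $L' = LK'$. Then $L'/K'$ is finite Galois, and it determines the truth of $\phi(a)$ viewed as a $K'$-formula, because $L'$ is the splitting field of $Q(X;a)$ over $K'$. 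Since $K$ is relatively algebraically closed in $K'$ and all fields in sight are perfect, $L \cap K'$, being algebraic over $K$ and contained in $K'$, equals $K$.

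Next I would verify that the restriction map $r : \mathfrak{F}(L'/K') \to \mathfrak{F}(L/K)$, $F \mapsto F \cap L$, is well-defined and satisfies $f_{\phi(a),L'} = f_{\phi(a),L} \circ r$. Given $F \in \mathfrak{F}(L'/K')$, write $F = M \cap L'$ for a model $M \models T$ extending $K'$ (Remark~\ref{nearly-there}); then $M$ also extends $K$, so $M \cap L \in \mathfrak{F}(L/K)$ by Lemma~\ref{yet-another-lemma}, and $r(F) = M \cap L' \cap L = M \cap L$. Both $f_{\phi(a),L'}(F)$ and $f_{\phi(a),L}(r(F))$ are, by definition, the truth value of $M \models \phi(a)$, hence agree. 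This is verbatim the Claim inside the proof of Lemma~\ref{thats-good}.

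Finally, Corollary~\ref{loc-calc-infty} applied with base fields $K'$ and $L \cap K' = K$ says that if $F$ is chosen from $\mu^\infty_{L'/K'}$, then $r(F) = F \cap L$ is distributed according to $\mu^\infty_{L/K}$. Pushing forward along $r$ and using $f_{\phi(a),L'} = f_{\phi(a),L}\circ r$ gives
\[ P(\phi(a), K') = \mu^\infty_{L'/K'}\bigl(\{F : f_{\phi(a),L'}(F) = \top\}\bigr) = \mu^\infty_{L/K}\bigl(\{F' : f_{\phi(a),L}(F') = \top\}\bigr) = P(\phi(a), K). \]
The only delicate point is the bookkeeping of base fields in the invocations of the earlier lemmas — in particular checking $L \cap K' = K$ (which is exactly where the relative algebraic closure hypothesis enters) and that $L' = LK'$ still determines the truth of $\phi(a)$ over $K'$; once these are in place the computation is mechanical.
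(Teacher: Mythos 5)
Your proof is correct and matches the paper's own argument essentially verbatim: fix $L/K$ determining the truth of $\phi(a)$, take $L'=LK'$, use $L\cap K'=K$ to invoke Corollary~\ref{loc-calc-infty}, check $f_{\phi(a),L'}=f_{\phi(a),L}\circ r$ as in Lemma~\ref{thats-good}, and push forward. The only cosmetic difference is that you verify $r$ is well-defined directly via Remark~\ref{nearly-there} and Lemma~\ref{yet-another-lemma}, whereas the paper deduces it probabilistically from Corollary~\ref{loc-calc-infty} together with Lemma~\ref{terminal-positive}; both are fine.
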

(This is the ``extension invariance'' part of Theorem~\ref{dream-thm}.)
\begin{proof}
Let $L$ be a finite Galois extension of $K$ determining the truth of $\phi(a)$.  Let $L'$ be a finite Galois extension of $K'$ determining the truth of $\phi(a)$; we may assume $L' \supseteq L$.  (In fact, we can take $L' = LK'$.)  Because $K$ is relatively algebraically closed in $K'$, $L \cap K' = K$.  So by Corollary~\ref{loc-calc-infty}, if $F \in \mathfrak{F}(L'/K')$ is distributed according to $\mu^\infty_{L'/K'}$, then $F \cap L$ is distributed according to $\mu^\infty_{L/K}$.  Using Lemma~\ref{terminal-positive}, this implies that $F \cap L \in \mathfrak{F}(L/K)$ for any $F \in \mathfrak{F}(L'/K')$.  Let $r : \mathfrak{F}(L'/K') \to \mathfrak{F}(L/K)$ be the map $F \mapsto F \cap L$.  By unwinding the definitions (as in the claim in the proof of Lemma~\ref{thats-good}), one sees that $f_{\phi(a),L'/K'} = f_{\phi(a),L/K} \circ r$.  As in the proof of Lemma~\ref{thats-good}, we see that for $F'$ choosen randomly from $\mathfrak{F}(L'/K')$ and $F$ chosen randomly from $\mathfrak{F}(L/K)$, the distribution of $F$ and $r(F')$ is the same, and therefore so too is the distribution of
\[ f_{\phi(a),L'/K'}(F') = f_{\phi(a),L/K}(r(F'))\]
and
\[ f_{\phi(a),L/K}(F).\]
This ensures that $P(\phi(a), K) = P(\phi(a), K')$.
\end{proof}

\begin{lemma}
If $K \models T_\forall$ and $\phi(a)$ is a $K$-formula which holds in some model of $T$ extending $K$, then $P(\phi(a),K) > 0$.
\end{lemma}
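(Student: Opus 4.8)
The plan is to unwind the definition of $P(\phi(a), K)$ and exhibit a single maximal element of $\mathfrak{S}(L/K)$ that lies in the relevant event and carries positive $\mu^\infty_{L/K}$-mass. First I would fix a finite Galois extension $L/K$ that determines the truth of $\phi(a)$; such an $L$ exists by Theorem~\ref{qe-version-2}, writing $\phi(a)$ as $\exists y : Q(y;a) = 0 \wedge \psi(y;a)$ and taking $L$ to be the splitting field of $Q(X;a)$ over $K$. With this choice, $P(\phi(a), K) = \mu^\infty_{L/K}(\{F : f_{\phi(a),L}(F) = \top\})$ by definition, and $f_{\phi(a),L}$ is well-defined on $\mathfrak{F}(L/K)$ by Remark~\ref{nearly-there}.

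Next I would invoke the hypothesis directly: let $M \models T$ extend $K$ with $M \models \phi(a)$, and form a compositum $ML$ inside some ambient field; set $F_0 := M \cap L$ (its isomorphism type as a $T_\forall$-structure does not depend on the choice of compositum). By Lemma~\ref{yet-another-lemma} (equivalently, by Remark~\ref{nearly-there}), $F_0$ is a \emph{maximal} element of $\mathfrak{S}(L/K)$, i.e.\ $F_0 \in \mathfrak{F}(L/K)$. Since $L$ determines the truth of $\phi(a)$ and $M \models \phi(a)$, the defining property of $f_{\phi(a),L}$ gives $f_{\phi(a),L}(F_0) = \top$, so $F_0$ lies in the event $\{F : f_{\phi(a),L}(F) = \top\}$.

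Finally, Lemma~\ref{terminal-positive} tells us that every maximal element of $\mathfrak{S}(L/K)$ — in particular $F_0$ — has positive probability under $\mu^\infty_{L/K}$. Hence
\[ P(\phi(a), K) = \mu^\infty_{L/K}\bigl(\{F : f_{\phi(a),L}(F) = \top\}\bigr) \ge \mu^\infty_{L/K}(\{F_0\}) > 0, \]
as desired. I expect essentially no obstacle at this stage: all the substance has been absorbed into the Markov-chain setup and Lemmas~\ref{move-on}, \ref{terminal-positive}, and \ref{yet-another-lemma}. The only points demanding care are that $F_0$ is genuinely maximal — so that it lies in the support of $\mu^\infty_{L/K}$, which is concentrated on $\mathfrak{F}(L/K)$ — and that the isomorphism type of $M \cap L$ is independent of the compositum, both of which are already established above.
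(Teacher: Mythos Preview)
Your proposal is correct and matches the paper's proof essentially step for step: fix a finite Galois $L/K$ determining the truth of $\phi(a)$, take $F_0 = M \cap L$ for a model $M \models \phi(a)$, observe $F_0 \in \mathfrak{F}(L/K)$ with $f_{\phi(a),L}(F_0) = \top$, and invoke Lemma~\ref{terminal-positive}. The paper's version is simply terser, omitting the explicit references to Theorem~\ref{qe-version-2} and Remark~\ref{nearly-there} that you spell out.
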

(This is the ``density'' part of Theorem~\ref{dream-thm}.)
\begin{proof}
Let $M$ be the model where $\phi(a)$ holds.  Let $L$ be a Galois extension of $K$ determining the truth of $\phi(a)$.  Then $L \cap M \in \mathfrak{F}(L/K)$ and $f_{\phi(a),L}(L \cap M) = \top$.  By Lemma~\ref{terminal-positive}, $P(\phi(a),K) > 0$.  
\end{proof}

We have verified each condition of Theorem~\ref{dream-thm}, which is now proven.

\section{NTP$_2$ and the Independence Property}\label{classification}
We show that the model companion $T$ (usually) fails to be NIP, but is always NTP$_2$, the next best possibility.  
\subsection{Failure of NIP}\label{sec:nip}
If $n = 1$, then $T = T_i$ is one of ACVF, RCF, or $p$CF, which are all known to be NIP.  On the other hand,
\begin{theorem}\label{thm-nip}
Suppose $n > 1$.  Then $T$ has the independence property.
\end{theorem}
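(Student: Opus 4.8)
The goal is to produce, in a monster model $\mathfrak{M} \models T$, a formula $\phi(x;\bar y)$ and a sequence $(a_i)_{i<\omega}$ in $\mathfrak{M}$ such that for every $S \subseteq \omega$ there is $\bar b_S$ with $\mathfrak{M} \models \phi(a_i;\bar b_S) \iff i \in S$; this exhibits IP. First I would record the two topological inputs. For $i \neq j$ the topologies $t_i,t_j$ that $T_i,T_j$ induce on $\mathfrak{M}$ are distinct: otherwise, being Hausdorff field topologies, they admit disjoint nonempty basic opens $U_i \in t_i$, $U_j \in t_j$, both quantifier-free definable and Zariski dense in $\mathbb{A}^1$, hence $\mathbb{A}^1$-dense, so axiom A2(1) (Remark~\ref{combo-restatement}) would force $U_i \cap U_j \neq \emptyset$, a contradiction. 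Moreover $t_1$ and $t_2$ are \emph{independent} in $\mathfrak{M}$: this is A.~L.~Stone's theorem (Fact~\ref{strong-approximation}), or directly A2(1) for $\mathbb{A}^1$ with the constraints on $v_3,\dots,v_n$ taken trivial. Since adjoining finitely many ``avoid this point'' conditions keeps a set $t_k$-open, one in fact gets full freedom: given finitely many points $p_1,\dots,p_r$ and $q_1,\dots,q_r$ and any prescribed $t_1$- and $t_2$-radii, $\mathfrak{M}$ contains an element simultaneously $t_1$-close to each $p_k$, $t_2$-close to each $q_k$, and $t_1$- and $t_2$-far from any other prescribed finite set. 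So the whole argument will use only $t_1$, $t_2$ and this freedom; the remaining valuations are inert.

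Next I would observe that $\phi$ must genuinely involve a quantifier. Any quantifier-free $\bigcup_i \mathcal{L}_i$-formula is a Boolean combination of $\mathcal{L}_i$-atomic formulas; each $\mathcal{L}_i$-definable family in $\mathfrak{M} \restriction \mathcal{L}_i$ is a trace of a definable family in a model of $T_i$, hence NIP, and a finite Boolean combination of NIP families is NIP, so a quantifier-free formula can never have IP. Thus by Theorem~\ref{qe-version-2} the witness may be taken of the form $\exists y\,(P(y,x,\bar b)=0 \wedge \psi(y,x,\bar b))$ with $P \in \mathbb{Z}[\bar b,x,y]$ monic in $y$ and $\psi$ quantifier-free. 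The plan for choosing it is to arrange $P$ and $\psi$ so that ``$\phi(a_i;\bar b)$'' translates into a condition on the $t_1$- and $t_2$-valuations of the ($K$-rational) roots $y$ of $P(\,\cdot\,,a_i,\bar b)$, with $\psi$ imposing one constraint through $\mathcal{L}_1$ and an independent one through $\mathcal{L}_2$. Because $t_1$ and $t_2$ are independent, these two constraints can be made to ``cross'', so that by moving $\bar b$ one turns the membership of each $a_i$ on or off independently of the others; the configuration of $a_i$'s and of the finitely many roots needed to realize a given finite pattern $S$ is then produced by a single application of A2(1) (density of the diagonal across $t_1$ and $t_2$), exactly in the spirit of the proof of Theorem~\ref{miracle}. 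Morally, the two topologies, acting independently on an algebraic cover, let one ``code'' an arbitrary subset -- something impossible with a single topology, which is why the case $n=1$ (ACVF, RCF, $p$CF) stays NIP.

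The main obstacle is precisely this construction: pinning down $P$ and $\psi$, and then checking that for each finite $S$ the required points $a_i$ and parameters $\bar b_S$ actually exist in $\mathfrak{M}$ -- this is where independence of the topologies, via A2, does the real work. Once $\phi$ and $(a_i)$ are fixed, the remaining verification that every subset of every finite initial segment is cut out, hence (by compactness) that $\phi$ has IP in $T$, is routine bookkeeping. No separate treatment of $n>2$ is needed: one simply ignores $v_3,\dots,v_n$ throughout, the argument requiring only the existential closedness of $\mathfrak{M}$ and the independence of $t_1$ and $t_2$.
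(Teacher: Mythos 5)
You have correctly isolated the structural obstacles — that a quantifier-free formula cannot witness IP (so any witness must, via Theorem~\ref{qe-version-2}, be of the form $\exists y\,(P(y,x,\bar b)=0\wedge\psi(y,x,\bar b))$), and that the interaction of at least two of the topologies on an algebraic cover is what should produce IP. These remarks are correct. But the proposal explicitly stops short of the actual proof: you write that ``the main obstacle is precisely this construction: pinning down $P$ and $\psi$'' and that once they are fixed ``the remaining verification \dots is routine bookkeeping.'' In this theorem, pinning down $P$ and $\psi$ and verifying that they work \emph{is} the proof; there is essentially nothing else. So what you have is a plan, not an argument.

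Moreover, the mechanism you gesture at does not quite match what is actually needed, and if pursued literally would likely stall. You propose to realize an arbitrary pattern $S$ by ``a single application of A2(1),'' i.e.\ by topological independence finding a parameter $\bar b_S$ directly in the home sort. The paper's proof does not work this way and I do not see how such a direct density argument would control the truth of an existential formula about roots. Instead the paper takes $P(y,x)=y^2-x$, defines $\psi(y)$ as $\phi_1(\sqrt{y})\oplus\phi_2(\sqrt{y})$ where $\phi_1,\phi_2$ are open conditions distinguishing the two square roots in $T_1$ and $T_2$ respectively, fixes $a_1,\dots,a_m$ in the domain of $\chi=\bigwedge_i\chi_i$, adjoins a single element $\epsilon$ infinitesimal for every topology and all the square roots $\sqrt{a_j+\epsilon}$, and then \emph{twists} the $\mathcal{L}_1$-structure (only) on $L=K(\sqrt{a_j+\epsilon}:j)$ by pulling it back along a Galois automorphism $\sigma_S\in\Gal(L/K(\epsilon))\cong(\mathbb{Z}/2\mathbb{Z})^m$. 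This twist flips $\phi_1$ on the chosen coordinates while leaving $\phi_2$ unchanged, and the XOR formula $\psi$ translates that flip into exactly the desired pattern. The A2/existential-closedness input enters only at the end, to extend the twisted $T_\forall$-structure $L_S$ to a model $K_S\succeq K$. Your sketch misses the two essential ingredients: (i) the Galois twist of one coordinate structure, which is what allows an arbitrary subset $S$ to be coded at all, and (ii) the exclusive-or of $\phi_1$ and $\phi_2$, which is what turns the one-sided twist into a symmetric on/off switch. Without these the approach does not close. (The $p$CF case also needs the Henselianity check that both roots of $T^2+T+(1/4-y)$ lie in $K$; that is a small but necessary point you would have to supply.)

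I would also flag that you do not need the full strength of Stone's approximation theorem here: the paper's argument only uses that $\chi$ is infinite (A2), that $\epsilon$ exists infinitesimal with respect to all topologies simultaneously (again A2), and the Galois twist; pairwise independence of the $t_i$ per se is not what drives the coding.
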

\begin{proof}
We give a proof which works in characteristic $\ne 2$.  It is not hard
to modify it to work in characteristic 2, replacing square roots with
Artin-Schreier roots.
\begin{claim} For each $i$, we can produce quantifier-free $\mathcal{L}_i$-formulas $\phi_i(x,y)$ and $\chi_i(y)$ without parameters such that $x, y$ are singletons, and such that if $K_i \models T_i$, then $\chi_i(K_i)$ is a non-empty open set and for every $b \in \chi_i(K_i)$, both square roots of $b$ are present in $K_i$, and exactly
one of them satisfies $\phi_i(x,b)$.
\end{claim}
\begin{claimproof}
If $T_i$ is RCF, let $\chi_i(y)$ say that $y > 0$ and $\phi_i(x,y)$ say that $x > 0$.
If $T_i$ is ACVF, let $\chi_i(y)$ say that $v(y - 1/4) > 0$, and $\phi_i(x,y)$ say that $v(x - 1/2) > 0$.  Note that if $v(y - 1/4) > 0$ and $x^2 = y$, then
$t = x - 1/2$ satisfies
\[ t^2 + t + 1/4 - y = (t + 1/2)^2 - y = 0.\]
By Newton polygons, one of the possibilities for $t$ has valuation zero, and the other has valuation $v(y - 1/4) > 0$.
If $T_i$ is $p$CF, the same formulas work as in the case of ACVF.  The only thing to check is that if $v(y - 1/4) > 0$ for some $y \in K \models \pCF$, then the two roots of $T^2 + T + (1/4 - y) = 0$ are present in $K$.  If not, then since the two roots have different valuations (in an ambient model of ACVF), there are two different ways to extend the valuation from $K$ to $K[T]/(T^2 + T + (1/4 - y))$, contradicting Henselianity of $K$.
\end{claimproof}
Given the $\phi_i$ and $\chi_i$ from the Claim, let $\chi(y) = \bigwedge_{i = 1}^n \chi_i(y)$.  Note that $\chi(y)$ defines an infinite subset of any model of $T$, by condition A2 of \S\ref{axiom-section}.  (Each $\chi_i(-)$ is $\mathbb{A}^1$-dense.)  If $K \models T$ and $b \in \chi(K)$, then $X^2 - b$ has roots in $K$ by choice of $\chi_i(-)$ and condition A1' of \S\ref{axiom-section}.  So each element of $\chi(K)$ is a square.

Let $\psi(y)$ assert that $\chi(y)$ holds and there is a square root
of $y$ which satisfies exactly one of $\phi_1$ and $\phi_2$.  Note
that if $\chi(b)$ holds, then both square roots of $b$ are present in
$K$, exactly one of them satisfies $\phi_1$, and exactly one of them
satisfies $\phi_2$.  Letting, $\oplus$ denote exclusive-or, we can
write $\psi(y)$ as $\phi_1(\sqrt{y}) \oplus \phi_2(\sqrt{y})$, where
the choice of $\sqrt{y}$ is unimportant.

Let $K$ be a model of $T$.  We will show that $\psi(x+y)$ has the independence property in $K$.  Let $a_1, \ldots, a_m$ be any $m$ elements in $\chi(K)$, which as we noted above is an infinite set.  We will show that for any subset $S_0 \subseteq \{1,\ldots,m\}$, there is a $b$ in $K$ such that $j \in S_0 \iff K \models \psi(b+a_j)$.  It suffices to find such a $b$ in an elementary extension of $K$, rather than $K$ itself.  Let $K' \succeq K$ be an elementary extension containing an element $\epsilon$ which is infinitesimal compared to $K$, with respect to every one of the valuations.  That is, for each $i$ such that $T_i$ is valuative, we want $v_i(\epsilon) > v_i(K)$, and for each $i$ such that $T_i$ is RCF, we want $-\alpha <_i \epsilon <_i \alpha$ for every $\alpha >_i 0$ in $K$.  The fact that such an $\epsilon$ exists follows by our axiom A2, and can be shown directly.

Note that for $1 \le j \le m$, $a_j + \epsilon \in \chi(K')$.  (Indeed, for every $i$, $K' \models \chi_i(a_i + \epsilon)$, because $\chi_i(-)$ defines an open set in a model of $T_i$, and $\epsilon$ is infinitesimal with respect to the prime model of $T_i$ over $K \restriction \mathcal{L}_i$.)  Consequently, $\sqrt{a_j + \epsilon} \in K'$ for every $1 \le j \le m$.
Let $L$ be $K(\sqrt{a_j + \epsilon} : 1 \le j \le m) \subseteq K'$, as a model of $T_\forall$.
Since $\epsilon$ is transcendental over $K$, $\Gal(L/K(\epsilon)) \cong (\mathbb{Z}/2\mathbb{Z})^m$.
In particular, for every $S \subseteq \{1 , \ldots, m\}$, there is a field automorphism $\sigma_S \in \Gal(L/K(\epsilon))$ which swaps the square roots of $a_j + \epsilon$ if and only if $j \in S$.  Let $L_S$ be the $T_\forall$-model with underlying field $L$, with the same $\mathcal{L}_i$-structure as $L$ for $i > 1$, and with the $\mathcal{L}_1$-structure obtained by pulling back the $\mathcal{L}_1$-structure of $L$ along $\sigma_S$.  If $\Delta$ denotes symmetric difference of sets, then
\begin{align*}
 \{j : L_S \models \phi_2(\sqrt{a_j + \epsilon})\} &= \{j : L \models \phi_2(\sqrt{a_j + \epsilon})\} \\
 \{j : L_S \models \phi_1(\sqrt{a_j + \epsilon})\} &= \{j : L \models \phi_1(\sigma_S(\sqrt{a_j + \epsilon}))\} \\
                                                   &= \{j : L \models \phi_1(\sqrt{a_j + \epsilon})\} ~ \Delta ~ S,
\end{align*}
where the last equality holds because $L \models \phi_1(\sqrt{a_j + \epsilon}) \iff \neg \phi_1(-\sqrt{a_j + \epsilon})$.
Now let $K_S$ be a model of $T$ extending $L_S$.  Since $L_S$ is a model of $T_\forall$ extending $K$, $K_S \succeq K$.  Also,
\begin{align*} \{ j : K_S \models \psi(a_j + \epsilon)\} &= \{j : L_S \models \phi_1(\sqrt{a_j + \epsilon})\} \Delta \{j : L_S \models \phi_2(\sqrt{a_j + \epsilon})\}
\\ &= \{j : L \models \phi_1(\sqrt{a_j + \epsilon})\} \Delta \{j : L \models \phi_2(\sqrt{a_j + \epsilon})\} \Delta S 
\\ &= \{j : K' \models \psi(a_j + \epsilon)\} \Delta S
\end{align*}
Therefore, by choosing $S = S_0 ~ \Delta ~ \{j : K' \models \psi(a_j + \epsilon)\}$, we can arrange that
\[ \{j : K_S \models \psi(a_j + \epsilon)\} = S_0,\]
i.e., $K_S \models \psi(a_j + \epsilon)$ if and only if $j \in S_0$.  Taking $b$ to be $\epsilon \in K_S$, this completes the proof.
\end{proof}
Because $T$ has the independence property and clearly has the strict order property, the best classification-theoretic property we could hope for $T$ to have is NTP$_2$.

\subsection{NTP$_2$ holds}

First we make some elementary remarks about relative algebraic closures.
\begin{lemma}\label{two-fields}
Let $M$ be a pure field.  Let $K$ be a subfield of $M$ which is relatively separably closed in $M$ (in the sense of Definition~\ref{nearly}).  Let $a$ and $b$ be two tuples from $M$ such that $a \forkindep^{\ACF}_K b$, i.e., $a$ and $b$ are algebraically independent from each other over $K$.  Then $K(a)$ is relatively separably closed in $K(a,b)$.
\end{lemma}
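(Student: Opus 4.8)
The plan is to reduce the statement to the dictionary recorded just after Definition~\ref{nearly}: for fields $F \subseteq F'$ embedded in a monster model of $\ACF$, $F$ is relatively separably closed in $F'$ if and only if $\tp(F'/F)$ is stationary. Fix a monster model $\mathfrak{M} \models \ACF$ containing $M$. Since $K(a,b) = K(a)(b)$, it then suffices to prove that $\tp(b/K(a))$ is stationary, as this is equivalent to $\tp(K(a,b)/K(a))$ being stationary, hence to $K(a)$ being relatively separably closed in $K(a,b)$.

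First I would observe that $\tp(b/K)$ is stationary. Indeed $K(b) \cap K^{alg} \subseteq M \cap K^{alg}$, and the latter lies in the perfect closure of $K$ by the hypothesis that $K$ is relatively separably closed in $M$; so $K$ is relatively separably closed in $K(b)$, which by the above dictionary means $\tp(b/K)$ is stationary.

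Next, the hypothesis $a \forkindep^{\ACF}_K b$ says precisely that $\tp(b/K(a))$ is a non-forking extension of $\tp(b/K)$ in the stable theory $\ACF$. A non-forking extension of a stationary type is again stationary: letting $N \supseteq K(a)$ be a model and $\tilde p$ the unique non-forking extension of $\tp(b/K)$ to $N$, any non-forking extension of $\tp(b/K(a))$ to $N$ is, by transitivity of non-forking, also a non-forking extension of $\tp(b/K)$ to $N$, hence equals $\tilde p$; thus $\tp(b/K(a))$ has a unique non-forking extension to every model over $K(a)$ and is stationary. Combining this with the previous paragraph gives that $\tp(b/K(a))$ is stationary, which is what we wanted.

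The argument is essentially formal once this correspondence is in place, so I do not expect a serious obstacle; the only point requiring a little care is the (characteristic-$p$) translation between ``relatively separably closed'', phrased in terms of perfect closures, and stationarity of types in $\ACF$ — but this is exactly the content asserted after Definition~\ref{nearly}, which we are entitled to use.
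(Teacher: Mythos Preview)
Your proof is correct and follows essentially the same strategy as the paper: both pass to a monster model of $\ACF$, use that $K$ being relatively separably closed in $M$ makes $\tp(b/K)$ stationary, and then exploit $a \forkindep_K b$ to reach the conclusion. The only difference is packaging --- you invoke the general fact that a non-forking extension of a stationary type is stationary and then reapply the dictionary, whereas the paper argues more concretely via $K$-definability of $\stp(b/K(a))$ and an explicit analysis of the conjugates of an element $c \in K(a,b) \cap K(a)^{alg}$.
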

\begin{proof}
Embed $M$ into a monster model $\mathfrak{M} \models \ACF$.  By the remarks after Definition~\ref{nearly}, $\tp(a/K)$ and $\tp(b/K)$ are stationary.  Since $a \forkindep_K b$, the type of $b$ over $\acl(K(a))$ is $K$-definable.  Now suppose that some singleton $c \in K(a,b)$ is algebraic over $K(a)$.   Write $c$ as $f(a,b)$, for some rational function $f(X,Y) \in K(X,Y)$.  Note that $\stp(b/K(a))$ includes the statement $f(a,x) = c$.  On the other hand, it does not include $f(a,x) = c'$ for any conjugate $c' \ne c$ of $c$ over $K(a)$.  As $\stp(b/K(a))$ is definable over $K$, $ac$ and $ac'$ cannot have the same type over $K$.  But if $c$ and $c'$ are conjugate over $K(a)$, then $ac$ and $ac'$ have the same type.  So $c'$ does not exist, and $c$ has no other conjugates over $K(a)$.  Thus $c \in \dcl(K(a))$.  So $c^{p^k} \in K(a)$ for some $k$.  As $c$ was an arbitrary element of $K(a,b) \cap K(a)^{alg}$, we see that $K(a)$ is relatively separably closed in $K(a,b)$.
\end{proof}

\begin{lemma}\label{three-fields}
Let $M$ be a pure field.  Suppose $K_0 \subseteq K_1 \subseteq K_2$ are three subfields of $M$, each relatively separably closed in $M$.
Let $c$ be tuple from $M$, possibly infinite.  Suppose that $c \forkindep^{\ACF}_{K_0} K_2$, i.e., $K_2$ and $c$ are algebraically independent over $K_0$.  Then $K_1(c)$ is relatively separably closed in $K_2(c)$.
\end{lemma}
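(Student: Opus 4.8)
The plan is to deduce this from Lemma~\ref{two-fields} applied with base field $K_1$, after first upgrading the algebraic independence hypothesis. First I would observe that $c \forkindep^{\ACF}_{K_0} K_2$ implies $c \forkindep^{\ACF}_{K_1} K_2$: for any finite subtuple $c'$ of $c$, monotonicity of transcendence degree along the chain $K_0 \subseteq K_1 \subseteq K_2$ gives $\trdeg(c'/K_0) \ge \trdeg(c'/K_1) \ge \trdeg(c'/K_2)$, and since the two outer quantities are equal by the assumed algebraic independence of $c'$ from $K_2$ over $K_0$, all three coincide. Hence $c'$ is algebraically independent from $K_2$ over $K_1$, for every finite $c' \subseteq c$, which is precisely $c \forkindep^{\ACF}_{K_1} K_2$.

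Next, to show that $K_1(c)$ is relatively separably closed in $K_2(c)$, by Definition~\ref{nearly} it suffices to take a singleton $x \in K_2(c) \cap K_1(c)^{alg}$ and show that $x$ lies in the perfect closure of $K_1(c)$. Such an $x$ is a rational expression in $c$ and finitely many elements of $K_2$; collecting those elements into a finite tuple $d$ from $K_2$, we have $x \in K_1(c,d)$. Since $d$ is a subtuple of (an enumeration of) $K_2$, the previous paragraph gives $c \forkindep^{\ACF}_{K_1} d$, and $K_1$ is relatively separably closed in $M$ by hypothesis, so Lemma~\ref{two-fields} (with $K = K_1$, $a = c$, $b = d$) tells us that $K_1(c)$ is relatively separably closed in $K_1(c,d)$. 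As $x \in K_1(c,d) \cap K_1(c)^{alg}$, we conclude that $x$ is in the perfect closure of $K_1(c)$, which is what we wanted. Note that the hypotheses that $K_0$ and $K_2$ are relatively separably closed in $M$ are not needed for this argument; only the relative separable closedness of $K_1$ is used.

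There is no serious obstacle here. The only point requiring a little care is the passage from the possibly infinite tuple $c$ and the possibly infinitely generated extension $K_2/K_1$ to the finitely generated situation in which Lemma~\ref{two-fields} is most cleanly invoked; but since relative separable closedness is tested on single elements, and each single element of $K_2(c)$ involves only finitely many coordinates of $c$ and finitely many elements of $K_2$, this reduction is routine. (Alternatively, one can apply Lemma~\ref{two-fields} directly with $b$ an enumeration of $K_2$, since its proof only ever manipulates finitely many coordinates of $b$ at a time.)
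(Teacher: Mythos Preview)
Your proof is correct and follows essentially the same approach as the paper: upgrade the independence $c \forkindep^{\ACF}_{K_0} K_2$ to $c \forkindep^{\ACF}_{K_1} K_2$ and then invoke Lemma~\ref{two-fields} with base $K_1$. The paper is simply terser, writing ``by properties of forking, $K_1(c) \forkindep_{K_1} K_2$'' and applying Lemma~\ref{two-fields} directly with $b$ an enumeration of $K_2$, whereas you spell out the transcendence-degree argument and reduce to a finite subtuple $d \subseteq K_2$; your closing observation that only the relative separable closedness of $K_1$ is actually used is also correct.
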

\begin{proof}
As in the previous lemma, embed $M$ into a monster model $\mathfrak{M}$ of ACF.  Then $c \forkindep_{K_0} K_2$, and by properties of forking, $K_1(c) \forkindep_{K_1} K_2$.  By the previous lemma, $K_1(c)$ is relatively separably closed in $K_2K_1(c) = K_2(c)$.
\end{proof}

Now we return to existentially closed fields with valuations and orderings.  As always, $T$ is the model companion.
\begin{lemma}\label{horror}
In a monster model of $T$, let $B$ be a small set of parameters and $a_1, a_2, \ldots$ be a $B$-indiscernible sequence.  Suppose that $B = \acl(B)$ and $a_i = \acl(Ba_i)$ for any/every $i$.  Suppose also that $a_j \forkindep^{\ACF}_B a_{< j}$ for every $j$, i.e., the sequence is algebraically independent over $B$.  Let $c$ be a finite tuple and suppose that $a_1, a_2, \ldots$ is quantifier-free indiscernible over $cB$, i.e., if $i_1 < \cdots < i_m$ and $j_1 < \cdots < j_m$, then
\[ \qftp(a_{i_1} a_{i_2} \cdots a_{i_m} / cB) = \qftp(a_{j_1} a_{j_2} \cdots a_{j_m} / cB).\]
Let $\phi(x;y)$ be a formula over $B$ such that $\phi(c;a_1)$ holds.  Then $\bigwedge_{j = 1}^\infty \phi(x;a_j)$ is consistent.
\end{lemma}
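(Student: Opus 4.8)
The plan is to show that, in a suitable model of $T$ extending $\acl(Ba_{<\omega})$, the very element $c$ (or a renamed version of it) witnesses $\phi(c;a_j)$ for every $j$. First I would apply Theorem~\ref{qe-version-2} to the $B$-formula $\phi(x;y)$, rewriting it (over $B$) in the form $\exists z\,(P(z;x,y)=0\wedge\psi(z;x,y))$, where $P$ is monic in $z$ with coefficients in $\Zz[x,y]$ and $\psi$ is quantifier-free over $B$. Fixing $d\in\mathfrak{M}$ with $P(d;c,a_1)=0$ and $\mathfrak{M}\models\psi(d;c,a_1)$, note that $d\in\acl(Bca_1)$. It then suffices to produce, in some model $N\models T$ extending $\acl(Ba_{<\omega})$ (with its ambient structure), a single element $c^{*}$ together with witnesses $d_j$ satisfying $P(d_j;c^{*},a_j)=0\wedge\psi(d_j;c^{*},a_j)$ for all $j$: since $\acl(Ba_{<\omega})$ satisfies axiom A1, Corollary~\ref{a1-rel-closed} makes it relatively algebraically closed in $N$, so Corollary~\ref{qe-version-1} gives $N\equiv_{\acl(Ba_{<\omega})}\mathfrak{M}$, and then $N\models\bigwedge_j\phi(c^{*};a_j)$ transfers to the consistency of $\bigwedge_j\phi(x;a_j)$.

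The second step is a transcendence-degree computation. From the hypotheses that $(a_j)$ is algebraically independent over $B$ and quantifier-free indiscernible over $cB$, I would deduce that $(a_j)$ is in fact algebraically independent over $Bc$ and that $a_j\forkindep^{\ACF}_B c$ for every $j$. The point is that $\trdeg(c/Ba_1\cdots a_j)$ is non-increasing in $j$, hence eventually constant; writing $s=\trdeg(a_1/B)$ and combining with $\trdeg(a_{j+1}/Ba_1\cdots a_j)=s$ forces $\trdeg(a_{j+1}/Bca_1\cdots a_j)=s$ for large $j$, whence $\trdeg(a_{j+1}/Bc)=s$; shift-invariance of $\qftp(-/cB)$ then propagates $\trdeg(a_k/Bc)=s=\trdeg(a_k/B)$ and the independence over $Bc$ to all of the sequence.

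The third step is the construction of $N$, and this is the heart of the argument. Set $C^{+}=\acl(Bc)$. By Step 2 the fields $\acl(Bca_j)\subseteq\mathfrak{M}$ are algebraically independent over $C^{+}$, and since $C^{+}$, $B$, and each $a_j=\acl(Ba_j)$ are relatively algebraically (hence separably) closed in $\mathfrak{M}$, iterating Fact~\ref{cb-stationarity-fact} shows both that $\acl(Bca_{<\omega})$ is the unique free amalgam over $C^{+}$ of the $\acl(Bca_j)$ and that $\acl(Ba_{<\omega})$ is the unique free amalgam over $B$ of the $a_j$. For each $j$, quantifier-free indiscernibility over $cB$ yields a $T_\forall$-isomorphism $B(c,a_1)\cong B(c,a_j)$ fixing $Bc$ pointwise and sending $a_1\mapsto a_j$; extend it to an isomorphism of $\acl(Bca_1)$ onto a fresh copy $G_j'$, carrying $d$ to some $d_j$ with $P(d_j;c,a_j)=0\wedge\psi(d_j;c,a_j)$. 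Now amalgamate $\acl(Bca_1),G_2',G_3',\dots$ freely over $C^{+}$; this is legitimate at each stage by Lemma~\ref{precise-amalgamation}, since $C^{+}$ is relatively separably closed in every $G_j'$. In the resulting $T_\forall$-structure there is a single $c$ (it lies in the shared base $C^{+}$), the witnesses $d_j$, and — because by Step 2 the copies of the $a_j$ are again freely amalgamated over $B$ with their original $B$-isomorphism types — the substructure generated by $B$ and these copies is, by the uniqueness just quoted, $B$-isomorphic to $\acl(Ba_{<\omega})$ via an isomorphism matching the copies with the $a_j$. Renaming along that isomorphism, then embedding into a model $N\models T$ (possible since it is now a $T_\forall$-structure extending $\acl(Ba_{<\omega})$), completes the construction, with $c^{*}=c$.

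The main obstacle is precisely this amalgamation: one must glue the infinitely many copies of $\acl(Bca_1)$ so that the single parameter $c$ is genuinely identified across all of them while the tuples $a_j$ keep their joint type over $B$. Sharing $c$ dictates amalgamating over $C^{+}=\acl(Bc)$ rather than over $B$; recovering the correct joint $B$-type of the $a_j$ requires knowing that the original configuration in $\mathfrak{M}$ is itself a free amalgam over $C^{+}$ — which is exactly what the computation in Step 2 supplies — together with the relative separable closedness coming from the $\acl$-hypotheses, which makes the free amalgamations unique (Fact~\ref{cb-stationarity-fact}) and the stepwise amalgamations available (Lemma~\ref{precise-amalgamation}, Corollary~\ref{free-amalgamation}). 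The remaining passage back to $\mathfrak{M}$ is then immediate from quantifier elimination up to algebraic covers.
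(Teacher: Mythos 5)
Your Step 2 (the transcendence-degree argument giving $c\forkindep^{\ACF}_B a_{<\omega}$) is correct and matches a computation the paper also performs. But Step 3 has a genuine gap, and it is exactly the place where the paper reaches for the Keisler-measure machinery of Theorem~\ref{dream-thm} instead.

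The problem is the assertion that ``$\acl(Ba_{<\omega})$ is the unique free amalgam over $B$ of the $a_j$.'' The free amalgam of the fields $a_j=\acl(Ba_j)$ over $B$ is their compositum $B(a_{<\omega})$, and this is in general a \emph{proper} subfield of $\acl(Ba_{<\omega})$: a compositum of relatively algebraically closed subfields, algebraically independent over a relatively algebraically closed base, need not itself be relatively algebraically closed. (For instance, in a monster of $\ACF$, $\sqrt{a+b}$ lies in $\acl(ab)$ but not in $\acl(a)\cdot\acl(b)$; equivalently, the compositum of $\overline{\mathbb{C}(a)}$ and $\overline{\mathbb{C}(b)}$ is a direct limit of function fields of products of curves, so it cannot contain the function field of a generic surface and is not algebraically closed.) Consequently your $F'$ contains $B(a_{<\omega})$ with the correct $T_\forall$-structure, but it simply does not contain $\acl(Ba_{<\omega})$; the elements of, say, $\acl(Ba_1a_2)\setminus B(a_1,a_2)$ are in no $G_j'$ (since $\acl(Bca_j)\cap\acl(Ba_{<\omega})=a_j$ by the independence from Step~2), hence not in $F'$ at all.

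This breaks the transfer in Step 1. Embedding $F'$ into $N\models T$ only controls $\qftp(B(a_{<\omega}))$, and $B(a_{<\omega})$ does not satisfy axiom A1, so Corollary~\ref{qe-version-1} cannot be invoked over $B(a_{<\omega})$. To get $N\equiv_{\acl(Ba_{<\omega})}\mathfrak{M}$ you would need the $T_\forall$-structure on $\acl^N(B(a_{<\omega}))$ to agree with $\mathfrak{M}$'s structure on $\acl(Ba_{<\omega})$, and nothing in the construction enforces this: the extension of the $T_\forall$-structure from $B(a_{<\omega})$ to its relative algebraic closure is not unique. The amalgamation you would actually need — gluing $\acl(Ba_{<\omega})$ (with its $\mathfrak{M}$-structure) into the picture simultaneously with the isomorphic copies $G_j'$ — is over the base $B(a_{<\omega})$, which is relatively separably closed in neither side, so Lemma~\ref{precise-amalgamation} and Fact~\ref{cb-stationarity-fact} do not apply. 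This is not a small omission: it is precisely the obstruction the paper's proof is designed to get around. The paper instead takes $K_2=\acl(Ba_{<\omega})$ as given (with its $\mathfrak{M}$-structure), uses Lemma~\ref{three-fields} to see that $\overline{B(a_j,c)}$ is relatively algebraically closed in $\overline{K_2(c)}$, and then uses the extension- and isomorphism-invariance of the measure $P(-,-)$ from Theorem~\ref{dream-thm} together with a positivity-plus-counting argument; the measure packages exactly the ``which completion of $\qftp$ do we land in above $K_2(c)$'' information that your field-amalgamation argument cannot pin down.

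A secondary, smaller issue: when you ``extend it to an isomorphism of $\acl(Bca_1)$ onto a fresh copy $G_j'$,'' you need $G_j'$ to contain $C^+=\acl(Bc)$ with its $\mathfrak{M}$-structure in order to amalgamate over $C^+$. This can be arranged (the isomorphism $\sigma_j:B(c,a_1)\to B(c,a_j)$ extends uniquely to $C^+(a_1)\to C^+(a_j)$ fixing $C^+$, because $B(c)$ is relatively separably closed in $B(c,a_1)$ by Lemma~\ref{two-fields}), but as written the extension is not shown to fix $C^+$, so the amalgamation ``over $C^+$'' is not yet set up.
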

\begin{proof}
Because $a_1, a_2, \ldots$ is $B$-indiscernible, it suffices to show for each $k$ that $\bigwedge_{j = 1}^\infty \phi(x;a_j)$ is not $k$-inconsistent.

First observe that whether or not $c \forkindep^{\ACF}_B a_j$ holds depends only on the quantifier-free type of $c$ and $a_j$ over $B$.  In particular, it does not depend on $j$, by quantifier-free indiscernibility of $a_1, a_2, \ldots$ over $cB$.  If $c \centernot\forkindep^{\ACF}_B a_j$ for one $j$, then this holds for all $j$.  As the $a_j$ are an algebraically independent sequence over $B$, this contradicts the fact that finite tuples have finite preweight in ACF.  So $c \forkindep^{\ACF}_B a_j$ for each $j$.  The same argument applied to the sequence $a_1 a_2, a_3 a_4, \ldots$ shows that $c \forkindep^{\ACF}_B a_1 a_2$.  Similarly $c \forkindep^{\ACF}_B a_1 a_2 a_3$, and so on, and so $c \forkindep^{\ACF}_B a_1 a_2 a_3 \ldots$.

Let $M$ be the monster model of $T$.  Any subset of $M$ closed under
$\acl(-)$ is relatively algebraically closed in $M$, hence relatively
separably closed in $M$.  In particular, if we let $K_0 = B$, $K_1 =
B(a_j) = a_j$, and $K_2 = \acl(B a_1 a_2 \ldots)$, then each of $K_0,
K_1, K_2$ is relatively separably closed in $M$, and $K_0 \subseteq
K_1 \subseteq K_2$.  By the previous paragraph and
Corollary~\ref{acl-char}, $c \forkindep^{\ACF}_B K_2$, so by
Lemma~\ref{three-fields}, we conclude that $K_1(c)$ is relatively
separably closed in $K_2(c)$, i.e., $B(a_j,c)$ is relatively separably
closed in $K_2(c)$.  Using bars to denote perfect closures, this means
that $\overline{B(a_j,c)}$ is relatively algebraically closed in
$\overline{K_2(c)}$.

Recall the function $P(-,-)$ from Theorem~\ref{dream-thm}.  By the ``extension invariance'' part of that theorem,
\[ P(\phi(c;a_j);\overline{B(a_j,c)}) = P(\phi(c;a_j);\overline{K_2(c)}).\]
Now by quantifier-free indiscernibility of $a_1,a_2,\ldots$ over $cB$, we see that $B(a_j,c) \cong B(a_{j'},c)$ for all $j, j'$.  By the isomorphism-invariance part of Theorem~\ref{dream-thm},
\[ P(\phi(c;a_j);\overline{B(a_j,c)}) = P(\phi(c;a_{j'});\overline{B(a_{j'},c)})\]
for all $j, j'$.  Consequently, $P(\phi(c;a_j);\overline{K_2(c)})$ does not depend on $j$.

Now $M$ is a model of $T$ extending $\overline{K_2(c)}$, and in $M$, $\phi(c;a_1)$ holds.  So by the ``density'' part of Theorem~\ref{dream-thm}, $P(\phi(c;a_1);\overline{K_2(c)})$ is some positive number $\epsilon > 0$.  Consequently, $P(\phi(c;a_j);\overline{K_2(c)}) = \epsilon > 0$ for every $j$.

Suppose for the sake of contradiction that $\bigwedge_{j = 1}^\infty \phi(x;a_j)$ is $k$-inconsistent for some $k$.  Let $N$ be big enough that $N\epsilon > k$.  Let $\psi(x)$ be the statement over $K_2$ asserting that at least $k$ of $\phi(x;a_1), \ldots, \phi(x;a_N)$ hold.  By the Keisler measure part of Theorem~\ref{dream-thm}, $P(\psi(c);\overline{K_2(c)}) > 0$, and there is a model $M'$ of $T$ extending $\overline{K_2(c)}$ in which $\psi$ holds.  In particular, $M' \models \exists x : \psi(x)$.  But $K_2$ is relatively algebraically closed in $M$, hence satisfies axiom A1 of \S\ref{axiom-section} by Corollary~\ref{a1-rel-closed}.  By Corollary~\ref{qe-version-1}, the statement $\exists x :  \psi(x)$ holds in $M$ if and only if it holds in $M'$.  Consequently, it holds in $M$, and therefore $\bigwedge_{j = 1}^n \phi(x;a_j)$ is not $k$-inconsistent.
\end{proof}

Recall from \cite{Adler} or \cite{Ch} that the burden of a partial type $p(x)$ is the supremum of $\kappa$ such that there is an inp-pattern in $p(x)$ of depth $\kappa$, that is, an array of formula $\phi_i(x;a_{ij})$ for $i < \kappa$ and $j < \omega$, and some $k_i < \omega$ such that the $i$th row $\bigwedge_{j < \omega} \phi_i(x;a_{ij})$ is $k_i$-inconsistent for each $i$, and such that for any $\eta : \kappa \to \omega$, the corresponding downwards path $\bigwedge_{i < \kappa} \phi_i(x;a_{i,\eta(i)})$ is consistent with $p(x)$.  A theory is NTP$_2$ if every partial type has burden less than $\infty$.  A theory is strong if every partial type has burden less than $\aleph_0$, roughly.  (See \cite{Adler} for a more precise statement.)  At any rate, if every partial type has burden less than $\aleph_0$, then the theory is strong.  By the submultiplicativity of burden (Theorem 11 in \cite{Ch}), it suffices to check the burden of the home sort.

\begin{fact}\label{superadditivity} If $D$ and $E$ are definable sets, $\bdn(D \times E) \ge \bdn(D) + \bdn(E)$.
\end{fact}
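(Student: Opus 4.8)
The plan is to prove this by the standard \emph{stacking} (concatenation) construction for inp-patterns. Recall that $\bdn(D)$ denotes the burden of the partial type ``$x \in D$'', and that an inp-pattern of depth $\kappa$ in a partial type $p(x)$ consists of formulas $\phi_i(x;a_{ij})$ (for $i < \kappa$, $j < \omega$) together with numbers $k_i < \omega$ such that each row $\bigwedge_{j<\omega}\phi_i(x;a_{ij})$ is $k_i$-inconsistent and every downward path $\bigwedge_{i<\kappa}\phi_i(x;a_{i,\eta(i)})$ is consistent with $p(x)$.

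First I would fix inp-patterns witnessing lower bounds for the two burdens: an inp-pattern $(\phi_i(x;a_{ij}) : i < \kappa_1,\ j < \omega)$ in the type ``$x \in D$'' with row-inconsistency numbers $k_i$, and an inp-pattern $(\psi_i(y;b_{ij}) : i < \kappa_2,\ j < \omega)$ in the type ``$y \in E$'' with numbers $l_i$, all parameters taken in the monster model of $T$. I would then assemble a new array, indexed by $\kappa_1 + \kappa_2$, in the variables $(x,y)$: rows $i < \kappa_1$ carry the formula $\phi_i(x;a_{ij})$ (viewed as a formula in $(x,y)$ that happens not to mention $y$), and rows $\kappa_1 + i'$ for $i' < \kappa_2$ carry $\psi_{i'}(y;b_{i'j})$.

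Next I would verify the two defining properties. Row-inconsistency is immediate: the $i$-th row of the first block is $k_i$-inconsistent already as a condition on $x$, hence a fortiori as a condition on $(x,y)$, and likewise each row of the second block is $l_{i'}$-inconsistent. For the path condition, given $\eta : \kappa_1 + \kappa_2 \to \omega$, the corresponding downward path is $\Phi(x) \wedge \Psi(y)$ where $\Phi(x) = \bigwedge_{i<\kappa_1}\phi_i(x;a_{i,\eta(i)})$ and $\Psi(y) = \bigwedge_{i'<\kappa_2}\psi_{i'}(y;b_{i',\eta(\kappa_1+i')})$. By the first inp-pattern there is $d \in D$ with $\Phi(d)$, and by the second there is $e \in E$ with $\Psi(e)$; then $(d,e) \in D\times E$ realizes the path. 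Thus the new array is an inp-pattern of depth $\kappa_1+\kappa_2$ in ``$(x,y)\in D\times E$'', so $\bdn(D\times E) \ge \kappa_1 + \kappa_2$. Taking the supremum over all $\kappa_1 < \bdn(D)$ and $\kappa_2 < \bdn(E)$ (which in the only case of interest here, that of finite burden, is simply attained) gives $\bdn(D\times E) \ge \bdn(D) + \bdn(E)$.

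There is no genuine obstacle: the argument is entirely routine. The only point requiring a word of care is the passage to suprema when the burdens are not attained, where one should either adopt the ``$\kappa^-$''-style formulation of burden from \cite{Adler} or simply observe that only finite burdens occur in this paper.
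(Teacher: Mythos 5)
Your proof is correct and is exactly the approach the paper uses: the paper's one-line justification ("if $\phi_i(x;a_i)$ is an inp-pattern for $D$ and $\psi_j(y;b_j)$ is an inp-pattern for $E$, then $\{\phi_i(x;a_i)\}\cup\{\psi_j(y;b_j)\}$ is an inp-pattern for $D\times E$") is precisely the stacking construction you spell out. Your extra remark about suprema versus attained burdens is a fair point of care, and as you note it is moot here since only finite burdens arise in the paper.
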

In fact, if $\phi_i(x;a_i)$ is an inp-pattern for $D$ and $\psi_j(y;b_j)$ is an inp-pattern for $E$, then $\{\phi_i(x;a_i)\} \cup \{\phi_j(y;b_j)\}$ is an inp-pattern for $D \times E$.

In NIP theories, burden is the same thing as dp-rank, which is known to be additive \cite{dp-add}.  The theories ACVF, $p$CF, and RCF are all known to be dp-minimal, i.e., to have dp-rank 1 \cite{dpExamples}.  One of the descriptions of dp-rank is that a partial type $\Sigma(x)$ over a set $C$ has dp-rank $\ge \kappa$ if and only if there are $\kappa$-many mutually indiscernible sequences over $C$ and a realization $a$ of $\Sigma(x)$ such that each sequence is not indiscernible over $Ca$.

\begin{theorem}\label{thm-ntp2}
The model companion $T$ is NTP$_2$, and strong.  In fact, the burden of affine $m$-space is exactly $mn$, where $n$ is the number of valuations and orderings.
\end{theorem}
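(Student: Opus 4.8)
The plan is to show that $\bdn(\mathbb{A}^m) = mn$ for every $m$, which gives everything: $mn$ is finite, and any type of a finite tuple in the home sort is a type in $\mathbb{A}^k$ for some $k$, so (an inp-pattern in such a type being in particular an inp-pattern in the full type $x\in\mathbb{A}^k$) it has burden at most $\bdn(\mathbb{A}^k)=kn<\aleph_0$; this is precisely what it means for $T$ to be strong, and a fortiori NTP$_2$. So it remains to prove the two bounds on $\bdn(\mathbb{A}^m)$.

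\emph{Lower bound.} By Fact~\ref{superadditivity} it suffices to produce an inp-pattern of depth $n$ in $\mathbb{A}^1$, and then $\bdn(\mathbb{A}^m)\ge m\cdot\bdn(\mathbb{A}^1)\ge mn$ by iterating the Fact. For each $i$ let $\phi_i(x;y)$ be the quantifier-free $\mathcal{L}_i$-formula cutting out the open unit ball $\{x:v_i(x-y)>0\}$ around $y$ when $T_i$ is ACVF or $p$CF, and the open interval $\{x:|x-y|<1\}$ when $T_i$ is RCF. In a monster model of $T$ choose $a_{ij}$, $j<\omega$, pairwise far apart for the $i$-th topology --- e.g.\ with pairwise distinct (indeed all negative) $v_i$-values, resp.\ with $|a_{ij}-a_{ik}|>2$ --- so that the $i$-th row $\bigwedge_j\phi_i(x;a_{ij})$ is $2$-inconsistent. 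Each path $\bigwedge_{i=1}^n\phi_i(x;a_{i,\eta(i)})$ is a finite intersection of $\mathbb{A}^1$-dense open $\mathcal{L}_i$-formulas, hence is consistent by axiom A2(1) of \S\ref{axiom-section}. This is the desired pattern.

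\emph{Upper bound.} Suppose for contradiction there is an inp-pattern of depth $mn+1$ in $\mathbb{A}^m$. By Ramsey together with the standard burden manipulations and Lemma~\ref{dubious-ist}, I aim to normalize it to the following situation: a small set $B=\acl(B)$, rows $(a_{ij})_{j<\omega}$ for $0\le i\le mn$ that are mutually indiscernible over $B$, with each $a_{ij}=\acl(Ba_{ij})$ and each row algebraically independent over $B$, together with a single $c\in\mathbb{A}^m$ for which $\phi_i(c;a_{i0})$ holds for all $i$ while $\bigwedge_j\phi_i(x;a_{ij})$ is $k_i$-inconsistent. Each row then meets the hypotheses of Lemma~\ref{horror} with base $B$; since $\phi_i(c;a_{i0})$ holds but $\bigwedge_j\phi_i(x;a_{ij})$ is inconsistent, the contrapositive of Lemma~\ref{horror} forces the row $(a_{ij})_j$ to fail quantifier-free indiscernibility over $cB$. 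A quantifier-free $T$-type is determined by its $\mathcal{L}_i$-reducts, so there is $\sigma(i)\in\{1,\dots,n\}$ with $(a_{ij})_j$ not quantifier-free $\mathcal{L}_{\sigma(i)}$-indiscernible over $cB$. By pigeonhole some $i_0$ equals $\sigma(i)$ for at least $m+1$ values of $i$; for these $i$, in the reduct $\mathfrak{M}\restriction\mathcal{L}_{i_0}\models T_{i_0}$ --- where quantifier-free indiscernibility is full indiscernibility --- the corresponding rows are mutually indiscernible over $B$ yet each fails indiscernibility over $cB$. This exhibits $m+1$ mutually indiscernible sequences over $B$ each broken by $c$, so $\dpr_{T_{i_0}}(c/B)\ge m+1$; but $T_{i_0}$ is dp-minimal \cite{dpExamples} and $c$ is an $m$-tuple, so $\dpr_{T_{i_0}}(c/B)\le m$ by subadditivity of dp-rank \cite{dp-add}, a contradiction. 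Hence $\bdn(\mathbb{A}^m)\le mn$, and combining with the lower bound completes the proof.

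\emph{The main obstacle} is exactly the normalization step in the upper bound: one must arrange, inside a mutually indiscernible inp-pattern with a realization $c$, that every row is simultaneously $\acl$-closed over $B$ and algebraically independent over $B$, since Lemma~\ref{horror} needs both, and this must be achieved without destroying mutual indiscernibility of the rows. I expect to carry this out by first $\acl$-closing each row over $B$ (re-extracting a mutually indiscernible family if necessary) and then invoking Lemma~\ref{dubious-ist}, or directly analyzing when a $B$-indiscernible sequence of $\acl(B-)$-closed tuples can fail to be algebraically independent over $B$ and absorbing any such failure into a suitably enlarged base; here Corollary~\ref{acl-char}, identifying $\acl$ with field-theoretic algebraic closure, keeps the bookkeeping manageable.
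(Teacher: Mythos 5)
Your argument follows the same strategy as the paper's: superadditivity for the lower bound, then an inp-pattern of depth $mn+1$, normalization to a mutually $B$-indiscernible array of $\acl$-closed, $B$-algebraically-independent rows, Lemma~\ref{horror} to force a failure of quantifier-free indiscernibility over $Bc$ in each row, and a pigeonhole against dp-minimality of each $T_i$ to reach a contradiction. The lower-bound witnesses you choose (unit balls / unit intervals at far-apart centers) differ from the paper's (valuation levels $v_i(x)=y$), but both work and are only sketched in the paper as well.

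The genuine gap is exactly the one you flag: arranging that each row is both an enumeration of $\acl(B\,\cdot\,)$ and algebraically independent over $B$ while keeping mutual indiscernibility. Your first suggestion, applying Lemma~\ref{dubious-ist}, is the wrong tool here: it regenerates a dividing witness for a \emph{single} row and gives no control over the other rows, so mutual indiscernibility is not preserved. The paper instead extends each row to the left (index $j<0$), sets $B=\acl(a_{ij}:j<0)$, and uses the stability-theoretic fact (in ACF) that the tail of an indiscernible sequence is a Morley sequence over the algebraic closure of the initial segment, which gives $a_{ij}\forkindep^{\ACF}_B a_{i,<j}$ for all rows simultaneously and for free. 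After enlarging the base this way, it replaces each $a_{ij}$ by an enumeration $b_{ij}$ of $\acl(Ba_{ij})$, re-extracts a mutually $B$-indiscernible array $c_{ij}d_{ij}$ from the $a_{ij}b_{ij}$, checks that $d_{ij}$ enumerates $\acl(Bc_{ij})$ and that the $d_{ij}$ remain $B$-algebraically independent along each row (via Corollary~\ref{acl-char}), and, crucially, rewrites $\phi_i(x;a_{ij})$ as $\psi_i(x;d_{ij}):=\phi_i(x;\pi_i(d_{ij}))$, where $\pi_i$ is the coordinate projection recovering $c_{ij}$ from $d_{ij}$. Your write-up elides this last projection step by writing $\phi_i(c;a_{i0})$ with $a_{i0}$ simultaneously playing the role of the original tuple and of an enumeration of $\acl(Ba_{i0})$; that needs to be separated out as in the paper. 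With the leftward-extension trick plus the $\psi_i=\phi_i\circ\pi_i$ bookkeeping, the rest of your argument goes through verbatim.
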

\begin{proof}
To show that the burden of $\mathbb{A}^m$ is at least $mn$, it suffices by Fact~\ref{superadditivity} to show that $\bdn(\mathbb{A}^1) \ge n$.  In the case where every $T_i$ is ACVF, one can take $\phi_i(x;y)$ to assert that $v_i(x) = y$, for $1 \le i \le n$, and take $a_{i,0}, a_{i,1}, \ldots$ to be an increasing sequence in the $i$th valuation group.  Variations on this handle the remaining cases.  We leave the details as an exercise to the reader.

For the upper bound, suppose for the sake of contradiction that there is an inp-pattern $\{\phi_i(x;a_{ij})\}_{i < mn+1;~0 \le j < \omega}$ of depth $mn+1$, with $x$ a tuple of length $m$.  We may assume that the $a_{ij}$ form a mutually $\emptyset$-indiscernible array.  Extend the array to the left, i.e., let $j$ range over negative numbers.  Let $B$ be $\acl(a_{ij} : j < 0)$.  From stability theory, one knows that $a_{ij} \forkindep^{\ACF}_B a_{i0} a_{i1} \cdots a_{i,j-1}$ for every $j$.  By mutual indiscernibility, each sequence $a_{i0}, a_{i1}, \ldots$ is indiscernible over $\{a_{ij} : j < 0\}$, hence over $B$.  In particular, $a_{ij} \equiv_B a_{ij'}$ for $j \ne j'$.  For each $i < mn+1$, let $b_{i0}$ be an enumeration of $\acl(B a_{i,0})$.  For $j > 0$, choose $b_{i,j}$ such that $a_{i,j}b_{i,j} \equiv_B a_{i,0}b_{i,0}$.  Then $b_{i,j}$ is an enumeration of $\acl(B a_{i,j})$ for every $i$ and every $j \ge 0$.  Let $c_{i,j}d_{i,j}$ be a mutually $B$-indiscernible array modeled on the array $a_{i,j}b_{i,j}$.  Then $c_{i,j}d_{i,j} \equiv_B a_{i,0}b_{i,0}$, so $d_{i,j}$ is an enumeration of $\acl(B c_{i,j})$.  Also, because $a_{i,0}, a_{i,1}, \ldots$ was already $B$-indiscernible, we must have
\[ c_{i,0} c_{i,1} \cdots \equiv_B a_{i,0} a_{i,1} \cdots\]
for each $i$.  Consequently, $c_{i,j} \forkindep_B^{\ACF} c_{i,0} \cdots c_{i,j-1}$.  And since $d_{i,j} \subseteq \acl(B c_{i,j})$, we also have
\[ d_{i,j} \forkindep_B^{\ACF} d_{i,0} d_{i,1} \cdots d_{i,j-1},\]
using Corollary~\ref{acl-char}.  As $b_{i,0}$ is an enumeration of $\acl(B a_{i,0})$, the elements of $a_{i,0}$ must actually appear somewhere in $b_{i,0}$.  Let $\pi_i$ be the coordinate projection such that $\pi_i(b_{i,0}) = a_{i,0}$.  Hence $c_{i,j} = \pi_i(d_{i,j})$.

Because the $a_{ij}$ formed a mutually $\emptyset$-indiscernible array, the collective type of all the $c_{i,j}$'s must agree with that of all the $a_{i,j}$'s.  Hence $\phi_i(x;c_{i,j})$ is still an inp-pattern of depth $mn+1$.  Let $\psi_i(x;y)$ be $\phi_i(x;\pi_i(y))$.  Then $\psi_i(x;d_{i,j})$ is an inp-pattern of depth $mn+1$.  Let $c$ be a realization of $\bigwedge_{i < mn+1} \psi_i(x;d_{i,0})$.  Note that $c$ is a tuple of length $m$.

Let $\mathfrak{M}$ be the ambient monster.  For each $1 \le k \le n$, let $\mathfrak{M}_k$ be a model of $T_k$ extending $\mathfrak{M} \restriction \mathcal{L}_k$.  By quantifier-elimination, the array $\{d_{i,j}\}$ is still mutually $B$-indiscernible in $\mathfrak{M}_i$.  By additivity of dp-rank and by dp-minimality of the home sort in $\mathfrak{M}_k$, we know that the dp-rank of $\tp(c/B)$ in $\mathfrak{M}_k$ is at most $m$.  In particular, for each $1 \le k \le n$, at most $m$ of the rows in the array $\{d_{i,j}\}$  can fail to be $Bc$-indiscernible in $\mathfrak{M}_k$.  By the pigeonhole principle, there must be some value of $i$ such that the sequence $d_{i,0}, d_{i,1}, \ldots$ is $Bc$-indiscernible in each of $\mathfrak{M}_1, \mathfrak{M}_2, \ldots, \mathfrak{M}_n$.  Back in $\mathfrak{M}$, this means that $d_{i,0}, d_{i,1}, \ldots$ is quantifier-free $Bc$-indiscernible.  Since $d_{i,0}, d_{i,1}, \ldots$ is also $B$-indiscernible and $B$-independent, Lemma~\ref{horror} applies.  Consequently, $\bigwedge_{j = 0}^\infty \psi_i(x;d_{i,j})$ is consistent, because $\psi_i(c;d_{i,0})$ holds.  This contradicts the fact that $\{ \psi_i(x;d_{i,j})\}$ is an inp-pattern.
\end{proof}

\begin{corollary}\label{mir-ntp2}
  If $(K,v_1,\ldots,v_n)$ is an algebraically closed field with $n$
  independent non-trivial valuations, then $(K,v_1,\ldots,v_n)$ is
  strong of burden $n$.
\end{corollary}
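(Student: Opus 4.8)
The plan is to identify $(K,v_1,\ldots,v_n)$ as a model of the model companion $T$ from \S\ref{review} in the special case where every $T_i$ is $\ACVF$, and then simply invoke Theorem~\ref{thm-ntp2}.

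First I would verify the three conditions of Theorem~\ref{miracle} with $T_1 = \cdots = T_n = \ACVF$. The first condition, that $K \restriction \mathcal{L}_1 \models \ACVF$, i.e.\ that $K$ be algebraically closed, is a hypothesis; the second, that $v_2,\ldots,v_n$ (indeed all $v_i$) be non-trivial, is also a hypothesis. The only point requiring comment is the third condition: that $v_i$ and $v_j$ induce distinct topologies on $K$ for $i \ne j$. This follows from the independence of the family $v_1,\ldots,v_n$, since two non-trivial valuations inducing the same field topology are necessarily dependent, having a common non-trivial coarsening (a standard fact of valuation theory, in line with the remark following Fact~\ref{strong-approximation}); independent non-trivial valuations are therefore topologically distinct. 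Hence $(K,v_1,\ldots,v_n) \models T$, so $\mathrm{Th}(K,v_1,\ldots,v_n) = T$.

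Now Theorem~\ref{thm-ntp2} applies directly to $T$: it is strong, and the burden of $\mathbb{A}^m$ is exactly $mn$. Taking $m = 1$ shows the home sort $\mathbb{A}^1$ has burden precisely $n$, which by definition means $T$ — and hence the structure $(K,v_1,\ldots,v_n)$, whose complete theory is $T$ — is strong of burden $n$.

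The only substantive observations are the translation ``independent non-trivial valuations have pairwise distinct topologies'' (used to apply Theorem~\ref{miracle}) and the fact that strength and burden are invariants of the complete theory $T$, hence pass to the possibly non-saturated structure $(K,v_1,\ldots,v_n)$. Neither is a genuine obstacle, so the corollary is essentially immediate from the two cited theorems.
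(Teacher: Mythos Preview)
Your proposal is correct and matches the paper's own proof, which simply cites Theorems~\ref{miracle} and \ref{thm-ntp2}. You have merely spelled out the verification of the hypotheses of Theorem~\ref{miracle} (in particular the standard fact that independent non-trivial valuations induce distinct topologies), which is exactly what the paper's one-line proof leaves implicit.
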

\begin{proof}
  Theorems~\ref{miracle} and \ref{thm-ntp2}.
\end{proof}
\section{Forking and Dividing}\label{forking}

We will make use of the following general fact, which is the implication (ii)$\implies$(i) in Proposition 4.3 of \cite{udi-anand}.\footnote{Hrushovski and Pillay assume NIP, but the assumption is unused for the implication (ii)$\implies$(i).}
\begin{fact}\label{keisler-lascar-forking}
Let $\mathfrak{M}$ be a monster model of some theory, let $S \subseteq \mathfrak{M}$ be a small set, and let $\phi(x)$ be a formula with parameters from $\mathfrak{M}$.  Suppose there is a global Keisler measure $\mu$ which is Lascar-invariant over $S$, and suppose $\mu(\phi(x)) > 0$.  Then $\phi(x)$ does not fork over $S$.
\end{fact}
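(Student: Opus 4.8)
This statement is the Hrushovski--Pillay result; the plan is to reduce it, via the definition of forking, to the single implication ``if $\psi(x)$ divides over $S$ then $\mu(\psi(x)) = 0$,'' and then to prove that implication by a counting argument combining $k$-inconsistency with finite additivity of $\mu$. First I would recall that $\phi(x)$ forks over $S$ precisely when $\phi(x) \vdash \bigvee_{i=1}^{m}\psi_i(x)$ for some formulas $\psi_i(x)$ with parameters in $\mathfrak{M}$, each of which divides over $S$. Since a Keisler measure is monotone and finitely subadditive, $\mu(\phi(x)) \le \sum_{i=1}^{m}\mu(\psi_i(x))$; so once we know each $\mu(\psi_i(x)) = 0$ we get $\mu(\phi(x)) = 0$, contradicting the hypothesis $\mu(\phi(x)) > 0$, and hence $\phi(x)$ does not fork over $S$. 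Thus it suffices to show that a formula dividing over $S$ has $\mu$-measure zero.

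For that, write such a formula as $\psi(x;b)$ and choose an $S$-indiscernible sequence $(b_j)_{j<\omega}$ with $b_0 = b$ and an integer $k < \omega$ witnessing the dividing, so that $\{\psi(x;b_j) : j < \omega\}$ is $k$-inconsistent. The key observation is that any two elements of a single $S$-indiscernible sequence have the same Lascar strong type over $S$, so for each pair $j,j'$ there is an element of $\operatorname{Autf}(\mathfrak{M}/S)$ sending $b_j$ to $b_{j'}$; since $\mu$ is Lascar-invariant over $S$, it follows that $\mu(\psi(x;b_j))$ does not depend on $j$. Call this common value $\epsilon$, so the task is reduced to showing $\epsilon = 0$.

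To do this, fix $N < \omega$, put $D_j := \psi(\mathfrak{M};b_j)$ for $j < N$, and for $1 \le r \le N$ let $F_r$ be the definable set of points lying in at least $r$ of $D_0,\dots,D_{N-1}$. The pointwise identity $\sum_{j<N}\mathbf{1}_{D_j} = \sum_{r=1}^{N}\mathbf{1}_{F_r}$, together with finite additivity of $\mu$, gives $N\epsilon = \sum_{j<N}\mu(D_j) = \sum_{r=1}^{N}\mu(F_r)$. By $k$-inconsistency no point lies in $k$ or more of the $D_j$, so $F_r = \emptyset$ for $r \ge k$; since $\mu(F_r) \le 1$ always, we obtain $N\epsilon = \sum_{r=1}^{k-1}\mu(F_r) \le k-1$. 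As $N$ was arbitrary, $\epsilon = 0$, which completes the argument.

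I do not anticipate a genuine obstacle: the whole proof is finitary, using only monotonicity, finite additivity, and normalization of Keisler measures. The one step requiring care is the passage from $S$-indiscernibility of $(b_j)$ to the constancy of $\mu(\psi(x;b_j))$ in $j$, which rests on two standard facts that I would state explicitly before using them: that elements of an $S$-indiscernible sequence are Lascar-equivalent over $S$, and that ``Lascar-invariant over $S$'' means invariance under the group $\operatorname{Autf}(\mathfrak{M}/S)$ of Lascar strong automorphisms fixing $S$ pointwise.
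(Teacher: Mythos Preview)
Your argument is correct and is essentially the standard proof of this implication. Note, however, that the paper does not give its own proof of this statement: it is recorded as a \emph{Fact}, with a citation to Proposition~4.3 of Hrushovski--Pillay (and a footnote observing that the NIP hypothesis there is unused for the direction (ii)$\Rightarrow$(i)). So there is no ``paper's proof'' to compare against; you have simply supplied the argument that the paper chose to outsource. Your reduction from forking to dividing via subadditivity, followed by the counting argument exploiting Lascar-invariance along an $S$-indiscernible sequence and $k$-inconsistency, is exactly the expected route.
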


Now we specialize to the theory $T$ under consideration.
\begin{lemma}\label{apply-probabilities}
Let $\mathfrak{M}$ be a monster model of $T$.  Let $S$ be a small subset of $\mathfrak{M}$, and let $p$ be a complete quantifier-free type over $\mathfrak{M}$ which is Lascar-invariant over $S$.  Then there is a Keisler measure $\mu$ on $S(\mathfrak{M})$, Lascar-invariant over $S$, whose support is exactly the set of completions of $p$.
\end{lemma}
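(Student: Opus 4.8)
The plan is to read $\mu$ directly off the function $P(-,-)$ of Theorem~\ref{dream-thm}. Fix a realization $c$ of $p$ in some elementary extension $\mathfrak{M}'\succeq\mathfrak{M}$, let $\mathfrak{M}(c)\subseteq\mathfrak{M}'$ be the field it generates over $\mathfrak{M}$ with its induced $T_\forall$-structure, and let $K$ be the perfect closure of $\mathfrak{M}(c)$ taken inside $\mathfrak{M}'$ (so $K=\mathfrak{M}(c)$ in characteristic $0$). Then $K$ is a perfect model of $T_\forall$ extending $\mathfrak{M}$, so for any formula $\phi(x;b)$ with $b$ a tuple from $\mathfrak{M}$ one may set
\[ \mu(\phi(x;b)) := P(\phi(c;b),K). \]
Because $p$ is a \emph{complete} quantifier-free type, the $T_\forall$-structure $\mathfrak{M}(c)$, hence also $K$, is determined up to isomorphism over $\mathfrak{M}$ by $p$ alone; so by the isomorphism-invariance clause of Theorem~\ref{dream-thm} the value $\mu(\phi(x;b))$ is independent of the choice of $c$ and of $\mathfrak{M}'$. (Theorem~\ref{dream-thm} puts no cardinality restriction on $K$, so it does no harm that $K$ is large.)

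Next I would check that $\mu$ is a Keisler measure on $S(\mathfrak{M})$ (in the variable of $p$). If $\phi(x;b)$ and $\phi'(x;b')$ satisfy $\phi(\mathfrak{M};b)=\phi'(\mathfrak{M};b')$, then $\mathfrak{M}\models\forall x\,(\phi(x;b)\leftrightarrow\phi'(x;b'))$; since $T$ is model complete and $\mathfrak{M}\subseteq K$, every model of $T$ extending $K$ also extends $\mathfrak{M}$ elementarily, so this equivalence holds in all of them, and the Keisler-measure clause of Theorem~\ref{dream-thm} (additivity, together with the statement that a formula holding in every extension of $K$ has $P$-value $1$) gives $P(\phi(c;b),K)=P(\phi'(c;b'),K)$. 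The same clause gives finite additivity, total mass $1$, and $0\le\mu\le 1$, so $\mu$ is a genuine Keisler measure.

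For the support I would argue both inclusions. Given a complete $q\supseteq p$ over $\mathfrak{M}$ and $\phi(x;b)\in q$, realize $q$ by some $c'$ in a model $M'\models T$ with $\mathfrak{M}\preceq M'$; since $\qftp(c'/\mathfrak{M})=p$ there is an isomorphism from the perfect closure of $\mathfrak{M}(c')$ onto $K$ fixing $\mathfrak{M}$ and sending $c'$ to $c$. As $M'$ is a model of $T$ extending the perfect closure of $\mathfrak{M}(c')$ in which $\phi(c';b)$ holds, the density clause of Theorem~\ref{dream-thm} makes $P$ positive there, and isomorphism-invariance transports this to $\mu(\phi(x;b))=P(\phi(c;b),K)>0$; hence $q$ is in the support. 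Conversely, if $\theta(x;b)\in p$ then, $\theta$ being quantifier-free with $c\models\theta$, the sentence $\theta(c;b)$ holds in $K$ and so in every model of $T$ extending $K$, whence $P(\theta(c;b),K)=1$ and $\mu(\neg\theta(x;b))=0$; so every type in the support contains all of $p$.

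It remains to prove Lascar-invariance over $S$, and this is the step I expect to be the main obstacle. Given $h\in\operatorname{Autf}(\mathfrak{M}/S)$, the type $p$ is $h$-invariant: $h$ fixes Lascar strong types over $S$ and $p$ is Lascar-invariant over $S$ by hypothesis. This $h$-invariance is exactly what is needed to see that the map acting as $h$ on $\mathfrak{M}$ and as the identity on $c$ is a well-defined automorphism of the $T_\forall$-structure $\mathfrak{M}(c)$ --- well-definedness and injectivity of the underlying field map, and preservation of each atomic $\mathcal{L}_i$-relation among elements of $\mathfrak{M}(c)$, are precisely the assertions that the relevant quantifier-free formulas lie in $p$ both before and after applying $h$ --- and it then extends canonically to an automorphism of $K=\overline{\mathfrak{M}(c)}$ fixing $c$. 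Feeding this automorphism into the isomorphism-invariance of $P$ yields $\mu(\phi(x;h(b)))=P(\phi(c;h(b)),K)=P(\phi(c;b),K)=\mu(\phi(x;b))$. The care required here is that only $\operatorname{Autf}$-conjugacy, not conjugacy under all of $\Aut(\mathfrak{M}/S)$, is available, so the invariance hypothesis on $p$ must be invoked in exactly this form; the remainder of the argument is bookkeeping around the isomorphism- and density-clauses of Theorem~\ref{dream-thm}.
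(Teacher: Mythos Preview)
Your proof is correct and follows essentially the same route as the paper's: realize $p$ by an element $c$, form the perfect closure $K$ of $\mathfrak{M}(c)$, set $\mu(\phi(x;b))=P(\phi(c;b),K)$, and read off well-definedness, the Keisler-measure axioms, Lascar-invariance, and the support claim from the four clauses of Theorem~\ref{dream-thm} together with model completeness. The paper's write-up is slightly terser (it does not explicitly verify the ``support $\subseteq$ completions of $p$'' direction, and it speaks of ``some bigger model'' rather than an elementary extension), but the construction and the key step---extending a Lascar strong automorphism of $\mathfrak{M}$ to an automorphism of $K$ fixing $c$ via the $h$-invariance of $p$---are identical.
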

This is nothing but a restatement or special case of Theorem~\ref{dream-thm}.
\begin{proof}
Let $a$ be a realization of $p$ in some bigger model, and consider the structure $\mathfrak{M}[a]$ generated by $\mathfrak{M}$ and $a$.  The structure of $\mathfrak{M}[a]$ is determined by $p$.  Also, if $\sigma$ is any Lascar strong automorphism of $\mathfrak{M}$ over $S$, then $p = \sigma(p)$.  This implies that there is a uniquely determined automorphism $\sigma'$ of $\mathfrak{M}[a]$ extending $\sigma$ on $\mathfrak{M}$ and fixing $a$.

Let $\overline{\mathfrak{M}[a]}$ denote the perfect closure of the field of fractions of $\mathfrak{M}[a]$.  This is uniquely determined (as a model of $T_\forall$) by $\mathfrak{M}[a]$, and hence is determined by $p$.  Let $\mu$ be the Keisler measure on $\mathfrak{M}$ which assigns to an $\mathfrak{M}$-formula $\phi(x;b)$ the value
\[ P(\phi(a;b);\overline{\mathfrak{M}[a]}),\]
where $P$ is as in Theorem~\ref{dream-thm}.
By the Keisler measure part of Theorem~\ref{dream-thm}, this is a Keisler measure on the space of completions of $\qftp( \mathfrak{M}[a])$.  By model completeness, any extension of $\qftp(\mathfrak{M}[a])$ to a complete type must satisfy $\tp(\mathfrak{M})$, so we have a legitimate Keisler measure on the space of extensions of $p$ to complete types over $\mathfrak{M}$.  And if $\sigma$ is any Lascar strong automorphism over $S$, then by the ``isomorphism invariance'' part of Theorem~\ref{dream-thm},
\[ P(\phi(a;\sigma(b));\overline{\mathfrak{M}[a]}) = P(\phi(\sigma'(a);\sigma'(b));\overline{\mathfrak{M}[a]}) = P(\phi(a,b);\overline{\mathfrak{M}[a]})\]
where $\sigma'$ is the aforementioned automorphism of $\mathfrak{M}[a]$ extending $\sigma$ and fixing $a$.  Thus $\mu(\phi(x;b)) = \mu(\phi(x;\sigma(b)))$.  We conclude that $\mu(\phi(x;b)) = \mu(\phi(x;b'))$ for any formula $\phi(x;y)$ and any $b, b' \in \mathfrak{M}$ having the same Lascar strong type over $S$.  Finally, if $b$ is a tuple from $\mathfrak{M}$ and $\phi(a;b)$ is a formula which is consistent with $p$, then $\phi(a;b)$ is also consistent with the diagram of $\mathfrak{M}[a]$, hence has positive probability by the ``density'' part of Theorem~\ref{dream-thm}.
\end{proof}

\begin{corollary}\label{every-extension}
Let $\mathfrak{M}$ be a monster model of $T$ and $S$ be a small subset of $\mathfrak{M}$.  Suppose $q$ is a complete quantifier-free type on $\mathfrak{M}$ which is Lascar invariant over $S$.  Then every complete type on $\mathfrak{M}$ extending $q$ does not fork over $S$.
\end{corollary}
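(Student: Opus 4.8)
The plan is to derive this immediately from Lemma~\ref{apply-probabilities} together with Fact~\ref{keisler-lascar-forking}, so almost all of the real content has already been established. First I would apply Lemma~\ref{apply-probabilities} to the given quantifier-free type $q$, obtaining a global Keisler measure $\mu$ on the space of types over $\mathfrak{M}$ that is Lascar-invariant over $S$ and whose support is exactly the set of completions of $q$.

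Next, let $p$ be an arbitrary complete type over $\mathfrak{M}$ extending $q$, and let $\phi(x)$ be any formula in $p$, with parameters from $\mathfrak{M}$. Since $q \subseteq p$, the point $p$ of the Stone space lies in the support of $\mu$ and also in the clopen set $[\phi(x)]$; hence $[\phi(x)]$ meets the support of $\mu$, and because a nonempty relatively open subset of the support of a Borel measure on a Stone space always carries positive mass, we get $\mu(\phi(x)) > 0$. Now Fact~\ref{keisler-lascar-forking}, applied to this $\mu$, shows that $\phi(x)$ does not fork over $S$. Since $\phi(x)$ was an arbitrary formula of $p$, the type $p$ does not fork over $S$, as desired.

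I do not expect a genuine obstacle: the difficult work lies in the construction of $P(-,-)$ in Theorem~\ref{dream-thm} and its repackaging as Lemma~\ref{apply-probabilities}, both of which we may take as given. The only point deserving explicit mention is the passage from ``$\operatorname{supp}(\mu)$ equals the set of completions of $q$'' to ``$\mu(\phi(x)) > 0$ for every $\phi(x)$ consistent with $q$''; this is a standard fact about supports of measures on Stone spaces. Alternatively, this positivity can be quoted directly from the ``density'' clause of Theorem~\ref{dream-thm} in the same way it was used inside the proof of Lemma~\ref{apply-probabilities}, avoiding the language of supports altogether.
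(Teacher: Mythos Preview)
Your proposal is correct and is essentially identical to the paper's proof: apply Lemma~\ref{apply-probabilities} to obtain the Lascar-invariant measure $\mu$, observe that any $\phi(x)\in p$ is consistent with $q$ and hence has positive $\mu$-measure, and invoke Fact~\ref{keisler-lascar-forking}. The only difference is cosmetic: the paper simply asserts ``$\mu(\phi(x))$ is positive because $\phi(x)$ is consistent with $q(x)$,'' whereas you spell this out via the support characterization (and correctly note the density clause as an alternative).
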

\begin{proof}
Let $p(x)$ be a complete type extending $q(x)$.  Let $\phi(x)$ be any formula from $p(x)$.  Let $\mu$ be the Keisler measure from Lemma~\ref{apply-probabilities}.  Then $\mu$ is Lascar invariant over $S$, and $\mu(\phi(x))$ is positive because $\phi(x)$ is consistent with $q(x)$.  By Fact~\ref{keisler-lascar-forking}, $\phi(x)$ does not fork over $S$.
\end{proof}

If $M$ is a model of $T$ and $A, B, C$ are subsets of $M$, let $A \forkindep^{T_i}_C B$ indicate that $A \forkindep_C B$ holds in any/every model of $T_i$ extending $M \restriction \mathcal{L}_i$.

\begin{lemma}\label{qftp-extender}
Work in a monster model $\mathfrak{M}$ of $T$.  Let $a$ be a finite tuple, and $B$ and $C$ be sets (in the home sort, as always).  Suppose $C = \acl(C)$.  Suppose $a \forkindep^{T_i}_C B$ holds for every $1 \le i \le n$.  Then $\qftp(a/BC)$ can be extended to a quantifier-free type $q(x)$ on $\mathfrak{M}$ which is Lascar invariant over $C$.
\end{lemma}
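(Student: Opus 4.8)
The plan is to produce $q$ by gluing together, for each $i$, a global quantifier-free $\mathcal{L}_i$-type that witnesses the $i$-th non-forking hypothesis, and then to verify the glued type is Lascar-invariant over $C$ by transferring Lascar-invariance from each $T_i$.

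First I would record two consequences of the hypotheses. Since $C=\acl(C)$, Corollary~\ref{acl-char} shows $C$ is relatively algebraically closed in $\mathfrak{M}$, hence relatively separably closed, so $\tp^{\ACF}(a/C)$ is stationary (the remarks after Definition~\ref{nearly}); and applying Lemma~\ref{stronger-than-acf} inside each $T_i$ to $a \forkindep^{T_i}_C B$ gives $a \forkindep^{\ACF}_C B$. Therefore $\qftp^{\ACF}(a/BC)$ is a non-forking extension of the stationary type $\tp^{\ACF}(a/C)$, so it has a \emph{unique} global non-forking (over $C$) extension to $\mathfrak{M}$; let $q^{\mathrm{fld}}$ denote the corresponding quantifier-free $\mathcal{L}_{\mathrm{rings}}$-type over $\mathfrak{M}$.

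Next, for each $i$ fix a monster model $\mathfrak{M}_i\models T_i$ extending the $(T_i)_\forall$-structure $\mathfrak{M}\restriction\mathcal{L}_i$. By hypothesis $a\forkindep_C B$ in $\mathfrak{M}_i$, and since $T_i$ is NIP, $\tp_{\mathcal{L}_i}(a/BC)$ extends to a global type $\hat q_i$ over $\mathfrak{M}_i$ that is Lascar-invariant over $C$. A realization $a''_i$ of $\hat q_i$ then satisfies $a''_i\forkindep^{\ACF}_C\mathfrak{M}_i$ (Lemma~\ref{stronger-than-acf} in $T_i$), hence $a''_i\forkindep^{\ACF}_C\mathfrak{M}$, while $a''_i\equiv^{\ACF}_{BC} a$; by the uniqueness above, the $\mathcal{L}_{\mathrm{rings}}$-reduct of $\hat q_i\restriction\mathfrak{M}$ equals $q^{\mathrm{fld}}$ for every $i$. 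Since the restrictions $q_i:=\hat q_i\restriction\mathfrak{M}$ all share the field reduct $q^{\mathrm{fld}}$, I can glue them: realize $q^{\mathrm{fld}}$ by a field $\mathfrak{M}(a')$, transport onto $\mathfrak{M}(a')$ the $\mathcal{L}_i$-structure carried by $q_i$ via a field isomorphism over $\mathfrak{M}$ (every subfield is an $\mathcal{L}_i$-substructure), obtaining a $T^0$-structure on $\mathfrak{M}(a')$ extending $\mathfrak{M}$; embedding it into a model of $T$ and using model-completeness, set $q:=\qftp_{\mathcal{L}}(a'/\mathfrak{M})$. By construction $q$ is a global quantifier-free type with $q\restriction\mathcal{L}_i=q_i$, so $q$ extends $\qftp(a/BC)$.

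Finally I would check $q$ does not Lascar-split over $C$. Let $\langle b_j\rangle_{j<\omega}$ be $C$-indiscernible in $\mathfrak{M}$ and $\chi(x;y)$ quantifier-free; writing $\chi$ as a Boolean combination of atomic $\mathcal{L}_i$-formulas and using completeness of $q$, it suffices to treat a single quantifier-free $\mathcal{L}_i$-formula $\varphi(x;y)$, for which $\varphi(x;b_j)\in q \iff \varphi(x;b_j)\in q_i \iff \varphi(x;b_j)\in\hat q_i$. The point is that $\langle b_j\rangle$ remains $C$-indiscernible \emph{in $\mathfrak{M}_i$}: quantifier-free $\mathcal{L}_i$-formulas are absolute between the substructure $\mathfrak{M}\restriction\mathcal{L}_i$ and $\mathfrak{M}_i$, and by quantifier elimination in $T_i$ the full $\mathcal{L}_i$-type over $C$ of a tuple in $\mathfrak{M}_i$ is its quantifier-free type, which is constant along the sequence by $C$-indiscernibility in $\mathfrak{M}$. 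Lascar-invariance of $\hat q_i$ over $C$ in $\mathfrak{M}_i$ then gives $\varphi(x;b_0)\in\hat q_i \iff \varphi(x;b_1)\in\hat q_i$, hence $\varphi(x;b_0)\in q\iff\varphi(x;b_1)\in q$, as wanted. I expect the main obstacle to be precisely this transfer step — ensuring that Lascar-invariance over $C$ computed in the auxiliary monsters $\mathfrak{M}_i$ descends to Lascar-invariance over $C$ in $T$ for the glued type — together with the bookkeeping that makes the gluing of the $q_i$ legitimate, where the stationarity of $\tp^{\ACF}(a/C)$ (hence the hypothesis $C=\acl(C)$) is what forces the field reducts to agree.
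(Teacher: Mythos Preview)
Your proposal is correct and follows essentially the same route as the paper: pass to monster models $\mathfrak{M}_i\models T_i$, use the NIP characterization of non-forking to obtain global $C$-Lascar-invariant $\mathcal{L}_i$-types $\hat q_i$ extending $\tp_{\mathcal{L}_i}(a/BC)$, observe via Lemma~\ref{stronger-than-acf} that their $\mathcal{L}_{\mathrm{rings}}$-reducts all coincide with the generic type of the (geometrically irreducible) locus of $a$ over $C$, glue to a single quantifier-free type $q$, and verify Lascar-invariance by reducing to quantifier-free $\mathcal{L}_i$-formulas and transferring $C$-indiscernibility to $\mathfrak{M}_i$ via quantifier elimination in $T_i$. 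The only cosmetic difference is that the paper defines $q$ directly as the union $\bigcup_i q_i$ and argues consistency syntactically from the common field reduct, whereas you build a realization by hand; the content is the same.
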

\begin{proof}
Let $V$ be the variety over $C$ of which $a$ is a generic point.  By Fact~\ref{cb-stationarity-variety}, $V$ is geometrically irreducible.

Let $\mathfrak{M}_i$ be a model of $T_i$ extending $\mathfrak{M} \restriction \mathcal{L}_i$.  Within $\mathfrak{M}_i$, $a \forkindep^{T_i}_C B$.  By Adler's characterization of forking in NIP theories (Proposition 2.1 in \cite{udi-anand}), there is an $\mathcal{L}_i$-type $p_i(x)$ on $\mathfrak{M}_i$ which extends the type of $a$ over $BC$ and which is Lascar-invariant over $C$.  The restriction of this $\mathcal{L}_i$-type to a quantifier-free $\mathcal{L}_{rings}$-type must say that $x$ lives on $V$ and on no $\mathfrak{M}_i$-definable proper subvarieties of $V$.  This follows from Lemma~\ref{stronger-than-acf}.  Let $q_i(x)$ be the set of quantifier-free $\mathcal{L}_i$-statements in $p_i(x)$ with parameters from $\mathfrak{M}$.  Then $q_i(x)$ is a complete quantifier-free $\mathcal{L}_i$-type on $\mathfrak{M}$.  Let $q(x)$ be $\bigcup_{i = 1}^n q_i(x)$.  This is a complete quantifier-free type on $\mathfrak{M}$; it is consistent because the $q_i(x)$ all have the same restriction to the language of rings, namely, the generic type of $V$.  Also, $q(x)$ extends $\qftp(a/BC)$, because the $\mathcal{L}_i$-part of $\qftp(a/BC)$ is present in $p_i(x)$ and $q_i(x)$.

To show Lascar-invariance of $q(x)$ over $C$, it suffices to show that if $I$ is a $C$-indiscernible sequence in $\mathfrak{M}$, $a$ and $a'$ are two elements of $I$, and $\phi(x;y)$ is a quantifier-free formula, then $\phi(x;a) \in q(x)$ if and only if $\phi(x;a') \in q(x)$.  In fact, we only need to consider the case where $\phi(x)$ is a quantifier-free $\mathcal{L}_i$-formula, for some $i$.  But then
\[ \phi(x;a) \in q(x) \iff \phi(x;a) \in p_i(x) \iff \phi(x;a') \in p_i(x) \iff \phi(x;a') \in q(x)\]
where the middle equivalence follows from the fact that $p_i(x)$ is Lascar-invariant, and $I$ is $C$-indiscernible within $\mathfrak{M}_i$ (by quantifier-elimination in $T_i$).  Thus $q(x)$ is Lascar-invariant over $C$, as claimed.
\end{proof}

\begin{theorem}\label{forking-is-dividing}
Forking and dividing agree over every set (in the home sort).
\end{theorem}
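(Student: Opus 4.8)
The plan is to deduce this from the Chernikov--Kaplan theorem that in an NTP$_2$ theory forking equals dividing over every extension base (\cite{CK}), using that $T$ is NTP$_2$ by Theorem~\ref{thm-ntp2}. Thus it suffices to show that every set $C$ in the home sort is an \emph{extension base} for forking, i.e.\ that every complete type over $C$ has a global non-forking extension. I first reduce to the case $C = \acl(C)$: if $\acl(C)$ is an extension base and $p \in S(C)$, realize $p$ by a tuple $a$, take a global extension of $\tp(a/\acl(C))$ non-forking over $\acl(C)$; it is non-forking over the smaller set $C$ as well, and restricts to $p$ on $C$. So assume $C = \acl(C)$, which by Corollary~\ref{acl-char} means $C$ is relatively algebraically closed as a field in the monster $\mathfrak{M}$, hence satisfies axiom A1 by Corollary~\ref{a1-rel-closed}.

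Now fix $p \in S(C)$, realized by a finite tuple $a$, and let $a'$ enumerate the field $A := \acl(Ca)$ (field-theoretic algebraic closure, which equals the model-theoretic one by Corollary~\ref{acl-char}). Then $A$ is relatively algebraically closed in $\mathfrak{M}$, hence satisfies A1, and — being of characteristic $0$ or algebraically closed — is relatively separably closed in $\mathfrak{M}$, and in particular in $A$ itself relative to $C$. Two ingredients combine. First, Corollary~\ref{qe-version-1} applied to $A$ says the diagram of $A$ implies its elementary diagram; unwound, this says exactly that $\qftp(a'/C) \vdash \tp(a'/C)$, so in particular $\tp(a'/C)$ extends $p$. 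Second, since $a'$ lies over $C$, we have $a' \forkindep^{T_i}_C C$ for every $i$ — the trivial case $B = C$ of the hypothesis of Lemma~\ref{qftp-extender}, which holds because every subset of a model of ACVF, RCF, or $p$CF is an extension base for forking. Hence Lemma~\ref{qftp-extender}, applied with $B = C$, produces a global quantifier-free type $q$ on $\mathfrak{M}$ extending $\qftp(a'/C)$ and Lascar-invariant over $C$. (Its proof goes through unchanged for the infinite — but finite-transcendence-degree — tuple $a'$: the only use of finiteness is to pass to the geometrically irreducible variety of which the tuple is generic and to invoke stationarity of the $\ACF$-generic type over $C$, both of which are still available since $C$ is relatively separably closed in $A$.)

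Finally, by Corollary~\ref{every-extension} every complete type on $\mathfrak{M}$ extending $q$ is non-forking over $C$; fix one such $r$. Its restriction to $C$ contains $\qftp(a'/C)$, hence equals $\tp(a'/C)$ by the first ingredient; restricting $r$ further to the subtuple of variables indexing $a$ yields a global type that is non-forking over $C$ (a reduct of a non-forking type is non-forking) and restricts to $p$ on $C$. Thus $p$ has a global non-forking extension, so $C$ is an extension base, and by Theorem~\ref{thm-ntp2} together with \cite{CK}, forking and dividing agree over $C$. The one point demanding care is precisely the passage from the finite tuple $a$ to the enumeration $a'$ of $\acl(Ca)$ — forced on us because quantifier elimination fails in $T$, so $\qftp(a/C)$ need not determine $\tp(a/C)$ whereas $\qftp(a'/C)$ does, via Corollary~\ref{qe-version-1} — and the attendant mild extension of Lemma~\ref{qftp-extender} to such tuples; the rest is bookkeeping with results already in hand.
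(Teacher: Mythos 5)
Your proof is correct and takes essentially the same route as the paper: reduce to showing every set is an extension base, use Lemma~\ref{qftp-extender} to produce a global quantifier-free type that is Lascar-invariant over $C$, invoke Corollary~\ref{every-extension} (hence the Keisler measure machinery) to get a global non-forking extension, pass through $\acl(Ca)$ and Corollary~\ref{qe-version-1} to upgrade from quantifier-free types to full types, and finish with Chernikov--Kaplan. The one place where you diverge is in handling the infinite tuple $a'$ enumerating $\acl(Ca)$: you re-examine the proof of Lemma~\ref{qftp-extender} and argue that it extends to infinite tuples of finite transcendence degree. That works, but the paper avoids this detour entirely: it first shows that $\qftp(a/B)$ does not fork over $B$ for every \emph{finite} tuple $a$ (directly from Lemma~\ref{qftp-extender} with $B$ in the role of both $B$ and $C$), then uses compactness (finite character of forking) to conclude the same for arbitrary small tuples, and only then substitutes $a' = \acl(aB)$. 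This sidesteps the need to talk about ``the geometrically irreducible variety of which an infinite tuple is generic,'' which, while salvageable, is exactly the sort of technical point one would rather not have to re-justify. The compactness step is worth internalizing as the cleaner standard move here.
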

\begin{proof}
First we show that if $a$ is a finite tuple and $B$ is a set, then $\qftp(a/B)$ does not fork over $B$.  By Lemma~\ref{qftp-extender}, there is a global quantifier-free type $q(x)$ which is Lascar-invariant over $B$.  By Corollary~\ref{every-extension}, any extension of $q(x)$ to a complete global type does not fork over $B$.  So $\qftp(a/B)$ has a global non-forking extension.  Now if $a$ is any small tuple, and $B$ is a set, then $\qftp(a/B)$ does not fork over $B$, by compactness.  Consequently, if $a$ is a small tuple and $B$ is a (small) set, then $\qftp(a'/B)$ does not fork over $B$, where $a'$ enumerates $\acl(aB)$.  By Corollary~\ref{qe-version-1}, $\qftp(a'/B)$ implies $\tp(a'/B)$, so $\tp(a'/B)$ does not fork over $B$.  By monotonicity, $\tp(a/B)$ does not fork over $B$.  As $a$ and $B$ are arbitrary, every set in the home sort is an extension base for forking in the sense of \cite{CK}, so by Theorem 1.2 in \cite{CK}, forking and dividing agree over every set in the home sort.
\end{proof}

\begin{lemma}\label{not-good}
Let $\mathfrak{M}$ be a monster model of $T$ and $C = \acl(C)$ be a small subset of $\mathfrak{M}$.  Suppose $p(x)$ is a complete type on $C$ and $q(x)$ is a complete quantifier-free type on $\mathfrak{M}$, with $q(x)$ extending the quantifier-free part of $p(x)$.  Suppose $q(x)$ is Lascar-invariant over $C$.  Then $q(x) \cup p(x)$ is consistent.
\end{lemma}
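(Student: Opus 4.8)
The plan is to realize $q$ in a model $M^* \models T$ extending $\mathfrak{M}$, built carefully so that the realization also satisfies $p$ over $C$. First fix $b \models p$ inside $\mathfrak{M}$ (possible since $C$ is small) and set $E_b := \acl(Cb)$. By Corollary~\ref{acl-char}, $E_b$ is the relative algebraic closure of $C(b)$ in $\mathfrak{M}$, regarded as a $T_\forall$-structure; by Corollary~\ref{a1-rel-closed} it satisfies axiom A1; and by Corollary~\ref{qe-version-1} the isomorphism type of $E_b$ as a $T_\forall$-structure, together with the distinguished tuples $C$ and $b$, already determines $\tp(b/C)=p$. Dually, realize $q$: choose a tuple $a$ (in some extension) with $\qftp(a/\mathfrak{M})=q$, and let $N$ be the perfect closure of the field generated by $\mathfrak{M}$ and $a$, viewed as a $T_\forall$-structure extending $\mathfrak{M}$ with $a\in N$. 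Since $T$ is the model companion of $T_\forall$, any such $N$ embeds over $\mathfrak{M}$ into a model of $T$, which is then an elementary extension of $\mathfrak{M}$ by model completeness. The real work is to enlarge $N$ \emph{first}, so that the relative algebraic closure of $C(a)$ in the final model is a prescribed copy of $E_b$.

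The key preliminary is that $a \forkindep^{\ACF}_C \mathfrak{M}$. If not, some finite subtuple $a'$ of $a$ has $\trdeg(a'/C\bar m)<\trdeg(a'/C)$ for a finite $\bar m\subseteq\mathfrak{M}$; using Lemma~\ref{stupid} choose a $C$-indiscernible sequence $\bar m_0=\bar m,\bar m_1,\dots$ in $\mathfrak{M}$ that is algebraically independent over $C$. Lascar-invariance of $q$ over $C$ gives $\qftp(a\bar m_k/C)=\qftp(a\bar m_0/C)$ for all $k$, hence $\trdeg(a'/C\bar m_k)<\trdeg(a'/C)$ for every $k$; this contradicts the finiteness of preweight of finite tuples in $\ACF$, exactly as in the proof of Lemma~\ref{horror}. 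Now $C=\acl(C)$ is relatively algebraically closed in $\mathfrak{M}$ by Corollary~\ref{acl-char}, hence relatively separably closed, so $\tp_{\ACF}(\mathfrak{M}/C)$ is stationary by the remarks after Definition~\ref{nearly}; since $a \forkindep^{\ACF}_C \mathfrak{M}$, the extension $\tp_{\ACF}(\mathfrak{M}/C(a))$ is nonforking over $C$ and therefore still stationary, so $C(a)$ is relatively separably closed in $\mathfrak{M}(a)$, and (looking only at purely inseparable extensions) in $N$.

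Next, since $\qftp(a/C)=\qftp(b/C)$ (both equal the quantifier-free part of $p$), there is an isomorphism of $T_\forall$-structures $C(b)\to C(a)$ over $C$ sending $b$ to $a$. As $E_b/C(b)$ is algebraic, this extends to an isomorphism $E_b\to E_a$ onto an algebraic $T_\forall$-extension $E_a$ of $C(a)$ with $E_a\cong_C E_b$; thus $E_a$ is perfect and satisfies A1. Apply Lemma~\ref{precise-amalgamation} with base $C(a)$, relatively separably closed in $N$ by the previous paragraph: this amalgamates $N$ and $E_a$ over $C(a)$ into a single $T_\forall$-structure, which we embed into a model $M^*\models T$, with $\mathfrak{M}\preceq M^*$ by model completeness. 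Then $\tp_{M^*}(a/\mathfrak{M})$ is a complete type extending $q$. Since $E_a$ satisfies A1, Corollary~\ref{a1-rel-closed} makes it relatively algebraically closed in $M^*$; being algebraic over $C(a)$ and contained in $M^*$, it is exactly the relative algebraic closure of $C(a)$ in $M^*$, i.e. $\acl_{M^*}(Ca)$ by Corollary~\ref{acl-char}. Hence $\acl_{M^*}(Ca)=E_a\cong_C E_b=\acl_{\mathfrak{M}}(Cb)$ as $T_\forall$-structures, both satisfying A1, so Corollary~\ref{qe-version-1} gives that these structures have the same type with their distinguished tuples; therefore $\tp_{M^*}(a/C)=\tp_{\mathfrak{M}}(b/C)=p$, and $a$ realizes $q\cup p$ in $M^*$.

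The \emph{main obstacle} is controlling the relative algebraic closure of $C(a)$ inside the final model: one needs it to be a prescribed copy of $\acl(Cb)$, which forces the amalgamation of $N$ with the transported structure $E_a$, and this amalgamation is legitimate only because $C(a)$ is relatively separably closed in $N$ — which is precisely where the Lascar-invariance of $q$ over $C$ is used, via the algebraic independence $a \forkindep^{\ACF}_C \mathfrak{M}$. (One should also treat positive characteristic by carrying perfect closures throughout, as in the rest of the paper; all the structures involved are perfect.)
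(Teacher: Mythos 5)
Your proof is correct, and at the top level it follows the same strategy as the paper's own argument: use Lascar-invariance of $q$ over $C$ to deduce $a \forkindep^{\ACF}_C \mathfrak{M}$, conclude that $C(a)$ is relatively separably closed in the structure generated by $\mathfrak{M}$ and $a$, and then amalgamate via Lemma~\ref{precise-amalgamation} with something carrying $p$. The two places you diverge are in the details. First, you derive $a \forkindep^{\ACF}_C \mathfrak{M}$ by a preweight/finite-character argument in the style of Lemma~\ref{horror} (produce a $C$-independent $C$-indiscernible sequence via Lemma~\ref{stupid}, transport the forking configuration along the sequence using Lascar-invariance of $q$, contradict finite preweight); the paper instead takes $W$ the variety of which $a$ is generic over $\mathfrak{M}$, observes that its code $\ulcorner W\urcorner$ is Lascar-invariant over $C$ hence lies in $\acl(C)=C$, and reads off $a\forkindep^{\ACF}_C\mathfrak{M}$ from canonical bases. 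Both are sound; the canonical-base route is a bit more succinct. Second, your back end builds an explicit copy $E_a \cong_C \acl_{\mathfrak{M}}(Cb)$ over $C(a)$, amalgamates it with the perfect closure of $\mathfrak{M}[a]$, and then invokes Corollary~\ref{qe-version-1} twice to identify $\tp_{M^*}(a/C)$ with $p$; the paper simply takes any model $N\models T$ extending $C[a]$ in which $a$ realizes $p$ over $C$, amalgamates $\mathfrak{M}[a]$ with $N$ over $C[a]$, and passes to a model of $T$, with model completeness finishing the job in one step. Your extra detour through $E_a$ is not wrong — it in fact makes more visible where A1 and Corollary~\ref{qe-version-1} enter — but it is more work than necessary. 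You also re-derive the relative separable closedness of $C(a)$ in $\mathfrak{M}(a)$ from stationarity instead of citing Lemma~\ref{two-fields} directly; that lemma was proved for exactly this use.
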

\begin{proof}
Let $\mathfrak{M}[a]$ be the structure obtained by adjoining a realization $a$ of $q(x)$ to $\mathfrak{M}$.  Let $W$ be the variety over $\mathfrak{M}$ of which $a$ is the generic point.  By Fact~\ref{cb-stationarity-variety}, $W$ is geometrically irreducible.  Moreover, the ACF-theoretic code $\ulcorner W \urcorner$ for $W$ must lie in $\mathfrak{M}$.  By Lascar invariance of $q(x)$, one sees that $W$ is Lascar invariant over $C$.  Consequently, the finite tuple $\ulcorner W \urcorner$ is fixed by every Lascar strong automorphism over $C$.  So $\ulcorner W \urcorner \subseteq \acl(C) = C$.  Consequently, in an ambient model of ACF we have $Cb(\stp(a/\mathfrak{M})) \subseteq C$, and so $a \forkindep^{\ACF}_C \mathfrak{M}$.  By Lemma~\ref{two-fields}, $C[a]$ is relatively algebraically closed in $\mathfrak{M}[a]$.

Because the quantifier-free type of $a$ over $C$ is consistent with $p(x)$, there is a model $N \models T$ extending $C[a]$ such that within $N$, $\tp(a/C) = p(x)$.  By Lemma~\ref{precise-amalgamation}, we can amalgamate $\mathfrak{M}[a]$ and $N$ over $C[a]$.  So there is a model $N'$ of $T$ extending $N$ and $\mathfrak{M}[a]$.  In $N$, $\tp(a/C) = p(x)$.  As $N \preceq N'$, $\tp(a/C) = p(x)$ holds in $N'$ as well.  And as $N' \supseteq \mathfrak{M}(a)$, $\qftp(a/\mathfrak{M}) = q(x)$.  So $q(x) \cup p(x)$ is consistent.
\end{proof}

\begin{lemma}\label{first-half}
Work in a monster model $\mathfrak{M}$ of $T$.  Let $a$ be a finite tuple, and $B$ and $C$ be sets (in the home sort, as always).  Suppose $a \forkindep^{T_i}_C B$ holds for every $1 \le i \le n$.  Then $a \forkindep_C B$.
\end{lemma}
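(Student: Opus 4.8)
\emph{Proof plan.} The plan is to produce a global complete type extending $\tp(a/BC)$ that does not fork over $C$, by stringing together Lemma~\ref{qftp-extender}, Lemma~\ref{not-good}, and Corollary~\ref{every-extension}. First I would reduce to the case $C = \acl(C)$. In any theory a formula forks over a set if and only if it forks over its algebraic closure (indiscernibility over $C$ being the same as over $\acl(C)$, as in the proof of Lemma~\ref{fork-div-1}), so $a \forkindep_C B \iff a \forkindep_{\acl(C)} B$; the same reasoning inside each $\mathfrak{M}_i$ shows that the hypothesis $a \forkindep^{T_i}_C B$ survives replacing $C$ by $\acl(C)$, using that field-algebraic elements have only finitely many conjugates so that $\acl^T(C)$ sits inside the $T_i$-algebraic closure of $C$. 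Thus we may assume $C = \acl(C)$, so $C$ is relatively algebraically closed in $\mathfrak{M}$ and satisfies axiom~A1 by Corollary~\ref{a1-rel-closed}.

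Next I would arrange a global quantifier-free type extending $\qftp(a/\acl(BC))$, not merely $\qftp(a/BC)$. By Lemma~\ref{stronger-than-acf} applied inside each $\mathfrak{M}_i$ we have $a \forkindep^{\ACF}_C B$, and a transcendence-degree count then gives $a \forkindep^{\ACF}_C \acl(BC)$; together with the good behaviour of forking over algebraically closed bases in the NIP theories $T_i$, this upgrades to $a \forkindep^{T_i}_C \acl(BC)$ for every $i$. Applying Lemma~\ref{qftp-extender} with $\acl(BC)$ in place of $B$ then produces a complete quantifier-free type $q(x)$ over $\mathfrak{M}$ extending $\qftp(a/\acl(BC))$ and Lascar-invariant over $C$; since $C \subseteq \acl(BC)$, the type $q(x)$ is in particular Lascar-invariant over $\acl(BC)$ as well.

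Now I would apply Lemma~\ref{not-good} with $\acl(BC)$ playing the role of ``$C$'', with $p(x) := \tp(a/\acl(BC))$, and with $q(x)$ as above. All the hypotheses hold: $\acl(BC)$ is algebraically closed, $q(x)$ extends the quantifier-free part of $p(x)$, and $q(x)$ is Lascar-invariant over $\acl(BC)$. So $q(x) \cup \tp(a/\acl(BC))$ is consistent; let $r(x)$ be any global complete type extending it. By Corollary~\ref{every-extension} (using that $q(x)$ is Lascar-invariant over $C$), $r(x)$ does not fork over $C$, and $r(x) \supseteq \tp(a/\acl(BC)) \supseteq \tp(a/BC)$. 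Hence $\tp(a/BC)$ does not fork over $C$, which is to say $a \forkindep_C B$.

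The main obstacle is the passage between quantifier-free and full types over $BC$: Lemma~\ref{qftp-extender} only controls the quantifier-free type, and because $BC$ need not satisfy axiom~A1, a global completion of $q(x)$ need not restrict to $\tp(a/BC)$ on $BC$. Lemma~\ref{not-good} is precisely the tool that repairs this, but to invoke it one first needs $q(x)$ to extend the \emph{full} quantifier-free type over $\acl(BC)$ rather than merely over $BC$; this is what forces the preliminary step of upgrading $a \forkindep^{T_i}_C B$ to $a \forkindep^{T_i}_C \acl(BC)$, and carefully checking that both this upgrade and the reduction to $C = \acl(C)$ genuinely preserve the hypotheses is the part of the argument requiring the most care.
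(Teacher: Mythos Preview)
Your proposal is correct and follows the same route as the paper: reduce to $C = \acl(C)$ and work over $\acl(BC)$, apply Lemma~\ref{qftp-extender} to get a global quantifier-free type $q$ Lascar-invariant over $C$, use Lemma~\ref{not-good} (with base $\acl(BC)$) to make $q$ consistent with $\tp(a/\acl(BC))$, and finish with Corollary~\ref{every-extension}. The one place where your write-up is looser than the paper's is the upgrade $a \forkindep^{T_i}_C B \Rightarrow a \forkindep^{T_i}_C \acl(BC)$: the detour through $\forkindep^{\ACF}$ is irrelevant (ACF-independence does not imply $T_i$-independence), and what you really need is the standard fact, valid over extension bases, that nonforking passes to the algebraic closure of the right-hand side; the paper handles this by citing Claim~3.6 of \cite{CK} together with Lemma~\ref{fork-div-1}.
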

\begin{proof}
A type forks/divides over $C$ if and only if it forks/divides over $\acl(C)$, so it suffices to show that $\tp(a/BC)$ does not fork over $\acl(C)$.  By monotonicity, it suffices to show that $\tp(a/\acl(BC))$ does not fork over $\acl(C)$.
By Claim 3.6 in \cite{CK} and Lemma~\ref{fork-div-1} above, $a \forkindep^{T_i}_{\acl(C)} \acl(CB)$ for every $i$.  So we may assume that $C = \acl(C) \subseteq B = \acl(B)$.

Now by Lemma~\ref{qftp-extender}, there is a global quantifier-free type $q(x)$ extending $\qftp(a/BC) = \qftp(a/B)$, with $q(x)$ Lascar-invariant over $C$.  Clearly $q(x)$ is also Lascar-invariant over $B$, so by Lemma~\ref{not-good}, $q(x)$ is consistent with $\tp(a/B)$.  Let $p(x)$ be a global complete type extending $q(x) \cup \tp(a/B)$.  Then $p(x)$ does not fork over $C$ by Corollary~\ref{every-extension}.
\end{proof}

Let $\qftp^i(a/B)$ denote the quantifier-free $\mathcal{L}_i$-type of $a$ over $B$, and let $\qftp^{\ACF}(a/B)$ denote the field-theoretic quantifier-free type of $a$ over $B$.
\begin{lemma}\label{craziness}
Let $\mathfrak{M}$ be a monster model of $T$, and let $C = \acl(C)$ be a small subset.  For each $i$, let $M_i$ be a model of $T_i$ extending $\mathfrak{M} \restriction \mathcal{L}_i$.  For each $i$, let $a_i$ be a tuple in $M_i$.  Suppose that $\qftp^{\ACF}(a_i/C)$ does not depend on $i$.  Then we can find a tuple $a$ in $\mathfrak{M}$ such that $\qftp^i(a/C) = \qftp^i(a_i/C)$ for every $i$.
\end{lemma}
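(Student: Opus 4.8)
The plan is to realize in $\mathfrak{M}$ a single quantifier-free $\mathcal{L} = \bigcup_i \mathcal{L}_i$-type that restricts to $\qftp^i(a_i/C)$ on each $\mathcal{L}_i$, and to do so by first packaging the given data as an honest $T_\forall$-structure. Because $\qftp^{\ACF}(a_i/C)$ does not depend on $i$, for each $i$ there is a canonical $C$-algebra isomorphism $C[a_i] \to C[a_1]$ sending $a_i \mapsto a_1$ coordinatewise, extending to a field isomorphism $\iota_i : C(a_i) \to F$, where $F$ denotes the pure field $C(a_1)$. Transporting the $\mathcal{L}_i$-structure of $C(a_i) \subseteq M_i$ along $\iota_i$ equips $F$ with an $\mathcal{L}_i$-structure; since each $\iota_i$ is a ring isomorphism, all of these structures share the same $\mathcal{L}_{rings}$-reduct, namely the native ring structure of $F$, so they glue to one $\mathcal{L}$-structure on $F$. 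For each $i$, $(F,\mathcal{L}_i) \cong C(a_i)$, a substructure of $M_i \models T_i$ and a field, hence a model of $(T_i)_\forall$; therefore $(F,\mathcal{L}) \models T_\forall$. As each $\iota_i$ fixes $C$ and the $\mathcal{L}_i$-structure on $C$ is the same computed in $\mathfrak{M}$ or in $M_i$ (the latter extending $\mathfrak{M} \restriction \mathcal{L}_i$), the $T_\forall$-structure on $F$ extends the one $C$ inherits from $\mathfrak{M}$.

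Next I would amalgamate $F$ with $\mathfrak{M}$ over $C$. Since $C = \acl(C)$, Corollary~\ref{acl-char} shows $C$ is relatively algebraically closed, hence relatively separably closed, in $\mathfrak{M}$; so Lemma~\ref{precise-amalgamation} applies and yields a $T_\forall$-model $M'$ with $\mathcal{L}$-embeddings of $F$ and of $\mathfrak{M}$ over $C$. Embedding $M'$ into a model $N \models T$ (possible as $T$ is the model companion of $T_\forall$) and using model-completeness of $T$, we get $\mathfrak{M} \preceq N$. Let $\bar a$ be the image of $a_1$ in $N$. For any quantifier-free $\mathcal{L}_i$-formula $\phi$ over $C$ we have $N \models \phi(\bar a) \iff F \models \phi(\bar a) \iff C(a_i) \models \phi(a_i) \iff M_i \models \phi(a_i)$, using preservation of quantifier-free formulas under the substructure embedding $F \hookrightarrow N$, that $\iota_i$ is an $\mathcal{L}_i$-isomorphism fixing $C$ with $\iota_i(a_i) = \bar a$, and absoluteness of quantifier-free formulas between $C(a_i)$ and $M_i$. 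Thus $\bar a$ realizes the partial type $\Sigma(x) := \bigcup_{i=1}^n \qftp^i(a_i/C)$ in $N$.

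Finally I would pull $\Sigma$ down to $\mathfrak{M}$. Each finite $\Sigma_0 \subseteq \Sigma$ is implied by a single $\mathcal{L}$-formula over $C$, and $N \models \exists x \bigwedge \Sigma_0(x)$; as this is a sentence with parameters in $\mathfrak{M}$ and $\mathfrak{M} \preceq N$, also $\mathfrak{M} \models \exists x \bigwedge \Sigma_0(x)$. So $\Sigma$ is finitely satisfiable in $\mathfrak{M}$, hence realized by some $a \in \mathfrak{M}$ by saturation. For each $i$ this gives $\qftp^i(a/C) \supseteq \qftp^i(a_i/C)$, and since both are complete quantifier-free $\mathcal{L}_i$-types over $C$ they are equal, as desired. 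The point needing the most care — and the crux of the argument — is the legitimacy of gluing the $\mathcal{L}_i$-structures into the $T_\forall$-model $F$ and then amalgamating $F$ with $\mathfrak{M}$ over $C$; this is exactly where the hypothesis $C = \acl(C)$ enters (through Corollary~\ref{acl-char} and Lemma~\ref{precise-amalgamation}), and without relative algebraic closedness of $C$ in $\mathfrak{M}$ the amalgamation step could fail.
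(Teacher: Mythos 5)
Your proposal is correct and follows essentially the same route as the paper's proof: use the common ACF quantifier-free type to identify the rings $C[a_i]$ (or their fraction fields) over $C$, transport each $\mathcal{L}_i$-structure along the identification to build a single $T_\forall$-model over $C$, amalgamate it with $\mathfrak{M}$ over $C$ via Lemma~\ref{precise-amalgamation} (using $C = \acl(C)$ and Corollary~\ref{acl-char}), and then pull back to $\mathfrak{M}$ by model completeness and saturation. Your write-up simply spells out the gluing and the final transfer step in more detail than the paper does.
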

\begin{proof}
Let $C[a_i]$ denote the subring or subfield of $M_i$ generated by $C$ and $a_i$.  By assumption, $C[a_i]$ is isomorphic to $C[a_{i'}]$ as a ring, for every $i$ and $i'$.  Use these isomorphisms to identify all the $C[a_i]$ with each other, getting a single ring $C[a]$ which is isomorphic to $C[a_i]$ for every $i$.  Use these isomorphisms to move the $(T_i)_\forall$ structure from $C[a_i]$ to $C[a]$.  Now $C[a]$ is a model of $T_\forall$, and $\qftp^i(a/C) = \qftp^i(a_i/C)$, for every $i$.  As $C = \acl(C)$, $C$ is relatively separably closed in $\mathfrak{M}$, so by Lemma~\ref{precise-amalgamation}, one can embed $C[a]$ and $\mathfrak{M}$ into a bigger model of $T$.  By model completeness and saturation, $\tp(a/C)$ is already realized in $\mathfrak{M}$.
\end{proof}

\begin{lemma}\label{second-half}
Let $a, B, C$ be small subsets of a monster model $\mathfrak{M} \models T$.  Suppose $a \centernot \forkindep^{T_1}_C b$.  Then $a \centernot \forkindep_C b$.
\end{lemma}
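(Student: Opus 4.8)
The plan is to produce a single formula in $\tp(a/Cb)$ that divides over $C$ in $\mathfrak{M}$; since dividing implies forking, this gives $a\centernot\forkindep_C b$. By the finite character of forking and monotonicity we may assume $b$ is a finite tuple, and we set $C^*:=\acl(C)$, which by Corollary~\ref{acl-char} is relatively algebraically closed in $\mathfrak{M}$. First I would manufacture a good dividing witness inside a reduct. Let $M_1\models T_1$ be a monster model extending $\mathfrak{M}\restriction\mathcal{L}_1$. By Lemma~\ref{fork-div-1} applied in $T_1$, $\tp_{\mathcal{L}_1}(a/Cb)$ divides over $C$ in $M_1$, and by quantifier elimination in $T_1$ we may fix a quantifier-free $\mathcal{L}_1$-formula $\phi(x;b,c)\in\tp_{\mathcal{L}_1}(a/Cb)$ (with $c$ from $C$) that divides over $C$. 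By Lemma~\ref{dubious-ist} in $T_1$ this dividing is witnessed by an $\mathcal{L}_1$-$C$-indiscernible sequence $b=b^1_0,b^1_1,\ldots$ in $M_1$ with $b^1_j\forkindep^{\ACF}_C b^1_{<j}$, together with an integer $k$ such that $\{\phi(x;b^1_j,c):j<k\}$ is inconsistent. Since indiscernibility and algebraic independence over $C$ pass to $C^*$, this sequence is also $\mathcal{L}_1$-$C^*$-indiscernible and algebraically independent over $C^*$.

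Next I would build matching sequences in the other reducts. Because $\langle b^1_j\rangle$ is $\mathcal{L}_1$-$C^*$-indiscernible, each $b^1_j$ realises $\tp^{\ACF}(b/C^*)$; and $C^*$ is relatively separably closed in $C^*(b)\subseteq\mathfrak{M}$ (all fields are perfect or of characteristic $0$), so $\tp^{\ACF}(b/C^*)$ is stationary by Fact~\ref{cb-stationarity-variety}, whence $\qftp^{\ACF}(\langle b^1_j\rangle_{j<\omega}/C^*)$ is the $\omega$-fold independent power of this stationary type. For each $i$ with $2\le i\le n$ let $M_i\models T_i$ be a monster extending $\mathfrak{M}\restriction\mathcal{L}_i$ and apply Lemma~\ref{stupid} in $T_i$ with the tuple $b$ and base $C^*$ to get an $\mathcal{L}_i$-$C^*$-indiscernible sequence $b=b^i_0,b^i_1,\ldots$ in $M_i$ with $b^i_j\forkindep^{\ACF}_{C^*}b^i_{<j}$; by the same computation its $\qftp^{\ACF}$ over $C^*$ is again that independent power. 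Thus $\qftp^{\ACF}(\langle b^i_j\rangle_{j<\omega}/C^*)$ does not depend on $i$, which is precisely what is needed to glue.

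Now I would glue the reducts into $\mathfrak{M}$ by re-running the proof of Lemma~\ref{craziness}, but amalgamating over the field $C^*(b)$ rather than over $C^*$. The compatible ring isomorphisms identifying the rings $C^*[\langle b^i_j\rangle]$ all fix $b$ (its $0$th coordinate), so they glue to a single $T_\forall$-structure on a domain $R\supseteq C^*[b]$ whose $\mathcal{L}_i$-reduct is copied from $M_i$ and which induces on $C^*(b)$ the structure coming from $\mathfrak{M}$; since the higher $b^i_j$ are algebraically independent over $C^*(b)$, the field $C^*(b)$ is relatively separably closed in $\Frac(R)$, so Lemma~\ref{precise-amalgamation} amalgamates $\Frac(R)$ with $\mathfrak{M}$ over $C^*(b)$. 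Embedding the result into a model of $T$ and using model-completeness and saturation of $\mathfrak{M}$, we realise $\tp(\langle \beta_j\rangle/C^*b)$ inside $\mathfrak{M}$, obtaining $\langle b'_j\rangle_{j<\omega}$ in $\mathfrak{M}$ with $b'_0=b$ and $\qftp^i(\langle b'_j\rangle/C^*)=\qftp^i(\langle b^i_j\rangle/C^*)$ for every $i$. As every quantifier-free $\mathcal{L}$-formula is a Boolean combination of quantifier-free $\mathcal{L}_i$-formulas (the symbols outside $\mathcal{L}_{rings}$ are relation symbols) and each $\langle b^i_j\rangle$ is $\mathcal{L}_i$-$C^*$-indiscernible, $\langle b'_j\rangle$ is quantifier-free $\mathcal{L}$-$C^*$-indiscernible; and since $\neg\exists x\bigwedge_{j<k}\phi(x;y_j,c)$ lies in the unique $T_1$-completion of $\qftp^{\mathcal{L}_1}((b^1_0,\ldots,b^1_{k-1})/C^*)$, it also holds of $(b'_0,\ldots,b'_{k-1})$ in any $T_1$-model, so $\{\phi(x;b'_j,c):j<k\}$ is inconsistent in $\mathfrak{M}$ (a realisation there would be a quantifier-free realisation in $M_1$ as well).

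Finally I would extract a genuine $\mathcal{L}$-$C^*$-indiscernible sequence $\langle b''_j\rangle$ from $\langle b'_j\rangle$; it keeps the same quantifier-free $\mathcal{L}$-EM-type over $C^*$, so $\{\phi(x;b''_j,c):j<k\}$ stays inconsistent and $\qftp_{\mathcal{L}}(b''_0/C^*)=\qftp_{\mathcal{L}}(b/C^*)$. The hard part — the main obstacle — is to arrange $b''_0=b$ on the nose: extraction only preserves the quantifier-free type of the first element, whereas to apply an automorphism of $\mathfrak{M}$ over $C^*$ moving $b''_0$ to $b$ I need $\tp_{\mathcal{L}}(b''_0/C^*)=\tp_{\mathcal{L}}(b/C^*)$, and in $T$ the quantifier-free type over $C^*=\acl(C)$ does not in general determine the full type for a single tuple. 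I would resolve this by carrying the algebraic closures $\acl(C^*b^i_j)$ through Steps 1–3 (these still form algebraically independent indiscernible sequences), so that $\langle \acl(C^*b'_j)\rangle$ is quantifier-free $\mathcal{L}$-$C^*$-indiscernible with first term $\acl(C^*b)$; then Corollary~\ref{qe-version-1}, applied to the $A1$-structure $\acl(C^*b)$, upgrades this to full $\mathcal{L}$-indiscernibility and identifies $b''_0$ with $b$. Once $b''_0=b$, the sequence $\langle b''_j\rangle$ witnesses that $\phi(x;b,c)$ divides over $C^*$, hence over $C$; as $\phi(x;b,c)\in\tp(a/Cb)$, this type forks over $C$, i.e.\ $a\centernot\forkindep_C b$.
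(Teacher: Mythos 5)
Your proposal is correct and follows essentially the same route as the paper's proof: pull a dividing witness out of the $T_1$-reduct via Lemmas~\ref{fork-div-1} and \ref{dubious-ist}, build matching algebraically independent indiscernible sequences in the other reducts via Lemma~\ref{stupid}, glue them back into $\mathfrak{M}$ via a version of Lemma~\ref{craziness}, and transfer the $k$-inconsistency.

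The one thing worth flagging is that the ``main obstacle'' you identify at the end, together with the fix you propose (carrying $\acl(C^*b^i_j)$ through), is exactly where the paper is cleaner: the paper replaces $B$ by $\acl(BC)$ \emph{at the very start} (justified by Claim~3.6 of \cite{CK} for both $T_1$ and $T$). Once $b$ is acl-closed over $C^*$, Corollary~\ref{qe-version-1} immediately upgrades $\qftp(b'_j/C^*) = \qftp(b/C^*)$ to $\tp(b'_j/C^*) = \tp(b/C^*)$, and then the glued sequence is already a sequence of \emph{full} conjugates of $b$ over $C$ on which $\phi$ is $k$-inconsistent --- which witnesses dividing on its own. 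Your extra machinery (amalgamating over $C^*(b)$ rather than $C^*$ to pin $b'_0=b$, tracking quantifier-free indiscernibility, extracting a genuinely indiscernible sequence, and worrying about $b''_0=b$) becomes unnecessary at that point: dividing does not require the witnessing sequence to be indiscernible once all terms realize $\tp(b/C)$. Your variant of the glueing step is itself sound --- the relative separable closedness of $C^*(b)$ in $\Frac(R)$ you invoke does hold by Lemma~\ref{two-fields}, since $C^*$ is relatively algebraically closed in $\mathfrak{M}$ and $b\forkindep^{\ACF}_{C^*}\langle b_j\rangle_{j\ge1}$ --- but you should apply your fix up front rather than as a patch, after which the last two paragraphs of your argument collapse to a single line.
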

\begin{proof}
By Claim 3.6 in \cite{CK} applied to both $T_1$ and $T$, we may assume
$C = \acl(C)$ and $B = \acl(BC)$.  By finite character of forking, we
may assume $a$ is finite.  For every $i$, let $\mathfrak{M}_i$ be an
even more monstrous model of $T_i$ extending $\mathfrak{M}
\restriction \mathcal{L}_i$.  Then $a \centernot \forkindep_C B$ holds
in $\mathfrak{M}_1$.  By Lemma~\ref{fork-div-1}, some
$\mathcal{L}_1$-formula $\phi(x;B)$ in $\tp(a/BC)$ divides over $C$.
By quantifier-elimination in $T_i$, we may assume that $\phi(x;y)$ is
a quantifier-free $\mathcal{L}_1$-formula.  By
Lemma~\ref{dubious-ist}, there is a sequence $B = B^1_0, B^1_1, B^1_2,
\ldots$ in $\mathfrak{M}_1$ which is indiscernible over $C$ and
algebraically independent over $C$, and such that $\bigwedge_{j =
  0}^\infty \phi(x;B^1_j)$ is $k$-inconsistent in $\mathfrak{M}_1$,
for some $k$.  Thus $\qftp^1(B^1_j/C) = \qftp^1(B/C)$, and in a
certain sense
\[ \qftp^{\ACF}(B^1_0 B^1_1 B^1_2 \cdots/C) = \qftp^{\ACF}(B/C) \otimes \qftp^{\ACF}(B/C) \otimes \cdots.\]
The right hand side makes sense because $C$ is relatively separably closed in $B$ (Definition~\ref{nearly}), so $\qftp^{\ACF}(B/C)$ is stationary.

Meanwhile, for $i > 1$, we can apply Lemma~\ref{stupid} to $\mathfrak{M}_i$ and $\tp(B/C)$, getting a sequence $B = B^i_0, B^i_1, B^i_2, \ldots$ which is indiscernible over $C$ and algebraically independent over $C$.  (Note that Lemma~\ref{stupid} is true even without the restriction that $B$ be finite.)  So again, we get $\qftp^i(B^i_j/C) = \qftp^i(B/C)$, and
\[ \qftp^{\ACF}(B^i_0 B^i_1 B^i_2 \cdots/C) = \qftp^{\ACF}(B/C) \otimes \qftp^{\ACF}(B/C) \otimes \cdots.\]
In particular, $\qftp^{\ACF}(B^i_0 B^i_1 B^i_2 \cdots/C)$ does not depend on $i$, as $i$ ranges from $1$ to $n$.  By Lemma~\ref{craziness}, we can therefore find a sequence $B_0, B_1, \ldots$ in $\mathfrak{M}$ such that
\[ \qftp^i(B_0 B_1 \ldots/C) = \qftp^i(B^i_0 B^i_1 B^i_2 \ldots/C)\]
for every $i$.  In particular, $\qftp^i(B_j/C) = \qftp^i(B^i_j/C) = \qftp^i(B/C)$.  Because this holds for all $i$, $\qftp(B_j/C) = \qftp(B/C)$.  Because $B = \acl(B)$, $\qftp(B/C) \vdash \tp(B/C)$ by Corollary~\ref{qe-version-1}.  So $\tp(B_j/C) = \tp(B/C)$ for every $j$.  Also,
\[ \qftp^1(B_0 B_1 \ldots/C) = \qftp^1(B^1_0 B^1_1 \ldots / C)\]
implies that there is an automorphism $\sigma$ of $\mathfrak{M}_1$ sending $B^1_0 B^1_1 \ldots$ to $B_0 B_1 \ldots$.  Consequently, $\bigwedge_{j = 0}^\infty \phi(x;B_j)$ is $k$-inconsistent in $\mathfrak{M}_1$.  Clearly it is also $k$-inconsistent in $\mathfrak{M}$, because $\mathfrak{M}$ is smaller than $\mathfrak{M}_1$.  Since $B_0$, $B_1$ is a sequence of realizations of $\tp(B/C)$, we conclude that $\phi(x;B)$ divides over $C$, in $\mathfrak{M}$.
\end{proof}

\begin{theorem}\label{thm-char}
Let $M$ be a model of $T$, and let $A, B, C$ be subsets of $M$ (in the home sort).  The following are equivalent:
\begin{itemize}
\item $A \forkindep_C B$, i.e., the type of $A$ over $BC$ does not fork over $C$.
\item The type of $A$ over $BC$ does not divide over $C$.
\item $A \forkindep^{T_i}_C B$ for every $1 \le i \le n$.
\end{itemize}
\end{theorem}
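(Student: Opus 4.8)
The plan is to assemble the theorem from three ingredients already established: Theorem~\ref{forking-is-dividing}, which identifies forking with dividing over every set in the home sort; Lemma~\ref{first-half}, which gives the implication from simultaneous non-forking in the $T_i$ to non-forking in $T$; and Lemma~\ref{second-half}, which gives the converse in contrapositive form. The equivalence of the first two bullet points is literally the content of Theorem~\ref{forking-is-dividing}, so all of the work lies in relating the third bullet to the first.

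For the direction ``$A \forkindep^{T_i}_C B$ for every $i$ $\implies$ $A \forkindep_C B$'', I would first reduce to the case of a finite tuple. By the finite character of forking, $\tp(A/BC)$ does not fork over $C$ as soon as $\tp(a/BC)$ does not fork over $C$ for every finite subtuple $a$ of $A$; and by monotonicity of forking in each $T_i$, the hypothesis $A \forkindep^{T_i}_C B$ passes to every such finite $a$. Hence we may assume $A$ is a finite tuple, and then Lemma~\ref{first-half} applies verbatim to conclude $A \forkindep_C B$.

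For the direction ``$A \forkindep_C B$ $\implies$ $A \forkindep^{T_i}_C B$ for every $i$'', I would argue by contraposition. Suppose $A \centernot\forkindep^{T_i}_C B$ for some index $i$. After relabelling the $T_j$ so that this $i$ becomes $1$ — which is harmless, as the statement of Lemma~\ref{second-half} makes no genuine use of the first coordinate being distinguished — Lemma~\ref{second-half} yields immediately that $A \centernot\forkindep_C B$ in $\mathfrak{M}$. Thus non-forking in $T$ forces non-forking in each $T_i$, completing the chain of equivalences.

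I do not expect a serious obstacle here: the substance of the argument is entirely contained in Lemmas~\ref{first-half} and \ref{second-half} together with Theorem~\ref{forking-is-dividing}, and what remains is the routine bookkeeping of finite character, monotonicity, and the permutation symmetry among the indices $1,\ldots,n$. The only point deserving an explicit remark is that Lemma~\ref{second-half} is phrased only for $T_1$, so one must note that nothing distinguishes the first valuation or ordering from the others, and therefore the conclusion applies with any $T_i$ in place of $T_1$.
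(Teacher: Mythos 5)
Your proposal is correct and matches the paper's own proof essentially verbatim: both cite Theorem~\ref{forking-is-dividing} for the first equivalence, Lemma~\ref{first-half} together with finite character of forking for one direction, and Lemma~\ref{second-half} (with the implicit permutation symmetry among the $T_i$) for the other. The only difference is that you spell out the reduction to finite tuples and the relabelling of indices slightly more explicitly, which is fine but not a new idea.
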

\begin{proof}
The first two bullet points are equivalent by Theorem~\ref{forking-is-dividing}.  If $A \forkindep_C B$, then by Lemma~\ref{second-half} $A \forkindep^{T_1}_C B$.  Similarly, $A \forkindep^{T_i}_C B$ for every $1 \le i \le n$.  Conversely, if $A \forkindep_C^{T_i} B$ for every $1 \le i \le n$, then by Lemma~\ref{first-half}, $a \forkindep_C B$ for every finite subset $a \subseteq A$.  By finite character of forking, $A \forkindep_C B$.
\end{proof}

\begin{acknowledgment}
  The author would like to thank Tom Scanlon, who read an earlier
  draft of this paper appearing in the author's PhD dissertation.

  This material is based upon work supported by the National Science
  Foundation under Grant No.\ DGE-1106400 and Award No.\ DMS-1803120.
  Any opinions, findings, and conclusions or recommendations expressed
  in this material are those of the author and do not necessarily
  reflect the views of the National Science Foundation.  
\end{acknowledgment}

\bibliographystyle{plain}
\bibliography{mybib}{}

\end{document}